  \edef\mtht{\the\textheight}
  \edef\mtwd{\the\textwidth}
  \definecolor{BackgroundColor}{RGB}{253, 246, 227}
\tikzset{
  commutative diagrams/.cd, 
  arrow style=tikz, 
  diagrams={>=stealth}
}
\addspace\texttt{\mkbibbrackets{\thefield{arxivclass}}}}}}
\addspace\texttt{\mkbibbrackets{\thefield{arxivclass}}}}}}
\newcommand{\printreferences}{\printbibliography[heading=bibintoc]}
\ifundef{\abstract}{}{\patchcmd{\abstract}%
    {\quotation}{\quotation\noindent\ignorespaces}{}{}}
\numberwithin{equation}{section}
\renewcommand{\eqref}[1]{\hyperref[#1]{\rm(\ref*{#1})}}
\def\makeautorefname#1#2{\AtBeginDocument{\expandafter\def\csname#1autorefname\endcsname{#2}}}
\newcommand{\mynewtheorem}[2]{
  \newaliascnt{#1}{equation}          
  \newtheorem{#1}[#1]{#2}
  \aliascntresetthe{#1}
  \makeautorefname{#1}{#2}
}
\newtheorem*{axiom*}{Axiom}
\newtheorem*{theorem*}{Theorem}
\newtheorem*{prop*}{Proposition}
\newtheorem*{conjecture*}{Conjecture}
\numberwithin{substep}{step}
\numberwithin{subcase}{case}
\theoremstyle{remark}
\newtheorem*{remark*}{Remark}
\newtheorem*{convention*}{Convention}
\newtheorem*{conventions*}{Conventions}
\theoremstyle{remark}
\theoremstyle{definition}
\newtheorem*{definition*}{Definition}
\newtheorem*{example*}{Example}
\newtheorem*{question*}{Question}
\let\C\undefined
\let\U\undefined
\DeclareFontFamily{U}{mathx}{\hyphenchar\font45}
\DeclareFontShape{U}{mathx}{m}{n}{
      <5> <6> <7> <8> <9> <10>
      <10.95> <12> <14.4> <17.28> <20.74> <24.88>
      mathx10
      }{}
\DeclareSymbolFont{mathx}{U}{mathx}{m}{n}
\DeclareMathAccent{\widecheck}{0}{mathx}{"71}
\DeclareMathAccent{\wideparen}{0}{mathx}{"75}
\DeclareMathOperator{\Ad}{Ad}
\DeclareMathOperator{\Ann}{Ann}
\DeclareMathOperator{\Aut}{Aut}
\DeclareMathOperator{\Bij}{Bij}
\DeclareMathOperator{\Diff}{Diff}
\DeclareMathOperator{\End}{End}
\DeclareMathOperator{\GL}{GL}
\DeclareMathOperator{\HF}{\HF}
\DeclareMathOperator{\Hess}{Hess}
\DeclareMathOperator{\Hom}{Hom}
\DeclareMathOperator{\Imm}{Imm}
\DeclareMathOperator{\Lie}{Lie}
\DeclareMathOperator{\Sym}{Sym}
\DeclareMathOperator{\coker}{coker}
\DeclareMathOperator{\im}{im}
\DeclareMathOperator{\rk}{rk}
\DeclareMathOperator{\sign}{sign}
\DeclareMathOperator{\spec}{spec}
\DeclareMathOperator{\tr}{tr}
\DeclarePairedDelimiter\paren{\lparen}{\rparen}
\DeclarePairedDelimiter{\Abs}{\|}{\|}
\DeclarePairedDelimiter{\abs}{\lvert}{\rvert}
\DeclarePairedDelimiter{\braket}{\langle}{\rangle}
\DeclarePairedDelimiter{\set}{\lbrace}{\rbrace}
\def\({\left(}
\def\){\right)}
\def\<{\left\langle}
\def\>{\right\rangle}
\newcommand{\CMA}{{\rm CMA}}
\newcommand{\CMfrom}{\widehat{\rm{CM}}}
\newcommand{\CSD}{\text{CSD}}
\newcommand{\C}{{\mathbf{C}}}
\newcommand{\Gtwo}{G_2}
\newcommand{\HMfrom}{\widehat{\rm{HM}}}
\newcommand{\N}{{\mathbf{N}}}
\newcommand{\Oc}{{\mathbf{O}}}
\newcommand{\PSL}{\P\SL}
\newcommand{\PT}{\mathrm{PT}}
\newcommand{\Q}{\mathbf{Q}}
\newcommand{\R}{\mathbf{R}}
\newcommand{\SF}{\mathrm{SF}}
\newcommand{\SL}{\mathrm{SL}}
\newcommand{\SO}{\mathrm{SO}}
\newcommand{\SU}{\mathrm{SU}}
\newcommand{\SW}{\mathrm{SW}}
\newcommand{\Span}[1]{\braket{#1}}
\newcommand{\Spin}{\mathrm{Spin}}
\newcommand{\Sp}{\mathrm{Sp}}
\newcommand{\U}{\mathrm{U}}
\newcommand{\Z}{\mathbf{Z}}
\newcommand{\andq}{\text{and}\quad}
\newcommand{\biota}{{\bm{\iota}}}
\newcommand{\bsD}{{\bm{\sD}}}
\newcommand{\cone}{{\rm{cone}}}
\newcommand{\co}{\mskip0.5mu\colon\thinspace}
\newcommand{\csum}{\sharp}
\newcommand{\defined}[2][\key]{\def\key{#2}\textbf{#2}\index{#1}}
\newcommand{\delbar}{\bar{\del}}
\newcommand{\delfrom}{\hat{\del}}
\newcommand{\del}{\partial}
\newcommand{\ev}{\mathrm{ev}}
\newcommand{\hkred}{{/\!\! /\!\! /}}
\newcommand{\id}{\mathrm{id}}
\newcommand{\inner}[2]{\braket{#1, #2}}
\newcommand{\into}{\hookrightarrow}
\newcommand{\iso}{\cong}
\newcommand{\itref}{\eqref}
\newcommand{\nsub}{\triangleleft}
\newcommand{\ob}{\mathrm{ob}}
\newcommand{\qandq}{\quad\text{and}\quad}
\newcommand{\qand}{\quad\text{and}}
\newcommand{\qwithq}{\quad\text{with}\quad}
\newcommand{\reg}{\mathrm{reg}}
\newcommand{\si}{{\rm si}}
\newcommand{\spin}{\mathfrak{spin}}
\newcommand{\tImm}{{\widetilde\Imm}}
\newcommand{\uSW}{\underline{\SW}}
\newcommand{\vol}{\mathrm{vol}}
\renewcommand{\H}{\mathbf{H}}
\renewcommand{\Im}{\operatorname{Im}}
\renewcommand{\O}{\mathrm{O}}
\renewcommand{\P}{\mathbf{P}}
\renewcommand{\Re}{\operatorname{Re}}
\renewcommand{\det}{\operatorname{det}}
\renewcommand{\emptyset}{\varnothing}
\renewcommand{\epsilon}{\varepsilon}
\renewcommand{\setminus}{{\backslash}}
\renewcommand{\sp}{\mathfrak{sp}}
\renewcommand{\leq}{\leqslant}
\renewcommand{\geq}{\geqslant}
\renewcommand*\env@matrix[1][*\c@MaxMatrixCols c]{%
  \hskip -\arraycolsep
  \let\@ifnextchar\new@ifnextchar
  \array{#1}}
\renewcommand\xleftrightarrow[2][]{%
  \ext@arrow 9999{\longleftrightarrowfill@}{#1}{#2}}
\newcommand\longleftrightarrowfill@{%
  \arrowfill@\leftarrow\relbar\rightarrow}
\newcommand{\rd}{{\rm d}}
\newcommand{\bp}{{\mathbf{p}}}
\newcommand{\bF}{{\mathbf{F}}}
\newcommand{\bP}{{\mathbf{P}}}
\newcommand{\bS}{{\mathbf{S}}}
\newcommand{\bX}{{\mathbf{X}}}
\newcommand{\sA}{\mathscr{A}}
\newcommand{\sC}{\mathscr{C}}
\newcommand{\sD}{\mathscr{D}}
\newcommand{\sE}{\mathscr{E}}
\newcommand{\sG}{\mathscr{G}}
\newcommand{\sH}{\mathscr{H}}
\newcommand{\sI}{\mathscr{I}}
\newcommand{\sJ}{\mathscr{J}}
\newcommand{\sL}{\mathscr{L}}
\newcommand{\sO}{\mathscr{O}}
\newcommand{\sP}{\mathscr{P}}
\newcommand{\sV}{\mathscr{V}}
\newcommand{\fg}{{\mathfrak g}}
\newcommand{\fn}{{\mathfrak n}}
\newcommand{\ft}{{\mathfrak t}}
\newcommand{\fu}{{\mathfrak u}}
\newcommand{\fw}{{\mathfrak w}}
\newcommand{\fx}{{\mathfrak x}}
\newcommand{\fA}{{\mathfrak A}}
\newcommand{\fF}{{\mathfrak F}}
\newcommand{\fL}{{\mathfrak L}}
\newcommand{\fM}{{\mathfrak M}}
\newcommand{\slD}{\slashed D}
\newcommand{\dt}{{\rd t}}
\newcommand{\bfeta}{{\bm\eta}}
\newcommand{\bxi}{{\bm\xi}}
\author{
  Aleksander Doan
  \and
  Thomas Walpuski
}
\title{  
  On counting associative submanifolds and Seiberg--Witten monopoles}
\date{2018-09-17}
\begin{document}
\maketitle

\begin{center}
  \textit{Dedicated to Simon Donaldson on the occasion of his \nth{60} birthday}%
  \\[1cm]
\end{center}

\begin{abstract}
  Building on ideas from \cite{Donaldson1998,Donaldson2009,Walpuski2013a,Haydys2017},
  we outline a proposal for constructing Floer homology groups associated with a $\Gtwo$--manifold.
  These groups are generated by associative submanifolds and solutions of the ADHM Seiberg--Witten equations.
  The construction is motivated by the analysis of various transitions which can change the number of associative submanifolds.
  We discuss the relation of our proposal to Pandharipande and Thomas' stable pair invariant of Calabi--Yau $3$--folds.
\end{abstract}

\section{Introduction}
\label{Sec_Introduction}

\citet[Section 3]{Donaldson1998} put forward the idea of constructing enumerative invariants of $\Gtwo$--manifolds by counting $\Gtwo$--instantons.
The principal difficulty in pursuing this program stems from non-compactness issues in higher-dimensional gauge theory \cite{Tian2000,Tao2004}.
In particular, $\Gtwo$--instantons can degenerate by bubbling along associative submanifolds.
\citet{Donaldson2009} realized that this phenomenon can occur along $1$--parameter families of $\Gtwo$--metrics.
Therefore, a naive count of $\Gtwo$--instantons \emph{cannot} lead to a deformation invariant of $\Gtwo$--metrics;
see also \cite{Walpuski2013a}.
\citeauthor{Donaldson2009} proposed to compensate for this phenomenon with a counter-term consisting of a weighted count of associative submanifolds.
However, they did not elaborate on how to construct a suitable coherent system of weights.
Haydys and Walpuski proposed to define such weights by counting solutions to the Seiberg--Witten equations associated with the ADHM construction of instantons on $\R^4$ \cites[paragraphs following Remark 1.7]{Haydys2014}{Haydys2017}[Introduction]{Doan2017a}[Appendix B]{Doan2017c}.

The construction of these weights depends on the choice of the structure group of $\Gtwo$--instantons, an obvious choice being $\SU(r)$.
If one specializes to $r=1$, that is, to trivial line bundles,
then there are no non-trivial $\Gtwo$--instantons and their naive count is, trivially, an invariant.
However, according to the Haydys--Walpuski proposal one should still count associatives weighted by the count of solutions to the Seiberg--Witten equation on them.
It is known that counting associatives by themselves does not lead to an invariant,
because the following situations may arise along a $1$--parameter family of $\Gtwo$--metrics:
\begin{enumerate}
\item
  \label{It_Intersecting}
  An embedded associative submanifold develops a self-intersection.
  Out of this self-intersection a new associative submanifold is created, as shown by \citet{Nordstrom2013}.
  Topologically, this submanifold is a connected sum.
\item
  \label{It_T2Singularities}
  By analogy with special Lagrangians in Calabi--Yau $3$--folds \cite[Section 3]{Joyce2002},
  it has been conjectured that it is possible for three distinct associative submanifolds to degenerate into a singular associative submanifold with an isolated singularity modeled on the cone over $T^2$ \cites[p.154]{Walpuski2013}[Conjecture 5.3]{Joyce2016}.
  Topologically, these three submanifolds form a surgery triad.
\end{enumerate}
We will argue that known vanishing results and surgery formulae for the Seiberg--Witten invariants of $3$--manifolds \cite[Proposition 4.1 and Theorem 5.3]{Meng1996},
show that the count of associatives weighted by solutions to the Seiberg--Witten equation is invariant under transitions \itref{It_Intersecting} and \itref{It_T2Singularities},
assuming that all connected components of the associative submanifolds in question have $b_1 > 1$.
This restriction is needed in order to be able to avoid reducible solutions and obtain a well-defined Seiberg--Witten invariant \emph{as an integer}.%
\footnote{%
  Using spectral counter-terms, \citet{Chen1997,Chen1998,Lim2000} were able to define Seiberg--Witten invariants of $3$--manifolds with $b_1 \leq 1$.
  These, however, are rational and cannot satisfy the necessary vanishing theorem.
}
We know of no natural assumption that would ensure that this restriction holds for all relevant associative submanifolds.
Hence, the Haydys--Walpuski proposal cannot yield an invariant which is just an integer.

One can define a topological invariant using the Seiberg--Witten equation for any compact, oriented $3$--manifold.
This invariant, however, is not a number but rather a homology group, called monopole Floer homology \cite{Marcolli2001,Manolescu2003,Kronheimer2007,Froyshov2010}.
The behavior of monopole Floer homology under connected sum and in surgery triads is well-understood \cites[Theorem 2.4]{Kronheimer2007a}{Bloom20XX}[Theorem 5]{Lin2015}.
We will explain how to construct a chain complex associated with a $\Gtwo$--manifold using the monopole chain complexes of associative submanifolds.
The  homology of this chain complex \emph{might be invariant} under transitions \itref{It_Intersecting} and \itref{It_T2Singularities}.

The discussion so far only involved the classical Seiberg--Witten equation.
There is a further transition that might spoil the invariance of the proposed homology group:
\begin{enumerate}[resume]
\item
  \label{It_MultipleCovers}
  Along generic $1$--parameter families of $\Gtwo$--metrics,
  somewhere injective immersed associative submanifolds can degenerate by converging to a multiple cover.
\end{enumerate}
We will explain why this phenomenon occurs and that it can change the number of associatives,
even when weighted by counts of solutions to the Seiberg--Witten equation.
This is where \emph{ADHM monopoles}, solutions to the Seiberg--Witten equations related to the ADHM construction, enter the picture.
Counting ADHM monopoles does not lead to a topological invariant of $3$--manifolds.
We will provide evidence for the conjecture that the change in the count of ADHM monopoles exactly compensates the change in the number of associatives weighted by the Seiberg--Witten invariant.
Based on this we will give a tentative proposal for how to construct an invariant of $\Gtwo$--manifolds: a homology group generated by associatives and ADHM monopoles.

This paper is organized as follows.
After reviewing in \autoref{Sec_CountingAssociatives} the basics of $\Gtwo$--geometry, we discuss in \autoref{Sec_ProblemsWithCountingAssociatives} and \autoref{Sec_MultipleCoverAssociatives} the three problems with counting associatives described above. 
The core of the paper are: \autoref{Sec_DegenerationsOfADHMMonopoles} where we introduce ADHM monopoles and relate them to multiple covers of associatives, and \autoref{Sec_TentativeProposal} where we outline a construction of a Floer homology group associated with a $\Gtwo$--manifold.
In \autoref{Sec_ADHMBundles} we argue that a dimensional reduction of our proposal should lead to a symplectic analogue of Pandariphande and Thomas' stable pair invariant known in algebraic geometry  \cite{Pandharipande2009}. 
\autoref{Sec_ProofOfSomewhereInjectiveTransversality} contains the proof of a transversality theorem for somewhere injective associative immersions. \autoref{Sec_SeibergWitten} and \autoref{Sec_HaydysCorrespondence} develop a general theory of the Haydys correspondence with stabilizers for Seiberg--Witten equations associated with quaternionic representations. \autoref{Sec_NakajimasProof} summarizes the linear algebra of the ADHM representation.

Finally, we would like to point out that an alternative approach to counting associative submanifolds has been proposed recently by \citet{Joyce2016}.
His proposal does not lead to a number or a homology group, but rather a more complicated object: a super-potential up to quasi-identity morphisms.

%%% Local Variables:
%%% mode: latex
%%% TeX-master: "CountingAssociativesSeibergWitten"
%%% End:

\paragraph{Acknowledgements}
We are grateful to Simon Donaldson for his generosity, kindness, and optimism which have inspired and motivated us over the years.
We thank Tomasz Mrowka for pointing out \cite{Bloom20XX} and advocating the idea of incorporating the monopole chain complex, Oscar García--Prada for a helpful discussion on vortex equations, and Richard Thomas for answering our questions about stable pairs.

This material is based upon work supported by  \href{https://www.nsf.gov/awardsearch/showAward?AWD_ID=1754967&HistoricalAwards=false}{the National Science Foundation under Grant No.~1754967}
and
\href{https://sites.duke.edu/scshgap/}{the Simons Collaboration Grant on ``Special Holonomy in Geometry, Analysis and Physics''}.

\section{Counting associative submanifolds}
\label{Sec_CountingAssociatives}

We begin with a review of $\Gtwo$--manifolds and associative submanifolds with a focus towards explaining what we mean by ``counting associative submanifolds''.

\subsection{\texorpdfstring{$\Gtwo$}{G2}--manifolds}

The exceptional Lie group $\Gtwo$ is the automorphism group of the octonions $\Oc$, the unique $8$--dimensional normed division algebra:
\begin{equation*}
  \Gtwo = \Aut(\Oc).
\end{equation*}
Since any automorphism of $\Oc$ preserves the unit $1 \in \Oc$ and its $7$--dimensional orthogonal complement $\Im\Oc \subset \Oc$, we can think of $\Gtwo$ as a subgroup of $\SO(7)$.

\begin{definition}
  A \defined{$\Gtwo$--structure} on a $7$--dimensional manifold $Y$ is a reduction of the structure group of the frame bundle of $Y$ from $\GL(7)$ to $\Gtwo$.
  An \defined{almost $\Gtwo$--manifold} is a  $7$--dimensional manifold $Y$ equipped with a $\Gtwo$--structure.
\end{definition}

The multiplication on $\Oc$ endows $\Im\Oc$ with:
\begin{itemize}
\item 
  an \defined{inner product},
  $g \co S^2\Im\Oc \to \R$
  satisfying
  \begin{equation*}
    g(u,v) = -\Re(uv),
  \end{equation*}
\item
  a \defined{cross-product}
  $\cdot\times\cdot\co \Lambda^2 \Im\Oc \to \Im\Oc$
  defined by
  \begin{equation*}
    (u,v) \mapsto u\times v \coloneq \Im(uv)
  \end{equation*}
  and a corresponding $3$--form $\phi \in \Lambda^3 \Im\Oc^*$ defined by
  \begin{equation*}
    \phi(u,v,w) \coloneq g(u\times v,w),
  \end{equation*}
  as well as
\item
  an \defined{associator}
  $[\cdot,\cdot,\cdot] \co \Lambda^3 \Im\Oc \to \Im\Oc$
  defined by
  \begin{equation}
    \label{Eq_Associator}
    [u,v,w] \coloneq (u\times v)\times w + \inner{v}{w}u - \inner{u}{w}v 
  \end{equation}
  and a corresponding $4$--form $\psi \in \Lambda^4\Im\Oc^*$ defined by
  \begin{equation*}
    \psi(u,v,w,z) \coloneq g([u,v,w],z).
  \end{equation*}
\end{itemize}
These are related by the identities
\begin{equation}
  \label{Eq_PhiPsiG}
  \begin{split}
    i(u)\phi\wedge i(v)\phi \wedge \phi &= 6g(u,v) \vol_g \qand \\
    *_g\phi &= \psi
  \end{split}
\end{equation}
for a unique choice of an orientation on $\Im\Oc$.
We refer the reader to \cites[Chapter IV]{Harvey1982} {Salamon2010} for a more detailed discussion.

A $\Gtwo$--structure on $Y$ endows $TY$ with analogous structures: 
\begin{itemize}
\item
  a Riemannian metric $g$,
\item
  a cross-product $\cdot\times\cdot\co \Lambda^2 TY \to TY$,
\item
  a $3$--form $\phi \in \Omega^3(Y)$, 
\item
  an associator $[\cdot,\cdot,\cdot] \co \Lambda^3 TY \to TY$, and
\item
  a $4$--form $\psi \in \Omega^4(Y)$,
\end{itemize}
satisfying the same relations as above.
From \eqref{Eq_PhiPsiG} it is apparent that from $\phi$ one can reconstruct $g$ and thus also $\psi$, the cross-product, and the associator.
Similarly, one can reconstruct $g$ from $\psi$ together with the orientation.
The condition for a $3$--form $\phi$ or a $4$--form $\psi$ to arise from a $\Gtwo$--structure is that the form be definite;
see \cites[Section 8.3]{Hitchin2001}[Section 2.8]{Bryant2006}.
We say that a $3$--form $\phi$ is \defined{definite} if the bilinear form $G_\phi \in \Gamma(S^2T^*Y\otimes \Lambda^7T^*Y)$ defined by
\begin{equation*}
  G_\phi(u,v) \coloneq i(u)\phi\wedge i(v)\phi\wedge \phi
\end{equation*}
is definite.
We say that a $4$--form $\psi$ is \defined{definite} if the bilinear form $G_\psi^* \in \Gamma\(S^2TY\otimes (\Lambda^7T^*Y)^{\otimes 2}\)$ defined by
\begin{equation*}
  G_\psi^*(\alpha,\beta) \coloneq i(\alpha)\psi\wedge i(\beta)\psi\wedge \psi
\end{equation*}
is definite.
Here we identify $\Lambda^4 T^*Y \iso \Lambda^3 TY\otimes \Lambda^7T^*Y$.
Therefore, a $\Gtwo$--structure can be specified either by a definite $3$--form $\phi$,
or by a definite $4$--form $\psi$ together with an orientation.

A $\Gtwo$--structure on a $7$--manifold induces a spin structure through the inclusion $\Gtwo \subset \Spin(7)$.
In fact, a $7$--manifold admits a $\Gtwo$--structure if and only if it is spin, see \cite[Theorems 3.1 and 3.2]{Gray1969} and \cite[p. 321]{Lawson1989}.
This means that the existence of a $\Gtwo$--structure is a soft, topological condition.
More rigid notions are obtained by imposing conditions on the torsion of the $\Gtwo$--structure, in the sense of $G$--structures, see \cite[Section 2.6]{Joyce2000}.
The most stringent and most interesting condition to impose is that the torsion vanishes.

\begin{definition}
  A \defined{$\Gtwo$--manifold} is a $7$--manifold equipped with a torsion-free $\Gtwo$--structure.
\end{definition}

\begin{theorem}[{\citet[Theorem 5.2]{Fernandez1982}}]
  A $\Gtwo$--structure on a $7$--manifold $Y$ is torsion-free if and only the associated $3$--form $\phi$ as well as the associated $4$--form $\psi$ are closed:
  \begin{equation*}
    \rd\phi = 0 \qandq
    \rd\psi = 0.
  \end{equation*}
\end{theorem}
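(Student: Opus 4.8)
The plan is to reduce the theorem to the vanishing of the intrinsic torsion of the $\Gtwo$--structure and then settle that by the representation theory of $\Gtwo$. Recall that a $\Gtwo$--structure is \emph{torsion-free} exactly when $\nabla\phi = 0$, where $\nabla$ is the Levi-Civita connection of the induced metric $g$. Since $\Gtwo \subset \SO(7)$ is the stabiliser of $\phi$ and $\nabla g = 0$, at each point $\nabla_X\phi$ lies in $\fg_2^\perp\cdot\phi \subset \Lambda^3\Tstar Y$, where $\fg_2^\perp$ is the orthogonal complement of $\fg_2$ in $\so(7)$; and $\xi\mapsto\xi\cdot\phi$ is an injective $\Gtwo$--equivariant map $\fg_2^\perp \to \Lambda^3\Tstar Y$. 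Hence $\nabla\phi$ is faithfully encoded by a tensor $T \in \Gamma(\Tstar Y\otimes\fg_2^\perp)$, the \emph{intrinsic torsion}; since $\fg_2^\perp \iso \R^7$ as a $\Gtwo$--module, $T$ is a section of a bundle modelled on $\R^7\otimes\R^7$.

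The forward implication is immediate: if $\nabla\phi = 0$, then $\nabla$ also preserves $g$ and the orientation, hence commutes with $*_g$, so $\nabla\psi = \nabla(*_g\phi) = *_g\nabla\phi = 0$ by \eqref{Eq_PhiPsiG}; and because $\nabla$ is torsion-free, exterior differentiation of a form is the antisymmetrisation of its covariant derivative, so $\rd\phi = 0$ and $\rd\psi = 0$.

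For the converse I would show that the pair $(\rd\phi,\rd\psi)$ already determines $T$. As $\rd\phi = \Alt(\nabla\phi)$ and $\nabla\psi = *_g\nabla\phi$, both $\rd\phi$ and $\rd\psi$ are $\Gtwo$--equivariant linear functions of $T$, so there is a bundle map $\Phi\co \Tstar Y\otimes\fg_2^\perp \to \Lambda^4\Tstar Y\oplus\Lambda^5\Tstar Y$ with $\Phi(T) = (\rd\phi,\rd\psi)$. Decomposing both sides into irreducible $\Gtwo$--representations, the source is $W_1\oplus W_7\oplus W_{14}\oplus W_{27}$ — the four Fernández--Gray torsion classes, of respective dimensions $1$, $7$, $14$, $27$, coming from the trace of $\R^7\otimes\R^7$, the summands $\R^7$ and $\fg_2$ of $\Lambda^2\R^7$, and the traceless symmetric part of $S^2\R^7$ — while $\Lambda^4\Tstar Y \iso \Lambda^3\Tstar Y \iso W_1\oplus W_7\oplus W_{27}$ and $\Lambda^5\Tstar Y \iso \Lambda^2\Tstar Y \iso W_7\oplus W_{14}$. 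By Schur's lemma $\Phi$ splits as a direct sum of scalar maps $W_1\to W_1$ and $W_{27}\to W_{27}$ into $\Lambda^4$, a scalar map $W_{14}\to W_{14}$ into $\Lambda^5$, and a map $W_7\to W_7\oplus W_7$ with one summand in $\Lambda^4$ and the other in $\Lambda^5$. Therefore $\Phi$ is injective as soon as each of these scalars is nonzero — equivalently, as soon as every torsion class contributes nontrivially to $\rd\phi$ or to $\rd\psi$ — and then $\rd\phi = 0$ and $\rd\psi = 0$ force $T = 0$, i.e.\ $\nabla\phi = 0$.

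The one remaining obstacle, and the real content of the proof, is this nonvanishing. I would verify it either by a single pointwise computation in a $\Gtwo$--adapted orthonormal coframe, contracting the octonionic structure constants of $\phi$ against $\nabla\phi$ and using the contraction identities behind \eqref{Eq_PhiPsiG} together with $*_g\phi=\psi$; or, more cheaply, by exhibiting one $\Gtwo$--structure of each pure torsion type. A nearly parallel structure, e.g. the round $S^7$, has $\rd\phi=\tau_0\psi\neq 0$ and $\rd\psi=0$, so the $W_1$ scalar is nonzero; a conformally rescaled torsion-free structure $e^{3f}\phi_0$, whose associated $4$--form is $e^{4f}\psi_0$, satisfies $\rd(e^{3f}\phi_0)=3\,\rd f\wedge(e^{3f}\phi_0)$ and $\rd(e^{4f}\psi_0)=4\,\rd f\wedge(e^{4f}\psi_0)$, so its $\tau_1=\rd f$ makes both $W_7$ scalars nonzero; and left-invariant $\Gtwo$--structures on suitable nilpotent, respectively solvable, Lie groups that are closed but not torsion-free, respectively coclosed but not torsion-free, show the $W_{14}$, respectively $W_{27}$, scalar nonzero. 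In either case the theorem reduces to a finite linear-algebra check, after which the claim follows.
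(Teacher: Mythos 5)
The paper offers no proof of this statement; it simply cites \citet[Theorem~5.2]{Fernandez1982}, and your argument is essentially the proof from that reference in its modern form: identify the intrinsic torsion with a section of a bundle modelled on $\R^7\otimes\R^7 \iso W_1\oplus W_7\oplus W_{14}\oplus W_{27}$, observe that $\rd\phi$ and $\rd\psi$ are equivariant images of it in $\Lambda^4 \iso W_1\oplus W_7\oplus W_{27}$ and $\Lambda^5 \iso W_7\oplus W_{14}$, and check via Schur's lemma that nothing is lost. The structure is correct and the forward direction is complete.

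One caution on the step you rightly flag as the real content. The direct route (deriving the pointwise identities $\rd\phi = \tau_0\psi + 3\tau_1\wedge\phi + *\tau_3$ and $\rd\psi = 4\tau_1\wedge\psi + \tau_2\wedge\phi$ in an adapted coframe, as in Bryant or Karigiannis) is clean and settles everything. The example-based route needs more care than your sketch suggests, because it skirts circularity: to conclude from a closed-but-not-torsion-free example that the $W_{14}$ scalar is nonzero, you must already know the $W_1$, $W_7$, $W_{27}$ scalars into $\Lambda^4$ are nonzero (so that $\rd\phi=0$ forces $T\in W_{14}$), and you must certify ``not torsion-free'' by some means other than the theorem being proved --- in practice by checking $\rd\psi\neq 0$ directly, which is fine, but then the examples must be produced and ordered so that each pins down one scalar given the ones already established. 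With that ordering made explicit (or with the direct computation substituted), the proof is complete.
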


The Riemannian metric induced by a torsion-free $\Gtwo$--structure has holonomy contained in $\Gtwo$---%
one of two exceptional holonomy groups in Berger's classification \cite[Theorem 3]{Berger1955}.
If $Y$ is compact, then equality holds if and only if $\pi_1(Y)$ is finite \cite[Proposition 10.2.2]{Joyce2000}.
We refer the reader to \cite[Section 10]{Joyce2000} for a thorough discussion of the properties of $\Gtwo$--manifolds.

\begin{example}
  \label{Ex_S1xCY3}
  If $Z$ is a Calabi--Yau $3$--fold with a Kähler form $\omega$ and a holomorphic volume form $\Omega$, and if $t$ denotes the coordinate on $S^1$,
  then $S^1 \times Z$ is a $\Gtwo$--manifold with
  \begin{equation*}
    \phi = \rd t \wedge \omega + \Re\Omega \qandq
    \psi = \frac12\omega\wedge\omega + \rd t\wedge\Im\Omega.
  \end{equation*}
  In this case the holonomy group is contained in $\SU(3) \subset \Gtwo$.
\end{example}

\begin{example}
  The first local, complete, and compact examples of manifolds with holonomy equal to $\Gtwo$ are due to \citet{Bryant1987,Bryant1989,Joyce1996,Joyce1996a,Joyce2000} respectively.
  Joyce's examples arise from a generalized Kummer construction based on smoothing flat $\Gtwo$--orbifolds of the form $T^7\!\!/\Gamma$ where $\Gamma$ is a finite group of isometries of the $7$--torus.
  This method has recently been extended to more general $\Gtwo$--orbifolds by \citet{Joyce2017}.
  The most fruitful construction method for $\Gtwo$--manifolds to this day is the twisted connected sum construction,
  which was pioneered by \citet{Kovalev2003} and improved by    \citet{Kovalev2011,Corti2012,Corti2012a}.
  It is based on gluing, in a twisted fashion, a pair of asymptotically cylindrical $\Gtwo$--manifolds which are products of $S^1$ with asymptotically cylindrical Calabi--Yau $3$--folds.
  Using this construction, 
  \citet{Corti2012a} produced tens of millions of examples of compact $\Gtwo$--manifolds.
\end{example}

\subsection{Associative submanifolds}

\begin{definition}
  Let $Y$ be an almost $\Gtwo$--manifold,
  let $P$ be an oriented $3$--manifold, and
  let $\iota\co P \to Y$ be an immersion.
  We say that $\iota$ is \defined{associative} if
  \begin{equation}
    \label{Eq_Associative}
    \iota^*{[\cdot,\cdot,\cdot]} = 0 \in \Omega^3(P,\iota^*TY)
    \qandq
    \iota^*\phi \text{ is positive}.
  \end{equation}
  
  An \defined{immersed associative submanifold} is an equivalence class $[\iota]$ of an associative immersion $\iota \in \Imm(P,Y)/\Diff_+(P)$ for some oriented $3$--manifold $P$.
  Here $\Imm(P,Y)$ is the space of immersions $P \to Y$ and $\Diff_+(P)$ is the group of orientation-preserving diffeomorphisms of $P$.
\end{definition}

\citet[Chapter IV, Theorem 1.6]{Harvey1982} proved the identity
\begin{equation}
  \label{Eq_AssociatorIdentity}
  \phi(u,v,w)^2 + \abs{[u,v,w]}^2 = \abs{u\wedge v\wedge w}^2.
\end{equation}
This shows that $\phi$ is a semi-calibration and that associative submanifolds are calibrated by $\phi$.
We refer to \cite[Introduction]{Harvey1982} and \cite[Section 3.7]{Joyce2000} for an introduction to calibrated geometry; we recall only the following simple but fundamental  fact.

\begin{prop}
  If $\iota\co P \to Y$ is associative, then
  \begin{equation*}
    \iota^*\phi = \vol_{\iota^*g}.
  \end{equation*}
  In particular, if $\phi$ is closed and $P$ is compact, then the immersed submanifold $\iota(P)$ is volume-minimizing in the homology class $\iota_*[P]$ and
  \begin{equation*}
    \vol(P,\iota^*g) = \inner{[\phi]}{\iota_*[P]}.
  \end{equation*}
\end{prop}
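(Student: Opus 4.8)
The plan is to deduce both assertions from the pointwise identity \eqref{Eq_AssociatorIdentity}. First I would establish the equality $\iota^*\phi = \vol_{\iota^*g}$ at each point. Fix $p \in P$ and choose a basis $e_1,e_2,e_3$ of $T_pP$ that is positively oriented and orthonormal with respect to $\iota^*g$, so that $\vol_{\iota^*g}(e_1,e_2,e_3) = 1$. Since $\iota$ is an immersion, $\rd\iota_p$ is an isometric injection of $(T_pP,\iota^*g)$ into $(T_{\iota(p)}Y,g)$; hence the vectors $u_i \coloneq \rd\iota_p(e_i)$ form a $g$--orthonormal triple in $T_{\iota(p)}Y$ and $\abs{u_1\wedge u_2\wedge u_3} = 1$. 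Associativity gives $[u_1,u_2,u_3] = \bigl(\iota^*[\cdot,\cdot,\cdot]\bigr)(e_1,e_2,e_3) = 0$, so \eqref{Eq_AssociatorIdentity} collapses to $\phi(u_1,u_2,u_3)^2 = 1$, i.e. $(\iota^*\phi)(e_1,e_2,e_3) = \pm 1$. The positivity clause in \eqref{Eq_Associative} forces the $+$ sign, so $\iota^*\phi$ and $\vol_{\iota^*g}$ are top-degree forms on $P$ that agree on a positively oriented orthonormal basis at every point, hence coincide.

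For the second part I would first record the comass bound: for any $y \in Y$ and any $g$--orthonormal $v_1,v_2,v_3 \in T_yY$, \eqref{Eq_AssociatorIdentity} gives $\phi(v_1,v_2,v_3)^2 \le \abs{v_1\wedge v_2\wedge v_3}^2 = 1$. Arguing exactly as above, this yields for an arbitrary compact oriented immersed $3$--submanifold $\iota'\co P' \to Y$ the pointwise inequality $\iota'^*\phi \le \vol_{\iota'^*g}$, and therefore $\int_{P'}\iota'^*\phi \le \vol(P',\iota'^*g)$. Now assume $\phi$ is closed and $P$ is compact, and let $\iota'\co P' \to Y$ be any compact oriented immersed $3$--submanifold with $\iota'_*[P'] = \iota_*[P]$ in $H_3(Y;\Z)$. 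Since $[\phi] \in H^3_{\dR}(Y)$ is well-defined, Stokes' theorem gives $\int_P \iota^*\phi = \inner{[\phi]}{\iota_*[P]} = \int_{P'}\iota'^*\phi$. Combining the first part applied to $\iota$ with the comass inequality applied to $\iota'$ yields $\vol(P,\iota^*g) = \int_P\iota^*\phi = \int_{P'}\iota'^*\phi \le \vol(P',\iota'^*g)$, which is the volume-minimizing property; the identity $\vol(P,\iota^*g) = \inner{[\phi]}{\iota_*[P]}$ is precisely the first equality of this chain.

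There is no serious obstacle: this is the standard argument that calibrated submanifolds minimize volume, here for the semi-calibration $\phi$. The only point needing care is the precise meaning of ``volume-minimizing in the homology class $\iota_*[P]$''. As written, the comparison is with smooth compact oriented immersed submanifolds, but since the sole inputs are the pointwise bound $\abs{\phi} \le 1$ on unit simple $3$--vectors and $\rd\phi = 0$, the argument extends verbatim to rectifiable (indeed integral) currents representing $\iota_*[P]$, which is the strongest sense in which the statement should be understood. I would also flag that the positivity hypothesis in \eqref{Eq_Associative} is genuinely used in the first step: without it one obtains only $\iota^*\phi = \pm\vol_{\iota^*g}$, and the wrong sign would make $\iota(P)$ volume-maximizing rather than minimizing among competitors.
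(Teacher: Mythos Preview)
Your argument is correct and is precisely the standard calibration argument. The paper does not actually give a proof of this proposition: it states the identity \eqref{Eq_AssociatorIdentity}, remarks that this shows $\phi$ is a semi-calibration with associative submanifolds calibrated by $\phi$, and then records the proposition as a ``simple but fundamental fact'' with a reference to Harvey--Lawson and Joyce. Your write-up simply unpacks that reference.
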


\begin{prop}[{see, e.g., \cite[Lemma 4.7]{Salamon2010}}]
  \label{Prop_AssociativeViaCrossProduct}
  If $\iota\co P \to Y$ is an immersion,
  then the following are equivalent:
  \begin{enumerate}
  \item
    $\iota^*[\cdot,\cdot,\cdot] = 0$,
  \item
    for all $u,v \in \iota_*T_xP$,
    $u \times v \in \iota_*TP$, and
  \item
    for all $u \in \iota_*T_xP$ and $v \in (\iota_*T_xP)^\perp$,
    $u \times v \in (\iota_*T_xP)^\perp$.
  \end{enumerate}
\end{prop}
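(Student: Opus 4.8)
The plan is to observe that all three conditions are pointwise and purely linear-algebraic: fixing $x \in P$ and setting $V \coloneq \iota_*T_xP$, a $3$--dimensional subspace of the inner-product space $(T_{\iota(x)}Y, g)$ which carries the cross-product, each condition concerns only the position of $V$ inside $(T_{\iota(x)}Y, g, \times)$. I would fix an orthonormal basis $e_1, e_2, e_3$ of $V$ and use throughout two standard facts. First, since $\phi(u,v,w) = g(u\times v,w)$ is totally antisymmetric, $u\times v$ is orthogonal to $u$ and to $v$, and the map $v\mapsto u\times v$ is skew-adjoint with respect to $g$. Second, $\abs{u\times v}^2 = \abs{u}^2\abs{v}^2 - \inner{u}{v}^2$ (immediate from multiplicativity of the norm on $\Oc$), so that $e_i\times e_j$ is a unit vector, and $a\times b = 0$ for unit vectors $a,b$ forces $a = \pm b$.

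For $(1)\Leftrightarrow(2)$: since $[\cdot,\cdot,\cdot]$ is trilinear and alternating, $(1)$ is equivalent to the single identity $[e_1,e_2,e_3] = 0$; since $\times$ is bilinear and alternating, $(2)$ is equivalent to $e_i\times e_j \in V$ for $(i,j)\in\{(1,2),(2,3),(3,1)\}$. Substituting the cyclic permutations of $(e_1,e_2,e_3)$ into the associator identity \eqref{Eq_Associator} and using orthonormality to drop the correction terms gives
\begin{equation*}
  [e_1,e_2,e_3] = (e_i\times e_j)\times e_k \qquad\text{for each cyclic }(i,j,k).
\end{equation*}
If $(1)$ holds, then $(e_i\times e_j)\times e_k = 0$, and since $e_i\times e_j$ is a unit vector this forces $e_i\times e_j = \pm e_k \in V$, so $(2)$ holds. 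Conversely, if $(2)$ holds, then $e_i\times e_j$ is a unit vector of $V$ orthogonal to $e_i$ and $e_j$, hence $e_i\times e_j = \pm e_k$, whence $[e_1,e_2,e_3] = (e_i\times e_j)\times e_k = \pm(e_k\times e_k) = 0$, so $(1)$ holds.

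For $(2)\Leftrightarrow(3)$: here I would use only skew-adjointness of $v\mapsto u\times v$. Assuming $(2)$, for $u\in V$, $v\in V^\perp$ and arbitrary $w\in V$ one has $\inner{u\times v}{w} = -\inner{v}{u\times w} = 0$ because $u\times w\in V$; hence $u\times v\in V^\perp$, which is $(3)$. Assuming $(3)$, for $u,w\in V$ and arbitrary $v\in V^\perp$ one has $\inner{u\times w}{v} = -\inner{w}{u\times v} = 0$ because $u\times v\in V^\perp$; hence $u\times w\in V$, which is $(2)$.

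I do not expect a genuine obstacle: the whole argument is elementary linear algebra in $\Im\Oc$. The only places that call for slight care are the reduction of conditions $(1)$ and $(2)$ to statements about a fixed orthonormal basis of $V$, and the bookkeeping with the three cyclic permutations in \eqref{Eq_Associator}; everything else is routine manipulation with the total antisymmetry of $\phi$ and the cross-product norm identity.
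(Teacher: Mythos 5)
Your argument is correct. Note that the paper does not actually prove this proposition --- it is quoted from \cite[Lemma 4.7]{Salamon2010} --- so there is no in-text proof to compare against; your pointwise linear-algebra argument (reduce to an orthonormal basis of $V=\iota_*T_xP$, use the norm identity $\abs{u\times v}^2=\abs{u}^2\abs{v}^2-\inner{u}{v}^2$ and total antisymmetry of $\phi$ for $(1)\Leftrightarrow(2)$, and skew-adjointness of $v\mapsto u\times v$ for $(2)\Leftrightarrow(3)$) is precisely the standard proof of that cited lemma, and all the steps check out.
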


\begin{example}
  Let $Z$ be a Calabi--Yau $3$--fold.
  Equip $S^1\times Z$ with the $\Gtwo$--structure from \autoref{Ex_S1xCY3}.
  If $\Sigma \subset Z$ is a holomorphic curve,
  then $S^1\times\Sigma$ is associative.
  If $L \subset Z$ is a special Lagrangian submanifold,
  then, for any $t \in S^1$, $\set{t}\times L$ is associative.
\end{example}

\begin{example}
  Examples of associative submanifolds which arise as fixed points of involutions have been given by \citet[Section 4.2]{Joyce1996a}.
  Examples of associative submanifolds arising from holomorphic curves and special Lagrangians in asymptotically cylindrical Calabi--Yau $3$--folds were constructed by \citet[Section 5]{Corti2012a}
\end{example}

\subsection{The \texorpdfstring{$\fL$}{L} functional}
\label{Sec_LFunctional}

Associative submanifolds can be formally thought of as critical points of a functional $\fL$ on the infinite-dimensional space of submanifolds.
In contrast to many other functionals studied in differential geometry (for example, the Dirichlet functional), the Hessian of $\fL$ at a critical point is not positive definite.
As we will see, it is a first order elliptic operator whose spectrum is discrete and unbounded in both positive and negative directions.
Morse theory of functionals with this property, most notably the Chern--Simons functional in gauge theory, was first developed by Floer \cite{Floer1988a, Donaldson2002}.
The existence of such $\fL$ already hints at the possibility of constructing \defined{Floer homology groups} from a chain complex formally generated by associative submanifolds.

\begin{definition}
  Define the $1$--form $\delta\fL = \delta\fL_\psi \in \Omega^1(\Imm(P,Y))$ by%
  \footnote{%
    Although $n$ is not a vector field on $Y$, by slight abuse of notation we denote by $\iota^*i(n)\psi$ the $3$--form on $P$ given by $(u,v,w)\mapsto\psi(\iota_*u,\iota_*v,\iota_*w,n)$.%
  }
  \begin{equation*}
    \delta_\iota\fL(n)
    = \int_P \iota^*i(n)\psi
    = \int_P \inner{\iota^*[\cdot,\cdot,\cdot]}{n}
  \end{equation*}
  for $n \in T_\iota\Imm(P,Y) = \Gamma(P,\iota^*TY)$.
\end{definition}

\begin{prop}
  \label{Prop_LFunctional}
  ~
  \begin{enumerate}
  \item
    \label{Prop_LFunctional_Associative}
    $\iota$ is associative if and only if $\delta_\iota\fL = 0$ and $\iota^*\phi$ is positive.
  \item
    \label{Prop_LFunctional_DiffInvariant}
    $\delta\fL$ is $\Diff_+(P)$--invariant.
  \item
    \label{Prop_LFunctional_Closed}
    If $\rd\psi = 0$, then $\delta\fL$ is a closed $1$--form.
    In fact, there is a $\Diff_+(P)$--equivariant covering space $\pi\co \tImm(P,Y) \to \Imm(P,Y)$ and a $\Diff_+(P)$--equivariant function $\tilde\fL\co \tImm(P,Y) \to \R$ whose differential is  $\pi^*\delta\fL$.%
    \footnote{%
      This justifies the notation $\delta\fL$ since locally it is the differential of a function.
    }
  \end{enumerate}
\end{prop}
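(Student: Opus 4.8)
The plan is to dispatch the three claims in turn, the substance lying in \itref{Prop_LFunctional_Closed}. Claim \itref{Prop_LFunctional_Associative} is immediate from the second formula for $\delta_\iota\fL$: the $L^2$--pairing $n\mapsto\int_P\inner{\iota^*[\cdot,\cdot,\cdot]}{n}$ on $\Gamma(P,\iota^*TY)$ is non-degenerate because $\inner{\cdot}{\cdot}$ is a fibre metric---if $\iota^*[\cdot,\cdot,\cdot]$ were non-zero at a point $p$, then testing against $n=\chi\xi$, where $\xi$ is a local section representing the $\iota^*TY$--factor of $\iota^*[\cdot,\cdot,\cdot]$ near $p$ and $\chi\geq 0$ is a bump supported where $\xi\neq 0$, produces a strictly positive integral. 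Hence $\delta_\iota\fL=0$ if and only if $\iota^*[\cdot,\cdot,\cdot]=0$, and combining this with the positivity of $\iota^*\phi$ is exactly \eqref{Eq_Associative}. For \itref{Prop_LFunctional_DiffInvariant}, the action of $f\in\Diff_+(P)$ on $\Imm(P,Y)$ is $\iota\mapsto\iota\circ f$, with differential $n\mapsto n\circ f$ on $T_\iota\Imm(P,Y)=\Gamma(P,\iota^*TY)$; unwinding both sides pointwise gives $(\iota\circ f)^*i(n\circ f)\psi=f^*\bigl(\iota^*i(n)\psi\bigr)$, and since $f$ preserves the orientation of $P$ we get $\int_Pf^*(\iota^*i(n)\psi)=\int_P\iota^*i(n)\psi$, so $\delta_{\iota\circ f}\fL(n\circ f)=\delta_\iota\fL(n)$.

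For closedness in \itref{Prop_LFunctional_Closed} I would work with the evaluation map $\ev\co\Imm(P,Y)\times P\to Y$, $\ev(\iota,p)=\iota(p)$, and note that the first formula for $\delta_\iota\fL$ says precisely $\delta\fL=\int_P\ev^*\psi$ (up to an overall sign), the integral over the fibre $P$---a closed oriented $3$--manifold---of the $4$--form $\ev^*\psi$. By the Cartan formula for fibre integration, or equivalently by applying Stokes' theorem on $P$ to an arbitrary two-parameter variation $\iota_{s,t}$ (the fibrewise exterior-derivative term dropping out since $\del P=\emptyset$),
\begin{equation*}
  \rd\,\delta\fL=\rd\int_P\ev^*\psi=\int_P\rd\,\ev^*\psi=\int_P\ev^*\rd\psi=0
\end{equation*}
using the hypothesis $\rd\psi=0$. (Should $\psi$ happen to be exact, say $\psi=\rd\theta$, the passage to a cover below is unnecessary: $\fL(\iota):=\int_P\iota^*\theta$ is then a genuine $\Diff_+(P)$--invariant function with $\delta_\iota\fL(n)=\int_P\iota^*\bigl(i(n)\rd\theta+\rd\,i(n)\theta\bigr)=\int_P\iota^*i(n)\psi$ by Cartan's magic formula and Stokes; in general $[\psi]\in H^4(Y)$ obstructs this, which is why the cover is needed.)

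Finally, to build $\tImm(P,Y)$ and $\tilde\fL$ I would take, componentwise, the covering $\pi\co\tImm(P,Y)\to\Imm(P,Y)$ associated with the kernel of the period homomorphism $\pi_1(\Imm(P,Y))\to\R$, $[\gamma]\mapsto\int_\gamma\delta\fL$ (well-defined because $\delta\fL$ is closed), and set $\tilde\fL(x):=\int\pi^*\delta\fL$ along any path in $\tImm(P,Y)$ from a fixed lift of a basepoint; this is well-defined precisely because $\pi^*\delta\fL$ has trivial periods, and $\rd\tilde\fL=\pi^*\delta\fL$ by construction. Since $\delta\fL$ is $\Diff_+(P)$--invariant by \itref{Prop_LFunctional_DiffInvariant}, the $\Diff_+(P)$--action preserves this period homomorphism, hence lifts to $\tImm(P,Y)$, and the additive constants in $\tilde\fL$ can be chosen consistently along $\Diff_+(P)$--orbits of components of $\Imm(P,Y)$ so that $\tilde\fL$ is $\Diff_+(P)$--equivariant. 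I expect the genuinely delicate points to be bookkeeping rather than conceptual: justifying these differential-form manipulations on the Fréchet manifold $\Imm(P,Y)$ (all the operations used---pullback, fibre integration over the finite-dimensional $P$, exterior derivative along finite-dimensional families---are harmless there), and pinning down the constants needed to upgrade ``$\tilde\fL$ equivariant up to locally constant shifts'' to genuine equivariance.
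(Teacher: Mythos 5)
Your treatment of \itref{Prop_LFunctional_Associative} and \itref{Prop_LFunctional_DiffInvariant} and your fibre-integration proof that $\delta\fL$ is closed are fine; the paper dismisses these as trivial and spends its proof entirely on the construction of $(\tImm,\tilde\fL)$. There your route genuinely diverges from the paper's. You take, on each component, the \emph{minimal} cover on which $\delta\fL$ becomes exact (the cover associated with the kernel of the period homomorphism $\pi_1(\Imm(P,Y))\to\R$) and define $\tilde\fL$ by path integration. The paper instead takes the cover whose fibre over $\iota$ is the set of $H_4(Y,\Z)$--equivalence classes $[Q]$ of $4$--chains with $\del Q = \iota_*[P]-(\iota_0)_*[P]$, and sets $\tilde\fL(\iota,[Q])=\int_Q\psi$. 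Both produce a cover carrying a primitive of $\pi^*\delta\fL$; the difference matters exactly at the step you label ``bookkeeping''.

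The gap is the $\Diff_+(P)$--equivariance of $\tilde\fL$. Fix $f\in\Diff_+(P)$ and a path $(\iota_t)_{t\in[0,1]}$ in $\Imm(P,Y)$ from $\iota$ to $\iota\circ f$ (such paths exist whenever $R_f\co\iota\mapsto\iota\circ f$ preserves a component). Gluing the ends of $[0,1]\times P$ by $f$ turns $(t,x)\mapsto\iota_t(x)$ into a map $G$ from the mapping torus $M_f$ to $Y$, and the period of $\delta\fL$ along the path is $\inner{[\psi]}{G_*[M_f]}$. A lift of $R_f$ to your cover preserving $\tilde\fL$ exists only if this number lies in the group $\Lambda$ of loop--periods of $\delta\fL$, i.e.\ in the subgroup of $\R$ generated by $\inner{[\psi]}{G_*[S^1\times P]}$ for loops of immersions; and to get a genuine lifted \emph{action} one needs this uniformly and compatibly for all $f$. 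Nothing forces a mapping--torus class $G_*[M_f]\in H_4(Y,\Z)$ to have $\psi$--period in $\Lambda$, so on the minimal cover the lifted action may only preserve $\tilde\fL$ up to nonremovable constants (compare $S^1$ with $\rd\theta$ and the rotation by $\pi$: the primitive $\theta$ on $\R$ cannot be made invariant under any lift). The paper's construction avoids this entirely: its fibre is a torsor over all of $H_4(Y,\Z)$, so every such period is absorbed by a deck transformation, and equivariance is manifest because the datum $(\iota_*[P],[Q])$ depends only on the image current, which is unchanged under precomposition with an orientation-preserving diffeomorphism. To repair your argument, replace the minimal cover by this larger one (or by the cover classified by the homomorphism $\pi_1\to H_4(Y,\Z)$ sending a loop of immersions to the class it sweeps out).
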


\begin{proof}
  Assertions \itref{Prop_LFunctional_Associative} and \itref{Prop_LFunctional_DiffInvariant} are both trivial.
  For $\beta \in H_3(Y,\R)$,
  let $\Imm_\beta(P,Y)$ denote the set of immersions $\iota\co P \to Y$ such that $\iota_*[P] = \beta$.
  Fix $P_0 \in \Imm_\beta(P,Y)$ and denote by $\tImm_\beta(P,Y)$ the space of pairs $(\iota,[Q])$ with $\iota \in \Imm_\beta(P)$ and $[Q]$ an equivalence class of $4$--chains in $Y$ such that $\del Q = P - P_0$ with $[Q]=[Q']$ if and only if $[Q-Q'] = 0 \in H_4(Y,\Z)$.
  Define $\tilde \fL \co \tImm_\beta(P,Y) \to \R$ by
  \begin{equation*}
    \tilde \fL(\iota,[Q]) = \int_Q \psi.
  \end{equation*}
  The function $\tilde \fL$ has the desired properties;
  see also \cite[Section 8]{Donaldson1998}.
\end{proof}

\subsection{The moduli space of associatives}

\begin{definition}
  Let $P$ be a compact, oriented $3$--manifold and let $\beta \in H_3(Y,\Z)$.
  Denote by $\Imm_\beta(P,Y)$ the space of immersions $\iota\co P \to Y$ with $\iota_*[P] = \beta$.
  The group $\Diff_+(P)$ acts on $\Imm_\beta(P,Y)$.
  The \defined{moduli space} of immersed associative submanifolds is
  \begin{equation*}
    \fA(\psi) = \coprod_{\beta \in H_3(Y,\Z)} \fA_\beta(\psi) = \coprod_{\beta \in H_3(Y,\Z)}\coprod_P \fA_{P,\beta}(\psi)
  \end{equation*}
  with
  \begin{equation*}
    \fA_{P,\beta}(\psi)
   \coloneq
   \set*{
      [\iota] \in \Imm_\beta(P,Y)/\Diff_+(Y)
      :
      \eqref{Eq_Associative}
    }.
  \end{equation*}
  Here $P$ ranges over all diffeomorphism types of compact, oriented $3$--manifolds.

  Denote by $\sD^4(Y)$ the space of definite $4$--forms on $Y$.
  If $\sP$ is a subspace of $\sD^4(Y)$,
  then the \defined{$\sP$--universal moduli space} is
  \begin{equation*}
    \fA(\sP) = \bigcup_{\psi \in \sP} \fA(\psi).
  \end{equation*}
\end{definition}

The moduli space $\fA(\sP)$ inherits a topology from the $C^\infty$--topology on $\Imm_{\beta}(P,Y)$.
As we will explain in the following,
the infinitesimal deformation theory of associatives submanifolds is controlled by a first-order elliptic operator
and $\fA(\sP)$ admits corresponding Kuranishi models.

\begin{definition}
  \label{Def_FueterNormal}
  Let $\iota\co P \to Y$ be an associative immersion.
  Denote by
  \begin{equation*}
    N\iota \coloneq \iota^*TY/TP \iso TP^\perp \subset \iota^*TY
  \end{equation*}
  its normal bundle and by $\nabla$ the connection on $N\iota$ induced by the Levi-Civita connection.
  The \defined{Fueter operator} associated with $\iota$ is the first order differential operator $F_\iota = F_{\iota,\psi} \co \Gamma(N\iota) \to \Gamma(N\iota)$ defined by
  \begin{equation*}
    F_\iota(m) \coloneq \sum_{i=1}^3 \iota_*e_i\times\nabla_{e_i}m.
  \end{equation*}
  Here $(e_1,e_2,e_3)$ is an orthonormal frame of $P$.
\end{definition}

This operator arises as follows.
Identify $N\iota$ with $TP^\perp \subset \iota^*TY$ and,
given a normal vector field $m \in \Gamma(N\iota)$,
define $\iota_m \co P \to Y$ by
\begin{equation*}
  \iota_m(x) \coloneq \exp(m(x)).
\end{equation*}
The condition for $\iota_{\epsilon m}$ to be associative to first order in $\epsilon$ is that
\begin{align*}
  0
  &=
    \left.\frac{\rd}{\rd \epsilon}\right|_{\epsilon=0}
    [(\iota_{\epsilon m})_*e_1,(\iota_{\epsilon m})_*e_2,(\iota_{\epsilon m})_*e_3] \\
  &=
    (\iota_*e_1 \times \iota_*e_2) \times \nabla_{e_3}m 
  + \text{cyclic permutations} \\
  &=
    \sum_{i=1}^3 \iota_*e_i \times \nabla_{e_i}m.
\end{align*}
Here we have used the definition of the associator \eqref{Eq_Associator} and the fact that $\iota\co P \to Y$ is associative so we have $\iota_*e_1\times\iota_*e_2 = \iota_*e_3$ (as well as all of its cyclic permutations).

\begin{prop}[{\citet[paragraph after Theorem 2.12]{Joyce2016}}]
  If $\rd\psi = 0$, then
  \begin{equation*}
    \Hess \tilde \fL (n,m) = \int_P \inner{n}{F_\iota m}
  \end{equation*}
  with $\tilde\fL$ as in \autoref{Prop_LFunctional}\itref{Prop_LFunctional_Closed}.
  In particular,
  $F_\iota$ is self-adjoint.
\end{prop}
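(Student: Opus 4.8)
The plan is to obtain both assertions at once from the variational description of associatives in \autoref{Prop_LFunctional}; the point is that self-adjointness of $F_\iota$ comes \emph{for free} from the closedness of $\delta\fL$, so no integration by parts --- and hence no hypothesis stronger than $\rd\psi=0$ --- is needed. First I would set up the Hessian honestly. Since $\rd\psi=0$, \autoref{Prop_LFunctional}\itref{Prop_LFunctional_Closed} provides the primitive $\tilde\fL$ of $\delta\fL$ on the covering space $\tImm_\beta(P,Y)$, and since $\iota$ is associative \autoref{Prop_LFunctional}\itref{Prop_LFunctional_Associative} gives $\delta_\iota\fL=0$; hence a lift of $\iota$ is a critical point of $\tilde\fL$, and $\Hess\tilde\fL$ is a well-defined symmetric bilinear form there. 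Concretely, pick a variation $t\mapsto\iota_t$ with $\iota_0=\iota$ and velocity $m\in\Gamma(N\iota)$ at $t=0$ --- say $\iota_t=\exp(tm)$ --- and any extension $n_t\in\Gamma(\iota_t^*TY)$ of $n\in\Gamma(N\iota)$; then, identifying the bundles $\iota_t^*TY$ for nearby $t$ by parallel transport along $s\mapsto\exp(sm(x))$,
\[
  \Hess\tilde\fL(n,m)=\left.\frac{\rd}{\rd t}\right|_{t=0}\delta_{\iota_t}\fL(n_t)=\left.\frac{\rd}{\rd t}\right|_{t=0}\int_P\inner{\iota_t^*[\cdot,\cdot,\cdot]}{n_t}.
\]

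Next I would differentiate under the integral sign. By metric compatibility of the Levi-Civita connection and the Leibniz rule this equals
\[
  \int_P\inner{\tfrac{D}{\rd t}\big|_{t=0}\iota_t^*[\cdot,\cdot,\cdot]}{n}+\int_P\inner{\iota^*[\cdot,\cdot,\cdot]}{\tfrac{D}{\rd t}\big|_{t=0}n_t},
\]
and the second integral vanishes because $\iota$ is associative, so $\iota^*[\cdot,\cdot,\cdot]=0$ by \eqref{Eq_Associative}. (In particular the result depends neither on the extension $n_t$ nor on the variation beyond its initial velocity, as it must at a critical point, and by the same token it vanishes whenever $m$ or $n$ is tangent to $P$ --- which is exactly why $F_\iota$ is an operator on $N\iota$.) For the first integral, the computation carried out immediately before the statement --- the derivation of $F_\iota$, using \eqref{Eq_Associator} together with $\iota_*e_1\times\iota_*e_2=\iota_*e_3$ and its cyclic permutations --- shows that $\tfrac{D}{\rd t}\big|_{t=0}\iota_t^*[\cdot,\cdot,\cdot]$ is the $N\iota$--valued $3$--form which on a positively oriented orthonormal frame $(e_1,e_2,e_3)$ of $(P,\iota^*g)$ equals $\sum_i\iota_*e_i\times\nabla_{e_i}m=F_\iota m$ (and in any case only its $N\iota$--component is felt here, as $n\in\Gamma(N\iota)$). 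Therefore $\Hess\tilde\fL(n,m)=\int_P\inner{F_\iota m}{n}=\int_P\inner{n}{F_\iota m}$, which is the first claim.

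Self-adjointness is then immediate, with no further computation: the Hessian of a function at a critical point is symmetric, so $\int_P\inner{n}{F_\iota m}=\Hess\tilde\fL(n,m)=\Hess\tilde\fL(m,n)=\int_P\inner{m}{F_\iota n}$ for all $m,n\in\Gamma(N\iota)$. (To avoid $\tilde\fL$ altogether one may instead invoke the general fact that a closed $1$--form vanishing at a point has a symmetric linearization there.) The one thing in this plan that needs care rather than calculation is the bookkeeping: legitimizing the Hessian on the infinite-dimensional covering space --- for which \autoref{Prop_LFunctional}\itref{Prop_LFunctional_Closed} is exactly what is needed --- and keeping straight which terms drop out because $\iota^*[\cdot,\cdot,\cdot]=0$; I would simply reuse the $\exp(tm)$--variation already spelled out before the statement rather than redo it. The obvious but misguided alternative is to prove self-adjointness by hand, integrating $\sum_i\inner{\iota_*e_i\times\nabla_{e_i}m}{n}$ by parts --- which drags in the mean curvature of $P$ and the (in general nonzero) torsion of the $\Gtwo$--structure, and is strictly messier than letting it fall out of the variational picture.
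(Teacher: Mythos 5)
Your proof is correct. The paper itself offers no proof of this proposition (it is cited to Joyce), but the linearization computation $\left.\tfrac{\rd}{\rd\epsilon}\right|_{\epsilon=0}[(\iota_{\epsilon m})_*e_1,(\iota_{\epsilon m})_*e_2,(\iota_{\epsilon m})_*e_3]=\sum_i\iota_*e_i\times\nabla_{e_i}m$ is carried out in the text immediately before the statement, and your argument is exactly the natural completion of it: differentiate $\delta_{\iota_t}\fL(n_t)=\int_P\inner{\iota_t^*[\cdot,\cdot,\cdot]}{n_t}$, discard the term where the derivative hits $n_t$ using $\iota^*[\cdot,\cdot,\cdot]=0$, and get self-adjointness for free from the symmetry of the Hessian of $\tilde\fL$ at a critical point (equivalently, from closedness of $\delta\fL$). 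The two points you flag as needing care --- that only the $N\iota$--component of the derivative of the associator is seen when pairing with $n\in\Gamma(N\iota)$, and that the Hessian is well defined and symmetric precisely because $\delta_\iota\fL=0$ --- are indeed the only nontrivial bookkeeping, and you handle both correctly.
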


\begin{theorem}[{\citet{McLean1998} and \citet[Theorem 2.12]{Joyce2016}}]
  \label{Thm_AssociativeKuranishiModel}
  Let $[\iota\co P\to Y] \in \fA_\beta(\psi_0)$.
  Denote by $\Aut(\iota)$ the stabilizer of $\iota$ in $\Diff_+(P)$.

  The group $\Aut(\iota)$ is finite.
  The Fueter operator $F_\iota$ is equivariant with respect to the action of $\Aut(\iota)$ on $\Gamma(N\iota)$.
  If $\sP$ is a submanifold of the space of definite $4$--forms containing $\psi_0$, then there are:
  \begin{itemize}
  \item
    an $\Aut(\iota)$--invariant open subset $U \subset \sP\times \ker F_\iota$,
  \item
    a smooth $\Aut(\iota)$--equivariant map $\ob \co \sP\times U \to \coker F_\iota$ with $\ob(\psi_0,\cdot)$ and its derivative vanishing at $0$,
  \item
    an open neighborhood $V$ of $([\iota],\psi_0)$ in $\fA_\beta(\sP)$, and
  \item
    a homeomorphism $\fx\co \ob^{-1}(0)/\Aut(\iota) \to V$.
  \end{itemize}
  Moreover, if $(\bp,n) \in \ob^{-1}(0)$, then the stabilizer of any immersion representing $\fx(\bp,n)$ is the stabilizer of $n$ in $\Aut(\iota)$.
\end{theorem}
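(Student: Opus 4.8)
The plan is to realise a neighbourhood of $([\iota],\psi_0)$ in $\fA_\beta(\sP)$ as the zero set of a finite-dimensional obstruction map by a Lyapunov--Schmidt reduction of the associative equation, following \citet{McLean1998}; the group $\Aut(\iota)$ enters as the isotropy of a local slice for the $\Diff_+(P)$--action. First, for \textbf{finiteness}: any $\varphi$ with $\iota\circ\varphi=\iota$ is an isometry of $(P,\iota^*g)$, so $\Aut(\iota)$ is a closed, hence compact, subgroup of the isometry group of a compact Riemannian manifold; it is discrete because a one-parameter subgroup $\varphi_t$ would have a generating vector field $X$ with $\iota_*X=0$ (differentiate $\iota\circ\varphi_t=\iota$), forcing $X=0$ since $\iota$ is an immersion, and a compact discrete group is finite. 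For \textbf{equivariance and ellipticity}: for $\varphi\in\Aut(\iota)$ the differential $\rd\varphi$ preserves $TP$ and covers the identity on $Y$, hence induces an isometric, $\nabla$--preserving action on $N\iota$; since the cross-product is pulled back from $Y$ and the defining sum in \autoref{Def_FueterNormal} is frame-independent, $F_\iota$ commutes with this action. Combined with the self-adjointness recorded via $\Hess\tilde\fL$ and with $F_\iota$ being first-order elliptic (its principal symbol is Clifford multiplication by the cross-product), this makes $\ker F_\iota$ and $\coker F_\iota\iso\ker F_\iota$ finite-dimensional $\Aut(\iota)$--modules of smooth sections.

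Next I would build a \textbf{local slice}. Let $\exp^\perp\co N\iota\to Y$, $(x,v)\mapsto\exp_{\iota(x)}(v)$, which is an immersion near the zero section, and for small $m\in\Gamma(N\iota)$ form $\iota_m\coloneq\exp^\perp\circ m$ as in the paragraph preceding the theorem. The claim to prove is that every immersion $C^1$--close to $\iota$ is $\Diff_+(P)$--equivalent to a unique such $\iota_m$, and that $\iota_m$ and $\iota_{m'}$ are $\Diff_+(P)$--equivalent iff $m'=\varphi_*m$ for some $\varphi\in\Aut(\iota)$. This identifies a neighbourhood of $[\iota]$ in $\Imm_\beta(P,Y)/\Diff_+(P)$ with a neighbourhood of $0$ in $\Gamma(N\iota)/\Aut(\iota)$ and shows that the stabilizer of $\iota_m$ in $\Diff_+(P)$ equals the stabilizer of $m$ in $\Aut(\iota)$.

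For the \textbf{reduction}, work with $C^{k+1,\alpha}$-- and $C^{k,\alpha}$--completions of $\Gamma(N\iota)$. Identifying $\iota_m^*TY\iso\iota^*TY$ by parallel transport along the normal geodesics $s\mapsto\exp_{\iota(x)}(sm(x))$ and projecting onto $N\iota$ with respect to $\iota^*g$, the condition that $\iota_m$ be associative for $\psi$ becomes $\cF(\psi,m)=0$ for a smooth $\Aut(\iota)$--equivariant map $\cF$ defined near $(\psi_0,0)$ with $\cF(\psi_0,0)=0$ and $D_m\cF(\psi_0,0)=F_\iota$, this last being the computation following \autoref{Def_FueterNormal}; positivity of $\iota_m^*\phi$ is automatic near $\iota$ and can be dropped. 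Let $\Pi$ be the $\Aut(\iota)$--equivariant $L^2$--projection onto $\overline{\im F_\iota}$, the $L^2$--orthogonal complement of (a copy of) $\coker F_\iota$, and split $m=m_0+m_1$ with $m_0\in\ker F_\iota$ and $m_1$ orthogonal to it. Since $D_{m_1}(\Pi\cF)$ at $(\psi_0,0)$ is $F_\iota$ restricted to $(\ker F_\iota)^\perp$, an isomorphism onto $\overline{\im F_\iota}$, the implicit function theorem gives a unique small solution $m_1=m_1(\psi,m_0)$, smooth and equivariant, with $m_1$ and $Dm_1$ vanishing at $(\psi_0,0)$. Put $\ob(\psi,m_0)\coloneq(1-\Pi)\,\cF\bigl(\psi,m_0+m_1(\psi,m_0)\bigr)$ on an $\Aut(\iota)$--invariant open $U\subset\sP\times\ker F_\iota$; then $\ob$ is smooth and equivariant, $\ob(\psi_0,\cdot)$ and its derivative vanish at $0$ (the derivative is $(1-\Pi)F_\iota|_{\ker F_\iota}=0$), and $\cF(\psi,m)=0$ is equivalent to $m=m_0+m_1(\psi,m_0)$ together with $\ob(\psi,m_0)=0$. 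Elliptic regularity makes all solutions smooth, matching the $C^\infty$--topology on $\fA_\beta(\sP)$; composing $\ob^{-1}(0)$ with the slice and passing to $\Aut(\iota)$--quotients yields the homeomorphism $\fx$ onto a neighbourhood $V$ of $([\iota],\psi_0)$, and the final stabilizer statement is inherited from the slice.

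\textbf{Main obstacle.} The delicate part is the local slice: turning the normal-exponential graph map into a bona fide local model for $\Imm_\beta(P,Y)/\Diff_+(P)$ with isotropy exactly $\Aut(\iota)$ when $\iota$ is only an immersion---so $\exp^\perp$ has self-intersections and $\iota$ may be, e.g., a multiple cover. This requires a uniform account of how a nearby immersion projects back to $P$ through $\exp^\perp$ and an identification of the residual gauge with $\Aut(\iota)$, and it is also where one must fix the function-space completions so that $\Imm_\beta(P,Y)$ is a Banach manifold carrying a suitable $\Diff_+(P)$--action, before bootstrapping with elliptic regularity.
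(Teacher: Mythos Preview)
The paper does not give its own proof of this theorem: it is stated with attribution to \citet{McLean1998} and \citet[Theorem 2.12]{Joyce2016} and then used as a black box. Your sketch is precisely the standard Kuranishi/Lyapunov--Schmidt argument that those references carry out, so there is nothing to compare against here; the approach is correct and matches the cited sources.
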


\begin{definition}
  We say that an associative immersion $\iota \co P \to Y$ is \defined{unobstructed} (or \defined{rigid}) if $F_\iota$ is invertible.
\end{definition}

\subsection{Transversality}

It follows from \autoref{Thm_AssociativeKuranishiModel} that if all associative immersions are rigid, then the moduli space $\fA_\beta(\psi)$ is a collection of isolated points---in other words, the functional $\fL$ is a Morse function.
While this is not always true, below we show that it does hold for a large class of immersions and for a generic choice of a closed positive $4$--form $\psi$.

\begin{definition}
  An immersion $\iota \co P \to Y$ is called \defined{somewhere injective} if each connected component of $P$ contains a point $x$ such that $\iota^{-1}(\iota(x)) = \set{x}$.
  Denote by
  \begin{equation*}
    \fA_\beta^\si(\psi)
  \end{equation*}
  the open subset of somewhere injective immersions with respect to $\psi$.
  Given a submanifold $\sP$ of the space of definite $4$--forms,
  set
  \begin{equation*}
    \fA_\beta^\si(\sP) = \bigcup_{\psi \in \sP} \fA_\beta^\si(\psi).
  \end{equation*}
\end{definition}

\begin{prop}
  \label{Prop_SomewhereInjectiveTransversality}
  Denote by $\sD_c^4(Y)$ the space of closed, definite $4$--forms.
  \begin{enumerate}
  \item
    There is a residual subset $\sD^4_{c,\reg} \subset \sD^4_c(Y)$
    such that for every $\psi \in \sD^4_{c,\reg}$
    \begin{enumerate}
    \item
      \label{Prop_SomewhereInjectiveTransversality_11}
      the moduli space $\fA^\si_{\beta}(\psi)$ is a $0$--dimensional manifold and consists only of unobstructed associative submanifolds, and
    \item
      \label{Prop_SomewhereInjectiveTransversality_12}
      $\fA^\si_{\beta}(\psi)$ consists only of embedded associative submanifolds.
    \end{enumerate}

  \item
    If $\psi_0,\psi_1 \in \sD^4_{c,\reg}(Y)$, then there is a residual subset $\bsD^4_{c,\reg}(\psi_0,\psi_1)$ in the space of paths from $\psi_0$ to $\psi_1$ in $\sD^4_c(Y)$
    such that for every $(\psi_t)_{t\in[0,1]} \in \bsD^4_{c,\reg}(\psi_0,\psi_1)$
    \begin{enumerate}
    \item
      \label{Prop_SomewhereInjectiveTransversality_21}
      the universal moduli space $\fA^\si_{\beta}\(\set{\psi_t : t \in [0,1]}\)$ is a $1$--dimensional manifold, and
    \item
      \label{Prop_SomewhereInjectiveTransversality_22}
      for each component $\set{ (\psi_t,[\iota_t]) : t \in J }$ with $J \subset [0,1]$ an interval,
      there is a discrete set $J_\times \subset J$ such that:
      \begin{enumerate}
      \item
        for $t \in J\setminus J_\times$ the map $\iota_t$ is an embedding and
      \item
        for $t_\times \in J_\times$ there is a $T > 0$ and with the property that
                \begin{equation*}
          \bP \coloneq \bigcup_{\abs{t-t_\times} < T} \set{t}\times \iota_t(P) \subset \R\times Y
        \end{equation*}
        has a unique self-intersection and this intersection is transverse.
      \end{enumerate}
    \end{enumerate}
  \end{enumerate}
\end{prop}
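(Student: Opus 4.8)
The plan is to run the Sard--Smale theorem twice: once for the moduli space itself, yielding the genericity and unobstructedness statements, and once for the locus of self--intersections, yielding embeddedness and the transition behaviour.

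\textbf{The universal moduli space.} I would fix a closed definite reference $4$--form, let $\sP$ be a small ball of closed definite $4$--forms around it in an auxiliary completion (a $C^k$ or Floer--Taubes $C^\infty_\epsilon$ space), and apply \autoref{Thm_AssociativeKuranishiModel} with this $\sP$, which presents $\fA^\si_\beta(\sP)$ locally as $\ob^{-1}(0)/\Aut(\iota)$ for a map $\ob\co\sP\times U\to\coker F_\iota$. The key point to prove is that along $\fA^\si_\beta(\sP)$ the full linearisation $(\dot\iota,\dot\psi)\mapsto F_\iota\dot\iota+\ell_\iota(\dot\psi)$ --- the normalised linearised associativity equation, $F_\iota$ acting on $\Gamma(N\iota)$ modulo reparametrisations, plus a term $\ell_\iota(\dot\psi)$ of zeroth order in $\dot\iota$ recording the variation of the $4$--form --- is surjective. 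Granting this, $\fA^\si_\beta(\sP)$ is a Banach manifold and $\pi\co\fA^\si_\beta(\sP)\to\sP$ is Fredholm of index $\ind F_\iota=0$, since $F_\iota$ is a self--adjoint elliptic operator on a closed $3$--manifold. By Sard--Smale the regular values of $\pi$ form a residual set; over a regular value $\psi$ one has $\coker F_\iota=0$, hence $\ker F_\iota=0$ by index $0$, so every $[\iota]\in\fA^\si_\beta(\psi)$ is unobstructed and, by its Kuranishi model, isolated. Ranging over the countably many pairs $(\beta,P)$, intersecting the resulting residual sets, and passing from $C^k$ to $C^\infty$ by the usual Taubes argument gives the residual subset $\sD^4_{c,\reg}\subset\sD^4_c(Y)$ as needed. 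The $1$--parameter statement is the same with $\sP$ replaced by a chart in the space of paths from $\psi_0$ to $\psi_1$: the vertical operator acquires the extra parameter $\del_t$, so $\pi$ has index $\ind F_\iota+1=1$ and a regular path yields the $1$--manifold $\fA^\si_\beta(\set{\psi_t:t\in[0,1]})$.

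\textbf{The transversality estimate.} This is the heart. Suppose $\eta$ is $L^2$--orthogonal to the image of the linearisation. Pairing against $F_\iota\dot\iota$ and using self--adjointness gives $\eta\in\ker F_\iota$. For the $\psi$--direction I would perturb by \emph{exact} forms $\dot\psi=\rd\gamma$ with $\gamma$ supported in a small ball $B\subset Y$ about $\iota(x_0)$, where $x_0$ is an injective point of a connected component $P_j$ of $P$ (one exists since $\iota$ is somewhere injective). Because $\iota^{-1}(\iota(x_0))=\set{x_0}$, for $B$ small $\iota^{-1}(B)$ is a single embedded coordinate ball around $x_0$, on which $\iota$ is injective; so an entire open set of injective points is available, and $\langle\eta,\ell_\iota(\dot\psi)\rangle$ reduces to a pairing over that ball. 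The key algebraic input is a pointwise surjectivity lemma: at each point, $\dot\psi\mapsto\ell_\iota(\dot\psi)$ evaluated at an injective point $x$ is onto $N_x\iota$; combined with the fact that $\rd\co\Omega^3(Y)\to\Omega^4(Y)$ is pointwise surjective (so exact perturbations already span all of $\Lambda^4T^*Y$ at a point), this forces $\eta\equiv0$ on the open set of injective points. Since $F_\iota$ is first--order elliptic, unique continuation (e.g.\ Aronszajn's theorem applied to the Laplace--type operator $F_\iota^2$) propagates $\eta\equiv0$ over all of $P_j$; repeating for each component gives $\eta=0$, i.e.\ surjectivity.

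\textbf{Embeddedness and self--intersections.} For the embeddedness claim and the behaviour along a generic path I would apply Sard--Smale a second time, to the ``double point'' universal moduli space $\fZ=\set*{(\psi,[\iota],\set{x,y}):[\iota]\in\fA^\si_\beta(\sP),\ x\neq y,\ \iota(x)=\iota(y)}$, i.e.\ $\fA^\si_\beta(\sP)$ decorated by an unordered pair of distinct points of $P$ with the same image in $Y$. One shows $\fZ$ is cut out transversally: the associativity equation is surjective by the previous step, and the condition $\iota(x)=\iota(y)$ is made transverse to the diagonal $\Delta_Y$ by varying $x$, $y$ and by perturbing $\psi$ near an injective point $x_0$ --- automatically distinct from $x,y$, since $\iota(x)$ has at least two preimages while $\iota(x_0)$ has one --- where a Green's function plus unique continuation argument (a covector at $x$ and one at $y$ annihilating every normal deformation induced by such localised exact perturbations would make the Green's kernel of $F_\iota$ with sources at $\set{x,y}$ vanish near $x_0$, hence everywhere off $\set{x,y}$, which is absurd) shows the induced deformations span $T_{\iota(x)}Y$ modulo $\iota_*T_xP+\iota_*T_yP$. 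Hence $\fZ$ is a Banach manifold and $\pi\co\fZ\to\sP$ is Fredholm of index $\ind F_\iota+\dim(P\times P)-\dim Y=0+6-7=-1$; a regular value has empty fibre, so every $[\iota]\in\fA^\si_\beta(\psi)$ is an injective immersion of a compact manifold, hence an embedding, and intersecting with the earlier residual set gives $\sD^4_{c,\reg}$. For paths the same construction has index $\ind F_\iota+1+\dim(P\times P)-\dim Y=0$, so along a regular path the double points form a $0$--manifold; in each component $\set{(\psi_t,[\iota_t]):t\in J}$ they occur at a discrete set $J_\times\subset J$, and transversality of $\iota(x)=\iota(y)$ to $\Delta_Y$ with $t$ allowed to vary is exactly transversality of the two sheets of $\bP=\bigcup_t\set{t}\times\iota_t(P)\subset\R\times Y$ at the crossing; choosing $T$ small isolates a single crossing.

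I expect the main obstacle to be the transversality estimate, and within it the pointwise surjectivity lemma: one must check, as a representation--theoretic fact about the $\Gtwo$--structure determined by $\psi$, that perturbing $\psi$ at a point moves the associator $\iota^*[\cdot,\cdot,\cdot]$ through \emph{every} normal direction, and that this survives restriction to \emph{closed} --- hence, as above, \emph{exact} --- perturbations. The remainder is the standard Sard--Smale package: Banach manifold charts via the Kuranishi models, the $\Diff_+(P)$--action (free on embeddings, finite stabiliser in general by \autoref{Thm_AssociativeKuranishiModel}), and the $C^k$--to--$C^\infty$ limit.
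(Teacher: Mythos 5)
Your proposal is correct and follows essentially the same route as the paper: surjectivity of the linearisation via closed (indeed exact) perturbations $\rd(f\nu)$ localised near an injective point combined with unique continuation for $\ker F_\iota$, then Sard--Smale for the index-$0$ (resp.\ index-$1$) projection, and a second application to the two-marked-point evaluation map transverse to $\Delta_Y\subset Y\times Y$ with the index count $0+6-7=-1$ (resp.\ $0$ along paths) to rule out (resp.\ isolate) double points. The paper's version of your ``pointwise surjectivity lemma'' is the slightly weaker but sufficient statement that each non-zero $n\in\ker F_\iota$ pairs non-trivially with some $X_{\iota,\psi}\rd\alpha$, which it establishes by the same localisation; otherwise the arguments match.
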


The proof of this result is deferred to \autoref{Sec_ProofOfSomewhereInjectiveTransversality}.
It is similar to that of analogous results about pseudo-holomorphic curves in symplectic manifolds, cf.~\citet[Sections 3.2 and 3.4]{McDuff2012}.
In fact, our situation is simpler because we assume from the outset that $\iota$ is an immersion.

\subsection{Compactness and tamed forms}

As we have seen, transversality for associative embeddings can be achieved by perturbing $\psi$. 
However, even if the moduli space $\fA_\beta(\psi)$ consists of isolated points, the number of points can be infinite. 
Indeed, for an arbitrary definite $4$--form $\psi$ there is no reason to expect $\fA_\beta(\psi)$  to be compact.
The situation is better when one considers a special class of tamed $4$--forms.
This is analogous to the notion of a tamed almost complex structure in symplectic topology, which guarantees area bounds for pseudo-holomorphic curves.

\begin{definition}[{\citet[Section 3.2]{Donaldson2009}, \citet[Definition 2.6]{Joyce2016}}]
  Let $Y$ be an almost $\Gtwo$--manifold with $3$--form $\phi$, $4$--form $\psi$, and associator $[\cdot,\cdot,\cdot]$.
  We say that $\tau \in \Omega^3(Y)$ \defined{tames} $\psi$ if $\rd\tau = 0$ and for all $x \in Y$ and $u,v,w \in T_xY$ with $[u,v,w] = 0$ and $\phi(u,v,w) > 0$, we have $\tau(u,v,w) > 0$.
\end{definition}

\begin{example}
  If $\psi$ corresponds to a torsion-free $\Gtwo$--structure,
then $\psi$, as well as any nearby $4$--form, is tamed by $\phi = *\psi$.
\end{example}

One should think of tamed, closed, definite $4$--forms as a softening of the notion of a definite $4$--form giving rise to a torsion-free $\Gtwo$--structure.
The advantage of working with tamed forms is that the volume of any associative submanifold in $\fA_\beta(\psi)$ is bounded and one can, in principle, use geometric measure theory to compactify $\fA_\beta(\psi)$.

\begin{prop}[{\citet[Section 3.2]{Donaldson2009}, \citet[Section 2.5]{Joyce2016}}]
  Let $Y$ be a compact almost $\Gtwo$--manifold with $4$--form $\psi$.
  If $\psi$ is tamed by a closed $3$--form $\tau$, then there is a constant $c > 0$ such that for every associative immersion $\iota\co P \to Y$ with $P$ compact
  \begin{equation*}
    \vol(P,\iota^*g) \leq c\cdot\inner{[\tau]}{\iota_*[P]}.
  \end{equation*}
\end{prop}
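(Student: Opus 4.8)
The plan is to combine the calibration identity $\iota^*\phi = \vol_{\iota^*g}$ from the preceding proposition with a pointwise comparison of the two closed $3$--forms $\phi$ and $\tau$ along associative tangent planes, made uniform by compactness of $Y$.

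First I would consider the bundle $\Gr_3^+(TY) \to Y$ of oriented $3$--planes in $TY$ and, inside it, the \emph{associative Grassmann bundle} $\sG$ consisting of those pairs $(x,\Pi)$ for which the associator $[\cdot,\cdot,\cdot]$ vanishes on $\Pi$ and $\phi$ restricts to a positive multiple of the Riemannian volume form of $(\Pi, g|_\Pi)$ in the given orientation. Since $Y$ is compact and the Grassmann fibres are compact, $\Gr_3^+(TY)$ is compact. The locus $Z$ on which $[\cdot,\cdot,\cdot]$ vanishes on $\Pi$ is closed in $\Gr_3^+(TY)$, and on $Z$ the form $\phi$ restricts to a nowhere-zero element of $\Lambda^3\Pi^*$ by the Harvey--Lawson identity \eqref{Eq_AssociatorIdentity}; comparing it with the (nowhere-zero) volume-form section shows that the sign condition $\phi|_\Pi > 0$ cuts out a subset of $Z$ that is both open and closed, so $\sG$ is closed in $\Gr_3^+(TY)$ and hence compact.

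Next I would define $f \co \sG \to \R$ by writing $\tau|_\Pi = f(x,\Pi)\,\phi|_\Pi$ in the one-dimensional space $\Lambda^3\Pi^*$ --- legitimate since $\phi|_\Pi \neq 0$ on $\sG$ --- so that $f$ is continuous. The taming hypothesis on $\tau$ states exactly that $\tau$ is a positive multiple of the volume form on every associative plane with $\phi|_\Pi > 0$, i.e.\ that $f > 0$ on $\sG$. Therefore $f$ attains a positive minimum $1/c$, $c > 0$, on the compact set $\sG$; equivalently $\phi|_\Pi \leq c\,\tau|_\Pi$ for every associative plane $(x,\Pi)$.

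Finally, for a compact associative immersion $\iota \co P \to Y$, each tangent plane $(\iota(p),\iota_*T_pP)$ lies in $\sG$, so the last inequality together with $\iota^*\phi = \vol_{\iota^*g}$ gives the pointwise bound $\vol_{\iota^*g} \leq c\,\iota^*\tau$ on $P$; integrating over $P$ and using that $\tau$ is closed, so that $\int_P \iota^*\tau = \inner{[\tau]}{\iota_*[P]}$, yields $\vol(P,\iota^*g) \leq c\cdot\inner{[\tau]}{\iota_*[P]}$ with $c$ independent of $(P,\iota)$. The one step that is not purely formal is the passage from the pointwise positivity of $f$ to a single uniform constant $c$; this is exactly where compactness of $Y$ (and of the Grassmann fibres) is used, and it is the only place that hypothesis enters.
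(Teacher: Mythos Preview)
The paper does not give its own proof of this proposition; it is stated with citations to \cite{Donaldson2009} and \cite{Joyce2016} and left at that. Your argument is the standard one and is correct: compactness of the associative Grassmann bundle over the compact $Y$ turns the pointwise positivity of $\tau$ on associative planes into a uniform lower bound $\tau|_\Pi \geq c^{-1}\phi|_\Pi$, and then the calibration identity $\iota^*\phi = \vol_{\iota^*g}$ together with closedness of $\tau$ finishes the job.
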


\subsection{Enumerative invariants from associatives?}
\label{Sec_EnumerativeInvariants?}

\begin{question}
  Is there a residual subset of tamed, closed, definite $4$--forms for which $\fA_\beta(\psi)$ is a compact $0$--dimensional manifold (or orbifold)?
\end{question}

If the answer to this question is yes, then for every $\psi$ from this residual subset we can define
\begin{equation}
  \label{Eq_NDPhi_1stAttempt}
  n_\beta(\psi) \coloneq \#\fA_\beta(\psi).
\end{equation}

\begin{question}
  \label{Q_NDPsiInvariant}
  Is $n_\beta(\psi)$, or some modification of it, invariant under deforming $\psi$?
\end{question}

If the answer to this question is also yes, then $n_\beta$ would give rise to a deformation invariant of $\Gtwo$--manifolds by defining its value on a torsion-free $\Gtwo$--structure $\psi$ to be that on a nearby tamed, closed, definite $4$--form.

It is easy to see that a naive interpretation of $\#\fA_\beta(\psi)$ as the cardinality of $\fA_\beta(\psi)$ does not lead to an invariant.
Suppose that $\sP = \set{ \psi_t : t \in (-1,1) }$ is $1$--parameter family of tamed, closed, definite $4$--form and $[\iota_0 \co P \to Y] \in \fA_\beta(\psi_0)$ with $\dim \ker F_{\iota_0,\psi_0} = 1$.
By \autoref{Thm_AssociativeKuranishiModel}, a neighborhood of $([\iota_0],\psi_0) \in \fA_\beta(\sP)$ is given by $\ob^{-1}(0)$ with $\ob$ a smooth map satisfying
\begin{equation*}
  \ob(t,\delta) = \lambda t + c \delta^2 + \text{higher order terms}.
\end{equation*}
For a generic $1$--parameter family we will have $\lambda, c \neq 0$.
For simplicity, let us assume that $\lambda = c = 1$.
In this situation for $-1 \ll t < 0$, there are two associative submanifolds $[\iota^\pm_t\co P \to Y]$ with respect to $\psi_t$ near $[\iota_0]$.
As $t$ tends to $0$, $[\iota^\pm_t]$ tends to $[\iota_0]$.
For $t > 0$ there are no associatives near $[\iota_0]$.
This means that $n_\beta(\phi)$ as defined in \eqref{Eq_NDPhi_1stAttempt} changes by $-2$ as $t$ passes through $0$.

The origin of this problem is that $\fA_\beta(\psi)$ should be an oriented manifold and we should count associative immersions $[\iota] \in \fA_\beta(\psi)$ with signs $\epsilon([\iota],\psi) \in \set{\pm 1}$.
These signs should be such that if $\set{ \iota_t \co P \to Y : t \in [0,1] }$ is a $1$--parameter family of associative immersions along a $1$--parameter family of closed, definite $4$--forms, then
\begin{equation}
  \label{Eq_EpsilonSpectralFlow}
  \epsilon([\iota_1],\psi_1) = (-1)^{\SF\(F_{\iota_t,\psi_t} : t \in [0,1]\)}\cdot\epsilon([\iota_0],\psi_0).
\end{equation}
In the above situation we have
\begin{equation*}
  \epsilon([\iota^+_t],\psi_t) = -\epsilon([\iota^-_t],\psi_t).
\end{equation*}
Therefore, $n_\beta(\psi)$ will be be invariant as $t$ passes though $0$ if we interpret $\#$ as as signed count, that is,
\begin{equation}
  \label{Eq_NDPhi_2ndAttempt}
  n_\beta(\psi) \coloneq \sum_{[\iota] \in \fA_\beta(\psi)} \epsilon([\iota],\psi)
\end{equation}
with some choice of $\epsilon$ satisfying \eqref{Eq_EpsilonSpectralFlow}.
An almost canonical method for determining $\epsilon$ was recently discovered by \citet[Section 3]{Joyce2016}.
We refer the reader to Joyce's article for a careful and detailed discussion.

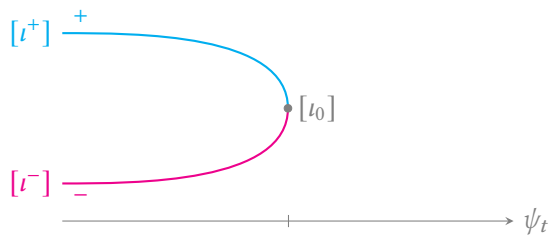
\begin{figure}[h]
  \centering
  \begin{tikzpicture}
    \draw[thick,cyan] (-3,2.5) node [left] {$[\iota^+]$} node [above right] {\footnotesize $+$} .. controls (-2,2.5) and (0,2.5) .. (0,1.5);
    \draw[thick,magenta] (-3,.5) node [left] {$[\iota^-]$} node [below right] {\footnotesize $-$} .. controls (-2,.5) and (0,.5) .. (0,1.5);
    \filldraw[gray] (0,1.5) node [right] {$[\iota_0]$} circle (0.05);
    \draw[gray] (-3,0) -- (0,0);
    \draw[|-stealth,gray] (0,0) -- (3,0) node [right] {$\psi_t$};
  \end{tikzpicture}
  \caption{Two associatives submanifold with opposite signs annihilating in an obstructed associative submanifold.}
  \label{Fig_TwoAssociativesWithOppositeSigns}
\end{figure}

%%% Local Variables:
%%% mode: latex
%%% TeX-master: "CountingAssociativesSeibergWitten"
%%% End:

\section{Intersections, \texorpdfstring{$T^2$}{T2}--singularities, and the Seiberg--Witten invariant}
\label{Sec_ProblemsWithCountingAssociatives}

In what follows we describe in more detail transitions \itref{It_Intersecting} and \itref{It_T2Singularities} from \autoref{Sec_Introduction},
and explain
why they spoil the deformation invariance of $n_\beta(\psi)$.
We then argue that the Seiberg--Witten equation on $3$--manifolds might play a role in repairing the deformation invariance.
There is, however, a price to pay: one has to give up on defining a numerical invariant and instead work with more complicated algebraic objects: chain complexes and homology groups.

\subsection{Intersecting associative submanifolds}
\label{Sec_IntersectingAssociatives}

Let $(\psi_t)_{t \in (-T,T)}$ be a $1$--parameter family of closed, tamed, definite $4$--forms on $Y$ and let $(\iota_t \co P \to Y)_{t \in (-T,T) }$ be a $1$--parameter family of somewhere injective unobstructed associative immersions.
By \autoref{Prop_SomewhereInjectiveTransversality},
if $(\psi_t)$ is generic,
then we can assume that $\iota_t$ is an embedding for all $t \neq 0$ and $\iota_0$ has a unique self-intersection as in \autoref{Prop_SomewhereInjectiveTransversality}\itref{Prop_SomewhereInjectiveTransversality_22}.
This intersection is locally modeled on the intersection of two transverse associative subspaces of $\R^7$.
Given any pair of transverse associative subspaces of $\R^7$,
there is a smooth associative submanifold asymptotic to these subspaces at infinity, called the Lawlor neck.
Nordström proved that out of the unique self-intersection of $\iota_0$ a new $1$--parameter family of associative submanifolds is created in $Y$ by gluing in a Lawlor neck.

\begin{theorem}[{\citet{Nordstrom2013}}]
  \label{Thm_DesingularizingIntersectionsOfAssocatives}
  Let $Y$ be a compact $7$--manifold and let $(\psi_t)_{t\in(-T,T)}$ be a family of closed, definite $4$--forms on $Y$.
  Let $P$ be a compact, oriented $3$--manifold.
  Suppose that $(\iota_t \co P \to Y)_{t \in (-T,T) }$ is a $1$--parameter family of unobstructed associative immersions such that
  \begin{equation*}
    \bP \coloneq \bigcup_{t\in(-T,T)} \set{t}\times \iota_t(P) \subset \R\times Y
  \end{equation*}
  has a unique self-intersection which occurs for $t=0$ and is transverse.
  Let $x^\pm$ denote the preimages in $P$ of the intersection in $Y$ and denote by $P^\csum$ the connected sum of $P$ at $x^+$ and $x^-$.

  There is a constant $\epsilon_0 > 0$,
  a continuous function $t \co [0,\epsilon_0] \to (-T,T)$, and
  a $1$--parameter family of immersions $(\iota^\csum_\epsilon \co P^\csum \to Y)_{\epsilon \in (0,\epsilon_0]}$
  such that, for each $\epsilon \in (0,\epsilon_0]$,
  $\iota^\csum_\epsilon$ is an unobstructed associative immersion with respect to $\psi_{t(\epsilon)}$.
  Moreover, as $\epsilon$ tends to zero the images of $\iota_\epsilon^\csum$ converge to the image of $\iota_0$ in the sense of integral currents.
\end{theorem}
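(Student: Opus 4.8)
The plan is to realize the desingularization as a gluing construction: cut out small balls around the self-intersection points, insert a rescaled Lawlor neck, and solve the associative equation by a quantitative implicit function theorem. First I would set up the pregluing. Near the self-intersection the image $\iota_0(P)$ looks like a union of two transverse associative $3$--planes $\Pi^+ \cup \Pi^-$ in $\R^7$; by the work of Lawlor (and the general existence theory for special Lagrangian/associative necks) there is a smooth associative submanifold $L \subset \R^7$, asymptotic at infinity to $\Pi^+ \cup \Pi^-$ with exponentially decaying rate. For each small $\epsilon > 0$, the rescaling $\epsilon L$ is again associative (the associative condition \eqref{Eq_Associative} is scale-invariant in the flat model), and it is exponentially close to $\Pi^+ \cup \Pi^-$ outside a ball of radius $\sim \sqrt\epsilon$. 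I would graft $\epsilon L$, in suitable normal coordinates around the intersection point of $Y$, onto the complement of the two balls in $\iota_0(P)$ using cutoff functions, producing an immersion $\hat\iota_\epsilon \co P^\csum \to Y$ whose image is associative up to an error that is $O(\epsilon^2)$ in the interior (from the curvature of $Y$ and the torsion of $\psi_0$, measured against the $\psi_t$'s) plus an exponentially small gluing error concentrated in the neck region, measured in appropriately weighted Hölder or Sobolev norms.

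Next I would linearize. The relevant operator is essentially the Fueter operator $F_{\hat\iota_\epsilon}$ of \autoref{Def_FueterNormal} (plus the $t$--derivative direction, since we are also free to move $\psi_t$); the key analytic input is a uniform bound $\|n\| \leq C \|F_{\hat\iota_\epsilon} n\|$ in the weighted norms, with $C$ independent of $\epsilon$. This is where the hypotheses are used: $\iota_0$ is unobstructed, so its Fueter operator is invertible on $P$; $L$ is assumed unobstructed as well (this is part of the known properties of Lawlor necks — the model operator on the neck has no decaying kernel); and the transversality of the intersection guarantees that the two model pieces match up without introducing extra cokernel. A standard patching argument (build an approximate right inverse from the right inverses on the two pieces, glued with cutoffs) then yields the uniform estimate, provided the weights are chosen so that the cutoff error terms — which involve derivatives of the cutoffs, hence factors of $1/\sqrt\epsilon$ — are beaten by the exponential decay of the model solutions in the neck and by the weight exponent. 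I expect this uniform invertibility across the two disparate length scales to be the main obstacle; it is the heart of every neck-stretching gluing theorem and requires a careful choice of $\epsilon$--dependent weighted norms (the usual device is conformally rescaled Hölder norms with a weight function $\sim (\epsilon + \mathrm{dist})$ to appropriate powers, interpolating between the two regimes).

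With the uniform estimate in hand, the contraction mapping / Newton iteration closes: writing the associative condition for a normal graph $n$ over $\hat\iota_\epsilon$ as $F_{\hat\iota_\epsilon} n + Q(n) = -\,(\text{pregluing error})$, with $Q$ a quadratic-type nonlinearity satisfying $\|Q(n_1) - Q(n_2)\| \lesssim (\|n_1\| + \|n_2\|)\|n_1 - n_2\|$ uniformly in $\epsilon$, and with the error term of size $o(C^{-2})$ as $\epsilon \to 0$, one obtains a unique small solution $n_\epsilon$, depending continuously (indeed smoothly) on $\epsilon$ and on the parameter $t$. Because the linearized operator is genuinely invertible (no cokernel), one does not even need to vary $t$ to solve — but one does get a free parameter: the solution exists for $t$ in an interval, and the cleanest statement is obtained by specifying $t = t(\epsilon)$, a continuous function, along which $\iota^\csum_\epsilon$ is associative with respect to $\psi_{t(\epsilon)}$; that $\iota^\csum_\epsilon$ is again unobstructed is automatic since invertibility of the linearization is an open condition and the linearization at $n_\epsilon$ is a small perturbation of $F_{\hat\iota_\epsilon}$, which we have shown to be uniformly invertible. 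Finally, the current convergence $\iota^\csum_\epsilon \to \iota_0$ as $\epsilon \to 0$ is immediate: the neck region $\epsilon L$ has volume $O(\epsilon^3)$ and hence carries vanishing mass, while away from the intersection point $\iota^\csum_\epsilon$ converges smoothly to $\iota_0$; the connectedness of $P^\csum$ and the topological identification as a connected sum are built into the pregluing by construction.
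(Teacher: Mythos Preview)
The paper does not prove this theorem: it is stated as a result of \citet{Nordstrom2013} and cited without proof. The only commentary the paper adds is \autoref{Rmk_DesingularizingIntersectionsOfAssocatives}, which notes that \cite{Nordstrom2013} has not been made publicly available and that part of the argument can be found in \cite[Section 4.2]{Joyce2016}. So there is no proof in the paper to compare your proposal against.

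That said, your outline is the expected gluing strategy for such a result and matches what is described in \cite[Section 4.2]{Joyce2016}: preglue a rescaled Lawlor neck into the transverse self-intersection, control the error in weighted norms, establish a uniform right inverse for the Fueter operator by patching the inverses on the closed piece (using unobstructedness of $\iota_0$) and on the neck (using the known properties of the Lawlor model), and close by a quantitative implicit function theorem. One point to be careful about: you say that since the linearization is invertible ``one does not even need to vary $t$ to solve''. This is not quite right in the present setting. The pregluing error contains a term coming from the failure of the Lawlor neck to be associative for the curved $\psi_0$, and the leading part of this error has a component that the Fueter operator cannot kill; it is precisely this obstruction that forces $t$ to move. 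Indeed, \autoref{Rmk_DesingularizingIntersectionsOfAssocatives} records that generically $t(\epsilon) = \delta\epsilon + O(\epsilon^2)$ with $\delta \neq 0$ having a specific geometric meaning. So the parameter $t$ is not free decoration but is genuinely needed to achieve surjectivity of the extended linearization $(n,\dot t) \mapsto F_{\hat\iota_\epsilon} n + X_{\hat\iota_\epsilon,\psi_0}\dot\psi$; your argument should be reorganized to reflect this.
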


\begin{remark}
  \label{Rmk_DesingularizingIntersectionsOfAssocatives}
  The paper \cite{Nordstrom2013} has not yet been made available to a wider audience. 
  A part of what goes into proving \autoref{Thm_DesingularizingIntersectionsOfAssocatives} can be found in \cite[Section 4.2]{Joyce2016}.
  There it is also argued that for a generic choice of $(\psi_t)_{t\in(-T,T)}$ the function $t$ is expected to be of the form $t(\epsilon) = \delta\epsilon + O(\epsilon^2)$ with a non-zero coefficient $\delta$ whose geometric meaning is also explained therein.
\end{remark}

\begin{remark}
  Denote by $P_1,\ldots, P_n$ the connected components of $P$.
  Let $j_\pm$ be such that $x_\pm \in P_{j_\pm}$.
  We have
  \begin{equation*}
    P^\csum
    \iso
    \begin{cases}
      \coprod_{j \neq j_\pm} P_j \sqcup (P_{j_+}\csum P_{j_-})
      & \text{for } j_+ \neq j_- \qand \\
      \coprod_{j \neq j_+} P_j \sqcup (P_{j_+}\csum S^1\times S^2)
      & \text{for } j_+ = j_-.
    \end{cases}
  \end{equation*}
\end{remark}

\begin{figure}[h]
  \centering
  \begin{tikzpicture}
    \draw[thick,cyan] (-3,1.5) -- (3,1.5) node[right] {$[\iota_t]$} node[above left] {\footnotesize  $\pm$};
    \draw[thick,magenta] (0,1.5) .. controls (0,.5) and (2,.5) .. (3,.5) node[right] {$[\iota^\csum_\epsilon]$} node[below left] {\footnotesize  $\pm$};
    \filldraw[gray] (0,1.5) circle (0.05);
    \draw[gray] (-3,0) -- (0,0);
    \draw[|-stealth,gray] (0,0) -- (3,0) node [right] {$\psi_t$};
  \end{tikzpicture}  
  \caption{An associative being born out of an intersection another associative.}
  \label{Fig_IntersectingAssociatives}
\end{figure}
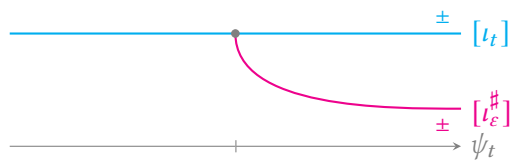

In the situation described in \autoref{Thm_DesingularizingIntersectionsOfAssocatives} and depicted in \autoref{Fig_IntersectingAssociatives},
$n_\beta(\psi_t)$ as defined in \eqref{Eq_NDPhi_2ndAttempt} changes by $\pm 1$ as $t$ crosses $0$.
In particular,
$n_\beta$ is not invariant.

\subsection{Associative submanifolds with \texorpdfstring{$T^2$}{T2}--singularities}
\label{Sec_AssociatviesWithT2Singularities}

Suppose that $\hat P$ is an associative submanifold in $(Y,\psi_0)$ with a point singularity at $x \in \hat P$ modelled on the following cone over $T^2$:
\begin{align*}
  \hat L
  &=
    \set*{
    (0,z_1,z_2,z_3) \in \R\oplus\C^3
    :
    \abs{z_1}^2=\abs{z_2}^2=\abs{z_3}^2,
    z_1z_2z_3 \in [0,\infty) \in \C
    } \\
  &=
    \set*{
    r\cdot (0,e^{i\theta_1},e^{i\theta_2},e^{-i\theta_1-i\theta_2})
    :
    r \in [0,\infty), \theta_1,\theta_2 \in S^1
    }.
\end{align*}
For a more formal discussion we refer the reader to \citet[Section 5.2]{Joyce2016}.
There, in particular, it is argued by analogy with the case of special Lagrangians that such singular associatives should be described by a Fredholm theory of index $-1$.
That is: we should expect them not to exist for a generic choice of $\psi$ but to appear along generic $1$--parameter families $(\psi_t)$.

The singularity model $\hat L$ can be resolved in $3$ ways:
\begin{align*}
  L^1_\lambda
  &=
    \set*{
    (0,z_1,z_2,z_3) \in \R\oplus\C^3
    :
    \abs{z_1}^2-\lambda = \abs{z_2}^2 = \abs{z_3}^2,
    z_1z_2z_3 \in [0,\infty) \in \C
    }, \\
  L^2_\lambda
  &=
    \set*{
    (0,z_1,z_2,z_3) \in \R\oplus\C^3 :
    \abs{z_1}^2 = \abs{z_2}^2-\lambda = \abs{z_3}^2,
    z_1z_2z_3 \in [0,\infty) \in \C
    }, \qand \\
  L^3_\lambda
  &=
    \set*{
    (0,z_1,z_2,z_3) \in \R\oplus\C^3 :
    \abs{z_1}^2 = \abs{z_2}^2 = \abs{z_3}^2-\lambda,
    z_1z_2z_3 \in [0,\infty) \in \C
    }.
\end{align*}

These are asymptotic to $\hat L$ at infinity and smooth, which can be seen by 
identifying $L^i_{\lambda}$ with $S^1 \times \C$ via
\begin{equation}
  \label{Eq_AttachingMaps}
  \begin{split}
    &S^1\times \C \to L^1_{\lambda}, \quad (e^{i\theta},z) \mapsto
    \(0,e^{i\theta}\sqrt{|z|^2+\lambda}, z, e^{-i\theta}\bar z\), \\
    &S^1\times \C \to L^2_{\lambda}, \quad (e^{i\theta},z) \mapsto
    \(0,e^{-i\theta}\bar z,e^{i\theta}\sqrt{|z|^2+\lambda},z\), \qand \\
    &S^1\times \C \to L^3_{\lambda}, \quad
    (e^{i\theta},z) \mapsto
    \(0,z,e^{-i\theta} \bar z,e^{i\theta}\sqrt{|z|^2+\lambda}\).
  \end{split}
\end{equation}
Topologically, $L^i_\lambda$ can be obtained from $\hat L$ via Dehn surgery.

\begin{definition}
  Let $P^\circ$ be a $3$--manifold with $\delbar P^\circ = T^2$.
  Let $\mu$ be a simple closed curve in $T^2$.
  The \defined{Dehn filling} of $P^\circ$ along $\mu$,
  denoted by $P^\circ_\mu$,
  is the $3$--manifold obtained by attaching $S^1\times D$ to $P^\circ$ in such a way that $\set{*} \times S^1$ is identified with $\mu$.
\end{definition}

\begin{remark}
  Up to diffeomorphism, $P^\circ_\mu$ depends only on the homotopy class of $\mu \subset T^2$;
  moreover, it does not depend on the orientation of $\mu$.
\end{remark}

We can identify the boundary of $\hat L^\circ \coloneq \hat L \setminus B_1$ with $T^2$ via
\begin{equation*}
  (e^{i\theta_1},e^{i\theta_2})
  \mapsto
  \frac{1}{\sqrt{3}} \paren*{0,e^{i\theta_1},e^{i\theta_2},e^{-i\theta_1-i\theta_2}}.
\end{equation*}
Comparing the maps introduced in  \eqref{Eq_AttachingMaps} restricted to $\{*\}\times S^1$ with the above identification, we see that $L^i_\lambda$ is obtained by Dehn filling $\hat L^\circ$ along loops representing the homology classes 
\begin{equation}
  \label{Eq_DehnFillingSlopes}
  \mu_1=(0,1), \quad
  \mu_2=(-1,0), \qandq
  \mu_3=(1,-1)
\end{equation}
where $(1,0)$ and $(0,1)$ are the generators of $H_1(T^2,\Z)$ corresponding to the loops $\theta\mapsto(e^{i\theta},0)$ and $\theta\mapsto(0,e^{i\theta})$.

We expect that $\hat P$ can be resolved in three ways as well.

\begin{conjecture}[cf.~{\citet[Conjecture 5.3]{Joyce2016}}]
  \label{Conj_ResolvingT2Singularity}
  Let $(\psi_t)_{t \in (-T,T)}$ be a $1$--parameter family of closed, tamed, definite $4$--forms on $Y$.
  Let $\hat P$ be an unobstructed singular associative submanifold in $(Y,\psi_0)$ with a unique singularity at $x$ which is modeled on $\hat L$.
  Associated to this data there are constants $\delta_1,\delta_2,\gamma \in \R$.
  For a generic $1$--parameter family $(\psi_t)_{t\in(-T,T)}$,
  $\delta_1\neq 0$, $\delta_2\neq 0$, $\delta_1\neq \delta_2$ and $\gamma \neq 0$.
  If this holds, then there is $\epsilon_0>0$ and, for $i=1,2,3$, there are functions $t_i \co [0,\epsilon_0] \to (-T,T)$,
  compact, oriented $3$--manifolds $P^i$, and
  $1$--parameter families of immersions $(\iota^i_\epsilon \co P^i \to Y)_{\epsilon \in (0,\epsilon_0]}$ such that:
  \begin{enumerate}
  \item
    $\iota^i_\epsilon$ is an unobstructed associative immersion with respect to $\psi_{t_i(\epsilon)}$.
  \item
    $\iota^i_\epsilon(P^i)$ is close to $\hat P$ away from $x$ and close to $L^i_\epsilon$ near $x$.
  \item
    $P^i$ is diffeomorphic to the manifold obtained by Dehn filling $\hat P^{\circ} = \hat P \setminus B_\sigma(x)$ along $\mu_i$ where $\mu_i \in H_1(\del \hat P^{\circ}) = H_1(T^2)$ is as in \eqref{Eq_DehnFillingSlopes}.
  \item
    We have
    \begin{gather*}
      t_1(\epsilon) = -\frac{\delta_2}{\gamma} \epsilon + O(\epsilon^2), \quad
      t_2(\epsilon) = \frac{\delta_1}{\gamma} \epsilon + O(\epsilon^2), \\ 
      \andq
      t_3(\epsilon) = \frac{\delta_2-\delta_1}{\gamma} \epsilon + O(\epsilon^2).
    \end{gather*}
  \end{enumerate}
\end{conjecture}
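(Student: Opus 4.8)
The plan is to treat this as a singular-perturbation (gluing) problem, directly parallel to \autoref{Thm_DesingularizingIntersectionsOfAssocatives} and to Joyce's programme for special Lagrangians with isolated conical singularities \cite[Section 5]{Joyce2016}. The resolutions $L^1_\lambda, L^2_\lambda, L^3_\lambda$ play the role that the Lawlor neck plays for transition \itref{It_Intersecting}: for each $i$ one produces, by interpolating between $\hat P$ away from $x$ and a rescaled copy $\lambda^{1/2}\cdot L^i_1 \iso L^i_\lambda$ inside a small ball about $x$, a compact oriented $3$--manifold $P^i$ and immersions $\iota^i_{\epsilon,t}\co P^i \to Y$ which are \emph{approximately} associative with respect to $\psi_t$ when the resolution scale $\epsilon$ (with $\epsilon = \lambda$ in the notation of the statement) and $t$ are small. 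One then perturbs $\iota^i_{\epsilon,t}$ to a genuine associative; the point is that for fixed small $\epsilon$ this succeeds only for $t$ confined to a thin neighbourhood of a curve $t = t_i(\epsilon)$, and extracting the leading term of $t_i$ yields the stated formulae.

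Concretely, I would carry this out in the following steps. (1) \emph{Local model.} Develop a Fredholm theory for the Fueter operator $F_{L^i_\lambda}$ in weighted spaces adapted to the asymptotically conical end of $L^i_\lambda$, compute the indicial roots contributed by the link $T^2$, and show that after rescaling the operator is surjective with kernel spanned by the geometrically evident, non-decaying deformations: the scaling field $\partial_\lambda L^i_\lambda$ and the residual $T^2$--symmetry directions. (2) \emph{Singular model.} Make precise the ``unobstructed'' hypothesis: on $\hat P^{\circ} = \hat P \setminus B_\sigma(x)$, with the critical conical weight, the Fueter operator of $\hat P$ should be invertible with uniformly bounded inverse and indicial set disjoint from the critical interval, as in Joyce's index-$(-1)$ deformation theory for $\hat P$. (3) \emph{Pre-gluing.} Splice (1) and (2) at an intermediate neck scale, and bound the associativity defect $\iota^{i\,*}_{\epsilon,t}[\cdot,\cdot,\cdot]$, and its $t$--derivative, by a positive power of $\epsilon$ in the weighted norm. (4) \emph{Uniform linear estimate.} Patch the two parametrices to invert the linearised operator on $P^i$ with $\epsilon$--independent norm on a complement of a small ``approximate cokernel'' coming from the non-decaying modes of (1). (5) \emph{Lyapunov--Schmidt reduction.} Run the contraction mapping principle in these weighted spaces to reduce the associativity equation, for each $(\epsilon, t)$, to a finite-dimensional equation $\ob^i(\epsilon, t, \cdot) = 0$; its derivatives in $t$ and in the reduction variable are governed to leading order by the periods of $\partial_t\psi_t|_{t=0}$ against the relative $3$--cycles swept out as loops generating $H_1(T^2)$ are resolved, together with a fixed pairing of $\phi_0 = *_{g_0}\psi_0$ (equivalently the taming form $\tau$) with the vanishing region --- these are the constants $\delta_1, \delta_2$ and $\gamma$. (6) \emph{Solving and topology.} Under the genericity conditions $\gamma \neq 0$, $\delta_1 \neq 0$, $\delta_2 \neq 0$, $\delta_1 \neq \delta_2$ the finite-dimensional zero set is, for each $i$, a smooth arc $t = t_i(\epsilon)$ through $(0,0)$ with the nonzero slope read off from the leading-order computation; comparing \eqref{Eq_AttachingMaps} with \eqref{Eq_DehnFillingSlopes} identifies $P^i$ with the Dehn filling of $\hat P^{\circ}$ along $\mu_i$, and elliptic regularity makes the maps $\iota^i_\epsilon$ immersions.

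The heart of the matter --- and the reason this remains a conjecture --- is steps (1) and (4). Because the link is a positive-dimensional torus rather than a pair of points, the cone $\hat L$ carries an entire ladder of indicial roots and the resolutions $L^i_\lambda$ carry continuous symmetry families, so one must find a single decay rate making \emph{both} $F_{\hat P}$ on $\hat P^{\circ}$ and $F_{L^i_1}$ on the rescaled model invertible \emph{and} verify that the splicing produces no spurious cokernel; this weight bookkeeping, routine for the Lawlor neck, is genuinely delicate here. A companion difficulty is reconciling Joyce's deformation theory of the \emph{singular} associative $\hat P$ with the weighted framework of the gluing, so that the reduced obstruction is indeed one-dimensional in $t$ and produces exactly the three branches rather than some degenerate locus --- this is where an honest proof would most plausibly get stuck.
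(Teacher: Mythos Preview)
The statement is a \emph{conjecture} in the paper, not a theorem: the paper provides no proof, only the citation to \citet[Conjecture 5.3]{Joyce2016} and the surrounding heuristic discussion of the three local resolutions $L^i_\lambda$ and their Dehn-filling slopes. So there is no ``paper's own proof'' to compare against.

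Your proposal is not a proof either, and you acknowledge this explicitly. What you have written is a reasonable outline of the gluing programme one would attempt --- and it is essentially the same programme Joyce sketches in \cite[Section 5]{Joyce2016}, which is precisely what the paper is citing. Your identification of the main obstacle (the weighted analysis on the $T^2$--cone, where the link is positive-dimensional and the indicial-root structure is more delicate than for the Lawlor neck) is accurate and matches the reason the statement is left as a conjecture both here and in Joyce's paper. There is nothing to correct: you have correctly recognised that this is an open problem and given a fair account of where the difficulty lies.
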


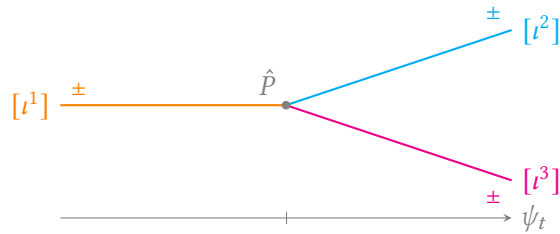
\begin{figure}[h]
  \centering
  \begin{tikzpicture}
    \draw[thick,orange] (-3,1.5) node [left] {$[\iota^1]$} node [above right] {\footnotesize $\pm$} -- (0,1.5);
    \draw[thick,cyan] (0,1.5) -- (3,2.5) node [right] {$[\iota^2]$} node [above left] {\footnotesize $\pm$};
    \draw[thick,magenta] (0,1.5) -- (3,.5) node [right] {$[\iota^3]$} node [below left] {\footnotesize $\pm$};
    \filldraw[gray] (0,1.5) node [above left] {$\hat P$} circle (0.05);
    \draw[gray] (-3,0) -- (0,0);
    \draw[|-stealth,gray] (0,0) -- (3,0) node [right] {$\psi_t$};
  \end{tikzpicture}  
  \caption{Three associatives emerging out of a singular associative for $\delta_2 > \delta_1 > 0$.}
  \label{Fig_T2Singularities}
\end{figure}

In the situation described in \autoref{Conj_ResolvingT2Singularity} and depicted in \autoref{Fig_T2Singularities},
$n_\beta(\psi_t)$ as defined in \eqref{Eq_NDPhi_2ndAttempt} changes as $t$ crosses $0$.
Again, the occurrence of the phenomenon described above would preclude $n_\beta$ from being a deformation invariant.

\subsection{The Seiberg--Witten invariant of \texorpdfstring{$3$}{3}--manifolds}
\label{Sec_RoleOfSeibergWitten}

If there were a topological invariant $w(P) \in \Z$ defined for every compact, oriented $3$--manifold and satisfying
\begin{equation}
  \label{Eq_SurgeryBehaviour}
  \begin{split}
    w(P_1 \csum P_2) &= 0 \qand \\
    \epsilon_1 w(P^{\circ}_{\mu_1})
    + \epsilon_2 w(P^{\circ}_{\mu_2})
    + \epsilon_3 w(P^{\circ}_{\mu_3}) &= 0
  \end{split}
\end{equation}
with $\mu_1,\mu_2,\mu_3$ as in \eqref{Eq_DehnFillingSlopes} and some choice of $\epsilon_1,\epsilon_2,\epsilon_3 \in \set{\pm 1}$,
then
\begin{equation}
  \label{Eq_NDPhi_3rdAttempt}
  n_\beta(\psi) \coloneq \sum_{[\iota] \in \fA_\beta(\psi)} \epsilon([\iota],\psi) w(P)
\end{equation}
would be invariant along the transition discussed in \autoref{Sec_IntersectingAssociatives} and also along the transition discussed in \autoref{Sec_AssociatviesWithT2Singularities} provided the signs work out correctly.

It is easy to see that the only such invariant defined for all $3$--manifolds is trivial since $w(P) = w(P\csum S^3) = 0$ for all oriented $3$--manifolds $P$.
However, for those $3$--manifolds $P$ for which $b_1(P_j) > 1$ for all connected components $P_j$, there are non-trivial invariants satisfying \eqref{Eq_SurgeryBehaviour}.
One example of such an invariant is the \defined{Seiberg--Witten invariant} $\SW(P)$.
We refer the reader to \cite[Section 2]{Meng1996} for a detailed discussion of the construction of $\SW(P)$.
For the moment, it shall suffice to think of $\SW(P)$ as the signed count of all gauge-equivalence classes of solutions to the Seiberg--Witten equation; that is, pairs of $(\Psi,A) \in \Gamma(W)\times \sA(\det(W))$ satisfying
\begin{equation}
  \label{Eq_ClassicalSeibergWitten}
  \begin{split}
    \slD_A \Psi &= 0 \qandq \\
    \frac12 F_A &= \mu(\Psi).
  \end{split}
\end{equation}
Here $W$ is the spinor bundle of a spin$^c$ structure $\fw$ on $P$,
$\slD_A$ is the twisted Dirac operator,
and $\mu(\Psi) = \Psi\Psi^*-\frac12\abs{\Psi}^2\,\id_W$ is identified with an imaginary-valued $2$--form using the Clifford multiplication.

\begin{remark}
  The actual definition of $\SW(P)$ involves perturbing \eqref{Eq_ClassicalSeibergWitten} by a closed $2$--form $\eta$ in order to ensure that the moduli space of solutions is cut-out transversely and contains no reducible solutions.
  The necessity to choose $\eta$ and the fact that $H^2(P,\Z)$ has codimension $b_1(P)$ in $H^2(P,\R)$, where the cohomology class of $\eta$ lies, is responsible for the restriction $b_1(P) > 1$.
\end{remark}

\begin{remark}
  $\SW(P)$ has a refinement $\uSW(P)$ defined for oriented $3$--manifolds $P$ with $b_1(P)>0$; roughly speaking, it is an integer-valued function on the set of the  isomorphism classes of spin$^c$ structures $\fw$ on $P$.
  When $b_1 > 1$, it is zero for all but finitely many $\fw$ and we can take $\SW(P)$ to be the sum of the invariants over all spin$^c$ structures.  
  We come back to this point in \autoref{Sec_MengTaubes}.
\end{remark}

\begin{theorem}[{\citet[Proposition 4.1]{Meng1996}}]
  \label{Thm_SeibergWittenUnstability}
  If $P_1,P_2$ are two compact, connected, oriented $3$--manifolds with $b_1(P_i) \geq 1$, then
  \begin{equation*}
    \SW(P_1\csum P_2) = 0.
  \end{equation*}
\end{theorem}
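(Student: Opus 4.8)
The plan is to prove vanishing by stretching a long neck of positive scalar curvature along the separating $2$--sphere $S \subset P_1\csum P_2$. Choose a family of metrics $g_T$ on $P_1\csum P_2$ which near $S$ is isometric to the Riemannian cylinder $S^2(R)\times(-T,T)$ (with $S^2(R)$ a round sphere of small fixed radius $R$ and $T$ large) and which agrees with fixed metrics on $P_i^\circ \coloneqq P_i\setminus B^3$ away from this neck. Since $S$ bounds a ball on each side it is null-homologous, so every spin$^c$ structure on $P_1\csum P_2$ restricts trivially to $S$; moreover Mayer--Vietoris gives $H^2(P_1\csum P_2;\R)\cong H^2(P_1^\circ;\R)\oplus H^2(P_2^\circ;\R)$, so we may take the perturbing closed $2$--form to be $\eta=\eta_1+\eta_2$ with $\eta_i$ supported in $P_i^\circ$ away from the neck and with $[\eta_i]$ generic. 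Because $b_1(P_1\csum P_2)=b_1(P_1)+b_1(P_2)>1$, the count $\SW$ is independent of the metric and of the admissible perturbation, so it suffices to show that for every spin$^c$ structure (only finitely many can contribute) and every sufficiently large $T$ the Seiberg--Witten moduli space of $(P_1\csum P_2,g_T,\eta)$ is empty.

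The positive curvature of the neck enters through two elementary estimates. The Weitzenböck formula applied to $\slD_A\Psi=0$, together with the curvature equation from \eqref{Eq_ClassicalSeibergWitten}, gives the pointwise inequality $\tfrac12\Delta\abs{\Psi}^2\geq\tfrac14 s\abs{\Psi}^2+\tfrac12\abs{\Psi}^4-c\abs{\eta}\abs{\Psi}^2$; evaluated at an interior maximum this yields a bound $\sup\abs{\Psi}^2\leq M$ depending only on $\sup\abs{\eta}$ and $\inf s$, hence independent of $T$ since $R$ and the metrics outside the neck are fixed. On the neck $\eta$ vanishes and $s\equiv s_0\coloneqq 2/R^2>0$, so $f\coloneqq\abs{\Psi}^2$ satisfies $\Delta f\geq\tfrac12 s_0 f$ there; comparing $f$ with the function $M\cosh(\sqrt{s_0/2}\,t)/\cosh(\sqrt{s_0/2}\,T)$, which solves $\Delta\phi=\tfrac12 s_0\phi$ and dominates $f$ on the two boundary spheres by the first estimate, gives $f(x,t)\leq M\cosh(\sqrt{s_0/2}\,t)/\cosh(\sqrt{s_0/2}\,T)$ on $S^2(R)\times(-T,T)$. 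Hence $\Psi$ tends to zero uniformly on the middle half of the neck as $T\to\infty$, and $\int_{\mathrm{neck}}\abs{F_A}^2=4\int_{\mathrm{neck}}\abs{\mu(\Psi)}^2$ stays bounded uniformly in $T$; combined with the fixed geometry of $P_1^\circ\cup P_2^\circ$, the total curvature energy of any solution is bounded uniformly in $T$.

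Now suppose, for contradiction, that solutions $(\Psi_n,A_n)$ exist for a sequence $T_n\to\infty$. Standard compactness for Seiberg--Witten solutions of bounded energy, together with the exponential decay above, lets me — after gauge transformations — pass to a subsequence converging in $C^\infty_\loc$ on each of the cylindrical-end manifolds $P_1^\circ$ and $P_2^\circ$ to a finite-energy solution there, whose spinor vanishes over the neck in the limit. On the positively curved end $S^2\times[0,\infty)$ of $P_i^\circ$ the spinor of such a limit decays exponentially and the connection converges to the unique (since $H^1(S^2)=0$) flat connection, so by the removable-singularity theorem the limit extends to a genuine Seiberg--Witten solution $(\widehat\Psi_i,\widehat A_i)$ on $(P_i,\widehat\eta_i)$ whose spinor vanishes at the connect-sum point $p_i$. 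For $b_1(P_i)\geq 1$ and generic perturbation there are no reducibles, so $\widehat\Psi_i\not\equiv 0$; but the zero locus of a spinor has real codimension $4$ in the zero-dimensional moduli space of $P_i$, so a Sard--Smale argument over the perturbations shows that generically no solution has a spinor zero at a prescribed point. This contradiction forces the moduli space on $(P_1\csum P_2,g_{T_n},\eta)$ to be empty for $n\gg 0$, and hence $\SW(P_1\csum P_2)=0$.

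The step I expect to be the real obstacle is the passage to the limit on the non-compact ends and, inside it, securing enough genericity near the connect-sum points. Ruling out any loss of energy into the neck, controlling the convergence on the ends, and the removable-singularity statement all rest on the exponential-decay estimate together with the $H^1(S^2)=0$ rigidity of the flat connection on the end; and the Sard--Smale argument must be carried out with perturbations that do not disturb the vanishing of $\eta$ on the part of the neck where the scalar-curvature estimate is applied, which is why one should phrase the final contradiction in terms of the moduli space of decaying solutions on $P_i^\circ$ rather than the extended solution on $P_i$. An alternative and technically cleaner route, avoiding these analytic points, is to invoke the Meng--Taubes identification of $\uSW$ with Milnor--Turaev torsion together with the vanishing of the torsion of a connected sum, or equivalently the fact that the monopole Floer homology of $S^2$ is purely reducible, so that a gluing/excision principle along $S$ kills the count.
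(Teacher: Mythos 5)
This statement is quoted from Meng--Taubes; the paper gives no proof of its own, so your argument has to stand entirely on its own merits. Its analytic first half is fine and standard: the neck metrics $g_T$, the uniform $C^0$ bound on $\Psi$ from the Weitzenb\"ock formula, the $\cosh$--comparison forcing $\Psi$ to be exponentially small on the middle of the neck, and the extraction of finite-energy limits on the two cylindrical-end manifolds with the connection converging to the unique flat connection on $S^2$. The gap is in how you extract a contradiction. Smallness of $\Psi$ at the middle of the neck is a statement about a region that recedes to infinity relative to $P_i^\circ$; it does not translate into vanishing of a capped-off spinor at the connect-sum point $p_i\in P_i$. Passing from the finite-energy moduli space of $P_i^\circ\cup\paren{S^2\times[0,\infty)}$ to the moduli space of the closed manifold $P_i$ is itself a gluing theorem (one glues in the trivial solution on the positively curved cap), and the solutions it produces carry no constraint whatsoever at $p_i$. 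So the evaluation-map transversality argument has nothing to bite on.

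More fundamentally, the strategy of showing the moduli space on $P_1\csum P_2$ is \emph{empty} for a long neck cannot succeed. Take $P_1=P_2=T^3$: both factor moduli spaces are nonempty and regular, and gluing then produces genuine solutions on the connected sum for every sufficiently long neck. These form, approximately, $M(P_1)\times M(P_2)\times S^1$, where the circle is the gluing parameter coming from the $\U(1)$ stabilizer of the reducible limit on the separating $S^2$, cut down by a rank-one obstruction; the vanishing of $\SW(P_1\csum P_2)$ is a \emph{signed cancellation} (the obstruction section over each gluing circle has degree zero, equivalently $\dim M(P_1)+\dim M(P_2)+1=1>0=\dim M(P_1\csum P_2)$), not an emptiness statement. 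This extra $+1$ from $H^0(S^2;i\R)$ is exactly what makes $S^2$ ``entirely reducible'' in the Floer-theoretic language the paper uses elsewhere. Your closing remark identifies the correct repairs: either quote the Meng--Taubes identification of $\uSW$ with torsion (the torsion of a connected sum of pieces with $b_1\geq 1$ vanishes), or use the connected-sum exact triangle for $\HMfrom$ that this paper invokes as \autoref{Thm_MonopoleHomology0SurgeryFormula}; a direct gauge-theoretic proof must instead carry out the gluing with the $S^1$ parameter and show that the resulting signed count is zero.
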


\begin{theorem}[{\citet[Theorem 5.3]{Meng1996}}]
  \label{Thm_SeibergWittenSurgeryFormula}
  Let $P^{\circ}$ be a compact, connected, oriented $3$--manifold with $\del P^{\circ} = T^2$.
  If $\mu_1,\mu_2,\mu_3 \in H_1(\del P^{\circ})$ are such that
  \begin{equation*}
    \mu_1\cdot \mu_2 = \mu_2\cdot \mu_3 = \mu_3\cdot \mu_1 = -1
  \end{equation*}
  (with $T^2 = \del P^{\circ}$ oriented as the boundary of $P^{\circ}$), then
  \begin{equation*}
    \epsilon_1\cdot\SW(P^{\circ}_{\mu_1})
    + \epsilon_2\cdot\SW(P^{\circ}_{\mu_2})
    + \epsilon_3\cdot\SW(P^{\circ}_{\mu_3})
    = 0
  \end{equation*}
  for suitable choices of $\epsilon_1,\epsilon_2,\epsilon_3 \in \set{\pm 1}$,
  provided $b_1(P^\circ_{\mu_i}) > 1$ for all $i=1,2,3$.
\end{theorem}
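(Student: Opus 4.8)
\emph{Step 1: replace $\SW$ by torsion.} The plan is to reduce the statement to a classical identity for Reidemeister--Turaev torsion and then verify that identity. By the Meng--Taubes theorem \cite{Meng1996}, for a closed oriented $3$--manifold $Y$ with $b_1(Y)>1$ the function $\mathfrak s\mapsto\uSW(Y,\mathfrak s)$ on $\Spin^c(Y)$ coincides, up to an overall sign fixed by a homology orientation, with the Milnor--Turaev torsion $\tau(Y)$; since $b_1(Y)>1$ this is an honest element of the group ring $\Z[H_1(Y)/\mathrm{tors}]$, well defined up to sign and translation. Hence $\SW(Y)=\sum_{\mathfrak s}\uSW(Y,\mathfrak s)=\pm\,\varepsilon(\tau(Y))$, where $\varepsilon$ denotes the augmentation (the sum of the coefficients), itself well defined up to sign. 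So it suffices to produce signs $\epsilon_i\in\set{\pm1}$ with $\epsilon_1\varepsilon(\tau(P^{\circ}_{\mu_1}))+\epsilon_2\varepsilon(\tau(P^{\circ}_{\mu_2}))+\epsilon_3\varepsilon(\tau(P^{\circ}_{\mu_3}))=0$.

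\emph{Step 2: the surgery formula.} Write $P^{\circ}_{\mu_i}=P^{\circ}\cup_{T^2}V_i$ with $V_i\cong S^1\times D^2$ glued so that its meridian is $\mu_i$. The coefficient system $\Z[H_1(P^{\circ}_{\mu_i})/\mathrm{tors}]$ pulls back along $P^{\circ}\hookrightarrow P^{\circ}_{\mu_i}$ to the one obtained from $H_1(P^{\circ})/\mathrm{tors}$ by killing the line spanned by $[\mu_i]$. By half-lives-half-dies the image of $H_1(T^2;\Q)$ in $H_1(P^{\circ};\Q)$ is one-dimensional, so the three lines $[\mu_1]$, $[\mu_2]$, $[\mu_3]$ all coincide, say with $\ell$. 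Milnor's multiplicativity of torsion under gluing along a torus (equivalently, Turaev's Dehn-surgery formula for torsion, of Torres type) then expresses each $\tau(P^{\circ}_{\mu_i})$ as a definite ``residue along $\ell$'' of the single rational function $\tau(P^{\circ})$ over the field of fractions of $\Z[H_1(P^{\circ})/\mathrm{tors}]$ --- informally, the pushforward of $([\mu_i]-1)\,\tau(P^{\circ})$ into $\Z[H_1(P^{\circ}_{\mu_i})/\mathrm{tors}]$, suitably interpreted when this is a $0/0$ expression. The blanket hypothesis $b_1(P^{\circ}_{\mu_i})>1$ is exactly what makes these residues honest polynomials with a well-defined augmentation, hence $\SW(P^{\circ}_{\mu_i})\in\Z$.

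\emph{Step 3: the triad relation.} The condition $\mu_1\cdot\mu_2=\mu_2\cdot\mu_3=\mu_3\cdot\mu_1=-1$ forces $\mu_1+\mu_2+\mu_3=0$ in $H_1(T^2)$, and pushing this into $H_1(P^{\circ})$ gives a linear relation among the three classes $[\mu_i]$ inside the rank-one group $\ell$. Restricting $\tau(P^{\circ})$ to $\ell$ yields a single one-variable Laurent polynomial, and Step 2 exhibits the three integers $\varepsilon(\tau(P^{\circ}_{\mu_i}))$ as three linear functionals of it (the residues at $[\mu_i]=1$). The relation $[\mu_1]+[\mu_2]+[\mu_3]=0$ then produces, by an elementary calculation --- this is the skein relation for Alexander polynomials under the three fillings --- a linear dependence among these functionals; its coefficients are the desired signs $\epsilon_i$, the sign in $\mu_i\cdot\mu_j=-1$ entering through the induced orientations of the $P^{\circ}_{\mu_i}$. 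Undoing Step 1 gives the theorem.

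I expect the main obstacle to lie in Steps 2 and 3 together: one must carry the torsion's sign-and-translation indeterminacy simultaneously on all three fillings, handle the degenerate cases in which $[\mu_i]$ is rationally trivial in $H_1(P^{\circ})$ (so $b_1$ does not drop under the $\mu_i$--filling and the ``residue'' is computed by a derivative rather than a substitution), and finally pin down the $\epsilon_i$; this is precisely where the hypothesis $b_1(P^{\circ}_{\mu_i})>1$ is indispensable, both to keep all torsions polynomial and to make each $\SW(P^{\circ}_{\mu_i})$ a well-defined integer. An alternative, more homological route would deduce the identity by taking Euler characteristics in the surgery exact triangle for the reduced monopole Floer homology of the triad $\set{P^{\circ}_{\mu_1},P^{\circ}_{\mu_2},P^{\circ}_{\mu_3}}$ --- which, once $b_1>1$, is finitely generated and supported in finitely many $\Spin^c$ structures, so that its total Euler characteristic equals $\SW$ --- but this merely trades the torsion bookkeeping for grading and exact-triangle bookkeeping of comparable delicacy, so the Meng--Taubes argument above seems the cleanest.
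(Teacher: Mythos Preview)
The paper does not prove this theorem: it is stated as a citation of \cite[Theorem 5.3]{Meng1996}, and the only argument supplied is the remark immediately following the statement, which explains that Meng and Taubes phrase their result in terms of $p/q$--surgery on a rationally null-homologous link and that the translation to the Dehn-filling language used here can be read off from \cite[Section 42.1]{Kronheimer2007}. So there is no ``paper's own proof'' to compare against beyond this reduction.

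Your proposal goes much further: you are sketching the internal Meng--Taubes argument itself, namely the identification $\SW = \pm\tau$ followed by the Torres/Turaev surgery behaviour of Reidemeister torsion. That is indeed the route taken in \cite{Meng1996}, and your observation that $\mu_1+\mu_2+\mu_3=0$ follows from the intersection conditions is correct and is what drives the skein-type relation. The weak points you flag are real: Step~2's ``residue'' description of the torsion surgery formula is informal, and in Step~3 the passage from the linear relation among the $[\mu_i]$ to the precise signed identity for the augmentations is exactly where the bookkeeping lives---this is the content of \cite[Theorem 5.3]{Meng1996} rather than a consequence one can wave through. Your alternative route via Euler characteristics of the monopole surgery triangle is also legitimate and is in fact closer in spirit to how the paper \emph{uses} the result (cf.\ \autoref{Thm_MonopoleHomology1SurgeryFormula}), but as you note it requires comparable care with gradings and finiteness. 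In short: your strategy is sound and aligned with the original source, but the paper itself simply cites the result rather than reproving it.
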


\begin{remark}
  The formulation of \cite[Theorem 5.3]{Meng1996} is in terms of $p/q$--surgery on a link $L$ which is rationally trivial in homology.
  The discussion in \cite[Section 42.1]{Kronheimer2007} explains how this is related to Dehn filling, and from this it is clear that the surgery formula given by Meng and Taubes implies the above theorem.
\end{remark}

\begin{remark}
  The Seiberg--Witten invariant is often defined only for compact, connected, oriented $3$--manifolds $P$.
  If $P$ has connected components $P_1,\ldots,P_m$, then $\SW(P) \coloneq \prod_{j=1}^m \SW(P_j)$. 
\end{remark}

Let us temporarily assume that all associative immersions $\iota\co P \to Y$ with $\iota_*[P] = \beta$ happen to be such that all connected components $P_j$ satisfy $b_1(P_j) > 1$.
If we defined $n_\beta$ by \eqref{Eq_NDPhi_3rdAttempt} with the weight $w = \SW$,
then $n_\beta$ would be invariant in the situations described in \autoref{Sec_IntersectingAssociatives} and \autoref{Sec_AssociatviesWithT2Singularities}, at least if the signs work out correctly, or modulo $2$.
Defining $n_\beta$ in this way really amounts to counting a much larger moduli space than $\fA_\beta(\psi)$,
namely:
\begin{equation*}
  \fA^\SW_\beta(\psi)
  =
  \coprod_P\coprod_{\fw} \fA^\SW_{P,\beta,\fw}(\psi)
\end{equation*}
with
\begin{equation*}
  \fA^\SW_{P,\beta,\fw}(\psi)
  \coloneq
  \frac{
    \set*{
      (\iota,\Psi,A) \in \Imm_\beta(P,Y) \times \Gamma(W) \times \sA(\det W)
      :
      \begin{array}{@{}l@{}}
        \iota \text{ satisfies \eqref{Eq_Associative}} \text{ and} \\
        (\Psi,A) \text{ satisfies \eqref{Eq_ClassicalSeibergWitten}} \\
        \text{with respect to } \iota^*g_\psi
      \end{array}
    }
  }{
    \Diff_+(P)\ltimes C^\infty(P,\U(1)).
  }
\end{equation*}
Here $\fw$ ranges over all isomorphism classes of spin$^c$ structures on $P$ and $W$ denotes the spinor bundle.
The non-invariance of $n_\beta$ as defined in \eqref{Eq_NDPhi_2ndAttempt} can be traced back to the completion of $\fA_\beta(\set{\psi_t})$ not being a $1$--manifold.
The moduli space $\fA^\SW_\beta(\set{\psi_t})$ smooths out the singularities in the completion of $\fA_\beta(\set{\psi_t})$ encountered in the situations described in \autoref{Sec_IntersectingAssociatives} and \autoref{Sec_AssociatviesWithT2Singularities};
see \autoref{Fig_SmoothingT2Singularities}.

\begin{figure}[h]
  \centering
  \begin{tikzpicture}
    \draw[thick,cyan] (-3,2) node [left] {\tiny $[\iota^1,\Psi^{1,1},A^{1,1}]$} -- (0,2) -- (3,3) node [right] {\tiny $[\iota^2,\Psi^{2,1},A^{2,1}]$};
    \draw[thick,cyan] (-3,1.75) node [left] {\tiny $[\iota^1,\Psi^{1,2},A^{1,2}]$} -- (0,1.75) -- (3,2.75) node [right] {\tiny $[\iota^2,\Psi^{2,2},A^{2,2}]$};
    \draw[thick,magenta] (-3,1.5) node [left] {\tiny  $[\iota^1,\Psi^{1,3},A^{1,3}]$} -- (0,1.5) -- (3,.5) node [right] {\tiny $[\iota^3,\Psi^3,A^3]$};
    \filldraw[gray] (0,2) circle (0.05);
    \filldraw[gray] (0,1.75) circle (0.05);
    \filldraw[gray] (0,1.5) circle (0.05);
    \draw[gray] (-3,0) -- (0,0);
    \draw[|-stealth,gray] (0,0) -- (3,0) node [right] {$\psi_t$};
  \end{tikzpicture}  
  \caption{An example of how counting with Seiberg--Witten solutions can smooth out the situation depicted in \autoref{Fig_T2Singularities}.}
  \label{Fig_SmoothingT2Singularities}
\end{figure}
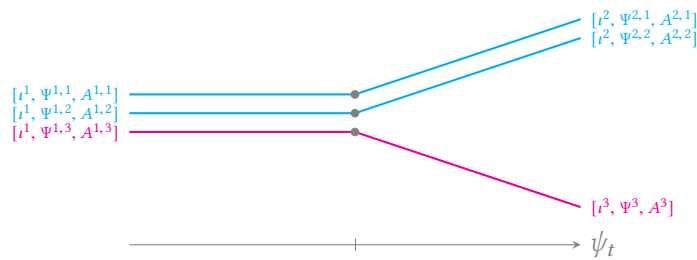

To sum up: the issue with defining a topological invariant $w(P) \in \Z$ with the properties described in \eqref{Eq_SurgeryBehaviour} means that there is indeed no invariant $n_\beta(\psi) \in \Z$ defined by a formula of the form \eqref{Eq_NDPhi_3rdAttempt}.
If it happens that for all associatives with $\iota_*[P] = \beta$ all connected components $P_j$ satisfy $b_1(P_j) > 1$, then the invariance of $n_\beta(\psi)$ can be rescued by setting $w(P) = \SW(P)$.
Unfortunately, there is no reason to believe that this holds for any reasonable class of closed, tamed, definite $4$--forms $\psi$ or choice of $\beta$.
(The situation is somewhat better for associatives arising from holomorphic curves in Calabi--Yau $3$--folds.
We discuss this case in \autoref{Sec_ADHMBundles}.)
However, Seiberg--Witten theory of $3$--manifolds suggests an alternative approach to defining an invariant of $\Gtwo$--manifolds.

\subsection{A putative Floer theory}
\label{Sec_AssociativeMonopoleHomology}

Although there is no topological invariant $w(P) \in \Z$ defined for all closed, oriented $3$--manifolds, satisfying the properties described in \eqref{Eq_SurgeryBehaviour},
there are Seiberg--Witten--Floer homology theories satisfying analogues of \eqref{Eq_SurgeryBehaviour},
see \citet{Marcolli2001,Manolescu2003,Kronheimer2007,Froyshov2010}.
We focus on one of the variants defined by \citeauthor{Kronheimer2007}.
To each closed, oriented $3$--manifold $P$ they assign a homology group
\begin{equation*}
  \HMfrom(P) = H\paren[\big]{\CMfrom(P,\clubsuit),\delfrom}.
\end{equation*}
Very roughly,
the chain complexes $\CMfrom(P,\clubsuit)$ are the $C^\infty(P,\U(1))$--equivariant Morse complexes of the \defined{Chern--Simons--Dirac functional} $\CSD\co \Gamma(W)\times\sA(\det W) \to \R$ defined by
\begin{equation}
  \label{Eq_CSD}
  \CSD(\Psi,A)
  =
  \frac12 \int_P (A-A_0)\wedge F_A
  + \int_P \<\slD_A\Psi,\Psi\> \vol
\end{equation}
on the configuration space
\begin{equation*}
  \sC(P) = \coprod_\fw \sC(P,\fw) \qwithq
  \sC(P,\fw) = \Gamma(W)\times \sA(\det W).
\end{equation*}
(The fact that $C^\infty(P,\U(1))$ does not act freely is a significant problem,
which \citeauthor{Kronheimer2007} resolve by blowing up $\sC(P)$ to a manifold with boundary and defining corresponding Morse complexes adapted to this situation.)
The chain complexes $\CMfrom(P,\clubsuit)$ depend on choices of additional data $\clubsuit$,
in particular, a Riemannian metric on $P$ as well as the choice of a suitable perturbation of the equation).
Different choices of $\clubsuit$, however, lead to quasi-isomorphic chain complexes.
We denote by $\CMfrom(P)$ quasi-isomorphism class of $\CMfrom(P,\clubsuit)$, or rather its isomorphism class in the derived category of chain complexes.
If $Q$ is a $4$--dimensional cobordism with $\del Q = P_1 - P_2$,
then \citeauthor{Kronheimer2007} define an induced chain map
\begin{equation*}
  \CMfrom(Q) \co \CMfrom(P_1) \to \CMfrom(P_2).
\end{equation*}
If $Q = [0,1]\times P$, then $\CMfrom(Q)$ is simply the differential $\delfrom$ on $\CMfrom(P)$.
The construction of $\HMfrom$ involves a choice of coefficients.
For the upcoming results to hold one needs to work with $\Z_2$ coefficients (or suitable local systems).
The monopole homology groups are then $\Z_2\llbracket U\rrbracket$--modules.
Here one should think $U$ as the same $U$ as in $H^\bullet(B\U(1)) = \Z[U]$.

The following results are the analogues of the vanishing result from \autoref{Thm_SeibergWittenUnstability} and the surgery formula from \autoref{Thm_SeibergWittenSurgeryFormula}.

\begin{theorem}[{\citet{Bloom20XX}; \citet[Theorem 5]{Lin2015}}]
  \label{Thm_MonopoleHomology0SurgeryFormula}
  Let $P^+$ and $P^-$ be two compact, connected, oriented $3$--manifolds.
  Denote by $P^+\csum P^-$ their connected sum and by $Q$ the surgery cobordism from $P^+ \sqcup P^-$ to $P^+\csum P^-$.
  Then there is an exact triangle%
  \footnote{%
    We use square brackets to denote the translation $C[p]_n = C_{p+n}$,
    see \cite[Translation 1.2.8]{Weibel1994}.
  }
  \begin{equation*}
    \CMfrom(P^+ \sqcup P^-)
    \xrightarrow{\CMfrom(Q)}
    \CMfrom(P^+\csum P^-)
    \to
    \CMfrom(P^+ \sqcup P^-)
    \to
    \CMfrom(P^+ \sqcup P^-)[-1];
  \end{equation*}
  in particular,
  \begin{equation}
    \label{Eq_MonopoleHomology0Surgeryformula}
    \HMfrom(P^+\sqcup P^-))
    \iso
    H\paren[\big]{\cone\paren[\big]{\CMfrom(P^+\sqcup P^-)
        \xrightarrow{\CMfrom(Q)} \CMfrom(P^+\csum P^-)}}.
  \end{equation}
\end{theorem}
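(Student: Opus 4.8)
The statement is the ``connected-sum'' (equivalently, $1$--handle) exact triangle for the monopole chain complex; it is the base case of the link-surgery spectral sequence of \cite{Bloom20XX}. The plan is to reduce it to a connected-sum formula for $\CMfrom$ and then finish by homological algebra. Two preliminary facts will be used: first, $\CMfrom$ is functorial under decorated cobordisms by the construction of \citet{Kronheimer2007}, so that $\CMfrom(Q)$ is defined, as is the map $\CMfrom(\bar Q)$ induced by the reversed cobordism $\bar Q\co P^+\csum P^- \to P^+\sqcup P^-$, which attaches a $4$--dimensional $3$--handle along the connected-sum $2$--sphere $S\subset P^+\csum P^-$ (the belt sphere of the $1$--handle of $Q$); second, the Seiberg--Witten equations on a disjoint union decouple, so $\CMfrom(P^+\sqcup P^-)\simeq \CMfrom(P^+)\otimes_{\Z_2}\CMfrom(P^-)$, and the composite cobordism $\bar Q\circ Q$ is the product cobordism on $P^+\sqcup P^-$ (a cancelling $1$--/$3$--handle pair), so that $\CMfrom(\bar Q)\circ\CMfrom(Q)$ is chain-homotopic to the identity; in particular $\CMfrom(Q)$ is a split injection up to homotopy.

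The analytic heart is a connected-sum formula: there is a two-dimensional graded $\Z_2$--vector space $V$, concentrated in two adjacent degrees, and a quasi-isomorphism
\begin{equation*}
  \CMfrom(P^+\csum P^-) \simeq \CMfrom(P^+\sqcup P^-)\otimes_{\Z_2} V
\end{equation*}
under which $\CMfrom(Q)$ becomes $\id\otimes\iota$ for the inclusion $\iota\co \Z_2\to V$ of the bottom-degree line and $\CMfrom(\bar Q)$ becomes $\id\otimes\pi$ for the projection onto the quotient. I would prove this by a neck-stretching argument on the cobordism $[0,1]\times(P^+\csum P^-)$ degenerated along the connecting region $S\times[0,1]$, exactly parallel to the surgery-triangle analysis of \cite{Bloom20XX} and the $S^1\times S^2$--neck computations underlying \cite[Theorem 2.4]{Kronheimer2007a}: as the neck is stretched, irreducible solutions converge to solutions on the two pieces, which reassemble into solutions on $P^+\sqcup P^-$, together with a contribution from the unique reducible on the neck, whose deformation theory produces the factor $V$; a gluing theorem shows that every such limiting configuration is attained, giving the quasi-isomorphism and the identification of the two cobordism maps. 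Mod-$2$ coefficients enter precisely here, because the relevant moduli spaces on the stretched cobordism are not yet known to carry coherent orientations.

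Granting the connected-sum formula, the theorem is formal. The mapping cone of $\CMfrom(Q)=\id\otimes\iota$ is $\CMfrom(P^+\sqcup P^-)\otimes_{\Z_2}\cone(\iota\co\Z_2\to V)$, and since $\iota$ is a split injection with one-dimensional cokernel, $\cone(\iota)$ is quasi-isomorphic to $\Z_2$ in a single degree; hence $\cone(\CMfrom(Q))\simeq\CMfrom(P^+\sqcup P^-)$. This is exactly the assertion that the displayed sequence is an exact triangle, with connecting map landing in the shift $\CMfrom(P^+\sqcup P^-)[-1]$; passing to homology gives \eqref{Eq_MonopoleHomology0Surgeryformula}. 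Equivalently, once $\CMfrom(\bar Q)\circ\CMfrom(Q)\simeq\id$ is known one has $\CMfrom(P^+\csum P^-)\simeq\CMfrom(P^+\sqcup P^-)\oplus\coker\CMfrom(Q)$, and it remains only to identify $\coker\CMfrom(Q)\simeq\CMfrom(P^+\sqcup P^-)$, which is again the content of the formula above.

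The main obstacle is this connected-sum formula: the Seiberg--Witten analysis on the cobordism stretched along $S\times[0,1]$ must control the reducible locus (its non-compactness and the transversality of the obstruction bundle near the reducible) and prove a matching gluing theorem with enough precision to pin down both the factor $V$ and the images of $\CMfrom(Q)$ and $\CMfrom(\bar Q)$. This is precisely the work carried out, in the closely related setting of the link-surgery spectral sequence, in \cite{Bloom20XX} and \cite[Theorem 5]{Lin2015}, on which the argument is modeled.
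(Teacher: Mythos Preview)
The paper does not supply its own proof of this statement; it is quoted from \cite{Bloom20XX} and \cite[Theorem 5]{Lin2015}. The remark immediately following the theorem records that Lin's formulation is
\begin{equation*}
  \HMfrom(P^+\csum P^-)
  \iso
  H\paren[\big]{\cone\paren[\big]{\CMfrom(P^+)\otimes\CMfrom(P^-)[1] \xrightarrow{\id\otimes U + U\otimes \id} \CMfrom(P^+)\otimes \CMfrom(P^-)}}
\end{equation*}
and that the stated triangle is obtained from this by identifying the connecting map $\CMfrom(P^+\sqcup P^-)\to\CMfrom(P^+\sqcup P^-)[-1]$ with $\id\otimes U+U\otimes\id$ and rotating.

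Your sketch contains two related errors. First, a $4$--dimensional $1$--handle and $3$--handle are \emph{not} a cancelling pair: handle cancellation requires adjacent indices. The composite $\bar Q\circ Q$ is not the product cobordism; already for $P^\pm=S^3$ it is connected (it is $S^4$ with four open balls removed), whereas $(S^3\sqcup S^3)\times[0,1]$ is not. Hence $\CMfrom(\bar Q)\circ\CMfrom(Q)$ is not homotopic to the identity, and $\CMfrom(Q)$ is not a split injection. Second, and consequently, the connected-sum formula you posit, $\CMfrom(P^+\csum P^-)\simeq\CMfrom(P^+\sqcup P^-)\otimes_{\Z_2}V$ with $V$ two-dimensional, is false for $\HMfrom$: it would force the connecting map in the triangle to be null-homotopic, but by Lin that map is $\id\otimes U+U\otimes\id$, which is generically nonzero in homology (take $P^\pm=S^3$ and compare ranks). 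A splitting of the shape you describe is what one expects for the tilde flavour, where there is no $U$; the theorem as stated is specific to $\HMfrom$, cf.\ the remark after \autoref{Thm_MonopoleHomology1SurgeryFormula}. The actual mechanism is that $\CMfrom(P^+\csum P^-)$ is the mapping cone of $\id\otimes U+U\otimes\id$, and the triangle follows by rotation; the neck-stretching analysis must therefore track the $U$--action, not merely produce an extra two-dimensional factor.
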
 

\begin{remark}
  In \cite[Theorem 5]{Lin2015},
  \autoref{Thm_MonopoleHomology0SurgeryFormula} is stated and proved as an isomorphism
  \begin{equation*}
    \HMfrom(P^+\csum P^-)
    \iso
    H\paren[\big]{\cone\paren[\big]{\CMfrom(P^+)\otimes\CMfrom(P^-)[1] \xrightarrow{\id\otimes U + U\otimes \id} \CMfrom(P^+)\otimes \CMfrom(P^-)}}
  \end{equation*}
  induced by the cobordism $Q$.
  This formulation is much more useful for actual computations of $\HMfrom(P^+\csum P^-)$,
  but we need \eqref{Eq_MonopoleHomology0Surgeryformula} for our purposes.
  The equivalence of these statements follows by observing that once we identify
  \begin{equation*}
    \CMfrom(P^+ \sqcup P^-) = \CMfrom(P^+)\otimes\CMfrom(P^-)
  \end{equation*}
  the map $\CMfrom(P^+ \sqcup P^-) \to \CMfrom(P^+ \sqcup P^-)[-1]$ is given by $\id\otimes U + U\otimes \id$ and rotating the above exact triangle.
\end{remark}

\begin{remark}
  More generally, if $P^\csum$ is obtained by performing a connected sum at two points $x^\pm$ in $P$ and $Q$ denotes the surgery cobordism from $P$ to $P^\csum$,
  then we expect there to be an exact triangle
  \begin{equation*}
    \CMfrom(P)
    \xrightarrow{\CMfrom(Q)}
    \CMfrom(P^\csum)
    \to
    \CMfrom(P)
    \to
    \CMfrom(P)[-1].
  \end{equation*}
  \autoref{Thm_MonopoleHomology0SurgeryFormula} asserts that this is holds if the points $x^\pm$ lie in different connected components of $P$.
\end{remark}

\begin{theorem}[{\citet[Theorem 2.4]{Kronheimer2007a}; see also \cite[Theorem 42.2.1]{Kronheimer2007}}]
  \label{Thm_MonopoleHomology1SurgeryFormula}
  Let $P^{\circ}$ be a compact, connected, oriented $3$--manifold with $\del P^{\circ} = T^2$.
  Let $\mu_1,\mu_2,\mu_3 \in H_1(\del P^{\circ})$ be such that
  \begin{equation*}
    \mu_1\cdot \mu_2 = \mu_2\cdot \mu_3 = \mu_3\cdot \mu_1 = -1
  \end{equation*}
  (with $T^2 = \del P^{\circ}$ oriented as the boundary of $P^{\circ}$.)
  Denote by $Q_{ij}$ the surgery cobordism from $P^{\circ}_{\mu_i}$ to $P^{\circ}_{\mu_j}$.
  There is an exact triangle
  \begin{equation*}
    \CMfrom(P^{\circ}_{\mu_2})
    \xrightarrow{\CMfrom(Q_{23})} \CMfrom(P^{\circ}_{\mu_3})
    \to \CMfrom(P^{\circ}_{\mu_1})
    \to \CMfrom(P^{\circ}_{\mu_2})[-1];
  \end{equation*}
  in particular,
  \begin{equation}
    \label{Eq_MonopoleHomology1Surgeryformula}
    \HMfrom(P^{\circ}_{\mu_1})
    \iso
    H\paren[\big]{\cone\paren[\big]{\CMfrom(P^{\circ}_{\mu_2})
        \xrightarrow{\CMfrom(Q_{23})} \CMfrom(P^{\circ}_{\mu_3})}}.
  \end{equation}
\end{theorem}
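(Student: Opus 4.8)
The statement is the specialisation to Dehn fillings of the monopole Floer surgery exact triangle of \citet[Section 42]{Kronheimer2007} (see also \cite{Kronheimer2007a}), and the plan is to reproduce its proof. First I would set up, for each cyclically consecutive pair of slopes, the surgery cobordism $Q_{ij}$ from $P^\circ_{\mu_i}$ to $P^\circ_{\mu_j}$: it is $P^\circ_{\mu_i}\times[0,1]$ with a single $2$--handle attached along the core of the $\mu_i$--filling solid torus, the framing being dictated by $\mu_i\cdot\mu_j = -1$. Let $f_{ij} = \CMfrom(Q_{ij}) \co \CMfrom(P^\circ_{\mu_i}) \to \CMfrom(P^\circ_{\mu_j})$ be the induced morphism in the blown-up monopole theory. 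The intersection hypotheses force $\mu_{i+3} = \mu_i$ in $H_1(\del P^\circ)$, so $Q_{12},Q_{23},Q_{31}$ close up into a cyclic triple, and the goal becomes to promote these three maps to an exact triangle; equivalently, after rotation, to exhibit $\CMfrom(P^\circ_{\mu_1}) \iso \cone(f_{23})$, which is \eqref{Eq_MonopoleHomology1Surgeryformula}.

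The two geometric inputs are: (i) for each cyclically consecutive triple the composite $f_{jk}\circ f_{ij}$ is chain-homotopic to zero; and (ii) the morphism $\cone(f_{ij}) \to \CMfrom(P^\circ_{\mu_k})$ assembled from $f_{jk}$ together with a \emph{chosen} null-homotopy of $f_{jk}\circ f_{ij}$ is a quasi-isomorphism. For (i) one notes that the composite cobordism $Q_{ij}\cup Q_{jk}$ is built from two $2$--handles whose attaching circles can be isotoped to be disjoint with one bounding a disc meeting the other exactly once; hence it contains an essential embedded $2$--sphere (of square $0$ after an isotopy), and the blow-up formula together with adjunction-type vanishing for monopole cobordism maps forces $f_{jk}\circ f_{ij} \simeq 0$, the null-homotopy being produced by counting monopoles over a $1$--parameter family of metrics and perturbations on $Q_{ij}\cup Q_{jk}$. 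For (ii) one runs the same analysis on the triple composite $Q_{ij}\cup Q_{jk}\cup Q_{ki}$, a cobordism from $P^\circ_{\mu_i}$ to itself (using $\mu_{i+3}=\mu_i$): a $2$--parameter family of metrics and perturbations furnishes a chain homotopy comparing the assembled map with an isomorphism. Given (i) and (ii), the abstract exact-triangle lemma (an argument of Ozsv\'{a}th and Szab\'{o}, in the form used by Kronheimer and Mrowka) yields the exact triangle $\CMfrom(P^\circ_{\mu_2}) \xrightarrow{f_{23}} \CMfrom(P^\circ_{\mu_3}) \to \CMfrom(P^\circ_{\mu_1}) \to \CMfrom(P^\circ_{\mu_2})[-1]$, and passing to homology gives the mapping-cone description \eqref{Eq_MonopoleHomology1Surgeryformula}.

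Everything topological here --- identifying each $Q_{ij}$ as a single $2$--handle attachment, the relation $\mu_{i+3}=\mu_i$, and the diffeomorphism types of the double and triple composites --- is routine Kirby calculus on $P^\circ\times[0,1]$ with solid-torus boundary, and the homological-algebra step is formal once (i) and (ii) are in place. The main obstacle is the analytic content of (i) and especially (ii): the whole construction must be carried out in Kronheimer and Mrowka's blown-up configuration spaces, so one must establish transversality for the parametrised moduli spaces over $Q_{ij}\cup Q_{jk}$ and $Q_{ij}\cup Q_{jk}\cup Q_{ki}$ away from the reducible locus, control their behaviour near reducibles, and describe their codimension-one and codimension-two boundary strata by gluing, in order to extract the null-homotopies and the comparison homotopy. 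This is also where $\Z_2$ coefficients (or a suitable local system) become indispensable: over $\Z$ the orientations of these parametrised moduli spaces need not be coherent, and the cancellations underlying (i) and (ii) hold only modulo $2$. Once this package is assembled the theorem follows, and for the execution I refer to \cite{Kronheimer2007,Kronheimer2007a}.
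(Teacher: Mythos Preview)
The paper does not prove this theorem; it is stated with attribution to \citet{Kronheimer2007a} and \cite[Theorem 42.2.1]{Kronheimer2007} and used as a black box. So there is no ``paper's own proof'' to compare against, and your outline is essentially a sketch of the argument in those references, which is the right thing to point to.

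One small correction to your sketch: in step (i) the composite cobordism $Q_{ij}\cup Q_{jk}$ does not contain a square-zero sphere killing the map via adjunction. Rather, it is diffeomorphic to a product cobordism blown up at a point (equivalently, connect-summed with $\overline{\CP^2}$), so it contains a sphere of square $-1$; the null-homotopy of $f_{jk}\circ f_{ij}$ then comes from stretching the neck separating that $\overline{\CP^2}$ summand and using the blow-up formula together with the fact that the induced map on the remaining piece vanishes for spin$^c$-structure reasons. The triple-composite step (ii) is likewise organized around a two-parameter family of metrics whose boundary strata are identified via this neck-stretching picture, and the ``triangle detection lemma'' you invoke is exactly the homological-algebra input. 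With that adjustment your plan matches the cited proof.
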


\begin{remark}
  While \autoref{Thm_MonopoleHomology1SurgeryFormula} holds for all three version of monopole homology defined by \citeauthor{Kronheimer2007},
  \autoref{Thm_MonopoleHomology0SurgeryFormula} only holds form $\HMfrom$;
  see \cite[paragraph after (13)]{Lin2015}.
  This is why we restricted ourselves to this version from the outset.
\end{remark}

Associative submanifolds are critical points of the functional $\fL$ defined in \autoref{Prop_LFunctional}.
Gradient flow lines of the functional $\fL$ can naturally be identified with immersions $\biota \co \R\times P \to \R\times Y$ such that
\begin{equation*}
  \biota^*(\psi + \rd t\wedge\phi) = \vol_{\biota^*g}
\end{equation*}
and $\pi_\R\circ \biota(t,x) = t$;
see, e.g., \cite[Lemma 12.6]{Salamon2010}.

\begin{definition}
  Let $\iota^\pm \co P^\pm \to Y$ be associative immersions with respect to $\psi$.
  A \defined{Cayley cobordism} in $\R\times Y$ from $\iota_-$ to $\iota_+$ is an oriented $4$--manifold $Q$ together with an immersion $\biota\co Q \to \R\times Y$ such that
  \begin{equation*}
    \biota^*(\psi + \rd t\wedge\phi) = \vol_{\biota^*g}
  \end{equation*}
  and there are two open subsets $U_\pm \subset Q$ such that $Q \setminus (U_+\cup U_-)$ is compact,
  constants $T_\pm$ and $c > 0$,
  and diffeomorphisms $\phi_+\co (T_+,\infty) \times P^+ \to U_+$ and $\phi_-\co (-\infty,T_-) \times P^- \to U_-$ such that
  \begin{equation*}
    \mathrm{dist}(\biota\circ\phi_\pm(t,x),(t,\iota^\pm(x)))
    = O(e^{-c\abs{t}}) \quad\text{as } t \to \pm \infty.
  \end{equation*}
  The \defined{truncation} of a Cayley cobordism is (the diffeomorphism type of)
  \begin{equation*}
    \bar Q \coloneq Q \setminus \(\phi_-(-\infty,T_--1) \cup \phi_+(T_++1,\infty)\).
  \end{equation*}
\end{definition}

The functorial behavior of Seiberg--Witten Floer homology groups under cobordisms leads to the following questions about the existence of Cayley cobordisms.

\begin{question}
  In the situation of \autoref{Thm_DesingularizingIntersectionsOfAssocatives},
  does there exist a Cayley cobordism $\biota \co Q \to \R\times Y$ from $\iota_{t(\epsilon)}$ to $\iota^\csum_{\epsilon}$,
  for all $\epsilon \in (0,\epsilon_0)$,
  whose truncation $\bar Q$ is the surgery cobordism from $P$ to $P^\csum$?
\end{question}

\begin{question}
  In the situation of \autoref{Conj_ResolvingT2Singularity},
  if $\delta_2 > \delta_1 > 0$,
  does there exist a Cayley cobordism $\biota \co Q \to \R\times Y$ from $\iota^2_t$ to $\iota^3_t$ with $\bar Q$ being the surgery cobordism from $P^{\circ}_{\mu_2}$ to $P^{\circ}_{\mu_3}$ for each $t \in (0,T)$?
  (Similarly for the cases $\delta_1 > \delta_2  > 0$, $\delta_2 < \delta_ 1 < 0$, and $\delta_1 < \delta_2 < 0$.)
\end{question}

We \emph{hope} that the answer to these questions is yes.
For the sake of argument, let us assume that this is indeed the case.
Define
\begin{equation}
  \label{Eq_CMA}
  \CMA_\beta(\psi) \coloneq \bigoplus_{P} \bigoplus_{[\iota] \in \fA_{P,\beta}(\psi)} \CMA_{\beta,[\iota]}(\psi)
  \quad\text{with}\quad
  \CMA_{\beta,[\iota]}(\psi) \coloneq \CMfrom(P)
\end{equation}
and define a differential on $\CMA_\beta(\psi)$ by declaring
\begin{equation*}
  \(\del\co \CMA_{\beta,[\iota_-]}(\psi) \to \CMA_{\beta,[\iota_+]}(\psi)\)
  \coloneq \sum_{[\biota]} \CMfrom(\bar Q)
\end{equation*}
where $[\biota\co Q \to \R\times Y]$ ranges over all equivalence classes of Cayley cobordisms from $[\iota_-]$ to $[\iota_+]$.

Since $\CMfrom([0,1]\times P)$ is just the differential $\delfrom$ on $\CMfrom(P)$,
in the situation of \autoref{Thm_DesingularizingIntersectionsOfAssocatives} with $\delta > 0$ as in \autoref{Rmk_DesingularizingIntersectionsOfAssocatives} (and assuming that there no other Cayley cobordism involving $[\iota_t]$ or $[\iota^\csum_t]$),
for $t < 0$, the chain complex $\CMA_\beta(\psi_t)$ contains the contribution
\begin{equation*}
  \CMA_\beta^\times(\psi_t) = \CMfrom(P)
  \quad\text{with}\quad
  \del = \delfrom;
\end{equation*}
for $t > 0$ this changes to
\begin{equation*}
  \CMA_\beta^\times(\psi_t)
  = \CMfrom(P) \oplus \CMfrom(P^\csum)
  \quad\text{with}\quad
  \del =
  \begin{pmatrix}
    \delfrom & 0 \\
    \CMfrom(Q) & \delfrom
  \end{pmatrix}
\end{equation*}
with $Q$ the surgery cobordism from $P$ to $P^\csum$.
The latter is simply the mapping cone
\begin{equation*}
  \cone\paren[\big]{\CMfrom(P) \xrightarrow{\CMfrom(Q)} \CMfrom(P^\csum)}.
\end{equation*}
Therefore, it follows from \autoref{Thm_MonopoleHomology0SurgeryFormula}, that the homology group
\begin{equation*}
  H(\CMA_\beta^\times(\psi_t),\del)
\end{equation*}
does not change as $t$ passes through zero.
Similarly,
in the situation of \autoref{Conj_ResolvingT2Singularity},
by \autoref{Thm_MonopoleHomology1SurgeryFormula},
the relevant contribution to $H(\CMA_\beta(\psi_t),\del)$ does not change as $t$ passes through zero.

To conclude: while there seem to be no way of making the weighted count of associatives $n_\beta(\psi)$ invariant under transitions \autoref{It_Intersecting} and \autoref{It_T2Singularities} described in \autoref{Sec_Introduction}, we conjecture that a more refined object, the homology group  $H(\CMA_\beta^\times(\psi))$ is invariant under both of these transitions. 

%%% Local Variables:
%%% mode: latex
%%% TeX-master: "CountingAssociativesSeibergWitten"
%%% End:

\section{Multiple covers of associative submanifolds}
\label{Sec_MultipleCoverAssociatives}

A further problem with counting associatives arises from multiple covers; namely, transition \autoref{It_MultipleCovers} from \autoref{Sec_Introduction}.
This section is concerned with describing the nature of this phenomenon and its consequences for counting associative submanifolds.
In the following we explain how this issue might be rectified using the ADHM Seiberg--Witten equations, in a similar way that the issues described in the previous sections were dealt with using the classical Seiberg--Witten equation.

We have already established that, most likely, one cannot guarantee the number $n_\beta(\psi)$, or some other weighted count of associatives, to be invariant under deformations.
However, the problem with multiple covers is independent of the phenomena discussed earlier.
Thus, for the sake of simplicity we will only discuss how multiple covers affect $n_\beta(\psi)$ rather than the homology group $H(\CMA_\beta^\times(\psi))$; see also \autoref{Rem_Categorification} below. 

\subsection{Collapsing of immersions of multiple covers}
\label{Sec_CollapsingImmersions}

Consider the following situation.
Let $\iota_0 \co P \to Y$ be an associative immersion with respect to $\psi_0 \in \sD^4_c(Y)$ and with $(\iota_0)_*[P] = \beta \in H_3(Y)$.
Let $\pi \co \tilde P \to P$ be an orientation preserving $k$--fold unbranched normal cover with deck transformation group $\Aut(\pi)$.
The composition
\begin{equation*}
  \kappa_0 \coloneq \iota_0\circ\pi \co \tilde P \to Y
\end{equation*}  
is an associative immersion with
\begin{equation*}
  (\kappa_0)_*[\tilde P] = k\cdot \beta \qandq \Aut(\pi) \subset \Aut(\kappa_0).
\end{equation*}
Suppose that $[\iota_0]$ is unobstructed but
\begin{equation*}
  \ker F_{\kappa_0} = \R\Span{n} \subset \Gamma(N\kappa_0).
\end{equation*}
We expect that this situation can arise along generic paths $(\psi_t)_{t \in (-T,T)}$ in $\sD^4_c(Y)$.
A neighborhood of $([\kappa_0],\psi_0)$ in the $1$--parameter family of moduli spaces $\bigcup_t \fM_{k\cdot \beta}(\psi_t)$ can be analyzed using \autoref{Thm_AssociativeKuranishiModel}.

The stabilizer of $\kappa_0$ plays an important role in this analysis.
Since $\Aut(\kappa_0)$ acts on $N\kappa_0$ and $F_{\kappa_0}$ is $\Aut(\kappa_0)$--equivariant,
$\Aut(\kappa_0)$ acts on $\ker F_{\kappa_0}$.
This yields a homomorphism $\sign\co \Aut(\kappa_0) \to \set{\pm 1}$ such that
\begin{equation}
  \label{Eq_SignHomomorphism}
  f\cdot n = \sign(f)n
\end{equation}
for all $f \in \Aut(\kappa_0)$.
The homomorphism $\sign$ must be non-trivial, for otherwise $n$ would be $\Aut(\pi)$--invariant and descend to a non-trivial element of $\ker F_{\iota_0}$.

To summarize,
$\kappa_0 \co P \to Y$ is an associative immersion with respect to $\psi_0 \in \sD^4_c(Y)$ such that:
\begin{enumerate}
\item
  $\Aut(\kappa_0)$ is non-trivial,
\item
  $[\kappa_0]$ is obstructed; more precisely: $\ker F_{\kappa_0} = \R\Span{n}$, and
\item
  the homomorphism $\sign\co \Aut(\kappa_0) \to \set{\pm 1}$ defined by \eqref{Eq_SignHomomorphism} is non-trivial.
\end{enumerate}
In this situation,
if $(\psi_t)_{t \in (-T,T)}$ is generic,
then the obstruction map $\ob$ from \autoref{Thm_AssociativeKuranishiModel}, whose zero set models a neighborhood of $([\kappa_0],\psi_0)$ in $\bigcup_t \fM_{k\cdot \beta}(\psi_t)$, will be of the form
\begin{equation*}
  \ob(t,\delta) = \lambda t\delta + c\delta^3 + \text{higher order terms}.
\end{equation*}
We can assume that $\lambda = c = 1$.
Ignoring the higher order terms, $\ob^{-1}(0)$ consists of the line $\set{\delta=0}$ and the parabola $\set{t+\delta^2=0}$.
Since $[\iota_0]$ is unobstructed,
for each $\abs{t} \ll 1$,
there is an associative immersion $\iota_t \co P \to Y$ with respect to $\psi_t$ near $\iota_0$.
The line $\set{\delta=0}$ corresponds to the unobstructed associative immersions $[\kappa_t] \coloneq [\iota_t\circ\pi]$ for $\abs{t} \ll 1$.
By \autoref{Thm_AssociativeKuranishiModel}, for each $-1 \ll t < 0$ there are also associative immersions $[\kappa^\pm_t : \tilde P \to Y]$ with respect to $\psi_t$ near $[\kappa_0]$.
These correspond to the two branches of the parabola $\set{t+\delta^2=0}$.
As $t$ tends to $0$, $[\kappa^\pm_t]$ tends to $[\kappa_0]$;
and $\Aut(\kappa^\pm_t)$ is the stabilizer of $n$ in $\Aut(\kappa_0)$.
Since $\sign\co \Aut(\kappa_0) \to \set{\pm 1}$ is non-trivial,
there is an $f \in \Aut(\kappa_0)$ such that
\begin{equation*}
  f_*n = -n.
\end{equation*}
Therefore, $\kappa^+_t$ and $\kappa^-_t$ differ by a diffeomorphism of $\tilde{P}$ and give rise to the same element in the moduli space of associatives:
\begin{equation*}
  [\tilde \iota_t] \coloneq [\kappa^+_t] = [\kappa^-_t].
\end{equation*}
Thus, the neighborhood $\ob^{-1}(0)/\Aut(\kappa_0)$ of $([\kappa_0],\psi_0)$ in $\bigcup_t \fM_{k\cdot \beta}(\psi_t)$ is homeomorphic to the figure depicted in \autoref{Fig_CollapsingToMultipleCover}.
Consequently, $n_{k\cdot \beta}(\psi_t)$ as in \eqref{Eq_NDPhi_3rdAttempt} with the weight $w = \SW$ changes by $\pm \SW(\tilde P)$ as $t$ crosses zero.
Similarly, if one were to adopt the approach described in \autoref{Sec_AssociativeMonopoleHomology}, part of the chain complex $\CMA_{k\cdot \beta}(\psi_t)$ would disappear as $t$ crosses zero.

\begin{figure}[h]
  \centering
  \begin{tikzpicture}
    \draw[thick,cyan] (-3,1.5) node [left] {$[\tilde\iota]$} .. controls (-2,1.5) and (0,1.5) .. (0,.5);
    \draw[thick,magenta] (-3,0.5) node [left] {$[\kappa]$} -- (3,.5);
    \draw[gray] (-3,0) -- (0,0);
    \filldraw[gray] (0,.5) circle (.05);
    \draw[|-stealth,gray] (0,0) -- (3,0) node [right] {$\psi_t$};
  \end{tikzpicture}
  \caption{An family of associative immersions collapsing to a multiple cover.}
  \label{Fig_CollapsingToMultipleCover}
\end{figure}
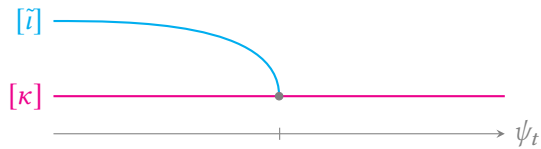

\subsection{Counting orbifolds points}

The standard way to deal with the issue of multiple covers is to count the immersions $[\kappa]$ and $[\tilde\iota]$ described before as orbifold points in the moduli space;
that is, to define
\begin{equation}
  \label{Eq_NDPhi_4thAttempt}
  n_\beta(\psi) \coloneq \sum_{[\iota] \in \fM_\beta(\psi)} \frac{\epsilon([\iota],\psi) w(P)}{\abs{\Aut(\iota)}}.
\end{equation}
Since $[\kappa_0]$ is obstructed,
more precisely, since the Fueter operator associated with $\kappa_0$ has a $1$--dimensional kernel,
\autoref{Eq_EpsilonSpectralFlow} implies that the sign $\epsilon([\kappa_t],\psi_t) \in \set{\pm 1}$ flips as $t$ passes through $0$.
Moreover,
\begin{equation*}
  \Aut(\tilde\iota) = \ker \sign \subset \Aut(\kappa),
\end{equation*}
where $\sign\co \Aut(\kappa_0) \to \set{\pm 1}$ is the homomorphism introduced above, and thus
\begin{equation*}
  \abs{\Aut(\kappa)} = 2\cdot \abs{\Aut(\tilde\iota)}.
\end{equation*}
Consequently, for $0 < t \ll 1$, we have
\begin{equation*}
  \frac{\epsilon([\kappa_{-t}],\psi_{-t}) w(\tilde P)}{\abs{\Aut(\kappa_{-t})}}
  +
  \frac{\epsilon([\tilde\iota_{-t}],\psi_{-t}) w(\tilde P)}{\abs{\Aut(\tilde\iota)}}
  =
  \frac{\epsilon([\kappa_{+t}],\psi_{+t}) w(\tilde P)}{\abs{\Aut(\kappa_{+t})}} \in \Q.
\end{equation*}

This works well for unbranched covers, but we believe that similar situations can occur with branched covers $\pi \co \tilde P \to P$.
If $\pi$ is a branched cover (with non-empty branching locus),
then $\kappa \coloneq \iota\circ\pi$ is not an immersion and thus the theory from \autoref{Sec_CountingAssociatives} does not apply.
What exactly replaces this theory is unclear to us; the work of \citet{Smith2011} might be a starting point.
Nevertheless, one would need to count $[\kappa]$ to be able to compensate the jump.
The crucial point is that, for any given $3$--manifold $P$ and $k \in \N$, infinitely many diffeomorphism types of $3$--manifolds might be realized as $k$--fold branched covers of $P$.
This is illustrated by the following result.

\begin{theorem}[{\citet{Hilden1974,Hilden1976,Montesinos1974}}]
  Every compact, connected, orientable $3$--manifold is a $3$--fold branched cover of $S^3$.
\end{theorem}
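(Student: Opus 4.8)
The plan is to reconstruct the Hilden--Montesinos argument via Heegaard splittings. First I would fix a Heegaard splitting $M = H \cup_h H'$ with $H, H'$ handlebodies of some genus $g$ and $h \co \partial H' \to \partial H$ the gluing homeomorphism, and write $S^3 = B_+ \cup_{\id} B_-$ as the union of two $3$--balls along their common boundary sphere $S^2$. The aim is then to realize $H \to B_+$ and $H' \to B_-$ as $3$--fold simple branched covers, each branched over a properly embedded tangle (a disjoint union of unknotted arcs) in the respective ball, whose boundary branched coverings $\partial H \to S^2$ and $\partial H' \to S^2$ agree after identifying the two spheres via $h$. Gluing these two covers along $S^2$ then produces a $3$--fold branched cover $M \to S^3$ whose branch locus is the link obtained by joining the two tangles at their endpoints.

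For the handlebody pieces I would invoke the classical fact that a genus-$g$ handlebody is a $3$--fold simple branched cover of $B^3$ over a \emph{trivial} tangle --- unknotted, unlinked, properly embedded arcs --- with the restriction to the boundary a $3$--fold simple branched cover $\Sigma_g \to S^2$ branched over $2g+4$ points. This is proved by starting from the trivial $2$--arc presentation of $B^3$ and adjoining trivial arcs one at a time, each carrying a transposition and raising the genus of the total space by one, the count being pinned down by Riemann--Hurwitz. Both $H$ and $H'$ therefore admit such presentations; the only remaining issue is whether the Heegaard gluing $h$ is compatible with them, that is, whether $h$ lies in the image of the group of homeomorphisms of the punctured sphere $(S^2, \text{branch points})$ under the Birman--Hilden lifting correspondence.

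The hard part is exactly this liftability step. Let $L_g \subset \mathrm{MCG}(\Sigma_g)$ denote the subgroup of mapping classes that lift homeomorphisms of $S^2$ permuting the branch points compatibly with the monodromy of $\Sigma_g \to S^2$. One must show that, after replacing the Heegaard splitting by a stabilization --- which raises $g$ and enlarges the tangles by trivial arcs, without changing $M$ --- and after pre- and post-composing $h$ with homeomorphisms that extend over $H'$ and over $H$ respectively (which again do not change $M$), one may assume $h \in L_g$. Concretely I would write $h$ as a product of Dehn twists along a standard generating set of $\mathrm{MCG}(\Sigma_g)$ and check that each such twist either already lies in $L_g$ or is carried into $L_g$ by a single stabilization; this is the point at which working with $3$--fold rather than $2$--fold simple branched covers is essential, since the liftable subgroup for double covers (the hyperelliptic mapping class group) is a proper subgroup of $\mathrm{MCG}(\Sigma_g)$ for $g \geq 3$, whereas for $3$--fold simple covers stabilization supplies enough flexibility. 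Keeping track of the monodromy labels under stabilization and under the chosen Dehn twists is where the genuine work lies, and I expect this to be the principal obstacle. (An alternative route is Montesinos's, which starts instead from a surgery or bridge presentation of $M$ and manipulates a coloured link by local moves; the same liftability difficulty reappears there as the problem of trivializing the monodromy by such moves.)

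Finally, once $h$ is liftable I would choose a lift and glue to obtain the desired $3$--fold branched cover $M \to S^3$, with branch locus a link $L \subset S^3$. If one insists on a connected branch locus, I would additionally observe that for a $3$--fold simple branched cover two components of $L$ whose local monodromies are distinct transpositions in the symmetric group $S_3$ may be band-summed without altering the total space, and iterate to replace $L$ by a knot; but for the statement as formulated, a link branch locus already suffices.
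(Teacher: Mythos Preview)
The paper does not prove this theorem at all: it is quoted as a classical result with citations to Hilden and Montesinos, and is used only to illustrate that infinitely many diffeomorphism types of $3$--manifolds can arise as $3$--fold branched covers of a given $P$. There is therefore no ``paper's own proof'' to compare against.

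Your outline is a faithful sketch of Hilden's original argument via Heegaard splittings, and you have correctly identified the crux: showing that after stabilization the gluing map can be taken to lie in the liftable subgroup for a $3$--fold simple branched cover $\Sigma_g \to S^2$. That step is genuine work and your proposal flags it honestly rather than sweeping it under the rug. The alternative route you mention (Montesinos, via surgery presentations) is equally valid. For the purposes of this paper no proof is expected; the result is simply invoked.
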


Therefore, if $\iota\co S^3 \to Y$ is an associative immersion in $(Y,\psi)$,
then, for every compact, connected, oriented $3$--manifold $\tilde P$, there is a $3$--fold branched cover $\pi \co \tilde P \to P$, and $[\iota\circ \pi]$ would have to contribute to \eqref{Eq_NDPhi_4thAttempt}.
This would lead to an infinite contribution from branched covers.

\subsection{Counting embeddings with multiplicty}
\label{Sec_EmbeddingsWithMultiplicity}

We believe that the origin of the problem is that all the associative submanifolds $[\iota\circ \pi]$ represent the same geometric object, namely, ``$k$ times $\im(\iota)$''.
Instead of trying to count immersions and their compositions with branched covers with weights,
we should count embeddings with multiplicity.
Embeddings with with multiplicity one should be weighted by the Seiberg--Wittten invariant,
as in \autoref{Sec_RoleOfSeibergWitten} or \autoref{Sec_AssociativeMonopoleHomology}.
Below we briefly outline an approach for defining the weights with which to count embeddings with multiplicity $k$ larger than one.
More details are given in \autoref{Sec_DegenerationsOfADHMMonopoles} and \autoref{Sec_TentativeProposal}.

\begin{remark}
  Our approach should be compared with holomorphic curve counting via Donaldson--Thomas/Pandharipande--Thomas theory in algebraic geometry where one counts embedded subschemes, including contributions from thickened subschemes, rather than images of maps.
  We elaborate on the relationship of this approach with Pandharipande--Thomas theory in \autoref{Sec_ADHMBundles}.
\end{remark}

To set the stage,
let us go back to the situation described at the beginning of this section;
that is, we have an unobstructed associative embedding $\iota \colon P \to Y$
and an orientation preserving $k$--fold unbranched cover $\pi\co \tilde P \to P$ such that
\begin{equation*}
  \kappa \coloneq \iota\circ \pi \co \tilde P \to Y
\end{equation*}
is an obstructed associative immersion with $\dim\ker F_\kappa = 1$.
Denote by $\tilde\iota\co \tilde P \to Y$ the associative immersion which is the deformation of $\kappa$ that does not come from deforming $\iota$.
(For simplicity's sake, we dropped the subscripts $t$ from the notation.)
Consider the bundle of stratified spaces
\begin{equation*}
  \Sym^kN\iota \coloneq \SO(N\iota)\times_{\SO(4)}\Sym^k\H = (N\iota)^k/S_k.
\end{equation*}
Here $\H = \R^4$ is the space of quaternions and $S_k$ is the symmetric group on $k$ elements.
To every normal vector field $n \in \Gamma(N\kappa)$ we assign a corresponding section $\tilde n \in \Gamma(\Sym^kN\iota)$ defined by
\begin{equation*}
  \tilde n(x)
  \coloneq
  [n(\tilde x_1),\ldots,n(\tilde x_k)]
\end{equation*}
with $\tilde x_1, \ldots, \tilde x_k$ denoting the preimages of $x$ with multiplicity.
Given such a section $\tilde n \in \Gamma(\Sym^kN\iota)$,
set
\begin{equation*}
  P_{\tilde n} \coloneq \set{ (x,v) \in N\iota : v \in \tilde n(x) }.
\end{equation*}
If $n \in \Gamma(N\kappa)$ is a normal vector field spanning $\ker F_\kappa$,
then $P_{\tilde n}$ is a model for $\im(\tilde\iota)$.
In particular, $\im(\tilde\iota)$ and $P_{\tilde n}$ are diffeomorphic in case they are smooth,
which we conjecture be true generically if $\pi$ is unbranched.

We can decompose $\im(\tilde\iota)$ into components $P^1, \ldots, P^m$ such that $P^j$ is an $\ell_j$--fold cover of $P$ and,
for each $\tilde x \in P^j$ corresponding to $(x,v) \in P_{\tilde n}$, $v$ appears in $\tilde n(x)$ with multiplicity $k_j$.
Geometrically, $[\tilde\iota]$ represents
\begin{equation}
  \label{Eq_SumKjPj}
  k_1\cdot P^1 + \cdots +  k_m\cdot P^m.
\end{equation}
Clearly, we have
\begin{equation}
  \label{Eq_PartitionOfK}
  \sum_{j=1}^m \ell_jk_j = k.
\end{equation}

Henceforth, let us assume that $\im(\tilde\iota)$ is smooth.
In the simplest case, we have $m = 1$ and $k_1=k$.
In this case, $\tilde n$ is a section of
\begin{equation*}
  \Sym^k_\reg N\iota
  \coloneq
  \set*{ (x,[v_1,\ldots,v_k]) \in \Sym^kN\iota : v_1,\ldots,v_k \text{ are pairwise distinct} },
\end{equation*}
the top stratum of $\Sym^kN\iota$.
In general, $\tilde n$ will be a section of a stratum
\begin{equation*}
  \Sym^k_\lambda N\iota \subset  \Sym^kN\iota
\end{equation*}
determined by $\lambda$, the partition of the natural number $k$ given by \eqref{Eq_PartitionOfK}.
Each of the strata $\Sym^k_\lambda N\iota$ is a smooth fibre bundle,
which is naturally equipped with a connection $\nabla$ and and a Clifford multiplication $\gamma$ on its vertical tangent bundle $V \Sym_\lambda^kN\iota$.
These can be used to define a Fueter operator, which assigns to each section $\tilde n \in \Gamma(\Sym^k_\lambda N\iota)$ an element
\begin{equation*}
  \fF \tilde n \in \Gamma(\tilde n^*V\Sym^k_\lambda N\iota).
\end{equation*}
The condition that $n \in \Gamma(N\kappa)$ is in the kernel of $F_\kappa$ means that
\begin{equation*}
  \fF \tilde n \coloneq \gamma(\nabla\tilde n) = 0;
\end{equation*}
that is, $\tilde n$ is a Fueter section of $\Sym^k_\lambda N\iota$.

The above discussion show that what causes $k_1\cdot P^1 + \cdots + k_m\cdot P^m$ to collapse to $k \cdot \im(\tilde\iota)$
is precisely a Fueter section $\tilde n$ of $\Sym^k_\lambda N\iota$.
For simplicity,
let us specialize to the case $m = 1$ and $k_1=k$;
that is:
\begin{itemize}
\item
  for $t < 0$
  there are two embedded associative submanifolds of interest,
  namely,
  $[\tilde\iota_t\co \tilde P \to Y]$ and $[\iota_t\co P \to Y]$;
\item
  as $t$ tends to zero, $\tilde\iota_t$ converges to the associative immersion $\kappa$, the $k$--fold covering of $\iota_0$, and then ceases to exist;
  and
\item
  for $t > 0$ we only have the embedded associative submanifold $[\iota_t\co P \to Y]$.
\end{itemize}
Extending the approach of \autoref{Sec_RoleOfSeibergWitten},
we would like to define weights $w$ such that
\begin{equation}
  \label{Eq_SimpleWeightRelation}
  w(\tilde P,\psi_{-t}) + w(k\cdot P,\psi_{-t})
  =
  w(k\cdot P,\psi_{+t})
\end{equation}
for $0 < t \ll 1$.
From the discussion in \autoref{Sec_RoleOfSeibergWitten} we learn that $w(\tilde P,\psi_t)$ should be $\epsilon(\tilde P,\psi_{-t})\cdot \SW(\tilde P)$ with $\epsilon(P,\psi_{-t}) \in \set{\pm 1}$ as in \autoref{Sec_EnumerativeInvariants?} and $\SW(\tilde P) \in \Z$ being the Seiberg--Witten invariant of $\tilde P$.
Thus \eqref{Eq_SimpleWeightRelation} means that the weight $w(k\cdot P,\psi_t)$ must jump by $\pm \SW(\tilde P)$ as $t$ passes through zero.

We propose that $w(k\cdot P,\psi_t)$ should be defined as the signed count of solutions to the \defined{ADHM$_{1,k}$ Seiberg--Witten equation} on $P$.
This is the Seiberg--Witten equation associated with the ADHM construction of $\Sym^k\H$.
Unlike in the case of the classical Seiberg--Witten equation, compactness fails for the ADHM$_{1,k}$ Seiberg--Witten equation.
As a consequence, the number of solutions can jump as the geometric background varies.
According to the \defined{Haydys correspondence}, those jumps occur precisely when (possibly singular) Fueter sections of $\Sym^k N\iota$ appear.
We will argue that in the above situation the jumps should be precisely by $\pm \SW(\tilde P)$.

The next section is concerned with introducing the ADHM$_{1,k}$ Seiberg--Witten equation,
stating and proving the Haydys correspondence with stabilizers,
and formally analyzing the failure of non-compactness for the ADHM$_{1,k}$ Seiberg--Witten equation.
After this discussion we will also explain what replaces \eqref{Eq_SimpleWeightRelation},
in general, and why defining $w$ via the ADHM$_{1,k}$ Seiberg--Witten equation should be consistent with that.

\begin{remark}
  \label{Rem_Categorification}
  Of course,
  instead of a weighted \emph{count} of embedded associatives with multiplicities,
  one should really try to define a Floer homology generalizing the discussion in \autoref{Sec_AssociativeMonopoleHomology}.
  Such ADHM$_{1,k}$ Seiberg--Witten--Floer homology groups are yet to be defined.
  It will become clear from the discussion in the following sections that these groups could only be expected to yield topological invariants of $3$--manifolds in the case $k=1$ (classical Seiberg--Witten--Floer homology).
  In general, they will depend on various parameters of the equation such as the Riemannian metric. 
\end{remark}

\begin{remark}
  We believe that this approach is also capable of dealing with branched covers.
  These should correspond to singular Fueter sections, that is, sections of $\Sym^k_\lambda N\iota$ defined outside a subset of codimension at most one (which corresponds to the branching locus) and extend a continuous section of the closure of $\Sym^k_\lambda N\iota$ in $\Sym^kN\iota$.
  It is known that singular Fueter sections appear in the compactifications of moduli spaces of solutions to Seiberg--Witten equations, cf.~\cite{Doan2017c}.
\end{remark}

%%% Local Variables:
%%% mode: latex
%%% TeX-master: "CountingAssociativesSeibergWitten"
%%% End:

\section{ADHM monopoles and their degenerations}
\label{Sec_DegenerationsOfADHMMonopoles}

The purpose of this section is to introduce ADHM monopoles and to relate their degenerations to the phenomenon of collapsing of associatives to multiple covers.

\subsection{The \texorpdfstring{ADHM}{ADHMrk} Seiberg--Witten equations}
\label{Sec_ADHMrkSeibergWittenEquations}

There is a general construction, summarized in \autoref{Sec_SeibergWitten}, which associates with every quaternionic representation of a Lie group a generalization of the Seiberg--Witten equation on $3$--manifolds.
In a nutshell, the ADHM Seiberg--Witten equations arise from this construction by choosing particular quaternionic representations which appear in the famous ADHM construction of instantons on $\R^4$; see  \autoref{Ex_ADHMAlgebraicData}.
However, below we introduce the ADHM Seiberg--Witten equations directly, without assuming that the reader is familiar with the general construction.

\begin{definition}
  Let $M$ be an oriented Riemannian $3$--manifold.
  Consider the Lie group
  \begin{equation*}
    \Spin^{\U(k)}(n) \coloneq (\Spin(n)\times\U(k))/\Z_2.
  \end{equation*}
  A \defined{spin$^{\U(k)}$ structure} on $M$ is a principal $\Spin^{\U(k)}(3)$--bundle together with an isomorphism
  \begin{equation}
    \label{Eq_SpinUkStructure}
    \fw \times_{\Spin^{\U(k)}(3)} \SO(3) \iso \SO(TM).
  \end{equation}
  The \defined{spinor bundle} and the \defined{adjoint bundle} associated with a spin$^{\U(k)}$ structure $\fw$ are
  \begin{equation*}
    W \coloneq \fw \times_{\Spin^{\U(k)}(3)} \H\otimes_\C \C^k
    \qandq
    \fg_\sH \coloneq \fw \times_{\Spin^{\U(k)}(3)} \fu(k)
  \end{equation*}
  respectively.
  The left multiplication by $\Im\H$ on $\H\otimes\C^k$ induces a \defined{Clifford multiplication} $\gamma\co TM \to \End(W)$.
  
  A \defined{spin connection} on $\fw$ is a connection $A$ inducing the Levi-Civita connection on $TM$.
  Associated with each spin connection $A$ there is a \defined{Dirac operator} $\slD_A\co \Gamma(W) \to \Gamma(W)$.
  
  Denote by $\sA^s(\fw)$ the space of spin connections on $\fw$,
  and by $\sG^s(\fw)$ the \defined{restricted gauge group}, consisting of those gauge transformations which act trivially on $TM$.
  Let $\varpi\co \Ad(\fw) \to \fg_\sH$ be the map induced by the projection $\spin^{\U(k)}(3) \to \fu(k)$.
\end{definition}

\begin{definition}
  \label{Def_ADHMGeometricData}
  Let $M$ be an oriented $3$--manifold.
  The \defined{geometric data} needed to formulate the ADHM$_{r,k}$ Seiberg--Witten equation are:
  \begin{itemize}
  \item
    a Riemannian metric $g$,
  \item
    a spin$^{\U(k)}$ structure $\fw$,
  \item
    a Hermitian vector bundle $E$ of rank $r$
    with a fixed trivialization $\Lambda^rE = \C$ and an $\SU(r)$--connection $B$,
  \item
    an oriented Euclidean vector bundle $V$ of rank $4$ together with an isomorphism
    \begin{equation}
      \label{Eq_SOLambda+V=SOTM}
      \SO(\Lambda^+V) \iso \SO(TM)
    \end{equation}
    and an $\SO(4)$--connection $C$ on $V$ with respect to which this isomorphism is parallel.
  \end{itemize}
\end{definition}

\begin{remark}
  If $\iota \co P \to Y$ is an associative immersion,
  then the normal bundle $V = N\iota$ admits a natural isomorphism \eqref{Eq_SOLambda+V=SOTM} by \autoref{Prop_AssociativeViaCrossProduct} and we can take $C$ to be the connection induced by the Levi-Civita connection.
  In this context, the bundle $E$ should be the restriction to $P$ of a bundle on the ambient $\Gtwo$--manifold and $B$ should be the restriction of a $\Gtwo$--instanton.
  Soon we will specialize to the case $r=1$, in which $E$ is trivial and $B$ is the trivial connection.
\end{remark}

The above data makes both $\Hom(E,W)$ and $V\otimes \fg_\sH$ into Clifford bundles over $M$;
hence, there are Dirac operators $\slD_{A,B} \co \Gamma(\Hom(E,W)) \to \Gamma(\Hom(E,W))$ and $\slD_{A,C} \co \Gamma(V\otimes \fg) \to \Gamma(V\otimes \fg)$.
The ADHM$_{r,k}$ Seiberg--Witten equation involves also two quadratic moment maps defined as follows.
If $\Psi \in \Hom(E,W)$, then $\Psi\Psi^* \in \End(W)$.
Since $\Lambda^2T^*M\otimes \fg_\sH$ acts on $W$, there is an adjoint map $(\cdot)_0\co \End(W) \to \Lambda^2T^*M\otimes \fg_\sH$.
Define $\mu \co \Hom(E,W) \to \Lambda^2T^*M\otimes \fg_\sH$ by
\begin{equation*}
  \mu(\Psi) \coloneq (\Psi\Psi^*)_0.
\end{equation*}
If $\bxi \in V\otimes\fg$, then $[\bxi\wedge \bxi] \in \Lambda^2 V\otimes \fg_\sH$.
Denote its projection to $\Lambda^+V\otimes \fg_\sH$ by $[\bxi\wedge \bxi]^+$.
Identifying $\Lambda^+V \iso \Lambda^2T^*M$ via the isomorphism \eqref{Eq_SOLambda+V=SOTM}, we define $\mu \co V\otimes\fg \to  \Lambda^2T^*M \otimes \fg_\sH$ by
\begin{equation*}
  \mu(\bxi) \coloneq [\bxi\wedge\bxi]^+
\end{equation*}

\begin{definition}
  Given a choice of geometric data as in \autoref{Def_ADHMGeometricData},
  the \defined{ADHM$_{r,k}$ Seiberg--Witten equation} is the following partial differential equation for $(\Psi,\bxi,A) \in \Gamma(\Hom(E,W))\times\Gamma(V\otimes\fg_\sH)\times\sA^s(\fw)$:
  \begin{equation}
    \label{Eq_ADHMrkSW}
    \begin{split}
      \slD_{A,B} \Psi &= 0, \\
      \slD_{A,C} \bxi &= 0, \qand \\
      \varpi F_A &= \mu(\Psi) + \mu(\bxi).
    \end{split}
  \end{equation}
  A solution of this equation is called an \defined{ADHM$_{r,k}$ monopole}.
\end{definition}

The moduli space of ADHM$_{r,k}$ monopoles might be non-compact.
The reason is that the $L^2$ norm of the pair $(\Psi,\bxi)$ is not a priori bounded and can diverge to infinity for a sequence of solutions.
To understand this phenomenon, one blows-up the equation by multiplying $(\Psi,\bxi)$ by $\epsilon^{-1}$ and studies the equation obtained by taking the formal limit $\epsilon \to 0$. 
This is explained in greater detail in \autoref{Sec_SeibergWitten}.

\begin{definition}
  The \defined{limiting ADHM$_{r,k}$ Seiberg--Witten equation} the following partial differential equation for $(\Psi,\bxi,A) \in \Gamma(\Hom(E,W))\times\Gamma(V\otimes\fg_\sH)\times\sA^s(\fw)$
  \begin{equation}
    \label{Eq_LimitingADHMrkSW}
    \begin{split}
      \slD_{A,B} \Psi &= 0, \\
      \slD_{A,C} \bxi &= 0, \qand \\
      \mu(\Psi) + \mu(\bxi) &=0.
    \end{split}
  \end{equation}
  together with the normalization $\Abs{(\Psi,\bxi)}_{L^2} = 1$.
\end{definition}

The ADHM$_{r,k}$ Seiberg--Witten equation \eqref{Eq_ADHMrkSW} and the corresponding limiting equation are preserved by the action of the restricted gauge group $\sG^s(\fw)$.

\begin{remark}
  \label{Rmk_ADHM11SW}
  Suppose that $r=k=1$.
  A spin$^{\U(1)}$ structure is simply a spin$^c$ structure and
  \begin{equation*}
    \varpi F_A = \frac12 F_{\det A}.
  \end{equation*}
  Also, $\fg_\sH = i\underline{\R}$;
  hence, $\slD_{A,C}$ is independent of $A$ and  $\mu(\bxi) = 0$.
  The ADHM$_{1,1}$ Seiberg--Witten equation is thus simply
  \begin{equation*}
    \slD_A \Psi = 0 \qandq
    \frac12 F_{\det A} = \mu(\Psi),
  \end{equation*}
  the classical Seiberg--Witten equation \eqref{Eq_ClassicalSeibergWitten} for $(\Psi,A)$, together with the Dirac equation
  \begin{equation*}
    \slD_C \bxi = 0.
  \end{equation*}
  
  If $\iota\co P \to Y$ is an associative immersion and $M = P$ and $V = N\iota$,
  then $\slD_C$ is essentially the Fueter operator $F_\iota$ from \autoref{Def_FueterNormal}.
  In particular, $\bxi$ must vanish if $\iota$ is unobstructed.
  (There is a variant of \eqref{Eq_ADHMrkSW} in which $\bxi$ is taken to be a section of $V\otimes \fg^\circ_{\sH}$ with $\fg^\circ_{\sH}$ denoting the trace-free component of $\fg_{\sH}$.
  For $r=k=1$, this equation is identical to the classical Seiberg--Witten equation.
  However, working with this equation somewhat complicates the upcoming discussion of the following sections.)
\end{remark}

\subsection{The Haydys correspondence for the \texorpdfstring{ADHM$_{1,k}$}{ADHM1k} Seiberg--Witten equation}

In what follows, we specialize to the case $r=1$ and analyze solutions of the limiting ADHM$_{1,k}$ Seiberg--Witten equation \eqref{Eq_LimitingADHMrkSW}.
This will lead to a conjectural compactification of the moduli space of ADHM$_{1,k}$ monopoles.
Our analysis is based on the general framework of the Haydys correspondence with stabilizers developed in \autoref{Sec_HaydysCorrespondence}.
We will also make use of several algebraic facts proved in \autoref{Sec_NakajimasProof}.
It is helpful but not necessary have read the appendices to understand the results stated in this section.

Assume the situation of \autoref{Sec_ADHMrkSeibergWittenEquations};
that is:
$\fw$ is a spin$^{\U(k)}$ structure on $M$ with spinor bundle $W$ and adjoint bundle $\fg_\sH$, and
$V$ is a Dirac bundle of rank $4$ over $M$ with connection $C$.
The \defined{limiting ADHM$_{1,k}$ Seiberg--Witten equation} for a triple $(\Psi,\bxi,A) \in \Gamma(W)\times\Gamma(V\otimes\fg_\sH)\times\sA^s(\fw)$ is
\begin{equation}
  \label{Eq_LimitingADHM1kSW}
  \begin{split}
    \slD_A\Psi &= 0, \\
    \slD_{A,C}\bxi &= 0, \qand \\
    \mu(\Psi) + \mu(\bxi) &= 0
  \end{split}
\end{equation}
as well as $\Abs{(\Psi,\bxi)}_{L^2} = 1$.

It follows from the third equation that if $(\Psi,\bxi,A)$ is a solution of \eqref{Eq_LimitingADHM1kSW}, then
\begin{enumerate}
\item $\Psi = 0$, and
\item $\bxi$ induces a section $\tilde n$ of the bundle $\Sym^k V$ over $M$ whose fiber is $\Sym^k \H$.
\end{enumerate}
The first statement is the content of \autoref{Prop_1kADHMMu=0=>Phi=0} and the second statement follows from a special case of the Haydys correspondence discussed in \autoref{Sec_HaydysCorrespondence}, combined with the observation that $\Sym^k \H$ is the hyperk\"ahler quotient of the ADHM$_{1,k}$ representation; see \autoref{Thm_1kADHM}.
Furthermore, the section $\tilde n$ satisfies the Fueter equation, as explained in \autoref{Sec_ProjectingDiracEquation}.

A more difficult part of the Haydys correspondence deals with the converse problem: given a section $\tilde n$ of $\Sym^k V$ which satisfies the Fueter equation, can we lift it to a solution $(\Psi,\bxi,A)$ of \eqref{Eq_LimitingADHM1kSW}? If yes, what is the space of all such lifts up to the action of the gauge group?

A technical difficulty that one has to overcome is that $\tilde n$ takes values in the symmetric product $\Sym^k \H$ which is not a manifold.
Rather, it is a stratified space whose strata correspond to the partitions of $k$.

\begin{definition}
  A \defined{partition} of $k \in \N$ is a non-increasing sequence of non-negative integers $\lambda = (\lambda_1, \lambda_2, \ldots)$ which sums to $k$.
  The \defined{length} of a partition is
  \begin{equation*}
    \abs{\lambda}
    \coloneq
    \min \set{ n \in \N : \lambda_n = 0 } - 1.
  \end{equation*}
  
  For every $n \in \N$, denote by $S_n$ the permutation group on $n$ elements. 
  With each partition $\lambda$ we associate the groups
  \begin{equation*}
    G_\lambda
    \coloneq
    \set*{
      \sigma \in S_{\abs{\lambda}}
      :
      \lambda_{\sigma(n)} = \lambda_n \text{ for all } n \in \set {1 \ldots, \abs{\lambda}}
    }
  \end{equation*}
  and the generalized diagonal
  \begin{equation*}
    \Delta_{\abs{\lambda}}
    =
    \set{
      v_1,\ldots,v_{\abs{\lambda}} \in \H^{\abs{\lambda}} : v_i = v_j \text{ for some } i \neq j
    }.
  \end{equation*}
  There is an embedding $(\H^{\abs{\lambda}}\setminus \Delta_{\abs{\lambda}})/G_\lambda \into \Sym^k \H$ defined by
  \begin{equation*}
    [v_1,\ldots,v_{\abs{\lambda}}] \mapsto [\underbrace{v_1,\ldots,v_1}_{\lambda_1 \text{ times}},\cdots,\underbrace{v_{\abs{\lambda}},\ldots,v_{\abs{\lambda}}}_{\lambda_{\abs{\lambda}} \text{ times}}].
  \end{equation*}
  The image of this inclusion is denoted by $\Sym^k_\lambda\H$.
\end{definition}
Each stratum $\Sym^k_\lambda \H$ is a smooth manifold. 
Let us assume that $\tilde n$ takes values in such a stratum:
\begin{equation*}
  \tilde n \in \Gamma(\Sym^k_\lambda V),
\end{equation*}
for some partition $\lambda$ of $k$.
This is familiar from \autoref{Sec_EmbeddingsWithMultiplicity}.

The next result summarizes the Haydys correspondence for solutions of \eqref{Eq_LimitingADHM1kSW}.
On first reading, the reader might assume that $\lambda = (1,\ldots,1)$, the partition yielding the top stratum of $\Sym^k\H$, since this simplifies the situation considerably.
For $j = 1,\ldots,m$,
denote by $k_j$ the $j$--th largest positive number appearing in the partition $\lambda$ and by $\ell_j$ the multiplicity with which it appears.

\begin{prop}
  \label{Prop_ADHM1kHaydys}
  Given $\tilde n \in \Gamma(\Sym^k_\lambda V)$,
  set
  \begin{equation*}
    \tilde M \coloneq \set{ (x,v) \in V : x \in M \qandq v \in \tilde n(x) }
  \end{equation*}
  and denote by $\pi\co \tilde M \to M$ by the projection map.

  \begin{enumerate}
  \item \label{Prop_ADHM1kHaydys_Covering}
    The map $\pi$ is a $\abs{\lambda}$--fold unbranched cover of $M$.
    Moreover, we can decompose $\tilde M$ into components $\tilde M_1, \ldots, \tilde M_m$ such that $\pi_j \coloneq \pi|_{\tilde M_j}$ restricts to a $\ell_j$--fold cover on $\tilde M_j$.
  \item \label{Prop_ADHM1kHaydys_Correspondence} There is a natural bijective correspondence between
    \begin{enumerate}[(a)]
    \item
      gauge equivalence classes of solutions $(\Psi,\bxi,A)$ of \eqref{Eq_LimitingADHM1kSW} for which the corresponding section of $\Sym^k V$ takes values in the stratum $\Sym^k_\lambda V$, and
    \item
      Fueter sections $\tilde n \in \Gamma(\Sym^k_\lambda V)$ together with a spin$^{\U(k_j)}$ structure $\fw_j$ on $\tilde M_j$ and a spin connection $A_j$ on $\fw_j$ for each $j = 1,\ldots,m$.
    \end{enumerate}
  \end{enumerate}
\end{prop}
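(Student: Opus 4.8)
The plan is to treat the two claims separately. The first claim is a soft statement about the geometry of the stratum $\Sym^k_\lambda V$; the second is an instance of the general Haydys correspondence with stabilizers developed in \autoref{Sec_HaydysCorrespondence}, specialized by means of the identification of $\Sym^k\H$ with the hyperk\"ahler quotient of the ADHM$_{1,k}$ representation (\autoref{Thm_1kADHM}) and the linear algebra of \autoref{Sec_NakajimasProof}, after first eliminating the spinor by \autoref{Prop_1kADHMMu=0=>Phi=0}.

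For \itref{Prop_ADHM1kHaydys_Covering}: since $\Sym^k_\lambda\H \iso (\H^{\abs{\lambda}}\setminus\Delta_{\abs{\lambda}})/G_\lambda$ is a smooth manifold, the fibre bundle $\Sym^k_\lambda V$ is smooth, and a section $\tilde n\in\Gamma(\Sym^k_\lambda V)$ restricts over any sufficiently small open subset of $M$ to a collection of $\abs{\lambda}$ pairwise distinct local sections of $V$, well defined up to the action of $G_\lambda$. These local sheets patch to the claimed smooth $\abs{\lambda}$--fold covering $\pi\co\tilde M\to M$. The monodromy of $\pi$ permutes the $\abs{\lambda}$ points of $\tilde n(x)$ while preserving the multiplicity with which each of them occurs in $\tilde n(x)\in\Sym^k\H$ --- a quantity intrinsic to the point --- so the monodromy is valued in $G_\lambda$. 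Consequently $\tilde M$ splits, according to the value $k_j$ of this multiplicity, into open and closed pieces $\tilde M_1,\dots,\tilde M_m$; since each fibre of $\pi$ contains exactly $\ell_j$ points of multiplicity $k_j$, the restriction $\pi_j \coloneq \pi|_{\tilde M_j}$ is an $\ell_j$--fold cover. This is the picture already anticipated in \autoref{Sec_EmbeddingsWithMultiplicity}.

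For \itref{Prop_ADHM1kHaydys_Correspondence}: given a solution $(\Psi,\bxi,A)$ of \eqref{Eq_LimitingADHM1kSW} whose associated section of $\Sym^k V$ lies in $\Sym^k_\lambda V$, \autoref{Prop_1kADHMMu=0=>Phi=0} forces $\Psi = 0$, so what must be classified are the pairs $(\bxi,A)$ with $\slD_{A,C}\bxi = 0$ and $\mu(\bxi) = 0$ and induced section in the fixed stratum, modulo $\sG^s(\fw)$. This is precisely the setting of the Haydys correspondence of \autoref{Sec_HaydysCorrespondence}, with target the hyperk\"ahler quotient $X = \Sym^k\H$ of the ADHM$_{1,k}$ representation: it matches such gauge equivalence classes with a Fueter section $\tilde n\in\Gamma(\Sym^k_\lambda V)$ --- the Fueter equation being the projection of $\slD_{A,C}\bxi = 0$ computed in \autoref{Sec_ProjectingDiracEquation} --- together with a reduction of the structure group to, and a compatible connection on, the stabilizer of a point of $\Sym^k_\lambda\H$. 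It then remains to identify this stabilizer: using translation equivariance to move each cluster of equal points to the origin and the description of the degenerate ADHM fibres in \autoref{Sec_NakajimasProof}, one finds that the stabilizer in $\U(k)$ of a point with pairwise distinct clusters of multiplicities $\lambda_1,\dots,\lambda_{\abs{\lambda}}$ is $\prod_i \U(\lambda_i) = \prod_{j=1}^m \U(k_j)^{\ell_j}$. Folding in the $G_\lambda$--monodromy, which permutes the $\ell_j$ factors $\U(k_j)$ over $M$, the reduction--plus--connection datum over $M$ becomes, for each $j$, a principal $\U(k_j)$--bundle with connection over the $\ell_j$--fold cover $\tilde M_j$; since $\tilde M_j$ inherits the spin structure of $M$, this is exactly a spin$^{\U(k_j)}$ structure $\fw_j$ together with a spin connection $A_j$. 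Reversing the construction reassembles $(\Psi = 0,\bxi,A)$ up to gauge, which yields the asserted bijection.

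The hard part will be the converse half of \itref{Prop_ADHM1kHaydys_Correspondence}: reconstructing an honest solution of \eqref{Eq_LimitingADHM1kSW} from a Fueter section valued in a positive--dimensional stratum, and verifying that the abstract reduction--plus--connection datum produced by the Haydys machinery corresponds exactly to the geometric data $(\fw_j,A_j)$ on the covers $\tilde M_j$. This rests on having the Haydys correspondence with stabilizers available in sufficiently precise form in \autoref{Sec_HaydysCorrespondence} and on the stabilizer computation for the ADHM$_{1,k}$ hyperk\"ahler quotient in \autoref{Sec_NakajimasProof}; some care is also needed with the restricted gauge group $\sG^s(\fw)$ and with the observation that the identifications required to glue local lifts of $\bxi$ over $\tilde M$ are precisely what is absorbed into the $\U(k_j)$--bundle data.
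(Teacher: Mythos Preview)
Your proposal is correct and follows essentially the same route as the paper: eliminate $\Psi$ via \autoref{Prop_1kADHMMu=0=>Phi=0}, invoke the general Haydys correspondence with stabilizers from \autoref{Sec_HaydysCorrespondence} together with \autoref{Thm_1kADHM}, and then identify the resulting bundle-plus-connection datum with spin$^{\U(k_j)}$ structures and spin connections on the covers $\tilde M_j$. The paper packages this last identification as a separate technical proposition, computing the groups $N_{\hat H}(T_\lambda)$ and $W_{\hat H}(T_\lambda)$ explicitly and showing that lifts of the Weyl group bundle $\hat Q^\diamond$ to $N_{\hat H}(T_\lambda)$ correspond precisely to the $\fw_j$, while $\sA^{\Psi,\bxi}_C(\hat Q)$ is identified with $\prod_j \sA^s(\fw_j)$; your sketch of this step (``folding in the $G_\lambda$--monodromy'') is correct in spirit but would benefit from being stated in terms of the normalizer $N_{\hat H}(T_\lambda)$ rather than as a ``reduction to the stabilizer,'' since the relevant extra datum in the Haydys correspondence is a lift of the $W_{\hat H}(T_\lambda)$--bundle to an $N_{\hat H}(T_\lambda)$--bundle.
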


\begin{remark}
  If $\lambda = (1,\ldots,1)$, then $m = 1$ and $\fw_1$ is simply a spin$^c$ structure on $\tilde M$.
\end{remark}

\begin{proof}
  Part \autoref{Prop_ADHM1kHaydys_Covering} follows from the definitions of $\Sym^k_\lambda V$ and $\tilde M$. 
  It is part \autoref{Prop_ADHM1kHaydys_Correspondence} which requires a proof.  
  This statement is a special case of the Haydys correspondence with stabilizer proved in \autoref{Sec_HaydysCorrespondence}; in particular, we will use the notation introduced in there. 

  We require the following pieces of notation.
  For every $n \in \N$, denote by $[n]$ the set $\set{1,\ldots,n}$,
  and let $S_n$ be the permutation groups on $n$ elements.
  Denote by $Q^\diamond$ the principal $\prod_{j=1}^m S_{\ell_j}$--bundle over $M$, denoted whose fibre over $x$ is
  \begin{equation}
    \label{Eq_QDiamond}
    Q^\diamond_x
    =
    \prod_{j=1}^{m} \Bij\paren[\big]{[l_j], \pi_j^{-1}(x)}.
  \end{equation}
  Tautologically,
  $\tilde M$ is the fiber bundle with fiber $[l_1]\times \cdots\times [l_m]$ associated with $Q^\diamond$ using the action of $\prod_{j=1}^m S_{\ell_j}$ on $[l_1]\times \cdots\times [l_m]$.
  Define
  \begin{align*}
    T_\lambda
    &\coloneq
      \prod_{n = 1}^{\abs{\lambda}} \U(\lambda_n)
      \subset
      \U(k), \\    
    W_{\hat H}(T_\lambda)
    &\coloneq
      \(\prod_{j=1}^m S_{\ell_j}\) \times \SO(4), \qandq \\
    N_{\hat H}(T_\lambda)
    &\coloneq
      \Spin(4)\times_{\Z_2} \(\prod_{j=1}^m S_{\ell_j} \ltimes \U(k_j)^{\ell_j}\) \\
    & = \(\Spin(3)\times_{\Z_2}\(\prod_{j=1}^m S_{\ell_j} \ltimes \U(k_j)^{\ell_j}\) \)\times_{\SO(3)} \SO(4).
  \end{align*}
  With this notation the following summarizes the discussion in \autoref{Sec_HaydysCorrespondence}.

  \begin{prop}
    \label{Prop_ADHM1kHaydysTechnical}
    Let $Q^\diamond$ be the principal $\prod_{j=1}^mS_{\ell_j}$--bundle defined by \autoref{Eq_QDiamond}. 
    Define a principal $W_{\hat H}(T_\lambda)$--bundle $\hat Q^\diamond$ associated with $\tilde n$ by
    \begin{equation*}
      \hat Q^\diamond = Q^\diamond\times \SO(V).
    \end{equation*}
    \begin{enumerate}
    \item
      \label{Prop_ADHM1kHaydys_SpinUl}
      The choice of a $N_{\hat H}(T_\lambda)$--bundle  $\hat Q^\circ$ lifting $\hat Q^\diamond$ is equivalent to the choice of a spin$^{\U(k_j)}$ structure $\fw_j$ on $\tilde M_j$ for each $j = 1,\ldots,m$.
    \item
      \label{Prop_ADHM1kHaydys_SpinConnection}
      Given a spin$^{\U(k_j)}$ structure $\fw_j$ on $\tilde M_j$ for each $j = 1,\ldots,m$,
      there exists a lift $(\Psi,\bxi)$ of $\tilde n$.
      The space of connections $\sA^{\Psi,\bxi}_C(\hat Q)$, defined in \eqref{Eq_APhiB}, is identified with the space
      \begin{equation*}
        \prod_{j=1}^m \sA^s(\fw_j)
      \end{equation*}
      and $\ft_P$, defined in \eqref{Eq_ftP}, is identified with the sum of the push-forward bundles
      \begin{equation*}
        \bigoplus_{j=1}^m (\pi_j)_*\fg_{\sH_j}.
      \end{equation*}
    \end{enumerate}
  \end{prop}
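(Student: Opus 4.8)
The plan is to deduce \autoref{Prop_ADHM1kHaydysTechnical} from the general Haydys correspondence with stabilisers developed in \autoref{Sec_HaydysCorrespondence}, so that the work reduces to a group-theoretic translation. First I would fix the dictionary: the relevant quaternionic representation is the ADHM$_{1,k}$ representation, with symmetry group $\hat H = \Spin(4)\times_{\Z_2}\U(k)$, the $\Spin(4)$ acting on $V\iso\H$ and the $\U(k)$ being the group whose hyperkähler quotient is taken. By \autoref{Thm_1kADHM} that quotient is $\Sym^k\H$, and by the linear algebra of \autoref{Sec_NakajimasProof} its orbit-type stratification is exactly the stratification by the $\Sym^k_\lambda\H$, the stratum $\Sym^k_\lambda\H$ having stabiliser (of the corresponding level-zero ADHM datum) conjugate to $T_\lambda = \prod_{n=1}^{\abs{\lambda}}\U(\lambda_n) = \prod_{j=1}^m\U(k_j)^{\ell_j}\subset\U(k)$. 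The general correspondence then classifies, up to gauge, the lifts of a Fueter section $\tilde n\in\Gamma(\Sym^k_\lambda V)$ to solutions of \eqref{Eq_LimitingADHM1kSW} by reductions of the $W_{\hat H}(T_\lambda)$--bundle $\hat Q^\diamond$ to the normaliser $N_{\hat H}(T_\lambda)$ together with a connection on the reduced bundle compatible with $C$ and with the induced spinor; what remains is to identify this abstract data with the tuples $(\fw_j)$, the space $\prod_j\sA^s(\fw_j)$, and the bundle $\bigoplus_j(\pi_j)_*\fg_{\sH_j}$.

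For \autoref{Prop_ADHM1kHaydys_SpinUl} I would unwind $N_{\hat H}(T_\lambda) = \Spin(4)\times_{\Z_2}\bigl(\prod_j S_{\ell_j}\ltimes\U(k_j)^{\ell_j}\bigr)$, using that $\prod_j S_{\ell_j}\ltimes\U(k_j)^{\ell_j}$ is precisely the normaliser of $T_\lambda$ in $\U(k)$, with $S_{\ell_j}$ permuting the $\ell_j$ equal blocks $\U(k_j)$. Since $\hat Q^\diamond = Q^\diamond\times\SO(V)$ with $Q^\diamond$ the $\prod_j S_{\ell_j}$--bundle of \eqref{Eq_QDiamond} and $\tilde M_j$ tautologically the fibre bundle associated with the $S_{\ell_j}$--factor via its action on $[\ell_j]$, a reduction of $\hat Q^\diamond$ to $N_{\hat H}(T_\lambda)$ amounts, over each $\tilde M_j$, to a choice of $\bigl(\Spin(3)\times_{\Z_2}\U(k_j)\bigr)$--bundle lifting $\SO(T\tilde M_j)$; here one uses $\SO(T\tilde M_j)\iso\SO(\Lambda^+\pi_j^*V)$ from \autoref{Prop_AssociativeViaCrossProduct} to pass between the $\Spin(4)$--datum on $V$ and the $\Spin(3)$--datum on $T\tilde M_j$, and one checks that pulling the semidirect factor $S_{\ell_j}\ltimes\U(k_j)^{\ell_j}$ back along the $\ell_j$--fold cover $\pi_j$ turns it into a single $\U(k_j)$--structure on $\tilde M_j$. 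By \autoref{Def_ADHMGeometricData} this is exactly a spin$^{\U(k_j)}$ structure $\fw_j$, and the construction is visibly reversible. The point needing care is to bookkeep the two central $\Z_2$ identifications — the one in $\Spin^{\U(k_j)}(3)$ and the one defining $\hat H$ — so that they match under $\pi_j$; I would verify this on transition functions.

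For \autoref{Prop_ADHM1kHaydys_SpinConnection}, existence of a lift $(\Psi,\bxi)$ follows because the level-zero ADHM datum representing a single point of $\H$ with multiplicity $k_j$ is unique up to $\U(k_j)$ and has vanishing $\Psi$--component (this is \autoref{Prop_1kADHMMu=0=>Phi=0}); using the $N_{\hat H}(T_\lambda)$--structure these pointwise models assemble into a global $(\Psi,\bxi)$ over $\tilde n$ with $\Psi=0$. To identify $\sA^{\Psi,\bxi}_C(\hat Q)$, note that a connection on the $N_{\hat H}(T_\lambda)$--bundle compatible with $C$ has its $\Spin(4)$--part pinned down by $C$ and its $S_{\ell_j}$--part discrete, so the only freedom is a $\U(k_j)^{\ell_j}$--connection on the $S_{\ell_j}$--cover; pushing this forward along $\pi_j$ and recombining with the fixed $\Spin(3)$--part yields precisely a spin connection on $\fw_j$, giving the identification with $\prod_{j=1}^m\sA^s(\fw_j)$ compatibly with \eqref{Eq_APhiB}. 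Finally $\ft_P$ of \eqref{Eq_ftP} is, by definition, the bundle with fibre $\Lie T_\lambda = \bigoplus_j\fu(k_j)^{\oplus\ell_j}$ associated with the $N_{\hat H}(T_\lambda)$--structure, and since induction along the covering $\pi_j$ is realised geometrically by the push-forward, this bundle is $\bigoplus_{j=1}^m(\pi_j)_*\fg_{\sH_j}$. The main obstacle throughout is not any single estimate but making this group-theoretic translation airtight — checking that the semidirect products and nested $\Z_2$--quotients in $N_{\hat H}(T_\lambda)$ correspond under push-forward exactly to the spin$^{\U(k_j)}$ data and to the bundles $\sA^s(\fw_j)$, $(\pi_j)_*\fg_{\sH_j}$ and $\ft_P$ produced by the general construction.
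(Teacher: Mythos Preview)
Your proposal is correct and follows essentially the same approach as the paper: both reduce the statement to a group-theoretic translation built on the general Haydys correspondence of \autoref{Sec_HaydysCorrespondence}, identifying $N_{\hat H}(T_\lambda)$--reductions of $\hat Q^\diamond$ with tuples of spin$^{\U(k_j)}$ structures via the covering data. The only presentational difference is that the paper writes down the fibre of $\hat Q^\circ$ explicitly as
\[
  \hat Q^\circ_x
  =
  \prod_{j=1}^{m}
  \set[\Big]{
    (f,g_1,\ldots,g_{\ell_j})
    \in
    \Bij\paren[\big]{[\ell_j], \pi_j^{-1}(x)}
    \times \tilde\fw_j^{\ell_j}
    :
    g_i \in (\tilde\fw_j)_{f(i)}
  },
\]
where $\tilde\fw_j$ is the spin$^{\U(k_j)}(4)$ structure on $\pi_j^*V$ induced by $\fw_j$, and then simply asserts that part \autoref{Prop_ADHM1kHaydys_SpinConnection} ``becomes apparent''; you instead argue each identification more discursively. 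One small slip: the isomorphism $\SO(T\tilde M_j)\iso\SO(\Lambda^+\pi_j^*V)$ comes from pulling back \eqref{Eq_SOLambda+V=SOTM} along the local diffeomorphism $\pi_j$, not from \autoref{Prop_AssociativeViaCrossProduct}.
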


  \begin{proof}[Proof of \autoref{Prop_ADHM1kHaydysTechnical}]
    We prove part \autoref{Prop_ADHM1kHaydys_SpinUl}.
    Given a spin$^{\U(k_j)}(3)$ structure $\fw_j$ on $\tilde M_j$ for each $j = 1,\ldots,m$,
    denote by $\tilde \fw_j$ the corresponding spin$^{\U(k_j)}(4)$ structure on $\pi_j^*V$.
    The principal $N_{\hat H}(T_\lambda)$--bundle $\hat Q^\circ$ with fibre over $x$ given by
    \begin{equation*}
      \hat Q^\circ_x
      =
      \prod_{j=1}^{m}
      \set[\Big]{
        (f,g_1,\ldots,g_{\ell_j})
        \in
        \Bij\paren[\big]{\set{ 1,\ldots,\ell_j }, \pi_j^{-1}(x)}
        \times \tilde\fw_j^{\ell_j}
        :
        g_i \in (\tilde\fw_j)_{f(i)}
      }
    \end{equation*}
    lifts $\hat Q^\diamond$.
    Conversely, given principal $N_{\hat H}(T_\lambda)$--bundle $\hat Q^\circ$ lifting $\hat Q^\diamond$ its pullback to $\tilde M_j$ contains a principal $\Spin^{\U(k_j)}(4)$--bundle $\tilde \fw_j$ which yields a spin$^{\U(k_j)}$ structure on $\pi_j^*V$ and thus on $\tilde M_j$.
    With this discussion in mind and the discussion in \autoref{Sec_HaydysCorrespondence}, part \autoref{Prop_ADHM1kHaydys_SpinConnection} of this proposition becomes apparent.
  \end{proof}

  Once \autoref{Prop_ADHM1kHaydysTechnical} is established, part 
  \autoref{Prop_ADHM1kHaydys_Correspondence} of \autoref{Prop_ADHM1kHaydys} follows from the discussion in 
  \autoref{Sec_LiftingSections} and \autoref{Sec_ProjectingDiracEquation} together with \autoref{Thm_1kADHM}.
\end{proof}

\subsection{Formal expansion around limiting solutions}
\label{Sec_FormalExpansion}

\autoref{Prop_ADHM1kHaydys} imposes very weak conditions on a connection $A \in \sA^s(\fw)$ which is part of a solution of the limiting equation \eqref{Eq_LimitingADHM1kSW}.
Indeed, given $(\bxi, \Psi)$ and one such connection, all other choices of $A$ are parametrized by choices of spin connections $A_j$ on $\fw_j$, for every $j$, and the spaces of these spin connections are infinite-dimensional.
However, we are only interested in those solutions of \eqref{Eq_LimitingADHM1kSW} which are obtained as limits of rescaled ADHM$_{1,k}$ monopoles.
To determine further constraints for such limits, let $(\Psi_0=0,\bxi_0,A_0)$ be a solution of \eqref{Eq_LimitingADHM1kSW} with $\tilde n \in \Gamma(\Sym^k_\lambda V)$ for some partition $\lambda$ of $k$,
and suppose that
\begin{align*}
  \Psi_\epsilon 
  =
  \sum_{i = 1}^\infty \epsilon^i\Psi_i, \quad
  \bxi_\epsilon
  =
  \sum_{i = 0}^\infty \epsilon^i\bxi_i, \qandq
  A_\epsilon
  =
  A_0 + \sum_{i = i}^\infty \epsilon^i a_i
\end{align*}
is a formal power series solution of the rescaled ADHM$_{1,k}$ Seiberg--Witten equation:
\begin{equation}
  \label{Eq_FormalExpansionADHM1kSW}
  \begin{split}
    \slD_{A_\epsilon} \Psi_\epsilon
    &=
    0, \\
    \slD_{A_\epsilon,C} \bxi_\epsilon
    &=
    0, \qand \\ 
    \epsilon^2\varpi F_{A_\epsilon}
    &=
    \mu(\Psi_\epsilon) + \mu(\bxi_\epsilon).
  \end{split}
\end{equation}
Moreover, we can assume the gauge fixing condition $\bxi_1 \perp \rho(\fg_P)\bxi_0$, that is,
\begin{equation*}
  R_{\bxi_0}^*\bxi_1 = 0
\end{equation*}
in the notation of \autoref{Prop_LinearizedMomentMapAtZero}.
The next proposition imposes constraints on the terms of order $\epsilon$ in the power series expansions.

Let $W_j$ and $\fg_{\sH_j}$ be, respectively, the spinor bundle and  adjoint bundle associated with the spin$^{\U(k_j)}$ structure $\fw_j$ on the total space of the covering map $\pi_j \co \tilde M_j \to M$.

\begin{prop}
  In the above situation, there exist $\tilde\Psi_{1,j} \in \Gamma(W_j)$ and $\tilde \bxi_j \in \Gamma(V\otimes \fg_{\sH_j})$ such that
  \begin{equation}
    \label{Eq_SpinorDecomposition}
    \Psi_1 = \bigoplus_{j=1}^m (\pi_j)_*\tilde\Psi_{1,j}
    \qandq
    \bxi_1 = \bigoplus_{j=1}^m (\pi_j)_*\tilde\bxi_{1,j}.
  \end{equation}
  Furthermore, $A_0$ arises from a collection of spin connections $A_{0,j} \in \sA^s(\fw_j)$, and each triple $(A_{0,j}, \bxi_{1,j}, \tilde\Psi_{1,j})$ satisfies the ADHM$_{1,k_j}$ equation
  \begin{equation}
    \label{Eq_SWForComponents}
    \begin{split}
      \slD_{A_{0,j}} \tilde\Psi_{1,j} &= 0, \\
      \slD_{A_{0,j},C} \bxi_{1,j} &= 0, \qand \\
      \varpi F_{A_{0,j}} &= \mu(\tilde\Psi_{1,j}) + \mu(\tilde\bxi_{1,j})
    \end{split}
  \end{equation}
  on $\tilde M_j $ for $j = 1, \ldots, m$.
\end{prop}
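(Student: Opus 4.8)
The plan is to expand \eqref{Eq_FormalExpansionADHM1kSW} in powers of $\epsilon$, read off what the coefficients of $\epsilon^0$, $\epsilon^1$, and $\epsilon^2$ must satisfy, and reorganise those identities along the covering $\pi\co\tilde M = \tilde M_1\sqcup\cdots\sqcup\tilde M_m\to M$ produced by \autoref{Prop_ADHM1kHaydys}. At order $\epsilon^0$ the system \eqref{Eq_FormalExpansionADHM1kSW} is precisely the limiting equation \eqref{Eq_LimitingADHM1kSW} for $(\Psi_0 = 0,\bxi_0,A_0)$, so \autoref{Prop_ADHM1kHaydys} and \autoref{Prop_ADHM1kHaydysTechnical} apply: I would use them to fix $\pi\co\tilde M\to M$, the spin$^{\U(k_j)}$ structures $\fw_j$ on $\tilde M_j$, and---via the identification $\sA^{\Psi_0,\bxi_0}_C(\hat Q)\iso\prod_j\sA^s(\fw_j)$ of \eqref{Eq_APhiB}---spin connections $A_{0,j}\in\sA^s(\fw_j)$ out of which $A_0$ is assembled, working in a gauge in which $A_0$ reduces to the $N_{\hat H}(T_\lambda)$--structure $\hat Q^\circ$. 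That reduction (cf.\ the proof of \autoref{Prop_ADHM1kHaydysTechnical}) also gives the bundle splittings $W\iso\bigoplus_j(\pi_j)_*W_j$ and $\ft_P = \bigoplus_j(\pi_j)_*\fg_{\sH_j}\subset\fg_\sH$, together with the compatible operator splittings $\slD_{A_0} = \bigoplus_j(\pi_j)_*\slD_{A_{0,j}}$ on $W$ and $\slD_{A_0,C}|_{V\otimes\ft_P} = \bigoplus_j(\pi_j)_*\slD_{A_{0,j},C}$ (using that the $\pi_j$ are local isometries), as well as $\varpi F_{A_0} = \bigoplus_j(\pi_j)_*\varpi F_{A_{0,j}}\in\Gamma(\Lambda^2T^*M\otimes\ft_P)$.

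Collecting the $\epsilon^1$--terms: since $\mu$ is quadratic and $\Psi_0 = 0$, the third equation of \eqref{Eq_FormalExpansionADHM1kSW} contributes only $\rd\mu_{\bxi_0}(\bxi_1) = 0$ at this order, and combined with the gauge-fixing $R^*_{\bxi_0}\bxi_1 = 0$ this forces $\bxi_1$ to be the horizontal representative in $\ker\rd\mu_{\bxi_0}$. Here I would invoke the hyperk\"ahler slice picture at a point with stabiliser conjugate to $T_\lambda$: by the ADHM$_{1,k}$ linear algebra of \autoref{Sec_NakajimasProof} (in particular \autoref{Thm_1kADHM}), together with \autoref{Prop_LinearizedMomentMapAtZero} and the discussion of \autoref{Sec_HaydysCorrespondence}, that horizontal kernel is exactly $\Gamma(V\otimes\ft_P)$. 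This produces $\bxi_1 = \bigoplus_j(\pi_j)_*\tilde\bxi_{1,j}$ with $\tilde\bxi_{1,j}\in\Gamma(V\otimes\fg_{\sH_j})$, while $\Psi_1 = \bigoplus_j(\pi_j)_*\tilde\Psi_{1,j}$ is automatic from $\Psi_1\in\Gamma(W)$; this is \eqref{Eq_SpinorDecomposition}. The $\epsilon^1$--parts of the first two equations of \eqref{Eq_FormalExpansionADHM1kSW} are $\slD_{A_0}\Psi_1 = 0$ (the $a_1$--term drops because $\Psi_0 = 0$) and $\slD_{A_0,C}\bxi_1 + \eta = 0$, where $\eta$ is the zeroth-order term obtained by linearising $A\mapsto\slD_{A,C}$ at $A_0$; since $\ft_P$ is the infinitesimal stabiliser of $\bxi_0$, only the $\ft_P^\perp$--component of $a_1$ contributes to $\eta$, through $[\ft_P^\perp,\ft_P]\subseteq\ft_P^\perp$, so $\eta\in\Gamma(V\otimes\ft_P^\perp)$. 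Projecting onto $V\otimes\ft_P$ and pushing down along the $\pi_j$ then yields $\slD_{A_{0,j}}\tilde\Psi_{1,j} = 0$ and $\slD_{A_{0,j},C}\tilde\bxi_{1,j} = 0$ on $\tilde M_j$, the first two lines of \eqref{Eq_SWForComponents}.

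For the curvature equation I would collect the $\epsilon^2$--terms of the third equation of \eqref{Eq_FormalExpansionADHM1kSW}; using $\mu(\bxi_0) = 0$ and $F_{A_\epsilon} = F_{A_0} + O(\epsilon)$ this reads
\begin{equation*}
  \varpi F_{A_0} = \mu(\Psi_1) + \mu(\bxi_1) + \rd\mu_{\bxi_0}(\bxi_2).
\end{equation*}
Pointwise $\im\rd\mu_{\bxi_0} = \Lambda^+V\otimes\ft_P^\perp$, so projecting this identity onto $\ft_P$ annihilates the $\bxi_2$--term; and because $\Psi_1$ and $\bxi_1$ are block-diagonal with respect to the $N_{\hat H}(T_\lambda)$--reduction, the $\ft_P$--components of $\mu(\Psi_1) = (\Psi_1\Psi_1^*)_0$ and $\mu(\bxi_1) = [\bxi_1\wedge\bxi_1]^+$ equal $\bigoplus_j(\pi_j)_*\mu(\tilde\Psi_{1,j})$ and $\bigoplus_j(\pi_j)_*\mu(\tilde\bxi_{1,j})$ (the off-diagonal contributions land in $\ft_P^\perp$). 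Since $(\pi_j)_*$ is injective on sections, this gives $\varpi F_{A_{0,j}} = \mu(\tilde\Psi_{1,j}) + \mu(\tilde\bxi_{1,j})$ on $\tilde M_j$, completing \eqref{Eq_SWForComponents}.

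The main obstacle is the content of the second paragraph. The order-by-order expansion and the $\ft_P$--projections are bookkeeping once one knows (i) that the horizontal part of $\ker\rd\mu_{\bxi_0}$ is $V\otimes\ft_P$ and (ii) that the Clifford multiplications, the Dirac operators $\slD_{A_0}$ and $\slD_{A_0,C}$, the quadratic maps $\mu$, and the isomorphism \eqref{Eq_SOLambda+V=SOTM} are all compatible with the block decomposition coming from the $N_{\hat H}(T_\lambda)$--reduction, hence restrict or push forward to the corresponding ADHM$_{1,k_j}$ data over $\tilde M_j$. Both (i) and (ii) are exactly what the Haydys correspondence with stabilisers of \autoref{Sec_HaydysCorrespondence} and the ADHM computations of \autoref{Sec_NakajimasProof} are designed to deliver, so the proof amounts to assembling those inputs carefully.
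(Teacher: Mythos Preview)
Your proposal is correct and follows essentially the same route as the paper: use \autoref{Prop_ADHM1kHaydysTechnical} for the decomposition of $A_0$ and the bundle splittings, combine the order--$\epsilon$ curvature equation with the gauge fixing $R_{\bxi_0}^*\bxi_1=0$ and \autoref{Prop_LinearizedMomentMapAtZero} to force $\bxi_1\in\Gamma(V\otimes\ft_P)$, then read off the Dirac and curvature equations for the components by projecting onto $\ft_P$. The only cosmetic differences are that the paper states the intermediate step $[\bxi_0\wedge\bxi_1]=0\Rightarrow\bxi_1\in\H\otimes\ft_\lambda$ as a separate observation and records the complementary $\ft_P^\perp$--identity $(\rd_{\bxi_0}\mu)\bxi_2=-\mu_\perp(\Psi_1)$ explicitly, whereas you simply project it away.
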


\begin{proof}
  From \autoref{Prop_ADHM1kHaydysTechnical}, we know that
  \begin{equation*}
    \bxi_0 = (\bxi_{0,1},\cdots,\bxi_{0,m}) \in \Gamma(V\otimes\ft_P)
    \qwithq
    \ft_P = \bigoplus_{j=1}^m (\pi_j)_*\fg_{\sH_j}
  \end{equation*}
  and $A_0$ arises from spin connections $A_{0,j} \in \sA^s(\fw_j)$.

  The coefficient in front of $\epsilon$ on the right-hand side of the third equation of \eqref{Eq_FormalExpansionADHM1kSW} must vanish;
  hence,
  \begin{equation*}
    (\rd_{\bxi_0}\mu)\bxi_1 = 0.
  \end{equation*}
  By \autoref{Prop_LinearizedMomentMapAtZero} it follows that $[\bxi_0\wedge\bxi_1] = 0$.
  Therefore,
  \begin{equation*}
    \mu(\bxi_1) \in \Omega^2(M,[\ft_P,\ft_P])
  \end{equation*}
  by the following self-evident observation combined with \autoref{Thm_1kADHM}.

  \begin{prop}
    If $\bxi_0,\bxi_1 \in \H\otimes\fg$, $[\bxi_0\wedge \bxi_1] = 0$, and the stabilizer of $\bxi_0 \in \U(k)$ is precisely $T_\lambda = \prod_{n=1}^{\abs{\lambda}} \U(\lambda_n)$,
    then $\bxi_1 \in \H\otimes\ft_\lambda$ with $\ft_\lambda = \bigoplus_{n=1}^{\abs{\lambda}} \fu(\lambda_n)$.
    In particular,
    \begin{equation*}
      [\bxi_1\wedge\bxi_1] \in \H\otimes[\ft_\lambda,\ft_\lambda] \subset \H\otimes\ft_\lambda.
    \end{equation*}
  \end{prop}

  \begin{remark}
    If $\lambda = (1,\ldots,1)$, then $[\ft_P,\ft_P] = 0$;
    cf.~\autoref{Rmk_ADHM11SW}.
  \end{remark}

  The third equation in \eqref{Eq_FormalExpansionADHM1kSW} to order $\epsilon^2$ is thus equivalent to
  \begin{equation}
    \label{Eq_Epsilon23FormalExpansionADHM1kSW}
    \varpi F_{A_0} = \mu(\bxi_1) + (\rd_{\bxi_0}\mu)\bxi_2 + \mu(\Psi_1).
  \end{equation}

  In terms of the spin connections $A_{0,j} \in \sA^s(\fw_j)$,
  we have
  \begin{equation*}
    \varpi F_{A_0} = \bigoplus_{j=1}^m (\pi_j)_*\varpi F_{A_{0,j}} \in \Omega^2(M,\ft_P).
  \end{equation*}
  By \eqref{Eq_DMUPhiOnST},
  we have
  \begin{equation*}
    (\rd_{\bxi_0}\mu)\bxi_2 \in \Omega^2(M,\ft_P^\perp).
  \end{equation*}
  Thus, if we denote by $\mu_\shortparallel(\Psi_1)$ the component of $\mu(\Psi_1)$ in $\ft_P$ and by $\mu_\perp(\Psi_1)$ the component of $\mu(\Psi_1)$ in $\ft_P^\perp \subset \fg_P$,
  then \eqref{Eq_Epsilon23FormalExpansionADHM1kSW} is equivalent to
  \begin{equation}
    \label{Eq_LeadingFormalExpansionADHM1kSW_1}
    \begin{split}
      \varpi F_{A_0} &= \mu(\bxi_1) + \mu_\shortparallel(\Psi_1) \qand \\
      (\rd_{\bxi_0}\mu)\bxi_2 &= -\mu_\perp(\Psi_1).
    \end{split}
  \end{equation}
  Since $\ft_P$ is parallel with respect to $A_0$ and $V\otimes\ft_P$ is perpendicular to $\bar\gamma(T^*M\otimes\fg_P)\bxi_0$,
  the first and the second equation of \eqref{Eq_FormalExpansionADHM1kSW} to order $\epsilon$ are equivalent to
  \begin{equation}
    \label{Eq_LeadingFormalExpansionADHM1kSW_2}
    \begin{split}
      \slD_{A_0} \Psi_1 &= 0, \\
      \slD_{A_0,C} \bxi_1 &= 0, \qand \\
      \gamma(a_1)\bxi_0 &= 0.
    \end{split}
  \end{equation}

  Let $\tilde\Psi_{1,j} \in \Gamma(W_j)$ and $\tilde \bxi_j \in \Gamma(V\otimes \fg_{\sH_j})$ be such that \eqref{Eq_SpinorDecomposition} holds.
  The first equation of \eqref{Eq_LeadingFormalExpansionADHM1kSW_1} and the first two equations of \eqref{Eq_LeadingFormalExpansionADHM1kSW_2} are precisely equivalent to the ADHM$_{1,k_j}$ Seiberg--Witten equation \eqref{Eq_SWForComponents} for the triple $(A_{0,j}, \bxi_{1,j}, \tilde\Psi_{1,j})$.
\end{proof}

\subsection{A compactness conjecture for \texorpdfstring{ADHM$_{1,k}$}{ADHM1k} monopoles}

The discussion in the preceding sections together with known compactness results for Seiberg--Witten equations
\cite{Taubes2012,Taubes2013,Haydys2014,Taubes2016,Taubes2017} lead to the following conjecture.

\begin{conjecture}
  \label{Conj_ADHM1kCompactness}
  Let $(\epsilon_i,\Psi_i,\bxi_i,A_i)$ be a sequence of solutions of the blown-up ADHM$_{1,k}$ Seiberg--Witten equation
  \begin{align*}
    \slD_{A_i} \Psi_i &= 0, \\
    \slD_{A_i,C} \bxi_i &= 0, \\
    \epsilon_i^2 \varpi F_{A_i} &= \mu(\Psi_i) + \mu(\bxi_i), \qand \\
    \Abs{(\Psi_i,\bxi_i)}_{L^2} &= 1
  \end{align*}
  with $\epsilon_i \to 0$.
  After passing to a subsequence the following hold:
  \begin{enumerate}
  \item
    There is a closed subset $Z \subset M$ of Hausdorff dimension at most one, such that outside of $Z$ and up to gauge transformations $(\Psi_i,\bxi_i,A_i)$ converges to a limit $(0,\bxi^\infty_0,A^\infty_0)$ and $\epsilon_i^{-1}(\Psi_i,\bxi_i-\bxi^\infty_0)$ converges to a limit $(\Psi^\infty_1,\bxi^\infty_1)$.
  \item
    The triple $(0,\bxi^\infty,A^\infty)$ is a solution of the limiting ADHM$_{1,k}$ Seiberg--Witten equation \eqref{Eq_LimitingADHM1kSW}.
  \item
    There is a section $\tilde n \in \Gamma(M\setminus Z,\Sym^k_\lambda V)$ for some partition $\lambda$ of $k$ induced by $\bxi^\infty_0$.
    The section $\tilde n$ extends to to a continuous section of $\Sym^k V$ on all of $M$.
  \item
    Denote by $\tilde M\setminus\tilde Z$ the unbranched cover of $M\setminus Z$ induced by $\tilde n$.
    If $k_j$, $\tilde M_j\setminus\tilde Z_j$, $\fw_j$ are as in \autoref{Prop_ADHM1kHaydys} and $A_{0,j} \in \sA^s(\fw_j)$ denote the spin connections giving rise to $A^\infty_0$, and $\tilde \Psi_{1,j}$ and $\tilde \bxi_{1,j}$ are such that
    \begin{equation*}
      \Psi^\infty_1 = \bigoplus_{j=1}^m (\pi_j)_*\tilde\Psi_{1,j}
      \qandq
      \bxi^\infty_1 = \bigoplus_{j=1}^m (\pi_j)_*\tilde\bxi_{1,j},
    \end{equation*}
    then, for each $j = 1,\ldots,m$,
    $(\tilde \Psi_{1,j},\tilde \bxi_{1,j}, A_{0,j})$ is a solution of the ADHM$_{1,k_j}$ Seiberg--Witten equation on $\tilde M_j\setminus\tilde Z_j$.
  \end{enumerate}
\end{conjecture}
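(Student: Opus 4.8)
The plan is to adapt the compactness theory for the $3$--dimensional Seiberg--Witten equation, in the form developed by \citet{Taubes2013} for the classical equation and by \citet{Haydys2014} for equations with multiple spinors, and to feed the resulting geometric limit into the Haydys correspondence of \autoref{Prop_ADHM1kHaydys} and the local analysis of \autoref{Sec_FormalExpansion}. The first step is to establish $i$--independent a priori estimates. Rewriting the curvature as $\varpi F_{A_i} = \epsilon_i^{-2}\big(\mu(\Psi_i) + \mu(\bxi_i)\big)$, feeding this into the Weitzenböck formulae for $\slD_{A_i}\Psi_i = 0$ and $\slD_{A_i,C}\bxi_i = 0$, and running a maximum--principle argument against the normalization $\Abs{(\Psi_i,\bxi_i)}_{L^2} = 1$ (using that the action of $\mu(\Psi_i)+\mu(\bxi_i)$ on $(\Psi_i,\bxi_i)$ has a definite sign), one obtains $\Abs{(\Psi_i,\bxi_i)}_{C^0} \leq C$ and hence $\epsilon_i^2\Abs{\varpi F_{A_i}}_{C^0} \leq C$.

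Second, define the blow--up set: let $Z \subset M$ consist of those $x$ for which $\liminf_{i\to\infty} r^{-1}\int_{B_r(x)}\big(\abs{\nabla_{A_i}\Psi_i}^2 + \abs{\nabla_{A_i,C}\bxi_i}^2\big) \geq \eta_0$ for all small $r$, where $\eta_0$ is the threshold of an $\epsilon$--regularity theorem for the blown--up equation. Away from $Z$, $\epsilon$--regularity together with Uhlenbeck gauge fixing yields locally uniform $C^\infty$ bounds, so a subsequence of $(\Psi_i,\bxi_i,A_i)$ converges in $C^\infty_{\loc}(M\setminus Z)$ to a limit $(\Psi^\infty_0,\bxi^\infty_0,A^\infty_0)$; since $A_i$ converges, $\epsilon_i^2\varpi F_{A_i} \to 0$, so the limit solves \eqref{Eq_LimitingADHM1kSW} and \autoref{Prop_1kADHMMu=0=>Phi=0} forces $\Psi^\infty_0 = 0$. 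The rescaled differences $\epsilon_i^{-1}(\Psi_i,\bxi_i - \bxi^\infty_0)$ then satisfy a linear elliptic system with $C^\infty_{\loc}$--convergent coefficients, so a further subsequence converges to $(\Psi^\infty_1,\bxi^\infty_1)$ on $M\setminus Z$. This yields parts (1) and (2).

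Third, an almost--monotonicity formula for the energy $r^{-1}\int_{B_r(x)}(\cdots)$ shows that $Z$ has locally finite $\mathcal{H}^1$--measure, hence Hausdorff dimension at most one; after absorbing into $Z$ the (again at most one--dimensional) locus where the orbit type of $\bxi^\infty_0$ jumps, \autoref{Thm_1kADHM} identifies $\bxi^\infty_0$ with a Fueter section $\tilde n \in \Gamma(M\setminus Z,\Sym^k_\lambda V)$ for a single partition $\lambda$, which — being bounded, with $Z$ small — extends across $Z$ to a continuous section of $\Sym^k V$ by a removable--singularity argument; this is part (3). Finally, \autoref{Prop_ADHM1kHaydys} supplies the unbranched cover $\pi\co \tilde M\setminus\tilde Z \to M\setminus Z$ and its components $\tilde M_j$, and the computation of \autoref{Sec_FormalExpansion} — whose identities are local on $M\setminus Z$ and rely only on $\mu(\Psi^\infty_0)+\mu(\bxi^\infty_0) = 0$ and the vanishing of the $\epsilon$--linear coefficient of the curvature equation, both valid for the genuine limits — produces the push--forward decompositions of $(\Psi^\infty_1,\bxi^\infty_1)$ and the ADHM$_{1,k_j}$ equation on each $\tilde M_j\setminus\tilde Z_j$, which is part (4).

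The main obstacle is the a priori interior estimate together with the $\epsilon$--regularity and almost--monotonicity formula: one must exclude energy concentration except along a set of dimension at most one. For $k=1$ this is essentially \citet{Taubes2013}, and for multiple spinors it is the core of \citet{Haydys2014}; carrying it out for the ADHM$_{1,k}$ system requires Weitzenböck and monotonicity identities adapted to the quaternionic representation underlying $\Sym^k\H$, and in particular control of the genuinely non--abelian term $\mu(\bxi)$, which is absent when $k=1$. A secondary difficulty is to rule out that the unit $L^2$ mass concentrates entirely on $Z$, so that the limit $(0,\bxi^\infty_0,A^\infty_0)$ is non--trivial; in comparable settings this is handled by a frequency--function argument.
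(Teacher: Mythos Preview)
The statement you are attempting to prove is labeled \emph{Conjecture} in the paper, and the paper does not provide a proof. The authors introduce it with the sentence ``The discussion in the preceding sections together with known compactness results for Seiberg--Witten equations \cite{Taubes2012,Taubes2013,Haydys2014,Taubes2016,Taubes2017} lead to the following conjecture,'' and leave it at that. There is therefore no ``paper's own proof'' to compare your proposal against.

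Your outline is a plausible sketch of how one would \emph{attempt} such a compactness theorem, following the template of Taubes and Haydys--Walpuski: Weitzenb\"ock-based a priori bounds, an $\epsilon$--regularity threshold defining the blow-up set $Z$, an almost-monotonicity formula to control $\dim_\cH Z$, and then the Haydys correspondence and the formal expansion of \autoref{Sec_FormalExpansion} to interpret the limit. The ingredients you invoke from the paper (\autoref{Prop_1kADHMMu=0=>Phi=0}, \autoref{Thm_1kADHM}, \autoref{Prop_ADHM1kHaydys}, \autoref{Sec_FormalExpansion}) are exactly the ones the authors developed as \emph{motivation} for the conjecture.

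However, you correctly identify the genuine obstacles in your final paragraph, and these are not minor. The non-abelian moment map $\mu(\bxi) = [\bxi\wedge\bxi]^+$ does not have the positivity properties that drive the Taubes and Haydys--Walpuski arguments; in particular, the claim that ``the action of $\mu(\Psi_i)+\mu(\bxi_i)$ on $(\Psi_i,\bxi_i)$ has a definite sign'' is not established and is likely false in this generality. The $\epsilon$--regularity and monotonicity steps are therefore not straightforward adaptations but open analytic problems. Your proposal is best read as a programme for proving the conjecture rather than a proof, which is consistent with the paper's own stance.
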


\begin{remark}
  The reader should observe that while $\tilde M_j$ in $\tilde M_j\setminus\tilde Z_j$ does exist, it need not be a smooth manifold.
\end{remark}

\begin{remark}
  If $\Psi = 0$, $V = TM \oplus \R$ and $(a,\xi) \in \Omega^1(M,\fg_\sH) \oplus \Omega^0(M,\fg_\sH) = \Gamma(V\otimes\fg_\sH)$,
  then the ADHM$_{1,k}$ Seiberg--Witten equation becomes the equation
  \begin{equation}
    \label{Eq_ExtendedGLkCFlat}
    \begin{split}
      F_{A+ia} - *[\xi,a] + *i\rd_A\xi &= 0 \qand \\
      \rd_A^* a &= 0
    \end{split}
  \end{equation}
  with
  \begin{equation*}
    F_{A+ia} = F_A - \frac12[a\wedge a] + i\rd_Aa.
  \end{equation*}
  If $(a,\xi,A)$ is a solution of \eqref{Eq_ExtendedGLkCFlat} and $M$ is closed,
  then a simple integration by parts argument shows that $\rd_A\xi = 0$;
  hence, $F_{A+ia} = 0$.
  That is, \eqref{Eq_ExtendedGLkCFlat} is effectively the condition that condition that $A + i a$ is a flat $\GL_k(\C)$--connection together with the moment map equation $\rd_A^*a = 0$.
  
  \autoref{Conj_ADHM1kCompactness} thus predicts that as limits of flat $\GL_k(\C)$--connections we should see data consisting of a closed subset $Z \subset M$ of Hausdorff dimension at most one, $m \in \N$ and, for each $j = 1,\ldots,m$, a $\ell_j$--fold cover $\tilde M\setminus \tilde Z_j$ of $M\setminus Z$, and solutions of \eqref{Eq_ExtendedGLkCFlat} on $\tilde M_j\setminus\tilde Z_j$ such that $\sum_{j=1}^m \ell_jk_j = k$.
\end{remark}

%%% Local Variables:
%%% mode: latex
%%% TeX-master: "CountingAssociativesSeibergWitten"
%%% End:

\section{A tentative proposal}
\label{Sec_TentativeProposal}

We are ready to outline how ADHM monopoles can be used to deal with the problem of multiple covers described in \autoref{Sec_MultipleCoverAssociatives}.

Let $\psi$ be a tamed, closed, definite $4$--form,
let $P$ be  a compact, connected, oriented $3$--manifold,
let $P \subset Y$ be an unobstructed associative embedding.
Set
\begin{equation*}
  \fM^{1,k}(P,\psi)
  \coloneq
  \coprod_{\fw}
  \fM^{1,k}_\fw(P,\psi)
\end{equation*}
with the disjoint union taken over all spin$^{\U(k)}$ structures $\fw$ on $P$ and
\begin{equation*}
  \fM^{1,k}_\fw(P,\psi)
  \coloneq
  \frac{
    \set*{
      (\Psi,\bxi,A) \in \Gamma(W)\times\Gamma(NP\otimes\fg_\sH)\times\sA^s(\fw)
      :
      \begin{array}{@{}l@{}}
        (\Psi,\bxi,A) \text{ satisfies }\eqref{Eq_ADHMrkSW} \\
        \text{ with respect to } g_\psi|_P
      \end{array}
    }
  }{\sG^s(\fw)}.
\end{equation*}
Ignoring issues to do with reducible solutions,
one should be able to extract a number
\begin{equation*}
  w(kP,\psi) \in \Z
\end{equation*}
by counting $\fM^{1,k}(P,\psi)$, at least, for generic $\psi$ and possibly after slightly perturbing the ADHM Seiberg--Witten equation \eqref{Eq_ADHMrkSW}.
More generally, if $P$ has connected components $P^1,\ldots,P^m$ and $k_1,\ldots,k_m \in \N$, we set
\begin{equation*}
  w(k_1\cdot P^1 + \cdots + k_m\cdot P^m,\psi)
  \coloneq \prod_{j=1}^k w(k_j\cdot P^j,\psi).
\end{equation*}

For $k = 1$, this number is the Seiberg--Witten invariant $\SW(P) \in \Z$ mentioned in \autoref{Sec_RoleOfSeibergWitten}
For $k > 0$, this number should be independent of the choice of perturbation but it will depend on $\psi$.
Assume the situation of \autoref{Sec_CollapsingImmersions};
that is, we have:
\begin{itemize}
\item
  a generic $1$--parameter family of tamed, closed, definite $4$--forms $(\psi_t)_{t\in(-T,T)}$,
\item
  a $1$--parameter family of compact, connected, unobstructed embedded associative submanifolds $(P_t)_{t \in (-T,T)}$ with respect to $(\psi_t)_{t \in (-T,T)}$, and
\item
  for every $j = 1, \ldots, m$ a $1$--parameter family of compact, connected, unobstructed embedded associative submanifolds $(P^j_t)_{t \in (-T,0)}$ with respect to $(\psi_t)_{t \in (-T,0)}$ such that
  \begin{equation*}
    P^j_t \to \ell_j\cdot P_0
  \end{equation*}
  as integral currents as $t$ tends to zero for some $\ell_j \in \set{ 2, 3, \ldots }$.
\end{itemize}
Given $k_1,\ldots,k_m$, set
\begin{equation*}
  k \coloneq \sum_{j=1}^m \ell_jk_j.
\end{equation*}
From the discussion in preceding three sections we expect that,
for $0 < t \ll 1$,
\begin{equation}
  \label{Eq_MultipleCoverTransitionFormula}
  w(k\cdot P_{-t},\psi_{-t})
  + w(k_1\cdot P^1_{-t} + \cdots + k_m\cdot P^m_{-t},\psi_{-t})
  =
  w(k\cdot P_{+t},\psi_{+t})
\end{equation}
because \autoref{Conj_ADHM1kCompactness} suggests that as $t$ passes through zero $w(k_1\cdot P^1_0 + \cdots + k_m\cdot P^1_0,\psi_0)$ ADHM$_{1,k}$ monopoles on $P_t$ degenerate and disappear (if counted with the correct sign).

Suppose that one can indeed define a weight $w$ as above satisfying \eqref{Eq_MultipleCoverTransitionFormula} as well as analogues of \eqref{Eq_SurgeryBehaviour}.
Define
\begin{equation}
  \label{Eq_NDPhi_5thAttempt}
  n_\beta(\psi) = \sum w(k_1\cdot P^1 + \cdots + k_m\cdot P^m,\psi)
\end{equation}
with the summation ranging over all $m \in \N$, $k_1,\ldots,k_m \in \N$ and all compact, connected, unobstructed embedded associative submanifolds $P^1,\ldots,P^m \subset Y$ such that
\begin{equation*}
  \sum_{j=1}^m k_j [P^j] = \beta.
\end{equation*}
This number would be invariant under the transitions described in \autoref{Sec_IntersectingAssociatives}, \autoref{Sec_AssociatviesWithT2Singularities}, and \autoref{Sec_CollapsingImmersions}.

From \autoref{Sec_RoleOfSeibergWitten} we know that reducible solutions will prevent us from defining $w$ in general.
However, the above can serve as a first approximation.
To deal with reducibles one likely has to develop ADHM$_{1,k}$ analogues of \citeauthor{Kronheimer2007}'s monopole homology and construct a chain complex extending \eqref{Eq_CMA} which does depend on $\psi$ but whose homology does not.

\begin{remark}
  By analogy with monopole Floer homology, one can envision also a corresponding $8$--dimensional version of the invariant proposed in this article. 
  Such an invariant would be obtained by counting Cayley submanifolds inside a closed $\Spin(7)$--manifold, weighted by solutions of the $4$--dimensional ADHM Seiberg--Witten equations.
  A relative version of this theory would associate with every cylindrical $\Spin(7)$--manifold $X$ whose end is asymptotic to a compact $\Gtwo$--manifold $Y$ a distinguished element of the Floer homology group associated with $Y$.  
  In order to develop such a $7+1$ dimensional theory, one has to deal with higher-dimensional moduli spaces of Cayley submanifolds and ADHM monopoles, which poses additional technical complications.
  Note that in order to define $\Gtwo$ Floer homology, one has to consider only $\Spin(7)$--manifolds of the form $Y \times (-\infty, \infty)$, and only zero-dimensional moduli spaces.
\end{remark}

%%% Local Variables:
%%% mode: latex
%%% TeX-master: "CountingAssociativesSeibergWitten"
%%% End:

\section{Counting holomorphic curves in Calabi--Yau \texorpdfstring{$3$}{3}--folds}
\label{Sec_ADHMBundles}

Let $Z$ be a Calabi--Yau $3$--fold with Kähler form $\omega$ and holomorphic volume form $\Omega$. 
The product $S^1\times Z$ is naturally a $\Gtwo$--manifold with the $\Gtwo$--structure given by
\begin{equation*}
  \phi = dt \wedge\omega + \Re\Omega.
\end{equation*}
Every holomorphic curve $\Sigma\subset Z$ gives rise to an associative submanifold $S^1\times \Sigma \subset S^1 \times Z$.

\begin{prop}
  Let $\beta\in H_2(Z)$ be a homology class.
  Every associative submanifold in $S^1 \times Z$ representing the class  $[S^1]\times\beta$ is necessarily of the form $S^1\times \Sigma$ with $\Sigma\subset Z$ a holomorphic curve.
\end{prop}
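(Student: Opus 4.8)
The plan is to show that an associative submanifold $P\subset S^1\times Z$ representing $[S^1]\times\beta$ is calibrated not only by $\phi$ but also by the closed $3$--form $\rd t\wedge\omega$, and then to read off the product structure $P=S^1\times\Sigma$ from the equality case of this second calibration, with holomorphicity of $\Sigma$ dropping out for free.

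First I would compute two cohomological pairings. By the calibration identity for associatives ($\iota^*\phi=\vol_{\iota^*g}$, hence $\vol(P,g)=\inner{[\phi]}{[S^1]\times\beta}$), together with the Künneth splitting $H^3(S^1\times Z;\R)\iso (H^1(S^1)\otimes H^2(Z))\oplus(H^0(S^1)\otimes H^3(Z))$, one has $\inner{[\rd t\wedge\omega]}{[S^1]\times\beta}=\inner{[\omega]}{\beta}$, whereas $\inner{[\Re\Omega]}{[S^1]\times\beta}=0$ because $\Re\Omega$ is pulled back from $Z$ and therefore restricts to zero on any $3$--cycle of the form $S^1\times c$. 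Writing $[\phi]=[\rd t\wedge\omega]+[\Re\Omega]$, this yields $\vol(P,g)=\inner{[\omega]}{\beta}$ and, applying the same computation to the closed form $\rd t\wedge\omega$ itself, $\int_P\rd t\wedge\omega=\inner{[\omega]}{\beta}=\vol(P,g)$. (In particular $\inner{[\omega]}{\beta}>0$, consistent with the stated conclusion.)

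Next I would establish that $\rd t\wedge\omega$ has comass $\leq 1$ on $S^1\times Z$, with value $1$ on an oriented $3$--plane $\xi\subset\R\partial_t\oplus T_xZ$ if and only if $\xi=\R\partial_t\oplus\ell$ for a positively oriented complex line $\ell\subset T_xZ$ --- exactly the tangent planes to the products $S^1\times(\text{holomorphic curve})$. The pointwise computation: if $\xi\subset T_xZ$ the form vanishes on it; otherwise write $\xi=\R e_1\oplus\eta$ with $\eta=\xi\cap T_xZ$ a $2$--plane and $e_1=\cos\theta\,\partial_t+\sin\theta\,u$ a unit vector, $u\in T_xZ$, so that for an oriented orthonormal frame $(e_1,e_2,e_3)$ of $\xi$ with $e_2,e_3\in\eta$ one has $(\rd t\wedge\omega)(e_1,e_2,e_3)=\cos\theta\cdot\omega(e_2,e_3)$; then $\abs{\cos\theta}\leq 1$ and Wirtinger's inequality $\abs{\omega(e_2,e_3)}\leq 1$ give the bound, and equality forces $\partial_t\in\xi$ and $\eta$ a complex line. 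Combining with the first step, the pointwise bound together with $\int_P\rd t\wedge\omega=\vol(P,g)$ forces $\rd t\wedge\omega$ to evaluate to $1$ on the oriented tangent frame at every $p\in P$, hence $T_pP=\R\partial_t\oplus\ell_p$ with $\ell_p\subset T_xZ$ a complex line for every $p=(t,x)\in P$.

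Finally, since $\partial_t$ is everywhere tangent to the compact submanifold $P$ and its flow is the free $S^1$--action rotating the first factor, $P$ is a union of $S^1$--orbits; therefore $\Sigma$, defined as the image of $P$ in $Z=(S^1\times Z)/S^1$, is a compact embedded surface with $P=S^1\times\Sigma$, and the condition that each $\ell_p=T_x\Sigma$ be a complex line says exactly that $\Sigma$ is a holomorphic curve. (For immersed associatives the same argument applies verbatim after pulling everything back to the domain.) I expect the only genuine work to be the comass computation and its equality analysis; the homological computation must be set up so that the $\Re\Omega$ term provably drops out, and the orientation bookkeeping should be double-checked so that ``complex line'' comes out with the correct positive orientation --- which is exactly the orientation a $\phi$--calibrated plane of this type carries, since $\Re\Omega$ also annihilates such planes.
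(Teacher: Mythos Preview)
Your proof is correct and follows essentially the same approach as the paper: compare the cohomological value $\int_P \rd t\wedge\omega$ with $\vol(P)$, invoke the comass bound for $\rd t\wedge\omega$ (which the paper abbreviates as ``Wirtinger's inequality''), and use the equality case to force $\partial_t$ tangent to $P$ and the orthogonal $2$--plane to be complex. Your version spells out the comass computation and its equality analysis more explicitly than the paper does, but the logical structure is identical.
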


\begin{proof}
  The argument is similar to the one used to prove an analogous statement for instantons \cite[Section 3.2]{Lewis1998}.
  Let $P \subset S^1\times Z$ be an associative submanifold representing $[S^1]\times\beta$.
  Since
  \begin{equation*}
    \phi|_P = (\dt\wedge\omega + \Re\Omega)|_P = \vol_P,
  \end{equation*}
  there is a smooth function $f$ on $P$ such that
  \begin{equation*}
    \dt\wedge\omega|_P = f \vol_P \qandq
    \Re\Omega|_P = (1-f)\vol_P
  \end{equation*}
  By Wirtinger's inequality \cite{Wirtinger1936}, $f \leq 1$.
  We need to prove that $f = 1$,
  since this implies that $\partial_t$ is tangent to $P$ and, therefore, $P$ is of the form $S^1 \times \Sigma$, with $\Sigma \subset Z$ calibrated by $\omega$.
  
  One the one hand we have
  \begin{equation*}
    \int_P \vol_P
    = \inner{[\phi]}{[P]}
    = \inner{[\rd t\wedge\omega] + [\Re\omega]}{[S^1]\times\beta}
    = \inner{[\rd t\wedge\omega]}{[S^1]\times\beta},
  \end{equation*}
  while on the other hand
  \begin{equation*}
    \int_P f\vol_P
    = \int_P \rd t\wedge \omega
    = \inner{[\rd t\wedge\omega]}{[P]}
    = \inner{[\rd t\wedge\omega]}{[S^1]\times\beta}.
  \end{equation*}
  It follows that $f$ has mean-value $1$ and thus $f = 1$ because $f \leq 1$.
\end{proof}

The deformation theory of the associative submanifold $S^1\times \Sigma$ in $S^1\times Z$ coincides with that of the holomorphic curve $\Sigma$ in $Y$ \cite[Lemma 5.11]{Corti2012a}.
In particular, the putative enumerative theory for associative submanifolds discussed in this paper should give rise to an enumerative theory for holomorphic curves in Calabi--Yau $3$--folds.
Algebraic geometry abounds in such theories and various interplays between them; see \cite{Pandharipande2014} for an introduction to this rich subject. 
Our approach is closer in spirit to the original proposal by Donaldson and Thomas \cite{Donaldson1998}. 
We will argue that it should lead to a symplectic analogue of a theory already known to algebraic geometers.

\subsection{The Seiberg--Witten invariants of Riemann surfaces}
\label{Sec_DimensionalReduction}

In the naive approach of \autoref{Sec_RoleOfSeibergWitten} each associative submanifold is counted with its total Seiberg--Witten invariant.
The Seiberg--Witten equation \eqref{Eq_ClassicalSeibergWitten} over the $3$--manifold $M=S^1\times \Sigma$ was studied extensively  \cite{Morgan1996a,Mrowka1997,Munoz2005}.
The equation admits irreducible solutions only for the spin$^c$--structures pulled-back from $\Sigma$.
Such a spin$^c$ structure corresponds to a Hermitian line bundle $L\to \Sigma$;
the induced spinor bundle is $W = L\oplus T^*\Sigma^{0,1} \otimes L$.
Up to gauge transformations,
all irreducible solutions of the Seiberg--Witten equation are pulled-back from triples $(A,\psi_1,\psi_2)$ on $\Sigma$, where $(\psi_1,\bar\psi_2)\in\Gamma(L)\oplus\Omega^{0,1}(\Sigma,L)$, $A \in \sA(\det(W))$ and
\begin{equation}
  \label{Eq_Vortex}
  \begin{split}
    \delbar_A \psi_1 =0, \quad
    \delbar_A^*\bar\psi_2 &=0, \\
    \inner{\psi_1}{\bar\psi_2} &=0, \qand \\
    \frac{i}2 *F_A + \abs{\psi_1}^2- \abs{\bar\psi_2}^2 &=0. 
  \end{split}
\end{equation}
Here $\inner{\psi_1}{\bar\psi_2}$ is the $(0,1)$--form obtained from pairing $\psi_1$ and $\bar\psi_2$ using the Hermitian inner product.

The second equation implies that either $\psi_1$ or $\bar\psi_2$ must vanish identically---which one, depends on the sign of the degree
\begin{equation*}
  2d \coloneq \inner{c_1(W)}{\Sigma}.
\end{equation*}
Since $\det(W) = L^2\otimes K_\Sigma^{-1}$,
we have
\begin{equation*}
  \deg(L) = g-1+d.
\end{equation*}

Suppose that $d < 0$.
It follows from integrating the third equation that $\psi_1\neq 0$ and so $\bar\psi_2=0$.
The pair $(A,\psi_1)$ corresponds to an effective divisor of degree $g-1+d$ on $\Sigma$:
the zero set of $\psi_1$ counted with multiplicities.
This corresponds to an element of the symmetric product $\Sym^{g-1+d}\Sigma$.
If $d > 0$,
then a similar argument and Serre duality associates with every solution of \eqref{Eq_Vortex} an element of $\Sym^{g-1-d}\Sigma$.
The above correspondence, in fact, goes both ways:

\begin{theorem}[\citet{Noguchi1987,Bradlow1990,GarciaPrada1993}]
  \label{Thm_VortexCorrespondence}
  Let $\lambda \in \R\setminus\set{d}$.
  The moduli space of solutions to the perturbed vortex equation
  \begin{equation}
    \label{Eq_PerturbedVortex}
    \begin{split}
      \delbar_A \psi_1 =0, \quad
      \delbar_A^*\bar\psi_2 &=0, \\
      \inner{\psi_1}{\bar\psi_2} &=0, \qand \\
      \frac{i}2 *F_A + \abs{\psi_1}^2- \abs{\bar\psi_2}^2 &= \frac{2\pi}{\vol(\Sigma)}\cdot\lambda
    \end{split}   
  \end{equation}
  is homeomorphic to
  \begin{equation*}
    \begin{cases}
      \Sym^{g-1+d}(\Sigma) & \quad\text{if } d - \lambda < 0 \qand \\
      \Sym^{g-1-d}(\Sigma) & \quad\text{if } d - \lambda > 0.      
    \end{cases}
  \end{equation*}
\end{theorem}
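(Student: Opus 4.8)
The plan is to run the classical abelian vortex correspondence, converting \eqref{Eq_PerturbedVortex} first into an equation for a single holomorphic section of a line bundle and then into an effective divisor on $\Sigma$. Write $\cG = C^\infty(\Sigma,\U(1))$. First I would show that exactly one of $\psi_1,\bar\psi_2$ vanishes identically. Pairing the last equation in \eqref{Eq_PerturbedVortex} with the constant $1$, integrating over $\Sigma$, and using $\tfrac{i}{2}\int_\Sigma(*F_A)\,\vol = \tfrac{i}{2}\int_\Sigma F_A = \pi\inner{c_1(\det W)}{\Sigma} = 2\pi d$, gives
\[
  \Abs{\psi_1}_{L^2}^2 - \Abs{\bar\psi_2}_{L^2}^2 = 2\pi(\lambda - d),
\]
so the two cannot both vanish when $\lambda\neq d$, and the sign of $\lambda - d$ says which one is non-zero. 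On the other hand $\delbar_A\psi_1 = 0$ makes $\psi_1$ a holomorphic section of $L$, while $\delbar_A^*\bar\psi_2 = 0$ makes $\bar\psi_2$, after the standard conjugation/Serre-duality identification, a holomorphic section of a line bundle of degree $g-1-d$; hence each has only isolated zeros unless it is identically zero. Since $\inner{\psi_1}{\bar\psi_2}\equiv 0$ and this $(0,1)$--form vanishes at $x$ exactly when $\psi_1(x)=0$ or $\bar\psi_2(x)=0$, the open sets $\{\psi_1\neq 0\}$ and $\{\bar\psi_2\neq 0\}$ are disjoint; if the section singled out by the displayed identity is not identically zero, its zero set is discrete, so its complement is dense and the other section vanishes. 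Thus $\bar\psi_2\equiv 0$ if $d-\lambda<0$ and $\psi_1\equiv 0$ if $d-\lambda>0$.

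Consider the case $d-\lambda<0$ (the other follows by conjugating and invoking Serre duality, which swaps $\psi_1$ for $\bar\psi_2$ and $L$ of degree $g-1+d$ for a bundle of degree $g-1-d$). With $\bar\psi_2\equiv 0$, expressing the spin$^c$ connection $A$ on $\det W = L^2\otimes K_\Sigma^{-1}$ through a connection $A_L$ on $L$ and the fixed connection on $K_\Sigma$, the system \eqref{Eq_PerturbedVortex} becomes a $\tau$--vortex equation $\delbar_{A_L}\psi_1 = 0$, $i*F_{A_L}+\abs{\psi_1}^2 = \eta$ on $L$, with $\eta\in C^\infty(\Sigma)$ of average $\tfrac{2\pi(\lambda+g-1)}{\vol(\Sigma)}$; integrating recovers $\Abs{\psi_1}_{L^2}^2 = 2\pi(\lambda-d)$, so this equation is soluble precisely when $d-\lambda<0$ (and $\Sym^{g-1+d}\Sigma = \emptyset$ when $g-1+d<0$, consistently). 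By the Hitchin--Kobayashi correspondence for vortices \cite{Noguchi1987,Bradlow1990,GarciaPrada1993}, every $\cG$--orbit of solutions contains a unique representative, and sending a solution to the holomorphic pair $(\cL_{A_L},\psi_1)$ and then to the zero divisor $\div(\psi_1)$ gives a bijection from the moduli space onto $\Sym^{g-1+d}\Sigma$; the inverse solves, within the complex gauge orbit of a holomorphic pair, a scalar equation for the Hermitian metric.

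To upgrade the bijection to a homeomorphism I would argue that both sides are compact Hausdorff and that the divisor map is continuous. The vortex moduli space is compact because the maximum principle applied to the $\abs{\psi_1}^2$--equation bounds $\Abs{\psi_1}_{L^\infty}$ a priori, after which elliptic bootstrapping yields $C^\infty$--compactness modulo gauge; $\Sym^{g-1+d}\Sigma$ is compact by inspection. The map $[A_L,\psi_1]\mapsto\div(\psi_1)$ is continuous since the zero set of $\psi_1$, with multiplicities, depends continuously on $(A_L,\psi_1)$ in the $C^\infty$ topology (e.g.\ by the argument principle). A continuous bijection between compact Hausdorff spaces is a homeomorphism, which proves the theorem.

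The analytic core, and the step I expect to be the main obstacle, is the existence-and-uniqueness half of the vortex correspondence: finding, in each complex gauge orbit of a holomorphic pair of the correct degree, a unique unitary representative solving the vortex equation. This reduces to solving a Kazdan--Warner type scalar equation $\Delta u + \abs{\phi}^2_h\,e^{u} = \text{const}$ on $\Sigma$ by the direct method or by sub-/supersolutions, with the inequality $d-\lambda<0$ exactly the hypothesis ensuring solvability --- precisely the content of the cited papers, so it may simply be quoted here. A minor additional chore is the bookkeeping relating the connection on $\det W$ to that on $L$ (and the conventions on $K_\Sigma$) so that the solubility range matches $d-\lambda<0$ versus $d-\lambda>0$ as stated, but this is routine once the correspondence is available.
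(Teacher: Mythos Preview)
The paper does not supply its own proof of this theorem: it is stated with attribution to \citet{Noguchi1987,Bradlow1990,GarciaPrada1993} and used as a black box, with only the brief remark preceding it that the equation $\inner{\psi_1}{\bar\psi_2}=0$ forces one of $\psi_1,\bar\psi_2$ to vanish, the sign of the degree deciding which. Your sketch is a correct outline of the classical abelian vortex argument from those references --- integrate the curvature equation to see which section survives, then invoke the Hitchin--Kobayashi correspondence (the Kazdan--Warner step) to identify the moduli space with the appropriate symmetric product --- so it matches both the paper's discussion and the cited literature.
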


The Seiberg--Witten invariant can be obtained by integrating the Euler class of the obstruction bundle, in this case the cotangent bundle, over the moduli space.
As a consequence, if $\Sigma \neq S^2$,
then the total Seiberg--Witten invariant is
\begin{equation*}
  \SW(S^1\times \Sigma)
  = \sum_{d \in \Z}  (-1)^{g-1+d}\chi(\Sym^{g-1+d}\Sigma). 
\end{equation*}
Here we can sum over all $d \in \Z$ since for $\abs{d} > g-1$ we have $\chi(\Sym^{g-1+d}\Sigma) = 0$.

\subsection{Rational curves and the Meng--Taubes invariant}
\label{Sec_MengTaubes}

For $\Sigma = S^2$,
the above series is not summable.
This is consistent with the general theory alluded to in \autoref{Sec_RoleOfSeibergWitten}: we have $b_1(S^1\times S^2) = 1$ and, due to the appearance of reducible solutions, the total Seiberg--Witten invariant is defined only for $3$--manifolds with $b_1 \geq 2$.
In full generality, this problem can be solved within the framework of Floer homology.
However, if one considers only closed, oriented $3$--manifolds with $b_1\geq 1$ there is also a middle ground approach due to Meng and Taubes \cite{Meng1996}.
For every such a $3$--manifold $M$ they define an invariant
\begin{equation*}
  \uSW(M) \in \Z\llbracket H\rrbracket/H.
\end{equation*}
Here $H$ is the torsion-free part of $H^2(M,\Z)$, $\Z\llbracket H \rrbracket$ is the set of $\Z$--valued functions on $H$, and $H$ acts on $\Z\llbracket H \rrbracket$ by pull-back.

The Meng--Taubes invariant takes a particularly simple form for $M = S^1\times \Sigma$. 
In this case, there is a distinguished spin$^c$ structure, corresponding to the line bundle $L$ being trivial, and the invariant can be naturally lifted to an element $\uSW(M) \in\Z\llbracket H \rrbracket$. 
Moreover, the support of $\uSW(M) $ is $\Z = H^2(\Sigma,\Z) \subset H$, reflecting the fact that the Seiberg--Witten equation has solutions only for the spin$^c$ structures pulled-back from $\Sigma$.
Thus, $\uSW(M)$ can be interpreted as an element of the ring of formal Laurent series in a single variable, $q$ say,
\begin{equation*}
  \uSW(M) \in \Z(\!(q)\!).
\end{equation*}
For $g\geq 1$, this is the Laurent polynomial whose coefficients are the Seiberg--Witten invariants:
\begin{equation}
  \label{Eq_SWInvariantOfSigma}
  \uSW(S^1\times \Sigma) = \sum_{d \in \Z} (-1)^{g-1+d}\chi(\Sym^{g-1+d}\Sigma)q^d
\end{equation}
and we see that $\SW(S^1\times \Sigma)$ is obtained by evaluating $\uSW(S^1\times \Sigma)$ at $q=1$.
It is easy to see from the definition of the Meng--Taubes invariant that the same formula is true for $\Sigma = S^2$, although now the series has infinitely many non-zero terms.
One cannot evaluate $\uSW(S^1\times S^2)$ at $q=1$ and is forced to work with the refined invariant.

\subsection{Stable pair invariants of Calabi--Yau \texorpdfstring{$3$}{3}--folds}
\label{Sec_StablePairs}

Pandharipande and Thomas introduced a numerical invariant counting holomorphic curves in Calabi--Yau $3$--folds together with points on them; see \cite[Section $4 \frac{1}{2}$]{Pandharipande2014} for a brief introduction and \cite{Pandharipande2009,Pandharipande2010} for more technical accounts. 
Since the space of curves and points on them is not necessarily compact, one considers the larger moduli space of \defined{stable pairs}, consisting of a coherent sheaf $F$ on $Z$ together with a section $s\in H^0(Z,F)$ which, thought of as a sheaf morphism $s\colon \sO_Z \to F$, is surjective outside a zero-dimensional subset of $Z$.
The sheaf is required to be supported on a (possibly singular and thickened) holomorphic curve $\Sigma \subset Z$.%
\footnote{%
  More precisely, $F$ is pure of dimension one and $s$ has zero-dimensional cokernel.
}
\begin{example}
  \label{Ex_StablePair}
  The simplest examples arise when $\Sigma$ is smooth and $(F,s)$ is the pushforward of a pair $(\sL,\psi)$ on $\Sigma$ consisting of a holomorphic line bundle and a non-zero section.
  Conversely, all stable pairs whose support is a smooth, unobstructed curve are of this form \cite[Section 4.2]{Pandharipande2009}.
\end{example}
The topological invariants of a stable pair are the homology class $[\Sigma] \in H_2(Z)$ and the Euler characteristic $\chi(X,F) \in \Z$. 
For instance, in \autoref{Ex_StablePair}, with $\Sigma$ of genus $g$, we have
\begin{equation}
  \label{Eqn_EulerChar}
  \chi(X,F) = 1-g+\deg(F).
\end{equation}
For every $\beta \in H_2(Z)$ and $d\in \Z$, Pandharipande and Thomas use virtual fundamental class techniques to define an integer $\PT_{d,\beta}$ which counts stable pairs with homology class $\beta$ and Euler characteristic $d$. 
These numbers for different values of $d$ can be conveniently packaged into the generating function
\begin{equation*}
  \PT_{\beta} = \sum_d \PT_{\beta,d} q^d.
\end{equation*}
For a holomorphic curve $\Sigma\subset Z$ with $[\Sigma]=\beta$, denote by $\PT_\Sigma(q)$ the contribution to $\PT_{\beta}(q)$ coming from stable pairs whose support is $\Sigma$. (It makes sense to talk about such a contribution even for non-isolated curves \cite[Section 3.1]{Pandharipande2010}.)

In the situation of \autoref{Ex_StablePair}, the moduli space of stable pairs with support on $\Sigma$ and Euler characteristic $d$ is simply the space of effective divisors whose degree, computed using \eqref{Eqn_EulerChar}, is $g-1+d$. 
From the deformation theory of such stable pairs one concludes that in this case, 
\begin{equation}
  \label{Eq_CurveContribution}
  \PT_\Sigma(q) = \sum_{d} (-1)^{g-1+d} \chi(\Sym^{g-1+d}\Sigma)q^d;
\end{equation}
see \cite[Equation (4.4)]{Pandharipande2009} for details.
As a result, we obtain the following.

\begin{prop}
  If $\Sigma\subset Z$ is a smooth, unobstructed holomorphic curve, then 
  \begin{equation*}
    \PT_\Sigma = \uSW(S^1\times \Sigma).
  \end{equation*} 
\end{prop}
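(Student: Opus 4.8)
The plan is to observe that both sides of the claimed identity have already been computed in the preceding subsections, so the proposition is really just a matter of matching the two formulas. On the Seiberg--Witten side, equation \eqref{Eq_SWInvariantOfSigma} in \autoref{Sec_MengTaubes} gives, for a smooth curve $\Sigma$ of genus $g$,
\begin{equation*}
  \uSW(S^1\times \Sigma) = \sum_{d \in \Z} (-1)^{g-1+d}\chi(\Sym^{g-1+d}\Sigma)q^d,
\end{equation*}
an identity valid for every $g \geq 0$ (for $g = 0$ the series has infinitely many terms, but as an element of $\Z(\!(q)\!)$ it is still well-defined, since $\chi(\Sym^n S^2) = n+1$ vanishes for no $n \geq 0$ but the series is a Laurent series bounded below). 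On the Pandharipande--Thomas side, equation \eqref{Eq_CurveContribution} gives, under the hypothesis that $\Sigma$ is smooth and unobstructed,
\begin{equation*}
  \PT_\Sigma(q) = \sum_{d} (-1)^{g-1+d} \chi(\Sym^{g-1+d}\Sigma)q^d.
\end{equation*}
These two right-hand sides are literally the same formal Laurent series, so the proof is one line: compare \eqref{Eq_SWInvariantOfSigma} with \eqref{Eq_CurveContribution}.

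In writing this out I would be slightly more careful about the one place where the two derivations could a priori diverge, namely the convention for which symmetric product appears when $\deg$ is large and positive versus large and negative. On the gauge-theoretic side this is handled by \autoref{Thm_VortexCorrespondence}: for $d - \lambda < 0$ the vortex moduli space is $\Sym^{g-1+d}(\Sigma)$ and for $d - \lambda > 0$ it is $\Sym^{g-1-d}(\Sigma)$, and one checks that the resulting contributions assemble (after the sign $(-1)^{g-1+d}$ coming from the Euler class of the cotangent obstruction bundle) into the single sum over $d \in \Z$ written above --- this is exactly the bookkeeping already carried out in \autoref{Sec_DimensionalReduction} and \autoref{Sec_MengTaubes}. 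On the algebro-geometric side, the analogous symmetry is Serre duality on $\Sigma$, and the deformation theory of stable pairs supported on a smooth unobstructed curve (recalled from \cite[Section 4.2]{Pandharipande2009}) produces precisely the moduli space of effective divisors of degree $g-1+d$ with the same cotangent obstruction bundle. So the coefficient-by-coefficient match is not a coincidence but reflects the fact that both moduli problems reduce, on a smooth unobstructed curve, to Hilbert schemes of points on $\Sigma$ with the same obstruction theory.

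Concretely, the proof I would write is: by \eqref{Eq_CurveContribution}, $\PT_\Sigma(q) = \sum_d (-1)^{g-1+d}\chi(\Sym^{g-1+d}\Sigma)q^d$; by \eqref{Eq_SWInvariantOfSigma}, $\uSW(S^1\times\Sigma) = \sum_d (-1)^{g-1+d}\chi(\Sym^{g-1+d}\Sigma)q^d$; the two expressions agree as elements of $\Z(\!(q)\!)$, hence $\PT_\Sigma = \uSW(S^1\times\Sigma)$. One sentence could be added noting that the identity holds for all $g \geq 0$, the genus-zero case being the one where the refinement from $\SW$ to $\uSW$ is genuinely needed, as discussed at the end of \autoref{Sec_MengTaubes}.

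There is no real obstacle here; the content of the proposition lies entirely in the two computations \eqref{Eq_SWInvariantOfSigma} and \eqref{Eq_CurveContribution} that were done earlier. If anything, the ``hard part'' is purely expository: making clear to the reader that this proposition is the punchline tying together \autoref{Sec_DimensionalReduction} (Seiberg--Witten of $S^1\times\Sigma$ via vortices), \autoref{Sec_MengTaubes} (the Meng--Taubes refinement needed for $\Sigma = S^2$), and \autoref{Sec_StablePairs} (the Pandharipande--Thomas curve contribution), rather than requiring any new argument. I would therefore keep the proof to two or three sentences and let it point back at the relevant displayed equations.
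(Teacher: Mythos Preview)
Your proposal is correct and matches the paper's approach exactly: the paper states the proposition immediately after \eqref{Eq_CurveContribution} with the phrase ``As a result, we obtain the following,'' giving no separate proof, since the proposition is simply the observation that \eqref{Eq_SWInvariantOfSigma} and \eqref{Eq_CurveContribution} are the same formal Laurent series. Your additional remarks about the genus-zero case and the role of Serre duality are accurate context but, as you yourself note, not part of the formal proof.
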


\begin{remark}
  From the $3$--dimensional perspective,
  the symmetry between $d$ and $-d$ is a special case of the involution in Seiberg--Witten theory induced from the involution on the space of spin$^c$ structures \cite[Section 6.8]{Morgan1996};
  from the $2$--dimensional viewpoint, it is a manifestation of the Serre duality between $H^1(\sL)$ and $H^0(K_\Sigma\otimes \sL^*)$.
\end{remark}

\begin{remark}
  The fact that the stable pair invariant is partitioned into an integers worth of invariants corresponding to the degrees of the spin$^c$ structures on curves suggests that something similar could be true for associative submanifolds.
  However, unlike in the dimensionally reduced setting, where a spin$^c$ corresponds in a natural way to an integer, for two distinct associatives $P_1$ and $P_2$ we are not aware of any way to relate the spin$^c$ structures on them.
\end{remark}

In general, the stable pair invariant includes also more complicated contributions from singular and obstructed curves representing the given homology class. 
For irreducible classes, Pandharipande and Thomas proved that such a contribution is a finite sum of Laurent series of the form \eqref{Eq_CurveContribution} \cite[Theorem 3 and Section 3]{Pandharipande2010}.

\subsection{ADHM bundles over Riemann surfaces}

The stable pair invariant includes also contributions from thickened curves.
If a homology class $\beta \in H_2(Z,\Z)$ is divisible by $k$ and $\beta/k$ is represented by a holomorphic curve $\Sigma\subset Z$, then there exist stable pairs  having $k\Sigma$ as their support. 
Thinking of $S^1\times k\Sigma$ as a multiple cover of the associative $S^1\times \Sigma$ in $S^1 \times Z$, we are led by the  discussion of \autoref{Sec_EmbeddingsWithMultiplicity} to the conclusion that the contribution of such a thickened curve should be in some way related to the solutions of the ADHM$_{1,k}$ Seiberg--Witten equation on the $3$--manifold $S^1\times \Sigma$.
We will argue that this is indeed the case.

Consider the more general ADHM$_{r,k}$ Seiberg--Witten equation introduced in \autoref{Sec_ADHMrkSeibergWittenEquations}
under the following assumptions:
\begin{hyp} 
  \label{Hyp_DimensionalReduction}
  Let $\Sigma$ be a closed Riemann surface and $M=S^1\times \Sigma$ with the geometric data as in \autoref{Def_ADHMGeometricData} such that:
  \begin{enumerate}
  \item $g$ is a product Riemannian metric,
  \item $E$ and the connection $B$ are pulled-back from $\Sigma$, and
  \item $V$ and the connection $C$ are pulled-back from a $\U(2)$--bundle with a connection on $\Sigma$ such that $\Lambda^2_{\C} V \cong K_\Sigma$ as bundles with connections.
  \end{enumerate}
\end{hyp}

\begin{prop}
  \label{Prop_PulledBack}
  If \autoref{Hyp_DimensionalReduction} holds and $(\Psi,\xi,A)$ is an irreducible solution of the ADHM$_{r,k}$ Seiberg--Witten equation \autoref{Eq_ADHMrkSW}, then the spin$^{\U(k)}$ structure $\fw$ is pulled-back from a spin$^{\U(k)}$ structure on $\Sigma$ and $(\Psi,\xi,A)$ is gauge-equivalent to a configuration pulled-back from $\Sigma$, unique up to gauge equivalence on $\Sigma$.
\end{prop}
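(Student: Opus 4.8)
The plan is to exploit the translation action of $S^1$ on $M = S^1\times\Sigma$: since by Hypothesis~\ref{Hyp_DimensionalReduction} all of the geometric data ($g$, $E$, $B$, $V$, $C$, and the isomorphism \eqref{Eq_SOLambda+V=SOTM}) are pulled back from $\Sigma$, this action lifts to the relevant bundles and preserves the ADHM$_{r,k}$ Seiberg--Witten equation \eqref{Eq_ADHMrkSW}. First I would show that every irreducible solution is, up to a gauge transformation in $\sG^s(\fw)$, invariant under this action; the fact that $\fw$ is pulled back from $\Sigma$ will come out of the same argument. Granting $S^1$--invariance up to gauge, the solution is the pullback of a triple $(\Psi_0,\bxi_0,A_0)$ on $\Sigma$, which is automatically an irreducible solution of the dimensionally reduced equation (in the case $r=k=1$, the vortex equation of Theorem~\ref{Thm_VortexCorrespondence}). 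Uniqueness up to gauge on $\Sigma$ then follows by restricting a gauge equivalence over $M$ between two such pullbacks to the slice $\{0\}\times\Sigma$, after first normalising that gauge equivalence to be $S^1$--invariant by the same procedure.

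Next I would record the routine preliminaries: elliptic regularity for \eqref{Eq_ADHMrkSW} makes any solution smooth, and the Weitzenböck formulas for $\slD_{A,B}$ and $\slD_{A,C}$ together with the moment-map equation $\varpi F_A = \mu(\Psi)+\mu(\bxi)$ give a priori $C^0$ bounds on $(\Psi,\bxi)$ and an $L^2$ bound on $F_A$ depending only on the geometric data, in particular independent of $\fw$. Then I would put the solution in temporal gauge along $S^1$: on the cover $\R\times\Sigma$ one gauges away the $d\theta$--component of $A$, so that on $M$ the solution becomes a path $\theta\mapsto(\Psi(\theta),\bxi(\theta),A(\theta))$ of configurations over $\Sigma$, twisted-periodic under a residual holonomy $h$ over $\Sigma$. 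Because the metric is a product and the data are pulled back, the three-dimensional Dirac operators split in the usual way along $S^1$, and \eqref{Eq_ADHMrkSW} becomes a first-order flow equation in $\theta$ for configurations over $\Sigma$, of downward-gradient type for a Chern--Simons--Dirac-type functional on the $\Sigma$--configuration space whose critical points are the dimensionally reduced (ADHM) vortices; the fields $\Psi$ and $\bxi$ enter on the same footing, and the ADHM terms $\mu(\bxi)$ and $\slD_{A,C}\bxi$ simply contribute additional terms to this functional and to the flow.

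The heart of the argument --- and the step I expect to be the main obstacle --- is to deduce from this that the flow velocity vanishes identically, whence the solution is $\theta$--independent and $\fw$ is pulled back. This is exactly the dimensional-reduction phenomenon established in the classical case in \cite{Morgan1996a,Mrowka1997,Munoz2005}: since the flow is of downward-gradient type, monotonicity of the functional along a flow line defined over the compact circle $S^1$ forces the line to be stationary, and the same computation identifies the ``energy drop per period'' with a topological quantity attached to $\fw$ which must therefore vanish --- that is, the $H^1(S^1)\otimes H^1(\Sigma)$--component of $c_1(\fw)$ is zero. The work is to carry this out with the extra field $\bxi$ present: one must control the coupled equation $\slD_{A,C}\bxi=0$ and the contribution $\mu(\bxi)$ to the curvature equation, check that they enter the monotone functional with the correct sign, and verify that the stabiliser of an irreducible reduced solution is still small enough to force $h$ to be homotopically trivial (hence absorbable by a further gauge transformation). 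The remaining bookkeeping on $\Sigma$ --- using $\Lambda^2_\C V \cong K_\Sigma$ to present the reduced data as holomorphic data --- is then routine.
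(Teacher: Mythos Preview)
The paper does not give a proof: it records that this is a special case of \cite[Theorem~3.8]{Doan2017}. Your sketch is a plausible outline of such an argument, and you have correctly located the main point (controlling the contributions of the additional field $\bxi$ and the term $\mu(\bxi)$).

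One refinement is in order. Your claim that in temporal gauge \eqref{Eq_ADHMrkSW} becomes a pure downward gradient flow of a $2$--dimensional functional is not quite accurate as stated. The $\Lambda^2 T^*\Sigma$--component of the curvature equation $\varpi F_A = \mu(\Psi)+\mu(\bxi)$ contains no $\theta$--derivative and is therefore a constraint on each slice $\{\theta\}\times\Sigma$, not an evolution equation. The standard route is instead a direct energy identity: one shows that an integral of the schematic form
\[
  \int_{S^1\times\Sigma} \abs{\partial_\theta A_\Sigma}^2 + \abs{\nabla_{A,\partial_\theta}\Psi}^2 + \abs{\nabla_{A,\partial_\theta}\bxi}^2
\]
equals a topological quantity attached to $\fw$, which must then vanish by non-negativity --- yielding simultaneously that $\fw$ is pulled back from $\Sigma$ and that the solution is $\theta$--independent. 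Your monotonicity argument is essentially a repackaging of this identity. The signs do work out uniformly, because both $\mu(\Psi)$ and $\mu(\bxi)$ arise from the hyperkähler moment-map formalism of \autoref{Sec_SeibergWitten}, so the concern you flag about $\bxi$ is handled by exactly the same mechanism as for $\Psi$.
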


This is a special case of \cite[Theorem 3.8]{Doan2017}. 
In the situation of \autoref{Prop_PulledBack},
\autoref{Eq_ADHMrkSW} reduces to a non-abelian vortex equation on $\Sigma$.
Recall that a choice of a spin$^{\U(k)}$ structure on $\Sigma$ is equivalent to a choice of a $\U(k)$--bundle $H \to \Sigma$. 
Consequently, $A$ can be seen as a connection on $H$.
The corresponding spinor bundles are  
\begin{equation*}
  \fg_{H} = \fu(H)
  \qandq
  W = H\oplus T^*\Sigma^{0,1}\otimes H.
\end{equation*}

\begin{prop}
  \label{Prop_DimensionalReduction}
  Let $(A,\Psi,\xi)$ be a configuration pulled-back from $\Sigma$.
  Under the splitting $W = H\oplus T^*\Sigma^{0,1}\otimes H$ we have $\Psi = (\psi_1, \psi_2^*)$ where
  \begin{align*}
    \psi_1 &\in \Gamma(\Sigma,\Hom(E,H) ), \\
    \psi_2 &\in \Omega^{1,0}(\Sigma,\Hom(H,E)), \qand \\
    \xi &\in \Gamma(\Sigma,V\otimes \End(H)).
  \end{align*} 
  Equation \eqref{Eq_ADHMrkSW} for $(A,\Psi,\xi)$ is equivalent to
  \begin{equation}
    \label{Eq_ADHMSWReduced}
    \begin{split}
      \delbar_{A,B}\psi_1 = 0, \quad \delbar_{A,B}\psi_2 = 0, \quad \delbar_{A,C}\xi &= 0, \\
      [\xi\wedge\xi] + \psi_1\psi_2 &= 0, \qand \\
      i * F_A + [\xi\wedge\xi^*] + \psi_1\psi_1^* - *\psi_2^*\psi_2 &=0.
    \end{split}
  \end{equation}
  In the second equation we use the isomorphism $\Lambda_{\C}^2 V \cong K_\Sigma$ so that the left-hand side is a section of $\Omega^{1,0}(\Sigma,\End(H))$.
  In the third equation we contract $V$ with $V^*$ so that the left-hand side is a section of $i\fu(H)$. 
\end{prop}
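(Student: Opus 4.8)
The plan is to carry out a straightforward dimensional reduction, plugging the pulled-back configuration from \autoref{Prop_PulledBack} into the ADHM$_{r,k}$ Seiberg--Witten equation \eqref{Eq_ADHMrkSW} and unwinding each of the three equations in turn under the product splitting $W = H \oplus T^*\Sigma^{0,1}\otimes H$. First I would fix notation: write the product metric on $M = S^1\times\Sigma$, choose a local unitary frame $e_0 = \partial_t$ together with $(e_1,e_2)$ an orthonormal frame on $\Sigma$, and record the Clifford multiplication $\gamma$ on $W$ in this frame. Under the standard identification of the spin bundle of the oriented Riemannian $3$-manifold $S^1\times\Sigma$ with $\Lambda^{0,\bullet}\Sigma = \underline{\C}\oplus T^*\Sigma^{0,1}$ (twisted by the appropriate line bundle encoded in the spin$^{\U(k)}$ structure), the Dirac operator $\slD_{A,B}$ on $\Hom(E,W)$ becomes, up to a first-order term in the $S^1$-direction which vanishes on pulled-back configurations, the operator $\sqrt 2\,(\delbar_{A,B} + \delbar_{A,B}^*)$ on $\Omega^{0,\bullet}(\Sigma,\Hom(E,H))$. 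Writing $\Psi = (\psi_1,\psi_2^*)$ with $\psi_1\in\Gamma(\Hom(E,H))$ and $\psi_2^*\in\Omega^{0,1}(\Sigma,\Hom(E,H))$ (equivalently $\psi_2\in\Omega^{1,0}(\Sigma,\Hom(H,E))$ via the Hermitian metric), the equation $\slD_{A,B}\Psi = 0$ becomes $\delbar_{A,B}\psi_1 = 0$ and $\delbar_{A,B}^*\psi_2^* = 0$; the latter is equivalent to $\delbar_{A,B}\psi_2 = 0$ after transposing and using $\Lambda^2_\C V\cong K_\Sigma$ to absorb the canonical bundle. The same computation applied to $\slD_{A,C}$ on $V\otimes\fg_\sH$ gives $\delbar_{A,C}\xi = 0$ once $\xi$ is identified with a section of $V\otimes\End(H)$ pulled back from $\Sigma$ (here $\fg_\sH = \fu(H)$).

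The second and third equations of \eqref{Eq_ADHMSWReduced} come from decomposing the curvature equation $\varpi F_A = \mu(\Psi) + \mu(\bxi)$ into its components under $\Lambda^2 T^*M = \Lambda^2 T^*\Sigma \oplus (T^*S^1\wedge T^*\Sigma)$, which matches the decomposition $\Lambda^+ V \cong \Lambda^2 T^*M$ into a "holomorphic" piece (values in $\Lambda^{2,0}\Sigma\oplus\Lambda^{0,2}\Sigma \cong K_\Sigma\oplus\overline{K_\Sigma}$, which for a Riemann surface is just the $(2,0)+(0,2)$ part) and a "real $(1,1)$" piece. On a pulled-back configuration $F_A$ has only the $\Lambda^{1,1}$-component coming from $\Sigma$, so the $\Lambda^{2,0}\oplus\Lambda^{0,2}$ part of the curvature equation reads $0 = [\bxi\wedge\bxi]^{2,0} + (\Psi\Psi^*)_0^{2,0}$; computing these moment-map pieces in the frame gives exactly $[\xi\wedge\xi] + \psi_1\psi_2 = 0$ (as a section of $\Omega^{1,0}(\Sigma,\End H)$ after using $\Lambda^2_\C V\cong K_\Sigma$, hence the factor written in the second displayed equation). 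The remaining $\Lambda^{1,1}$-component gives $i*F_A + [\xi\wedge\xi^*] + \psi_1\psi_1^* - *\psi_2^*\psi_2 = 0$, where the $*$ on the Riemann surface converts the $(1,1)$-forms to sections of $i\fu(H)$, and the signs on the $\psi_i$ terms reflect that $\psi_1$ sits in the $H$-summand of $W$ and $\psi_2^*$ in the $T^*\Sigma^{0,1}\otimes H$-summand, which Clifford-multiply with opposite signs onto the $\Lambda^{1,1}$ part. I would check these sign conventions carefully against the definition of $\mu(\Psi) = (\Psi\Psi^*)_0$ and the splitting of $\End(W)$.

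The main obstacle — really the only place where care is needed rather than bookkeeping — is verifying that the quadratic moment-map terms $\mu(\Psi) = (\Psi\Psi^*)_0$ and $\mu(\bxi) = [\bxi\wedge\bxi]^+$ decompose exactly into the stated bilinear expressions $\psi_1\psi_2$, $\psi_1\psi_1^*$, $*\psi_2^*\psi_2$, $[\xi\wedge\xi]$, $[\xi\wedge\xi^*]$ with the correct signs and the correct identifications ($\Lambda^2_\C V\cong K_\Sigma$ in one line, the contraction $V\otimes V^*\to\C$ in the other). This amounts to a fixed, finite linear-algebra computation in the model fibre $\H\otimes\C^k$ with its $\Spin^{\U(k)}(3)$-action, decomposed under the reduction to $\Sigma$; once the conventions for the adjoint map $(\cdot)_0\co\End(W)\to\Lambda^2 T^*M\otimes\fg_\sH$ and for the isomorphism \eqref{Eq_SOLambda+V=SOTM} are pinned down, each term is forced. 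Since the statement is explicitly flagged as "a special case of \cite[Theorem 3.8]{Doan2017}" for the reduction step (\autoref{Prop_PulledBack}), and the remaining content of \autoref{Prop_DimensionalReduction} is precisely this term-by-term identification, I would present the argument as: (i) invoke \autoref{Prop_PulledBack} to reduce to pulled-back data; (ii) identify the spinor and adjoint bundles and the Clifford module structure on $\Sigma$; (iii) reduce each of the three equations of \eqref{Eq_ADHMrkSW} as above; and (iv) do the moment-map linear algebra to read off \eqref{Eq_ADHMSWReduced}.
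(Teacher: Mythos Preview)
Your proposal is correct and follows the same underlying computation as the paper. The paper's own proof, however, is a one-line citation: it defers the reduction of the Dirac equations to \cite[Proposition~3.6, Remark~3.7]{Doan2017} and handles the curvature equation by invoking the complex form of the ADHM moment map \eqref{Eq_ComplexMomentMap} from \autoref{Sec_NakajimasProof}, namely $\mu(v,w,A^*,B) = \tfrac12(vv^* - ww^* - [A,A^*] - [B,B^*]) + j(wv^* - [A,B])$, from which the second and third equations of \eqref{Eq_ADHMSWReduced} can be read off directly. Your outline is simply a self-contained unpacking of what those references contain; the only minor slip is that you cite \cite[Theorem~3.8]{Doan2017}, which is the reference for \autoref{Prop_PulledBack}, whereas the dimensional-reduction identities themselves are in Proposition~3.6 and Remark~3.7 of that paper.
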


This follows from \cite[Proposition 3.6, Remark 3.7]{Doan2017} and the complex description \eqref{Eq_ComplexMomentMap} of the hyperkähler moment map appearing in the ADHM construction.

We can also perturb \eqref{Eq_ADHMSWReduced}  by  $\tau\in\R$ and $\theta\in H^0(\Sigma,K_\Sigma)$:
\begin{equation}
  \label{Eq_ADHMSWReducedPerturbed}
  \begin{split}
    \delbar_{A,B}\psi_1 = 0, \quad \delbar_{A,B}\psi_2 = 0, \quad \delbar_{A,C}\xi &= 0, \\
    [\xi\wedge\xi] + \psi_1\psi_2 &= \theta\otimes\id, \qand \\
    i * F_A + [\xi\wedge\xi^*] - \psi_1\psi_1^* + *\psi_2^*\psi_2 &=\tau \ \id.
  \end{split}
\end{equation}

There is a Hitchin--Kobayashi correspondence between gauge-equivalence classes of solutions of \eqref{Eq_ADHMSWReducedPerturbed} and isomorphism classes of certain holomorphic data on $\Sigma$. 
Let $\sE=(E,\delbar_B)$ and $\sV=(V,\delbar_C)$ be the holomorphic bundles induced from the unitary connections on $E$ and $V$.

\begin{definition}
  An \defined{ADHM bundle} with respect to $(\sE,\sV,\theta)$ is a quadruple
  \begin{equation*}
    (\sH,\psi_1,\psi_2,\xi)
  \end{equation*}
  consisting of:
  \begin{itemize}
  \item
    a rank $k$ holomorphic vector bundle $\sH \to \Sigma$,
  \item
    $\psi_1\in H^0(\Sigma,\Hom(\sE,\sH))$,
  \item
    $\psi_2\in H^0(\Sigma,K_\Sigma\otimes\Hom(\sH,\sE))$, and
  \item
    $\xi\in H^0(\Sigma,\sV\otimes\End(\sH))$
  \end{itemize}
  such that
  \begin{equation*}
    [\xi\wedge\xi]+\psi_1\psi_2 = \theta\otimes\id \in H^0(\Sigma,K_\Sigma \otimes\End(\sH)).
  \end{equation*}
\end{definition}

\begin{definition}
  For $\delta \in \R$, the $\delta$--\defined{slope} of an ADHM bundle $(\sH,\psi_1,\psi_2,\xi)$ is
  \begin{equation*}
    \mu_{\delta}(\sH) \coloneq \frac{2\pi}{\vol(\Sigma)}\frac{\deg \sH}{\rk \sH} + \frac{\delta}{\rk\sH}.
  \end{equation*}
  The \defined{slope} of $\sH$ is $\mu(\sH) \coloneq \mu_0(\sH)$.
\end{definition}

\begin{definition}
  \label{Def_ADHMStable}
  Let $\delta\in\R$.
  An ADHM bundle $(\sH,\psi_1,\psi_2,\xi)$ is \defined{$\delta$ stable} if it satisfies the following conditions:
  \begin{enumerate}
  \item
    If $\delta > 0$, then $\psi_1 \neq 0$ and if $\delta < 0$, then $\psi_2 \neq 0$.
  \item
    If $\sG \subset \sH$ is a proper $\xi$--invariant holomorphic subbundle such that $\im \psi_1 \subset \sG$, then $\mu_{\delta}(\sG) < \mu_{\delta}(\sH)$.
  \item
    If $\sG \subset \sH$ is a proper $\xi$--invariant holomorphic subbundle such that $\sG\subset \ker \psi_2 $, then $\mu(\sG) < \mu_{\delta}(\sH)$.
  \end{enumerate}
  We say that $(\sH,\psi_1,\psi_2,\xi)$ is \defined{$\delta$--polystable} if there exists a $\xi$--invariant decomposition $\sH = \bigoplus_i \sG_i \bigoplus_j \sI_j$ such that:
  \begin{enumerate}
  \item
    $\mu_{\delta}(\sG_i) = \mu_{\delta}(\sH)$ for every $i$ and the restrictions of $(\psi_1,\psi_2,\xi)$ to each $\sG_i$ define a $\delta$ stable ADHM bundle, and
  \item
    $\mu(\sI_j) = \mu_{\delta}(\sH)$ for every $j$, the restrictions of $\psi_1$, $\psi_2$ to each $\sI_j$ are zero, and there exist no $\xi$--invariant proper subbundle $\sJ \subset \sI_i$ with $\mu(\sJ)<\mu(\sI_j)$. 
  \end{enumerate} 
\end{definition}

In the proposition below we fix $\delta$ and the topological type of $\sH$, and set $\tau = \mu_\delta(\sH)$. 

\begin{prop}
  \label{Prop_HKCorrespondence}
  Let $(A,\psi_1,\psi_2,\xi)$ be a solution of \eqref{Eq_ADHMSWReducedPerturbed}.
  Denote by $\sH$ the holomorphic vector bundle $(H,\delbar_A)$.
  Then $(\sH,\psi_1,\psi_2,\xi)$ is a $\delta$--polystable ADHM bundle. 
  Conversely, every $\delta$--polystable ADHM bundle arises in this way from a solution to \eqref{Eq_ADHMSWReducedPerturbed}  which is unique up to gauge equivalence.
\end{prop}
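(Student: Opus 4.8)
The plan is to prove \autoref{Prop_HKCorrespondence} as an instance of the Hitchin--Kobayashi correspondence for augmented (quiver-type) holomorphic bundles, in the spirit of Bradlow, Garc\'ia-Prada, Mundet i Riera and \'Alvarez-C\'onsul--Garc\'ia-Prada, specialised to the single-loop quiver with the ADHM relation. I would isolate two directions and the reduction from polystable to stable.

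\emph{Easy direction.} Given a solution $(A,\psi_1,\psi_2,\xi)$ of \eqref{Eq_ADHMSWReducedPerturbed}, put $\delbar_{\sH}=\delbar_A$, so that $H$ becomes a holomorphic bundle $\sH$. The first three equations say precisely that $\psi_1\in H^0(\Sigma,\Hom(\sE,\sH))$, $\psi_2\in H^0(\Sigma,K_\Sigma\otimes\Hom(\sH,\sE))$ and $\xi\in H^0(\Sigma,\sV\otimes\End(\sH))$, while the fourth is the ADHM relation $[\xi\wedge\xi]+\psi_1\psi_2=\theta\otimes\id$; hence $(\sH,\psi_1,\psi_2,\xi)$ is an ADHM bundle. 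To see it is $\delta$-polystable, let $\sG\subset\sH$ be a proper $\xi$-invariant holomorphic subbundle with $\im\psi_1\subset\sG$ and let $\Pi$ be the Hermitian orthogonal projection onto $\sG$. The Chern--Weil identity gives $2\pi\deg\sG=\int_\Sigma i\,\mathrm{tr}(\Pi F_A)-\int_\Sigma|\delbar_A\Pi|^2$; pairing the last equation of \eqref{Eq_ADHMSWReducedPerturbed} with $\Pi$, using $\tau=\mu_\delta(\sH)$, the $\xi$-invariance of $\sG$ (so $\mathrm{tr}(\Pi[\xi\wedge\xi^*])$ has a favourable sign) and $\im\psi_1\subset\sG$ (so $\mathrm{tr}(\Pi\,\psi_1\psi_1^*)\geq\|\psi_1\|^2/\rk$ while $\mathrm{tr}(\Pi\,\psi_2^*\psi_2)$ drops out), one gets $\mu_\delta(\sG)\leq\mu_\delta(\sH)$ with equality only if $\delbar_A\Pi=0$ and all the arrows split, i.e. in the polystable case; the condition for $\sG\subset\ker\psi_2$ is the Serre-dual computation, and the sign condition on $\psi_1$ or $\psi_2$ follows by integrating the trace of the real equation.

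\emph{Hard direction.} Conversely, given a $\delta$-polystable ADHM bundle, decompose it into $\xi$-invariant pieces as in \autoref{Def_ADHMStable}; a solution on each piece (with $\tau=\mu_\delta(\sH)$, which matches the slopes of all pieces) assembles to a solution on $\sH$, so it suffices to treat a $\delta$-stable ADHM bundle. Fix a background metric $K_0$ on $\sH$ and seek $K=K_0h$ with $h$ positive self-adjoint solving the real moment-map equation; the holomorphic equations are automatic for the Chern connection. I would solve this nonlinear elliptic equation for $h$ by the continuity method along a parameter $s\in[0,1]$ (e.g.\ scaling $(\psi_1,\psi_2,\xi)$ or sliding $\tau$): openness is the implicit function theorem, the linearisation being a twisted Laplacian plus a nonnegative zeroth-order term built from $|\psi_1|^2+|\psi_2|^2+|\xi|^2$ and hence invertible; closedness reduces, via elliptic bootstrapping, to a uniform $C^0$-bound on $\log h$. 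Alternatively one minimises the Donaldson-type functional adapted to this quiver. Uniqueness up to unitary gauge then follows from convexity of that functional along metric geodesics, together with $\delta$-stability forcing infinitesimal automorphisms to be scalar.

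The main obstacle is the a priori $C^0$-estimate and the accompanying Uhlenbeck--Yau argument: if $\|\log h\|_{C^0}$ is not controlled, a weak limit of a renormalised $\log h$ produces a weakly holomorphic subsheaf, and one must verify that this subsheaf is genuinely $\xi$-invariant and interacts with $\psi_1$ (resp.\ $\psi_2$) in the precise way demanded by \autoref{Def_ADHMStable}, so that it actually contradicts $\delta$-stability. This is where the twist of $\xi$ by $\sV$ and the relation $[\xi\wedge\xi]+\psi_1\psi_2=\theta\otimes\id$ must be handled with care. Since the ADHM relation is a complex moment-map condition for a single-loop quiver, I expect the cleanest route is to quote the general Hitchin--Kobayashi theorem for twisted quiver bundles with relations and simply check its hypotheses in the present setting, relegating the analytic core (continuity method, Uhlenbeck--Yau estimate, uniqueness) to those references; the reduction \autoref{Prop_DimensionalReduction} and \autoref{Prop_PulledBack} already place us in exactly that framework on $\Sigma$.
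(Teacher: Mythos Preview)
Your proposal is correct and takes essentially the same approach as the paper: the easy direction is the standard Chern--Weil/projection argument going back to Donaldson, and for the hard direction you ultimately advocate quoting the general Hitchin--Kobayashi theorem for twisted quiver bundles with relations, which is exactly what the paper does (citing \'Alvarez-C\'onsul--Garc\'ia-Prada, with the minor adjustment that the connections on $E$ and $V$ are fixed background data rather than variables, handled as in Bradlow--Garc\'ia-Prada--Gothen). The paper is simply much terser, skipping the from-scratch sketch you gave and going straight to the citation.
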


\begin{proof}
A standard calculation going back to \cite{Donaldson1983} shows that \eqref{Eq_ADHMSWReducedPerturbed}  implies $\delta$--polystability.
 The difficult part is showing that every $\delta$--polystable ADHM bundle admits a compatible unitary connection solving the third equation of \eqref{Eq_ADHMSWReducedPerturbed}, unique up to gauge equivalence.
  This is a special case of the main result of \cite[Theorem 31]{Consul2003}, with the minor difference that the connections on the bundles $E$ and $V$ are fixed and not part of a solution.
  The necessary adjustment in the proof is discussed in a similar setting in \cite{Bradlow2003}. 
\end{proof}

Stable ADHM bundles on Riemann surfaces were studied extensively by Diaconescu \cite{Diaconescu2012,Diaconescu2012a} in the case when $n=1$, $\sE$ is a trivial line bundle, and $\sV$ is the direct sum of two line bundles.
Thus, we have a splitting $\xi = (\xi_1,\xi_2)$ and $[\xi\wedge\xi] = [\xi_1,\xi_2]$, so the holomorphic equation
\begin{equation*}
  [\xi\wedge\xi] + \psi_1\psi_2 = \theta\otimes\id
\end{equation*}
is preserved by the $\C^*$--action $t(\psi_1,\psi_2,\xi_1,\xi_2) = (t\psi_1,t^{-1}\psi_2,t\xi_1,t^{-1}\xi_2)$. 
Moreover, if the perturbing form $\theta$ is chosen to be zero, there is an additional $\C^*$ symmetry given by rescaling every the sections. 
Assuming that the stability parameter $\delta$ is sufficiently large, Diaconescu shows the fixed-point locus of the resulting $\C^*\times\C^*$--action on the moduli space of $\delta$ stable ADHM bundles is compact.
Furthermore, the moduli space is equipped with a $\C^*\times\C^*$--equivariant perfect obstruction theory.
This can be used to define a numerical invariant via equivariant virtual integration.
This number is then shown to be equal to the \defined{local stable pair invariant} of the non-compact Calabi--Yau $3$--fold $\sV$.
This invariant counts, in the equivariant and virtual sense, stable pairs whose support is a $k$--fold thickening of the zero section $\Sigma\subset \sV$.
Here $k$ is the rank of $\sH$ so that the stable ADHM bundles in question correspond, by \autoref{Prop_HKCorrespondence}, to solutions to the ADHM$_{1,k}$ Seiberg--Witten equation on $S^1\times\Sigma$.
This suggests that the relation between Seiberg--Witten monopoles and stable pairs discussed in the previous section could extend to the case of multiple covers.

\subsection{Towards a numerical invariant}

Due to the appearance of reducible solutions, one does not expect to be able to count solutions to the ADHM$_{1,k}$ Seiberg--Witten equation on a general $3$--manifold.
Instead, the enumerative theory for associatives in tamed almost $\Gtwo$--manifolds should incorporate a version of equivariant Floer homology, as explained in \autoref{Sec_AssociativeMonopoleHomology} and \autoref{Sec_TentativeProposal}.
However, the existence of the stable pair invariant and the discussion of the previous sections indicate that we can hope for a differential-geometric invariant counting pseudo-holomorphic curves in a symplectic Calabi--Yau $6$--manifold $Z$ which in the projective case would recover the stable pair invariant.
It is expected that such an invariant would encode the same symplectic information as the Gromov--Witten invariants by the conjectural GW/PT correspondence, known also as the MNOP conjecture \cite[Sections $3\frac{1}{2}$ and $4\frac{1}{2}$]{Pandharipande2014}.
The algebro-geometric version of this conjecture is at present widely open.
Like the Gromov--Witten invariant, the putative symplectic stable pair invariant is given by a weighted count of simple $J$--holomorphic maps. 
Thus, we expect that a symplectic definition of the stable pair invariant will shed new light on the MNOP conjecture.

For a homology class $\beta\in H_2(Z,\Z)$ the invariant would take values in the ring of Laurent series $\Z(\!(q)\!)$ and be defined by
\begin{equation*}
  n_{\beta}(Z) =  \sum_{\Sigma^1,\ldots,\Sigma^m} \prod_{j=1}^m  \uSW_{1,k_j}(S^1\times \Sigma^j) \sign(\Sigma^j).
\end{equation*}
Some explanation is in order:
\begin{enumerate}
\item
  The sum is taken over all collections of embedded, connected pseudo-holomorphic curves $\Sigma^1,\ldots,\Sigma^m$ such that
   \begin{equation*}
     \sum_{j=1}^m k_j [\Sigma^j] = \beta.   
   \end{equation*}
   We assume here that we can choose a generic tamed almost-complex structure such that there are finitely many such curves and all of them are unobstructed.
\item
  $\sign(\Sigma) = \pm 1$ comes from an orientation on the moduli space of pseudo-holomorphic curves.
\item
  $\uSW_{1,k}(S^1\times \Sigma)$ is a generalization of the Meng--Taubes invariant defined using the moduli spaces of solutions to the ADHM$_{1,k}$ Seiberg--Witten equation on $S^1\times \Sigma^j$.
  This is yet to be defined, but if it exists, it should be naturally an element of $\Z(\!(q)\!)$  because of the identification of the set of the spin$^{\U(k)}$ structures on $\Sigma$ with the integers, as in \autoref{Sec_MengTaubes}.
\item
  We use here crucially that $b_1(S^1\times\Sigma) \geq 1$; otherwise even the classical Meng--Taubes invariant $\uSW_{1,1}$ is ill-defined.
  For $k > 1$, the ADHM$_{1,k}$ Seiberg--Witten equation, admits in general, reducible solutions: for example, flat connections or solutions to the ADHM$_{1,k-1}$ Seiberg--Witten equation.
  A good feature of the dimensionally-reduced setting is that if the perturbing holomorphic $1$--form $\theta$ in \eqref{Eq_ADHMSWReducedPerturbed} is non-zero, then we automatically avoid reducible solutions.
  Indeed, a simple algebraic argument shows that in this case the triple $(\xi,\psi_1,\psi_2)$ has trivial stabilizer in $\U(k)$ at every point where $\theta$ is non-zero.
\end{enumerate}

%%% Local Variables:
%%% mode: latex
%%% TeX-master: "CountingAssociativesSeibergWitten"
%%% End:

\appendix
\section{Transversality for associative embeddings}
\label{Sec_ProofOfSomewhereInjectiveTransversality}

The goal of this section is to prove \autoref{Prop_SomewhereInjectiveTransversality}.
The proof relies on the following observations.
The tangent space $T_\psi\sD_c^4(Y) \subset \Omega^4(Y)$ is the space of closed $4$--forms.
Define
\begin{equation*}
  X_{\iota,\psi} \co T_\psi\sD_c^4(Y) \to \Gamma(N\iota)
\end{equation*}
by
\begin{equation}
  \label{Eq_VariationOperator}
  \inner{X_{\iota,\psi}\eta}{n}_{L^2}
  \coloneq
  \left.\frac{\rd}{\rd\epsilon}\right|_{\epsilon=0}
  \delta\fL_{\psi+\epsilon\eta}(n)
  =
  \int_P \iota^* i(n)\eta
\end{equation}
for every closed $4$--form $\eta$ on $Y$.

\begin{prop}
  \label{Prop_VariationOfPsi}
  If $\iota\co P \to Y$ is a somewhere injective associative immersion,
  then for every non-zero $n \in \ker F_\iota \subset \Gamma(N\iota)$, there exists $\alpha \in \Omega^3(Y)$ such that
  \begin{equation*}
    \inner{X_{\iota,\psi}\rd\alpha}{n} \neq 0.
  \end{equation*}
\end{prop}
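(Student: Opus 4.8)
The plan is to prove the statement constructively: given $n$, I would exhibit a suitable $\alpha$ explicitly. The first step is a reduction to a local model. Since $\inner{X_{\iota,\psi}\rd\alpha}{n}_{L^2} = \int_P \iota^* i(n)\rd\alpha$ depends linearly on $\alpha$, it suffices to produce $\alpha$ supported in an arbitrarily small neighbourhood of a single point. I would first locate a point $x_0 \in P$ which is an \emph{injective point} of $\iota$, i.e. $\iota^{-1}(\iota(x_0)) = \set{x_0}$, and at which $n(x_0) \neq 0$. Near such a point $\iota$ is an embedding, and — using that $\iota(P\setminus B)$ is compact and misses $\iota(x_0)$ for a small ball $B\ni x_0$ — one can choose a coordinate chart $W$ around $\iota(x_0)$ in $Y$, with coordinates $(x,y) = (x_1,x_2,x_3,y_1,\dots,y_4)$, such that $\iota(P)\cap W = \set{y=0}$ and $\iota^{-1}(W)$ is a single coordinate ball. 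Any $\alpha$ with support in $W$ then contributes to $\int_P\iota^*i(n)\rd\alpha$ only through this one sheet, so the problem collapses to a computation in the chart; and since $\alpha$ will have compact support in $W$ it extends by zero to all of $Y$.

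To produce $x_0$: the set $O\subset P$ of injective points is open. Indeed, if $x_k\to x$ with $x$ injective but each $x_k$ non-injective, pick $y_k\neq x_k$ with $\iota(y_k)=\iota(x_k)$; by compactness $y_k\to y$ along a subsequence, $\iota(y)=\iota(x)$, hence $y=x$, contradicting that the immersion $\iota$ is injective on a neighbourhood of $x$. By the somewhere-injectivity hypothesis $O$ meets every connected component of $P$. Moreover $F_\iota$ is of Dirac type — its principal symbol at $\xi$ is $m\mapsto \iota_*(\xi^\sharp)\times m$, which squares to $-\abs{\xi}^2$ on $N\iota$ by the identity $u\times(u\times m) = -\abs{u}^2 m + \inner{u}{m}u$ and orthogonality of tangent and normal vectors — so solutions of $F_\iota n = 0$ satisfy weak unique continuation. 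Therefore, on any component $P_j$ with $n|_{P_j}\not\equiv 0$ the zero set of $n$ has empty interior, and since $O\cap P_j$ is open and non-empty we can pick $x_0\in O\cap P_j$ with $n(x_0)\neq 0$; choosing $P_j$ with $n|_{P_j}\not\equiv 0$ (possible as $n\neq 0$) gives the desired point.

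For the local construction, in the chart $W$ write $n(x) = \sum_a n^a(x)\,\partial_{y_a}$ along $\set{y=0}$, and after relabelling the $y$--coordinates assume $n^{a_0}(x_0)\neq 0$. Take $\alpha = f\,(\rd x_1\wedge \rd x_2\wedge \rd x_3)$ with $f = \rho(x)\,\chi(y)\,y_{a_0}$, where $\rho\geq 0$ is a bump function supported near $x_0$ with $\rho(x_0)>0$, $\chi\equiv 1$ near the origin, and both are supported so that $f\in C^\infty_c(W)$. Then $\rd\alpha = \sum_a (\partial_{y_a}f)\,\rd y_a\wedge \rd x_1\wedge \rd x_2\wedge \rd x_3$, and unwinding the convention for $\iota^*i(n)(\cdot)$ one gets $\iota^*i(n)\rd\alpha = \pm\bigl(\sum_a n^a(x)\,\partial_{y_a}f(x,0)\bigr)\,\rd x_1\wedge \rd x_2\wedge \rd x_3$. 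Since $\partial_{y_a}f(x,0) = \rho(x)\,\delta_{aa_0}$, this equals $\pm\rho(x)\,n^{a_0}(x)\,\rd x_1\wedge \rd x_2\wedge \rd x_3$, and as $n^{a_0}$ has constant sign on a neighbourhood of $x_0$ the integral $\int_P\iota^*i(n)\rd\alpha = \pm\int \rho\,n^{a_0}\,\rd x$ is non-zero, which is what we want.

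I do not expect a serious obstacle here; the construction of $\alpha$ is elementary once the sheet is straightened. The one step that genuinely requires care — and the only input beyond elementary differential topology — is producing $x_0$: "somewhere injective" by itself does \emph{not} make the injective points dense (two sheets can agree on part of a component), so one really does need unique continuation for $F_\iota$ to prevent $n$ from vanishing identically on the open set $O$ of injective points. Everything else is bookkeeping with the sign conventions and a routine partition-of-unity argument to globalise $\alpha$.
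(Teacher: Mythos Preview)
Your proof is correct and follows essentially the same approach as the paper: localize near an injective point where $n$ does not vanish (using unique continuation for the Dirac-type operator $F_\iota$), straighten the sheet in a tubular neighbourhood, and take $\alpha$ to be a function times a $3$--form extending the volume form of $P$. The paper's write-up is slightly more coordinate-free (setting $\alpha = f\nu$ with $\nu|_U = \vol_P$ and $i(n)\rd\nu|_V = 0$) and applies unique continuation \emph{after} fixing the injective neighbourhood rather than before, but these are cosmetic differences.
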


\begin{proof}
  We can assume that $P$ is connected.
  Pick a point $x$ such that $\iota^{-1}(\iota(x)) = \set{x}$.
  Since $P$ is compact, there is a neighborhood $U$ of $x\in P$ which is embedded via $\iota$ and satisfies $\iota(U) \cap \iota(P\setminus U) = \emptyset$.
  Choose a tubular neighborhood $V$ of $\iota(U)$ and $\rho > 0$ such that $B_\rho(N\iota(U)) \xrightarrow{\exp} V$ is a diffeomorphism.
  By unique continuation, $n$ cannot vanish identically on $U$.
  Thus we can find a function $f$ supported in $V$ such that $\rd f(n) \geq 0$ and $\rd f(n) > 0$ somewhere.
  Let $\nu$ be a $3$--form on $Y$ with $\nu|_U = (\vol_P)|_U$ and $i(n)\rd\nu|_V = 0$.
  With
  \begin{equation*}
    \alpha = f\nu
  \end{equation*}
  we have
  \begin{equation*}
    \int_P \iota^*(i(n)\rd\alpha) 
    =
    \int_P \rd f(n) \vol_P > 0.
    \qedhere
  \end{equation*}
\end{proof}

For a somewhere injective immersed associative $[\iota\co P \to Y]$, $\Aut(\iota)$ must be trivial.
Denote by $\pi_{\Imm}\co \Imm_\beta(P,Y) \times \sD_c^4(Y) \to \Imm_\beta(P,Y)$ the canonical projection.
By \autoref{Prop_VariationOfPsi},
the linearization of the section
\begin{equation*}
  \delta\fL \in \Gamma(\pi_{\Imm}^*T^*\Imm_\beta(P,Y))
\end{equation*}
is surjective.
Hence, it follows from the Regular Value Theorem,
and the fact that there are only countably many diffeomorphism types of $3$--manifolds \cite{Cheeger1970},
that the universal moduli space of immersed associatives
\begin{equation*}
  \fA^\si_\beta = \fA^\si_\beta(\sD^4_c(Y))
\end{equation*}
is a smooth manifold.
This directly implies \itref{Prop_SomewhereInjectiveTransversality_11} and \itref{Prop_SomewhereInjectiveTransversality_21} by the Sard--Smale Theorem.

Consider the moduli space of immersed associative submanifolds with $n$ marked points
\begin{equation*}
  \fA^\si_{\beta,n}(\psi)
  \coloneq
  \coprod_P
  \left.\set*{ (\iota,x_1,\ldots,x_n) \in \Imm_\beta(P,Y)\times P^n : [\iota] \in \fA^\si_\beta(\psi) }\right/\Diff_+(P)
\end{equation*}
as well as the corresponding universal moduli space
\begin{equation*}
  \fA^\si_{\beta,n}
  \coloneq
  \bigcup_{\psi \in \sD^4_c(Y)} \fA^\si_{\beta,n}(\psi).
\end{equation*}
Define the map $\ev \co \fA^\si_{\beta,n} \to Y^n$ by
\begin{equation*}
  \ev([\iota,x_1,\ldots,x_n],\psi) \coloneq (\iota(x_1),\ldots,\iota(x_n)).
\end{equation*}

\begin{prop}
  \label{Prop_EvaluationMapRegular}
  For each $([\iota,x_1,\ldots,x_n],\psi) \in \fA^\si_{\beta,n}$,
  the derivative of $\ev$,
  \begin{equation*}
    \rd_{([\iota,x_1,\ldots,x_n],\psi)}\ev\co
    T_{([\iota,x_1,\ldots,x_n],\psi)}\fA^\si_{\beta,n} \to \bigoplus_{i=1}^n T_{\iota(x_i)}Y,
  \end{equation*}
  is surjective.
\end{prop}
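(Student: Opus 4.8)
The plan is to mimic the proof of transversality of evaluation maps on moduli spaces of simple $J$--holomorphic curves, cf.~\cite[Section 3.4]{McDuff2012}, with the Fueter operator $F_{\iota,\psi}$ in place of the Cauchy--Riemann operator. We may assume $x_1,\ldots,x_n$ are pairwise distinct. Since $[\iota]$ is somewhere injective, $\Aut(\iota)$ is trivial, and by the discussion above $\fA^\si_\beta$ is a smooth manifold, which means the operator $F_{\iota,\psi}\oplus X_{\iota,\psi}\co \Gamma(N\iota)\oplus T_\psi\sD^4_c(Y)\to\Gamma(N\iota)$ is surjective. Consequently the tangent space to $\fA^\si_{\beta,n}$ at $([\iota,x_1,\ldots,x_n],\psi)$ is $\set*{(m,\eta,v_1,\ldots,v_n): F_{\iota,\psi}m+X_{\iota,\psi}\eta=0,\ v_i\in T_{x_i}P}$, and $\rd\ev(m,\eta,v_1,\ldots,v_n)=(\iota_*v_i+m(x_i))_{i=1}^n$. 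Varying the $v_i$ independently realises all of $\bigoplus_i\iota_*T_{x_i}P$, so it is enough to prove that $(m,\eta)\mapsto(m(x_i))_{i=1}^n$ maps $\ker(F_{\iota,\psi}\oplus X_{\iota,\psi})$ onto $\bigoplus_{i=1}^n (N\iota)_{x_i}$.

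Suppose this fails; then there is a non-zero tuple $(\zeta_1,\ldots,\zeta_n)$ with $\zeta_i\in (N\iota)_{x_i}$ such that
\begin{equation*}
  \sum_{i=1}^n \inner{m(x_i)}{\zeta_i}=0 \qforall (m,\eta)\in\ker(F_{\iota,\psi}\oplus X_{\iota,\psi}).
\end{equation*}
Passing to Sobolev completions $W^{k,p}$ with $kp>3$, so that point evaluation is bounded, the left-hand side defines a bounded functional vanishing on $\ker(F_{\iota,\psi}\oplus X_{\iota,\psi})$; since $F_{\iota,\psi}\oplus X_{\iota,\psi}$ is bounded and surjective, it factors as $\Lambda\circ(F_{\iota,\psi}\oplus X_{\iota,\psi})$ for a bounded functional $\Lambda$ on $W^{k-1,p}(N\iota)$. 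Representing $\Lambda$ by a distributional section $\mu$ of $N\iota$ and testing with $\eta=0$, respectively $m=0$, yields
\begin{equation*}
  F_{\iota,\psi}\mu=\sum_{i=1}^n\zeta_i\,\delta_{x_i} \qandq \int_P\iota^*(i(\mu)\eta)=0 \quad\text{for every closed }\eta\in\Omega^4(Y),
\end{equation*}
the second identity following from the defining formula \eqref{Eq_VariationOperator} for $X_{\iota,\psi}$ (extended to $\mu$ by density). By elliptic regularity $\mu$ is smooth on $P\setminus\set{x_1,\ldots,x_n}$ and lies in $L^1(P,N\iota)$, the fundamental solution of the first-order elliptic operator $F_{\iota,\psi}$ having an $L^1$ singularity in dimension three; in particular both identities make sense.

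Now let $C$ be a connected component of $P$ containing some $x_i$. As $\iota$ is somewhere injective and the set of injective points of $C$ is open, we may pick an injective point $q\in C$ with $q\notin\set{x_1,\ldots,x_n}$; near $q$ the section $\mu$ is smooth. In the spirit of the proof of \autoref{Prop_VariationOfPsi}, choose an embedded neighbourhood $U\ni q$ with $\iota(U)\cap\iota(P\setminus U)=\emptyset$, a tubular neighbourhood of $\iota(U)$, and a $3$--form $\alpha$ supported there with $\int_P\iota^*(i(\mu)\rd\alpha)=\int_U\chi\abs{\mu}^2\vol_{\iota^*g}$ for a bump function $\chi$ with $\chi(q)>0$; since $\rd\alpha$ is closed, the second identity above forces this integral to vanish, so $\mu\equiv0$ near $q$. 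The Fueter operator is of Dirac type, hence has the unique continuation property, so a section solving $F_{\iota,\psi}\mu=0$ on $C\setminus\set{x_1,\ldots,x_n}$ (connected because $\dim P=3$) and vanishing near $q$ vanishes on all of $C\setminus\set{x_1,\ldots,x_n}$. Restricting $F_{\iota,\psi}\mu=\sum_i\zeta_i\delta_{x_i}$ to $C$ then gives $\sum_{x_i\in C}\zeta_i\,\delta_{x_i}=0$, i.e.\ $\zeta_i=0$ for every $x_i\in C$; ranging over all components forces $(\zeta_1,\ldots,\zeta_n)=0$, a contradiction.

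The heart of the argument is the duality step: one must choose the functional-analytic setup (Sobolev exponents, the representation of $\Lambda$) so that the cokernel element is encoded in a single section $\mu$ which is at once regular enough for the pairing $\int_P\iota^*(i(\mu)\eta)$ to be defined and for elliptic unique continuation to apply at an interior point. The auxiliary ingredients---the localisation near a somewhere-injective point, which is a quantitative refinement of the construction in \autoref{Prop_VariationOfPsi} now used to detect an unknown section rather than a prescribed element of $\ker F_{\iota,\psi}$, and the unique continuation property for $F_{\iota,\psi}$---are routine but need to be assembled with care.
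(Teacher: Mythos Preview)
Your argument is correct and follows essentially the same route as the paper's proof: both reduce to showing that a cokernel element $\mu$ (your notation) or $\nu$ (the paper's) is smooth and satisfies $F_{\iota,\psi}\mu=0$ on $P\setminus\set{x_1,\ldots,x_n}$, is annihilated by $X_{\iota,\psi}$, and hence vanishes by the localisation argument of \autoref{Prop_VariationOfPsi} at an injective point together with unique continuation. The only difference is packaging: the paper restricts the domain of $F_{\iota,\psi}\oplus X_{\iota,\psi}$ to $W^{k,2}$ sections vanishing at the marked points and shows surjectivity directly, so the cokernel element is automatically $L^2$; you instead factor the evaluation functional through the full operator and represent the resulting functional by a distributional section, which forces you to argue separately that $\mu$ is smooth away from the $x_i$ and integrable near them. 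Both approaches work, and the paper's is marginally cleaner in that it avoids the fundamental-solution estimate.
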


\begin{proof}
  We will show that if $(v_1,\ldots,v_n) \in \bigoplus_{i=1}^n N_{x_i}\iota$,
  then there exist $n \in \Gamma(N\iota)$ and $\eta \in T_{\psi}\sD_c^4(Y)$ such that
  \begin{equation*}
    n(x_i) = v_i \qandq
    (n,\eta) \in T_{[\iota],\psi}\fA^\si_\beta.
  \end{equation*}
  This immediately implies the assertion.

  Denote by $\ev_{x_1,\ldots,x_n}\co \Gamma(N\iota) \to \bigoplus_{i=1}^n N_{x_i}\iota$ the evaluation map and define
  \begin{equation*}
    \bF^{k} \coloneq \(F_\iota \oplus X_{\iota,\psi}
    \co
    W^{k,2}\ker\ev_{x_1,\ldots,x_n} \oplus T_\psi\sD_c^4(Y)
    \to W^{k-1,2}\Gamma(N\iota)\),
  \end{equation*}
  where $F_{\iota}$ is the Fueter operator and $X_{\iota,\psi}$ is defined in \eqref{Eq_VariationOperator}.
  We prove that the operator $\bF^1$ is surjective,
  cf. \citet[Proof of Lemma 3.4.3]{McDuff2012}.
  To see this note that its image is closed and thus we need to show only that if $\nu \perp\im \bF^1$, then $\nu = 0$.
  Since $\nu \perp F_\iota(W^{1,2}\ker \ev_{x_1,\ldots,x_n})$,
  on $P \setminus \set{ x_1,\ldots, x_n }$,
  $\nu$ is smooth and satisfies $F_\iota\nu = 0$.
  We also know that $\nu \perp \im X_{\iota,\psi}$.
  The argument from the Proof of \autoref{Prop_VariationOfPsi} shows that $\nu = 0$,
  because the set of points $x\in P$ satisfying $\iota^{-1}(\iota(x)) = \set{x}$ is open in $P$ so we can choose such a point $x$ belonging to $P\setminus \set{ x_1, \ldots, x_n }$).
  That $\bF^k$ is surjective follows from the fact that $\bF^1$ is surjective by elliptic regularity.
  
  Pick $n_0 \in \Gamma(N\iota)$ with
  \begin{equation*}
    n_0(x_i) = v_i
  \end{equation*}
  and pick $(n_1,\eta) \in \ker\ev_{x_1,\ldots,x_n} \oplus T_\psi\sD_c^4(Y)$ such that
  \begin{equation*}
    F_\iota n_1 + X_{\iota,\psi}(\eta) = - F_\iota n_0.
  \end{equation*}
  The pair $(n_0+n_1,\eta) \in T_{[\iota],\psi}\fA^\si_\beta$ has the desired properties.
\end{proof}

Finally, we are in a position to prove \itref{Prop_SomewhereInjectiveTransversality_12} and \itref{Prop_SomewhereInjectiveTransversality_22} of \autoref{Prop_SomewhereInjectiveTransversality}.
Denote by $\pi \co \fA^\si_{\beta,2} \to \fA^\si_\beta$ the forgetful map and
denote by $\Delta = \set{ (x,x) \in Y \times Y : x \in Y }$ the diagonal in $Y$.
\autoref{Prop_EvaluationMapRegular}
The universal moduli space of non-injective but somewhere injective immersed associatives is precisely
\begin{equation*}
  \pi(\ev^{-1}(\Delta)).
\end{equation*}
By \autoref{Prop_EvaluationMapRegular},
$\ev^{-1}(\Delta) \subset \fA^\si_\beta$ is a codimension $7$ submanifold.
Since $\pi$ is a Fredholm map of index $6$ and
$\rho\co \fA^\si \to \sD^4_c(Y)$ is a Fredholm map of index $0$,
it follows that $\rho(\pi(\ev^{-1}(\Delta))) \subset \sD^4_c(Y)$ is residual.
This proves \itref{Prop_SomewhereInjectiveTransversality_12} because an injective immersion of a compact manifold is an embedding.
The proof of \itref{Prop_SomewhereInjectiveTransversality_22} is similar.
This completes the proof of \autoref{Prop_SomewhereInjectiveTransversality}.
\qed

%%% Local Variables:
%%% mode: latex
%%% TeX-master: "CountingAssociativesSeibergWitten"
%%% End:

\section{Seiberg--Witten equations in dimension three}
\label{Sec_SeibergWitten}

We very briefly review how to associate a Seiberg--Witten equation to a quaternionic representation.
More detailed discussions can be found in \cites{Taubes1999b}{Pidstrigach2004}{Haydys2008}[Section 6]{Salamon2013}[Section 6(i)]{Nakajima2015};
we follow  \cite[Section 1]{Doan2017a} closely.
The first ingredient is a choice of algebraic data.

\begin{definition}
  A \defined{quaternionic Hermitian vector space} is a real vector space $S$ together with a linear map $\gamma\co \Im \H \to \End(S)$ and an inner product $\inner{\cdot}{\cdot}$, such that $\gamma$ makes $S$ into a left module over the quaternions $\H = \R\Span{1,i,j,k}$, and $i,j,k$ act by isometries.
  The \defined{unitary symplectic group} $\Sp(S)$ is the subgroup of $\GL(S)$ preserving $\gamma$ and $\inner{\cdot}{\cdot}$.
  A \defined{quaternionic representation} of a Lie group $G$ on $S$ is a homomorphism $\rho \co G \to \Sp(S)$.
\end{definition}

Let $\rho\co G \to \Sp(S)$ be a quaternionic representation.
Denote by $\fg$ the Lie algebra of $G$.
There is a canonical hyperkähler moment map $\mu \co S \to (\fg \otimes \Im \H)^*$ defined as follows.
By slight abuse of notation denote by $\rho\co \fg \to \sp(S)$ the Lie algebra homomorphism induced by $\rho$.
Combine $\rho$ and $\gamma$ into the map $\bar\gamma\co \fg\otimes\Im\H \to \End(S)$ given by
\begin{equation*}
  \bar\gamma(\xi\otimes v)\Phi \coloneq \rho(\xi)\gamma(v)\Phi.
\end{equation*}
The map $\bar\gamma$ takes values in the space of symmetric endomorphisms of $S$.
Denote by $\bar\gamma^*\co \End(S) \to (\fg\otimes \Im\H)^*$ the adjoint of $\bar\gamma$.
Define
\begin{equation*}
  \mu(\Phi) \coloneq \frac12\bar\gamma^*(\Phi\Phi^*).
\end{equation*}

\begin{definition}
  \label{Def_CanonicalPermutingAction}
  The \defined{canonical permuting action} $\theta\co \Sp(1) \to \O(S)$ is defined by left-multiplication by unit quaternions.
  It satisfies
  \begin{equation*}
    \theta(q) \gamma(v)\Phi = \gamma(\Ad(q)v)\theta(q)\Phi
  \end{equation*}
  for all $q \in \Sp(1) = \set{ q \in \H : \abs{q} = 1 }$, $v \in \Im\H$, and $\Phi \in S$.
\end{definition}

\begin{definition}
  \label{Def_AlgebraicData}
  A set of \defined{algebraic data} consists of:
  \begin{itemize}
  \item
    a quaternionic Hermitian vector space $(S,\gamma,\inner{\cdot}{\cdot})$,
  \item
    a compact, connected Lie group $H$, 
    an injective homomorphism $\Z_2 \to Z(H)$, 
    an $\Ad$--invariant inner product on $\Lie(H)$,
  \item
    a closed, connected, normal subgroup $G\nsub H$, and
  \item
    a quaternionic representation $\rho\co H\to \Sp(S)$ such that $-1 \in \Z_2 \subset Z(H)$ acts as $-\id_S$.
  \end{itemize}
\end{definition}

\begin{definition}
  Given a set of algebraic data, set
  \begin{equation*}
    \hat H \coloneq (\Sp(1)\times H)/\Z_2, \quad
    K \coloneq H/G, \qandq
    \hat K \coloneq (\Sp(1)\times K)/\Z_2.
  \end{equation*}
  The group $K$ is called the \defined{flavor group}.
\end{definition}

\begin{example}
  \label{Ex_ADHMAlgebraicData}
  The ADHM$_{r,k}$ Seiberg--Witten equation arise by choosing
  \begin{equation*}
    S = S_{r,k} \coloneq \Hom_\C(\C^r,\H\otimes_\C \C^k) \oplus \H\otimes_\R\fu(k)
  \end{equation*}
  with
  \begin{equation*}
    G = \U(k) \nsub H = \SU(r) \times \Sp(1) \times \U(k)
  \end{equation*}
  where $\SU(r)$ acts on $\C^r$ in the obvious way, $\U(k)$ acts on $\C^k$ in the obvious way and on $\fu(k)$ by the adjoint representation, and $\Sp(1)$ acts on the first copy of $\H$ trivially and on the second copy by right-multiplication with the conjugate.
  The homomorphism $\Z_2\to Z(H)$ is defined by $-1 \mapsto (\id_{\C^r},-\id_\H,-\id_{\C^k})$.
  In particular,
  \begin{equation*}
    \hat H = \SU(r) \times \Spin^{\U(k)}(4)
  \end{equation*}
  with
  \begin{equation*}
    \Spin^{\U(k)}(n) \coloneq (\Spin(n)\times\U(k))/\Z_2.
  \end{equation*}
  Although notationally cumbersome,
  we usually prefer to think of $\hat H$ as
  \begin{equation*}
    \hat H = \SU(r) \times \Spin^{\U(k)}(3)\times_{\SO(3)} \SO(4).
  \end{equation*}
  Here the second factor is the fiber product of $\Spin^{\U(k)}(3)$ with $\SO(4)$ with respect to the obvious homomorphism $\Spin^{\U(k)}(3) \to \SO(3)$ and the homomorphism $\SO(4) \to \SO(3)$ is given by the action on $\Lambda^+\R^4$.
\end{example}

In addition to a set of algebraic data has been chosen one also needs to fix the geometric data for which the Seiberg--Witten equation will be defined.

\begin{definition}
  \label{Def_GeometricData}
  Let $M$ be a closed, connected, oriented $3$--manifold.
  A set of \defined{geometric data} on $M$ compatible with a set of algebraic data as in \autoref{Def_AlgebraicData} consists of:
  \begin{itemize}
  \item
    a Riemannian metric $g$ on $M$,
  \item
    a principal $\hat H$--bundle $\hat Q \to M$ together with an isomorphism
    \begin{equation}
      \label{Eq_HatQInducesSOTM}
      \hat Q\times_{\hat H}\SO(3) \iso \SO(TM),
    \end{equation}
    and
  \item
    a connection $B$ on the principal $K$--bundle
    \begin{equation*}
      R \coloneq \hat Q\times_{\hat H} K.
    \end{equation*}
  \end{itemize}
\end{definition}

\begin{definition}
  Given a choice of geometric data,
  the \defined{spinor bundle} and the \defined{adjoint bundle} are the vector bundles%
  \footnote{%    
    If $H = G\times K$, then there is a principal $G$--bundle $P \to M$ associated with $\hat Q$ and $\fg_P$ is the adjoint bundle of $P$.
    In general, $P$ might not exist but traces of it remain, e.g., its adjoint bundle $\fg_P$ and its gauge group $\sG(P)$.
  }
  \begin{equation*}
    \bS \coloneq \hat Q\times_{\theta\times\rho} S
    \qandq
    \fg_P \coloneq \hat Q\times_{\Ad} \fg.
  \end{equation*}
  Because of \eqref{Eq_HatQInducesSOTM} the maps $\gamma$ and $\mu$ induce maps
  \begin{equation*}
    \gamma\co T^*M \to \End(\bS)
    \qandq
    \mu\co \bS \to \Lambda^2 T^*M \otimes \fg_P.
  \end{equation*}
  Here we take $\mu$ to be the moment map corresponding to the action of $G \nsub H$.
\end{definition}

\begin{definition}
  Set
  \begin{equation*}
    \sA_B(\hat Q)
    \coloneq
    \set*{
      A \in \sA(\hat Q)
      :
      \begin{array}{@{}l@{}}
        A \text{ induces } B \text { on } R \text{ and the} \\
        \text{Levi-Civita connection on } TM
      \end{array}
    }.
  \end{equation*}
  Any $A \in \sA_B(\hat Q)$ defines a covariant derivative $\nabla_A\co \Gamma(\bS) \to \Omega^1(M,\bS)$.
  The \defined{Dirac operator} associated with $A$ is the linear map $\slD_A\co \Gamma(\bS) \to \Gamma(\bS)$ defined by
  \begin{equation*}
    \slD_A\Phi \coloneq \gamma(\nabla_A\Phi).
  \end{equation*}
  $\sA_B(\hat Q)$ is an affine space modeled on $\Omega^1(M,\fg_P)$.
  Denote by $\varpi \co \Ad(\hat Q) \to \fg_P$ the projection induced by $\Lie(\hat H) \to \Lie(G)$.
\end{definition}

Finally, we are in a position to define the Seiberg--Witten equation.

\begin{definition}
  The \defined{Seiberg--Witten equation} associated with the chosen algebraic and geometric data is the following system of partial differential equations for $(\Phi,A) \in \Gamma(\bS) \times \sA_B(\hat Q)$:
   \begin{equation}
    \label{Eq_SeibergWitten}
    \begin{split}
      \slD_A\Phi 
      &= 0 \qand \\
      \varpi F_A 
      &=  \mu(\Phi).
    \end{split}
  \end{equation}
\end{definition}

The Seiberg--Witten equation is invariant with respect to gauge transformations which preserve the flavor bundle $R$ and $\SO(T^*M)$.

\begin{definition}
  The \defined{group of restricted gauge transformations}
  is
  \begin{equation*}
    \sG(P)
    \coloneq
    \set*{ u \in \sG(\hat Q) : u \text{ acts trivially on } R \text{ and } \SO(TM) }.
  \end{equation*}
  $\sG(P)$ can be identified with the space of sections of $\hat Q\times_{\hat H} G$ with $\hat H$ acting on $G$ via $[(q,h)] \cdot g = hgh^{-1}$.
\end{definition}

If $\mu^{-1}(0) = \set{0}$,
then one proves in the same way as for the classical Seiberg--Witten equation that solutions of \eqref{Eq_SeibergWitten} obey a priori bounds on $\Phi$.
In many cases of interest $\mu^{-1}(0) \neq \set{0}$ and in these cases a priori bounds fail to hold.
Anticipating this, we blow-up the Seiberg--Witten equation.

\begin{definition}
  The \defined{blown-up Seiberg--Witten equation} is the following partial differential equation for $(\epsilon,\Phi,A) \in [0,\infty)\times\Gamma(\bS)\times\sA_B(\hat Q)$:
  \begin{equation}
    \label{Eq_BlownUpSeibergWitten}
    \begin{split}
      \slD_A\Phi 
      &=
      0, \\
      \epsilon^2 \varpi F_A 
      &=
      \mu(\Phi), \qand \\
      \Abs{\Phi}_{L^2}
      &=
      1.
    \end{split}
  \end{equation}
  
  The \defined{limiting Seiberg--Witten equation} is the following partial differential equation for $(\Phi,A) \in [0,\infty) \in \Gamma(\bS)\times \sA_B(\hat Q)$:
  \begin{equation}
    \label{Eq_LimitingSeibergWitten}
    \begin{split}
      \slD_A\Phi &= 0 \qand \\
      \mu(\Phi) &= 0
    \end{split}
  \end{equation}
  as well as $\Abs{\Phi}_{L^2} = 1$.
\end{definition}

The phenomenon of $\Phi$ tending to infinity for \eqref{Eq_SeibergWitten} corresponds to $\epsilon$ tending to zero for \eqref{Eq_BlownUpSeibergWitten}
Formally,
the compactifiction of the moduli space of solutions of \eqref{Eq_SeibergWitten} should thus be given by adding solution of the limiting equation.
\citet{Taubes2012,Haydys2014} proved that---up to allowing for codimension on singularities in the limiting solutions---this is true for the flat $\PSL(2,\C)$--connections and the Seiberg--Witten equation with multiple spinors, which are particular instances of equation \eqref{Eq_SeibergWitten}.
Although one might initially hope that it is unnecessary to allow for singularities in solutions of the limiting equation,
it has been shown in \cite{Doan2017c} that this phenomenon cannot be avoided.

%%% Local Variables:
%%% mode: latex
%%% TeX-master: "CountingAssociativesSeibergWitten"
%%% End:

\section{The Haydys correspondence with stabilizers}
\label{Sec_HaydysCorrespondence}

Throughout this appendix we assume that algebraic data and geometric data as in \autoref{Def_AlgebraicData} and \autoref{Def_GeometricData} have been chosen.
Denote by 
\begin{equation*}
  X \coloneq S\hkred G = \mu^{-1}(0)/G
\end{equation*}
the \defined{hyperkähler quotient} of $X$ by $G$,
and denote by $p\co \mu^{-1}(0) \to X$ the canonical projection.
The action of $\hat H$ on $S$ induces an action of $\hat K = \hat H/G$ on $X$.
Set
\begin{equation*}
  \bX \coloneq \hat R\times_{\hat K}X.
\end{equation*}
If $\Phi \in \Gamma(\bS)$ satisfies $\mu(\Phi) = 0$, then
\begin{equation}
  \label{Eq_S=PPhi}
  s \coloneq p\circ \Phi \in \Gamma(\bX).
\end{equation}
The Haydys correspondence \cite[Section 4.1]{Haydys2011} relates solutions of the limiting Seiberg--Witten equation \eqref{Eq_LimitingSeibergWitten} with certain sections of $\bX$.
The discussions of the Haydys correspondence available in the literature so far \cites[Section 4.1]{Haydys2011}[Section 3]{Doan2017a} assume that the action of $G$ on $\mu^{-1}(0)$ is generically free.
This hypothesis does not hold in \autoref{Ex_ADHMAlgebraicData} with $r = 1$,
which leads to the ADHM$_{1,k}$ Seiberg--Witten equation.
This appendix is concerned with extending the Haydys correspondence to the case when $G$ acts on $\mu^{-1}(0)$ with a non-trivial generic stabilizer.

\subsection{Decomposition of hyperkähler quotients}
\label{Sec_DecompositionOfHyperkahlerQuotients}

Denote by $S_{\set{e}}$ the subset of $S$ on which $G$ acts freely.
By \cite[Section 3(D)]{Hitchin1987}, the quotient
\begin{equation*}
  (S_{\set{e}}\cap\mu^{-1}(0))/G
\end{equation*}
can be given the structure of hyperkähler manifold of dimension $4(\dim_\H S - \dim G)$ such that, for $\Phi \in S_{\set{e}}\cap\mu^{-1}(0)$,
\begin{equation}
  p_*\co (\rho(\fg)\Phi)^\perp\cap T_\Phi\mu^{-1}(0) \to T_{[\Phi]}X
\end{equation}
is a quaternionic isometry.
If $G$ acts on $\mu^{-1}(0)$ with trivial generic stabilizer (that is: $S_{\set{e}}$ is dense and open),
then this makes an dense open subset of $X$ into a hyperkähler manifold.
In general, $X$ can be decomposed as a union of hyperkähler manifolds according to orbit type as follows.

\begin{definition}
  \label{Def_ST}
  For $\Phi \in S$, denote by $G_\Phi$ the stabilizer of $\Phi$ in $G$.
  Let $T < G$ be a subgroup.
  Set
  \begin{equation*}
    S_T
    \coloneq
    \set{
      \Phi \in S : G_\Phi = T
    }
    \qandq
    S_{(T)}
    \coloneq
    \set{
      \Phi \in S : gG_\Phi g^{-1} = T \text{ for some } g \in G
    }.
  \end{equation*}
\end{definition}

\begin{definition}
  Given a subgroup $T < G$, set
  \begin{equation*}
    W_G(T) \coloneq N_G(T)/T.
  \end{equation*}
  Here $N_G(T)$ denotes the normalizer of $T$ in $G$. 
\end{definition}

\begin{remark}
  This notation is motivated by the example $S = \H \otimes \fg$, with $G$ acting via the adjoint representation. 
  In this case, the stabilizer $T$ of a generic point in $\mu^{-1}(0)$ is a maximal torus and $W_G(T)$ is the Weyl group of $G$;
  cf.~\autoref{Sec_NakajimasProof} for the case $G=\U(k)$.
\end{remark}

\begin{theorem}[{\citet[Theorem 2.1]{Dancer1997}; \citet{Sjamaar1991}, \citet[Section 6]{Nakajima1994a}}]
  \label{Thm_HyperkahlerQuotientWithStabilizer}
  For each $T < G$, the quotient
  \begin{equation*}
    X_{(T)} \coloneq (\mu^{-1}(0)\cap S_{(T)})/G
  \end{equation*}
  is a hyperkähler manifold, and
  \begin{equation}
    \label{Eq_DecompositionOfX}
    X = \bigcup_{(T)} X_{(T)} 
  \end{equation}
  where $(T)$ runs through all conjugacy classes of subgroups of $G$.%
  \footnote{%
    There can be subgroups $T < G$ with $S_{(T)} \neq 0$, but $\mu^{-1}(0)\cap S_{(T)} = \emptyset$.
  }
  More precisely, for each $T < G$:
  \begin{enumerate}
  \item
    \label{Thm_HyperkahlerQuotientWithStabilizer_STSubmanifold}
    $S_T$ is a hyperkähler submanifold of $S$ and
    $S_{(T)}$ is a submanifold of $S$.
  \item
    \label{Thm_HyperkahlerQuotientWithStabilizer_S(T)/G=ST/W}
    We have
    \begin{equation*}
      (\mu^{-1}(0)\cap S_{(T)})/G = (\mu^{-1}(0)\cap S_{T})/W_G(T).
    \end{equation*}
  \item
    \label{Thm_HyperkahlerQuotientWithStabilizer_MuOnST0}
    Denote by $S_T^0$ denotes the union of the components of $S_T$ intersecting $\mu^{-1}(0)$.
    Then $W_G(T)$ acts freely on $S_T^0$ and
    \begin{equation*}
      \mu(S_T^0)
      \subset
        (\fw\otimes\Im\H)^*
    \end{equation*}
    with $\fw \coloneq \Lie(W_G(T))$.
    In particular, the restriction of $\mu$ to $S_T^0$ induces a hyperkähler moment map on $S_T^0$ for the action of $W_G(T)$.
  \item
    \label{Thm_HyperkahlerQuotientWithStabilizer_X(T)}
    $X_{(T)}$  can be given the structure of a hyperkähler manifold such that, for each $\Phi \in \mu^{-1}(0)\cap  S_{(T)}$,
    \begin{equation*}
      p_*\co (\rho(\fg)\Phi)^\perp \cap \ker \rd_\Phi\mu \cap T_\Phi S_{(T)} \to T_{[\Phi]}X_{(T)}
    \end{equation*}
    is a quaternionic isometry.
  \end{enumerate}
\end{theorem}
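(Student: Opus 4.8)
The plan is to reduce the assertion to the classical theorem that the hyperkähler quotient of a quaternionic Hermitian vector space by a \emph{freely} acting compact group is a hyperkähler manifold, via the orbit-type decomposition, following \citet{Dancer1997} (with the analytic groundwork going back to \citet{Sjamaar1991} and \citet{Nakajima1994a}). The key elementary observation is that $T<G<\Sp(S)$ commutes with the Clifford action $\gamma$ and acts by isometries, so the fixed-point set
\[
  S^T \coloneq \set*{ \Phi \in S : \rho(t)\Phi = \Phi \text{ for all } t \in T }
\]
is a $\gamma$-invariant linear subspace of $S$ and hence itself a quaternionic Hermitian vector space. Since $S_T$ is the complement in $S^T$ of the proper linear subspaces $S^{T'}$ with $T' > T$, it is an open subset of $S^T$, which gives the first half of \itref{Thm_HyperkahlerQuotientWithStabilizer_STSubmanifold}. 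The normalizer $N_G(T)$ preserves $S_T$, $T$ acts trivially on $S_T$, and the residual action of $W_G(T) = N_G(T)/T$ on $S_T$ is free because the $G$-stabilizer of every point of $S_T$ is exactly $T$. Moreover $S_{(T)} = \rho(G)\cdot S_T$ is the image of the injective immersion $G\times_{N_G(T)} S_T \to S$, $[g,\Phi]\mapsto\rho(g)\Phi$, which exhibits it as a submanifold and finishes \itref{Thm_HyperkahlerQuotientWithStabilizer_STSubmanifold}; the same description, together with triviality of the $T$-action, immediately yields the identification $(\mu^{-1}(0)\cap S_{(T)})/G = (\mu^{-1}(0)\cap S_T)/W_G(T)$ of \itref{Thm_HyperkahlerQuotientWithStabilizer_S(T)/G=ST/W}.

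Next I would pin down the moment map on $S^T$. Writing $\fn = \Lie(N_G(T))$ and noting $\fn = \fg^T$, split $\fg = \ft\oplus\fw\oplus\fm$ orthogonally under $\Ad(T)$, where $\fw\iso\Lie(W_G(T))$ is a complement of $\ft$ in $\fg^T$ and $\fm = (\fg^T)^\perp$ contains no non-zero $T$-invariant vector. For $\Phi\in S^T$ one has $\rho(\xi)\Phi = 0$ when $\xi\in\ft$, and since each $\rho(\xi)$ is skew-adjoint and commutes with $\gamma$,
\[
  \inner{\mu(\Phi)}{\xi\otimes v} = \tfrac12\inner{\rho(\xi)\gamma(v)\Phi}{\Phi} = 0 \qquad\text{for } \xi\in\ft,
\]
while averaging this pairing over $t\in T$ (replacing $\Phi$ by $\rho(t)\Phi$) shows it also vanishes for $\xi\in\fm$. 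Hence $\mu(S^T)\subset(\fw\otimes\Im\H)^*$, so on the union $S_T^0$ of components of $S_T$ meeting $\mu^{-1}(0)$ the restriction $\mu|_{S_T^0}$ is a hyperkähler moment map for the free $W_G(T)$-action, and freeness of this action makes $0$ a regular value; this proves \itref{Thm_HyperkahlerQuotientWithStabilizer_MuOnST0}. Applying the Hitchin--Karlhede--Lindström--Roček construction recalled in \cite[Section 3(D)]{Hitchin1987} to this free action equips
\[
  X_{(T)} = (\mu^{-1}(0)\cap S_T)/W_G(T) = (\mu|_{S_T^0})^{-1}(0)/W_G(T)
\]
with a hyperkähler structure, and the decomposition \eqref{Eq_DecompositionOfX} is just the partition of $\mu^{-1}(0)$ by conjugacy class of stabilizer pushed to the quotient.

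It remains to match the two descriptions of $T_{[\Phi]}X_{(T)}$ in \itref{Thm_HyperkahlerQuotientWithStabilizer_X(T)}. The free-quotient theorem identifies $T_{[\Phi]}X_{(T)}$, as a quaternionic Hermitian space, with $(\rho(\fw)\Phi)^\perp\cap\ker\rd_\Phi(\mu|_{S^T})$ computed inside $S^T$. One then checks: $\rho(\fm)\Phi$ is orthogonal to $S^T$ (its $T$-average is zero), $\rho(\ft)\Phi = 0$, $\rho(\fw)\Phi\subset S^T$, and $T_\Phi S_{(T)} = \rho(\fm)\Phi\oplus S^T$. Feeding these orthogonality relations through the intersection shows
\[
  (\rho(\fg)\Phi)^\perp\cap\ker\rd_\Phi\mu\cap T_\Phi S_{(T)} = (\rho(\fw)\Phi)^\perp\cap\ker\rd_\Phi(\mu|_{S^T}) \subset S^T,
\]
and that on this subspace $p_*$ agrees with the projection of the $W_G(T)$-quotient, so the quaternionic isometry supplied by the free-quotient theorem is exactly the one asserted.

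The main obstacle is not any single analytic estimate — the hard content is absorbed into the free hyperkähler quotient theorem — but the orbit-type bookkeeping: verifying carefully that $S_{(T)}$ is a genuinely embedded, locally closed submanifold (for which I would invoke the slice theorem for the compact group $G$ acting linearly on $S$), that the passage from $S_T$ to the union $S_T^0$ of components meeting $\mu^{-1}(0)$ is harmless, and above all that the chain of orthogonal splittings of $T_\Phi S$ interacts with $\ker\rd_\Phi\mu$ and with the $G$- and $W_G(T)$-orbit directions precisely as claimed, so that the quaternionic and metric structures transported from $S^T$ are the intrinsic ones on $X_{(T)}$ and are compatible with the ambient structure on $X$. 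I would also record, as in the footnote to the statement, that some conjugacy classes $(T)$ satisfy $S_{(T)}\neq 0$ yet $\mu^{-1}(0)\cap S_{(T)} = \emptyset$, in which case $X_{(T)}$ is empty and contributes nothing to \eqref{Eq_DecompositionOfX}.
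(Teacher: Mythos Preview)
Your approach is essentially the same as the paper's: reduce to the free hyperkähler quotient $S_T^0\hkred W_G(T)$ via the orbit-type decomposition, using that $S^T$ is a quaternionic linear subspace, $S_T$ is open in it, and $S_{(T)}\cong G\times_{N_G(T)}S_T$. Your argument for \itref{Thm_HyperkahlerQuotientWithStabilizer_MuOnST0} is in fact slightly more direct than the paper's, which reaches $\mu(S_T^0)\subset(\fw\otimes\Im\H)^*$ by first observing $\mu(S_T)\subset(\fg^*)^T\subset\fn^*$ from equivariance, then computing $\rd_\Phi\mu\in\Ann_{\fg^*}\ft$, and finally integrating from a point of $\mu^{-1}(0)$---hence the restriction to $S_T^0$.

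There is, however, one incorrect claim that you should repair: $\fn=\fg^T$ fails whenever $T$ is non-abelian. One always has $\fg^T\subset\fn$, but equality requires $T$ abelian; in the paper's main application $T=T_\lambda=\prod_n\U(\lambda_n)$ is typically non-abelian, and then $\ft\not\subset\fg^T$, so your ``orthogonal splitting $\fg=\ft\oplus\fw\oplus\fm$'' is not a direct sum and your $\fw$ (a complement of $\ft$ in $\fg^T$) is not $\Lie(W_G(T))$. Fortunately this does not break the argument. Your two computations actually show $\mu(S^T)\subset\fg^T\cap\ft^\perp$ (identifying $\fg^*\cong\fg$), and since $\fg^T\subset\fn$ this lies in $\fn\cap\ft^\perp=\Lie(W_G(T))$, which is what is needed. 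Likewise, in \itref{Thm_HyperkahlerQuotientWithStabilizer_X(T)} you need $\rho(\fn)\Phi=\rho(\fg^T)\Phi$ rather than $\fn=\fg^T$; this follows because your orthogonality $\rho(\fm)\Phi\perp S^T$ together with $\rho(\fn)\Phi\subset S^T$ forces $\rho(\fn\cap\fm)\Phi=0$, whence $\fn\cap\fm\subset\ft$ and $\rho(\fn)\Phi=\rho(\fg^T)\Phi$. With this bookkeeping corrected, your proof goes through and matches the paper's.
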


\begin{proof}
  We recall \citeauthor{Dancer1997}'s argument, since some aspects of it will play a role later on.

  To prove \itref{Thm_HyperkahlerQuotientWithStabilizer_STSubmanifold},
  denote by
  \begin{equation*}
    S^T \coloneq \set{ \Phi \in S : G_\Phi \supset T }
  \end{equation*}
  the fixed-point set of the action of $T$.
  $S^T$ is an $\H$--linear subspace of $S$ and $S_T$ is an open subset of $S^T$ (by the Slice Theorem).
  Therefore, $S_T$ is a hyperkähler submanifold of $S$.
  The group action induces a bijection
  \begin{equation*}
    S_T \times_{W_G(T)} G/T \iso S_{(T)}, \quad
    [\Phi,gT] \mapsto \rho(g)\Phi.
  \end{equation*}
  This shows that $S_{(T)}$ is a submanifold of $S$.
  For future reference, we also observe that
  \begin{equation}
    \label{Eq_TS(T)}
    T_\Phi S_T
    =
      S^T \qandq
    T_\Phi S_{(T)}
    =
      S^T + \rho(\fg)\Phi
    \iso
      \frac{S^T \oplus \rho(\fg)\Phi}{\rho(\fw)\Phi}.
  \end{equation}

  The assertion made in \itref{Thm_HyperkahlerQuotientWithStabilizer_S(T)/G=ST/W} follows directly from the definitions.
  
  To prove \itref{Thm_HyperkahlerQuotientWithStabilizer_MuOnST0},
  observe that by the definition of $S_T$,
  the group $W_G(T) = N_G(T)/T$ acts freely on $S_T$.
  Since $\mu$ is $G$--equivariant, $\mu(S_T) \subset (\fg^*)^T \subset \fn^*$ with $\fn \coloneq \Lie(N_G(T))$.
  Let $\ft = \Lie(T)$.
  If $\Phi \in S_T$, then
  \begin{equation}
    \label{Eq_DMUPhiOnST}
    \rd_\Phi\mu \in \Ann_{\fg^*} \ft \otimes (\Im\H)^*,
  \end{equation}
  because, for $\xi \in \ft$, $v \in \Im\H$, and $\phi \in S$, we have
  \begin{equation*}
    \inner{(\rd_\Phi\mu)\phi}{\xi\otimes v}
    = \inner{\gamma(v)\rho(\xi)\Phi}{\phi}
    = 0.
  \end{equation*}
  Since $\fw^* = \fn^*\cap \Ann_{\fg_*}\ft$,
  we have $\mu(S_T^0) \subset (\fw\otimes\Im\H)^*$.
  This proves \itref{Thm_HyperkahlerQuotientWithStabilizer_MuOnST0}.

  Finally, we prove \itref{Thm_HyperkahlerQuotientWithStabilizer_X(T)}.
  Since
  \begin{equation*}
    X_{(T)}
    =
      (\mu^{-1}(0)\cap S_{(T)})/G
    =
      (\mu^{-1}(0)\cap S_{(T)})/W_G(T)
    =
      S_T^0 \hkred W_G(T),
  \end{equation*}
  $X_{(T)}$ can be given a hyperkähler structure by the construction in \cite[Section 3(D)]{Hitchin1987}.
  If $\Phi \in S_T$, then
  \begin{equation*}
    (\rho(\fg)\Phi)^\perp \cap T_\Phi S_{(T)} = (\rho(\fw)\Phi)^\perp\cap T_\Phi S_T
  \end{equation*}
  by \eqref{Eq_TS(T)};
  hence, by the discussion before \autoref{Def_ST},
  \begin{equation*}
    p_*\co
    (\rho(\fg)\Phi)^\perp \cap \ker \rd_\Phi\mu \cap T_\Phi S_{(T)}
    =
      (\rho(\fw)\Phi)^\perp\cap \ker \rd_\Phi\mu \cap T_\Phi S_T
    \to
      T_{[\Phi]}X_{(T)}
  \end{equation*}
  is a quaternionic isometry.
  This finishes the proof of \itref{Thm_HyperkahlerQuotientWithStabilizer_X(T)}.
\end{proof}

In general,
the action of $\hat K = \hat H/G$ need not preserve the strata $X_{(T)}$.
The following hypothesis,
which holds for all the examples considered in this article,
guarantees that the action of $\hat H$ on $S$ preserves $S_{(T)}$ and that the action of $\hat K$ on $X$ preserves $X_{(T)} \subset X$.

\begin{hyp}
  \label{Hyp_HConjugateOrbit=GConjugateOrbit}
  Given $T < G$, assume that, for all $h \in H$, there is a $g \in G$ such that
  \begin{equation*}
    hTh^{-1} = gTg^{-1}.
  \end{equation*}
\end{hyp}

\begin{prop}
  \label{Prop_HatHPreservesS(T)X(T)}
  If \autoref{Hyp_HConjugateOrbit=GConjugateOrbit} holds for $T < G$,
  then the action of $\hat H$ on $S$ preserves the submanifold $S_{(T)}$ and the action of $\hat K$ on $X$ preserves $X_{(T)}$.
  \qed
\end{prop}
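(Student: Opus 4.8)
The plan is to show that the equivalences of \autoref{Hyp_HConjugateOrbit=GConjugateOrbit} are exactly what is needed to transport the stratification \eqref{Eq_DecompositionOfX} along the residual $\hat H$-action. First I would unwind the action of $\hat H$ on $S$: it is the composition $\theta\times\rho$ of the permuting action of $\Sp(1)$ and $\rho$ on $H$, and since $G\nsub H$ is normal, conjugation by $\hat H$ permutes the subgroups of $G$. The key observation is that for $\Phi\in S$ and $h\in H$ one has $G_{\rho(h)\Phi}=hG_\Phi h^{-1}$, while the $\Sp(1)$-factor acts by isometries commuting with the $G$-action (by \autoref{Def_CanonicalPermutingAction}, $\theta(q)$ intertwines $\rho(\xi)$ with $\rho(\xi)$ up to the adjoint action on $\Im\H$, hence commutes with the $G$-action entirely since $G$ acts on the $\H$-module structure quaternion-linearly). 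Therefore $G_{\hat h\cdot\Phi}$ is $H$-conjugate to $G_\Phi$ for any $\hat h\in\hat H$.

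The second step is purely formal: by definition $\Phi\in S_{(T)}$ iff $G_\Phi$ is $G$-conjugate to $T$. If $\hat h\cdot\Phi=\Phi'$, then $G_{\Phi'}=hG_\Phi h^{-1}$ for the $H$-component $h$ of $\hat h$, so $G_{\Phi'}$ is $H$-conjugate to $T$. Under \autoref{Hyp_HConjugateOrbit=GConjugateOrbit} — applied to the subgroup $gTg^{-1}$ where $g\in G$ conjugates $G_\Phi$ to $T$ — being $H$-conjugate to $T$ is the same as being $G$-conjugate to $T$, so $\Phi'\in S_{(T)}$. (One should check the hypothesis is genuinely used in both directions of the conjugacy, but since it is stated as an equality of $G$-orbits for all $h\in H$, this is immediate.) Hence $\hat H$ preserves $S_{(T)}$. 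Since $\hat H$ also preserves $\mu^{-1}(0)$ (because $\mu$ is $H$-equivariant and $\Sp(1)$ rotates the $\Im\H$-factor, so $\mu^{-1}(0)$ is $\hat H$-invariant), it preserves $\mu^{-1}(0)\cap S_{(T)}$, and the induced $\hat K=\hat H/G$-action on $X=\mu^{-1}(0)/G$ therefore preserves $X_{(T)}=(\mu^{-1}(0)\cap S_{(T)})/G$.

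I do not expect a serious obstacle here; the statement is essentially a bookkeeping consequence of the orbit-type decomposition of \autoref{Thm_HyperkahlerQuotientWithStabilizer} together with the defining property of the hypothesis. The one point requiring a little care is the interaction of the $\Sp(1)$-factor with stabilizers: one must confirm that $\theta(q)$ does not change $G_\Phi$, which follows because $\theta(q)$ is $G$-equivariant — the permuting action commutes with the $G$-action on $S$ since $G\to\Sp(S)$ lands in the quaternion-linear isometries while $\theta(q)$ is itself such an isometry, and these two commuting subgroups of $\Sp(S)$ have the property that $\theta(q)\rho(g)=\rho(g)\theta(q)$. Given this, $G_{\theta(q)\Phi}=G_\Phi$ exactly, so the only conjugation in play comes from the $H$-component, and the rest is the hypothesis. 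The proof is thus short enough to state in a few lines, which is presumably why the excerpt marks it with \qed rather than giving an argument.

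\begin{proof}
  The group $\hat H$ acts on $S$ via $\theta\times\rho$. Since $G$ acts on $S$ by quaternion-linear isometries and $\theta(q)$ is likewise a quaternion-linear isometry, the two commute: $\theta(q)\rho(g)=\rho(g)\theta(q)$ for all $q\in\Sp(1)$, $g\in G$. Hence for $\Phi\in S$ and $\hat h=[(q,h)]\in\hat H$ we have
  \begin{equation*}
    G_{\hat h\cdot\Phi} = G_{\rho(h)\Phi} = hG_\Phi h^{-1}.
  \end{equation*}
  Moreover, since $\mu$ is $H$-equivariant and $\theta$ acts on the $\Im\H$-factor of $(\fg\otimes\Im\H)^*$ by rotations, $\mu^{-1}(0)$ is $\hat H$-invariant.

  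Now let $\Phi\in S_{(T)}$, so that $gG_\Phi g^{-1}=T$ for some $g\in G$. For $\hat h=[(q,h)]\in\hat H$, the stabilizer $G_{\hat h\cdot\Phi}=hG_\Phi h^{-1}$ is $H$-conjugate to $T$. By \autoref{Hyp_HConjugateOrbit=GConjugateOrbit}, applied to $T$, the $H$-conjugacy class and the $G$-conjugacy class of $T$ within the set of subgroups of $G$ coincide; hence $G_{\hat h\cdot\Phi}$ is $G$-conjugate to $T$, i.e. $\hat h\cdot\Phi\in S_{(T)}$. Thus $\hat H$ preserves $S_{(T)}$, and therefore preserves $\mu^{-1}(0)\cap S_{(T)}$. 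Passing to the quotient by $G$, the induced action of $\hat K=\hat H/G$ on $X$ preserves $X_{(T)}=(\mu^{-1}(0)\cap S_{(T)})/G$.
\end{proof}
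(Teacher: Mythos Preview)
Your proof is correct and follows the same approach as the paper's: compute the stabilizer of $\hat h\cdot\Phi$, reduce to $H$-conjugacy of $T$, and invoke the hypothesis. One small correction is in order, though. You write that ``$\theta(q)$ is likewise a quaternion-linear isometry,'' and deduce commutation from both maps being quaternion-linear. Neither part of that reasoning is right: left multiplication by $q\in\Sp(1)$ is \emph{not} $\H$-linear with respect to the left module structure $\gamma$ (it does not commute with $\gamma(i),\gamma(j),\gamma(k)$ unless $q\in\R$), and in any case two $\H$-linear maps need not commute. The correct argument is the converse one: $\rho(g)\in\Sp(S)$ means precisely that $\rho(g)$ commutes with $\gamma$, i.e.\ with all left multiplications, hence in particular with $\theta(q)$. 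With that fix the displayed identity $G_{\hat h\cdot\Phi}=hG_\Phi h^{-1}$ and the rest of your argument go through unchanged, and indeed match the paper's proof almost line for line.
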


\begin{proof}
  For $h \in H$ and $\Phi \in S_{(T)}$,
  we have $G_{\rho(h)\Phi} = hG_{\Phi}h^{-1} = hTh^{-1} = gTg^{-1}$ for some $g \in G$.
  Thus, $\rho(h)\Phi \in S_{(T)}$ and the action of $H$ preserves $S_{(T)}$.
  The action of $\Sp(1)$ commutes with that of $H$ and so it also preserves $S_{(T)}$.
  We conclude that $S_{(T)}$ is preserved by the action of $\hat H $.
  Since $X_{(T)} = (\mu^{-1}(0)\cap S_{(T)})/G$, the action of $\hat{K}$ preserves $X_{(T)}$.
\end{proof}

\begin{prop}
  \label{Prop_NHT/NGT=K}
  For any $T < G$, $N_G(T)$ is a normal subgroup of $N_H(T)$,
  and the identity $K = H/G$ induces an injective homomorphism $N_H(T)/N_G(T) \into K$.
  If \autoref{Hyp_HConjugateOrbit=GConjugateOrbit} holds for $T < G$,
  then this map is an isomorphism
  \begin{equation*}
    N_H(T)/N_G(T) \iso K.
  \end{equation*}
\end{prop}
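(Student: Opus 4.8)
The plan is to verify the three assertions in order: first normality of $N_G(T)$ in $N_H(T)$, then injectivity of the induced map $N_H(T)/N_G(T) \to K = H/G$, and finally — under \autoref{Hyp_HConjugateOrbit=GConjugateOrbit} — its surjectivity. The first step is pure group theory: since $G \nsub H$, conjugation by any $h \in H$ preserves $G$, so if $h \in N_H(T)$ then $hN_G(T)h^{-1} = h(N_H(T)\cap G)h^{-1} = N_H(T)\cap G = N_G(T)$, using that $h$ normalizes both $N_H(T)$ (trivially, $h \in N_H(T)$ and $N_H(T)$ is a group) — more precisely one observes $N_G(T) = N_H(T) \cap G$ and both $N_H(T)$ and $G$ are normalized by $h \in N_H(T)$, so their intersection is too. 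This gives the normal subgroup.

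For injectivity, consider the composite $N_H(T) \hookrightarrow H \twoheadrightarrow H/G = K$. Its kernel is $N_H(T) \cap G = N_G(T)$, so it descends to an injective homomorphism $N_H(T)/N_G(T) \hookrightarrow K$. This is immediate from the first isomorphism theorem and requires no hypothesis.

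The surjectivity, which I expect to be the only substantive point, uses \autoref{Hyp_HConjugateOrbit=GConjugateOrbit}. Given a class $hG \in K$, I want to produce $g \in G$ with $hg \in N_H(T)$, i.e. $hg$ normalizes $T$. The hypothesis gives $g_0 \in G$ with $hTh^{-1} = g_0 T g_0^{-1}$. Set $g \coloneq g_0^{-1}$; then $(hg)T(hg)^{-1} = h g_0^{-1} T g_0 h^{-1}$. Hmm — I need to be careful about the order: from $hTh^{-1} = g_0 Tg_0^{-1}$ I get $g_0^{-1}hTh^{-1}g_0 = T$, so $(g_0^{-1}h)$ normalizes $T$. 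Thus $g_0^{-1} h \in N_H(T)$, and since $g_0^{-1} \in G$, the image of $g_0^{-1}h$ in $K$ equals the image of $h$. Hence the class $hG$ lies in the image of $N_H(T)/N_G(T) \to K$, proving surjectivity and therefore that the map is an isomorphism. The only thing to double-check in writing this up is the bookkeeping of left versus right cosets in the identification $K = H/G$ — since $G$ is normal, $H/G$ is a genuine quotient group and left and right cosets coincide, so this causes no trouble.
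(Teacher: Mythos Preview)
Your proof is correct and follows essentially the same route as the paper: identify $N_G(T) = N_H(T)\cap G$ to get normality and injectivity, then use the hypothesis to pick $g\in G$ with $hTh^{-1}=gTg^{-1}$ so that $g^{-1}h\in N_H(T)$ hits the given coset. Your phrasing of the normality step via the intersection characterization is if anything slightly cleaner than the paper's direct verification, but the content is identical.
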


\begin{proof}
  If $g \in N_G(T)$ and $h \in N_H(T)$,
  then $\tilde g \coloneq hgh^{-1} \in G$ since $G\nsub H$;
  hence, $\tilde g \in N_G(T)$.
  Since $N_H(T) \cap G = N_G(T)$, we have an injective homomorphism $N_H(T)/N_G(T) \into K$.
  
  Assuming \autoref{Hyp_HConjugateOrbit=GConjugateOrbit} and given $k = hG \in K$,
  there is a $g \in G$ such that
  \begin{equation*}
    hTh^{-1} = gTg^{-1}.
  \end{equation*}
  It follows that $\tilde h \coloneq g^{-1}h \in N_H(T)$ and $\tilde h G = k$;
  hence, $N_H(T)/N_G(T) \into K$ is an isomorphism.
\end{proof}

Assuming \autoref{Hyp_HConjugateOrbit=GConjugateOrbit} for $T < G$, we can define fiber bundles over $M$ whose fibers are the strata $S_{(T)}$ and $X_{(T)}$:
\begin{equation*}
  \bS_{(T)}
  \coloneq
    \hat Q\times_{\hat H} S_{(T)}
  \qandq
  \bX_{(T)}
  \coloneq
    \hat R \times_{\hat K} X_{(T)}.
\end{equation*}
If it holds for all $T < G$ with non-empty $S_T$, we decompose $\bS$ and $\bX$ as
\begin{equation*}
  \bS
  =
    \bigcup_{(T)} \bS_{(T)}
  \qandq
  \bX_{(T)}
  =
  \bigcup_{(T)} \bX_{(T)}.
\end{equation*}

\subsection{Lifting sections of \texorpdfstring{$\bX_{(T)}$}{XT}}
\label{Sec_LiftingSections}

For the remainder of this section we will assume \autoref{Hyp_HConjugateOrbit=GConjugateOrbit} for $T < G$.
The first part of the Haydys correspondence is concerned with the questions:
\begin{quote}
  \centering
  When can a section $s \in \Gamma(\bX_{(T)})$ be lifted a section of $\Phi \in \Gamma(\bS_{(T)})$ with $\mu(\Phi) = 0$ for some choice of $\hat Q$?
\end{quote}
and
\begin{quote}
  \centering
  To what extend is the principal $\hat H$--bundle $\hat Q$ determined by $s$?
\end{quote}

\begin{prop}
  \label{Prop_HatQCirc}
  If $\Phi \in \Gamma(\bS_{(T)})$, then
  \begin{equation*}
    \hat Q^\circ = \hat Q^\circ_\Phi
    \coloneq
    \set*{
      q \in \hat Q : \Phi(q) \in S_T
    }%
    \footnote{%
      Here we think of $\Phi$ as a $\hat H$--equivariant map $\Phi\co \hat Q \to S$.
    }
  \end{equation*}
  is a principal $N_{\hat H}(T)$--bundle over $M$ whose associated principal $\hat H$--bundle is isomorphic to $\hat Q$. 
  Moreover,
  the stabilizer of $\Phi$ in $\sG(P) = \Gamma(\hat Q\times_{\hat H} G)$ is 
  \begin{equation*}
    \Gamma(\hat Q^\circ\times_{N_{\hat H}(T)}T) \subset \sG(P),
  \end{equation*}
  and the kernel of $\rho(\cdot)\Phi \co \fg_P \to \bS$ is
  \begin{equation}
    \label{Eq_ftP}
    \ft_P \coloneq \hat Q^\circ\times_{N_{\hat H}(T)} \Lie(T) \subset \fg_P.
  \end{equation}
\end{prop}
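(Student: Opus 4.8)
The plan is to unwind everything through the section $\Phi \co \hat Q \to S$ regarded as an $\hat H$--equivariant map, and to use the hypothesis $\Phi \in \Gamma(\bS_{(T)})$, i.e.\ $\Phi(q) \in S_{(T)}$ for all $q$, together with the Slice Theorem description of $S_T$ and $S_{(T)}$ recalled in the proof of \autoref{Thm_HyperkahlerQuotientWithStabilizer}. First I would check that $\hat Q^\circ = \Phi^{-1}(S_T)$ is non-empty in each fibre: since $\Phi(q) \in S_{(T)}$, there is $g$ with $\rho(g)\Phi(q) \in S_T$ (by definition of $S_{(T)}$ as the $G$--orbit of $S_T$), so $qg \in \hat Q^\circ$. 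Next, for $q \in \hat Q^\circ$ and $\hat h \in \hat H$, the point $q\hat h$ lies in $\hat Q^\circ$ iff $\rho(\hat h)^{-1}\Phi(q) \in S_T$; writing $\hat h = (\sigma, h)$ with $\sigma \in \Sp(1)$ acting trivially on stabilizers and $h \in H$, this says $G_{\rho(h)^{-1}\Phi(q)} = T$, i.e.\ $h^{-1} T h = T$, i.e.\ $h \in N_H(T)$. Thus the $\hat H$--action restricts to a free, fibrewise-transitive-on-$\hat Q^\circ$ action of $N_{\hat H}(T) := (\Sp(1)\times N_H(T))/\Z_2$, which gives $\hat Q^\circ$ the structure of a principal $N_{\hat H}(T)$--bundle. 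That $\hat Q^\circ \times_{N_{\hat H}(T)} \hat H \cong \hat Q$ is the standard fact that a reduction of structure group to a subgroup $N$ reconstitutes the original bundle upon extending back to $\hat H$; here one needs that $\hat Q^\circ$ meets every $\hat H$--orbit, which is exactly the surjectivity statement just established.

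For the stabilizer assertion, an element $u \in \sG(P) = \Gamma(\hat Q \times_{\hat H} G)$ acts on $\Phi$ (viewed equivariantly) by $(u\cdot\Phi)(q) = \rho(g(q))\Phi(q)$ where $g \co \hat Q \to G$ is the corresponding equivariant map; it fixes $\Phi$ iff $g(q) \in G_{\Phi(q)}$ for all $q$. Restricting attention to $q \in \hat Q^\circ$, where $G_{\Phi(q)} = T$ by construction, this says precisely that the equivariant map $g$ restricted to $\hat Q^\circ$ takes values in $T$ and is $N_{\hat H}(T)$--equivariant, i.e.\ $u$ is a section of $\hat Q^\circ \times_{N_{\hat H}(T)} T$. (One must check the values of $g$ on $\hat Q^\circ$ determine it everywhere by $\hat H$--equivariance, and that $g(\hat Q^\circ)\subset T$ is automatically consistent with equivariance because $N_{\hat H}(T)$ normalizes $T$ — this is where $N_H(T)$ rather than a larger group is forced.) The infinitesimal version is identical: the kernel of $\rho(\cdot)\Phi \co \fg_P \to \bS$ is cut out fibrewise by $\{\xi \in \fg : \rho(\xi)\Phi(q) = 0\} = \Lie(G_{\Phi(q)})$, which over $\hat Q^\circ$ equals $\Lie(T) = \ft$, giving $\ft_P = \hat Q^\circ \times_{N_{\hat H}(T)} \ft$ as claimed in \eqref{Eq_ftP}.

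The main obstacle I anticipate is not any single computation but the bookkeeping around the $\Sp(1)$ factor and the $\Z_2$ quotient defining $\hat H$ and $N_{\hat H}(T)$: one must be careful that $\Sp(1)$ acts trivially on the stabilizer subgroups (true because it commutes with $H$ and $G$ is normal, so $G_{\theta(\sigma)\Phi} = G_\Phi$), so that the condition defining $\hat Q^\circ$ genuinely only constrains the $H$--component, and that the $\Z_2$ identified into $Z(H)$ is absorbed correctly on both sides — i.e.\ that $N_{\hat H}(T)$ as a subgroup of $\hat H$ really is $(\Sp(1)\times N_H(T))/\Z_2$ and not something with a different $\Z_2$. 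A secondary subtlety is verifying smoothness: $S_T$ is open in the linear subspace $S^T$ (Slice Theorem, as in \eqref{Eq_TS(T)}), so $\hat Q^\circ = \Phi^{-1}(S_T)$ is open in the smooth manifold $\hat Q$ and inherits a smooth structure, and local triviality follows from local triviality of $\hat Q$ together with the fibrewise freeness of the $N_{\hat H}(T)$--action; I would state this but not belabor it.
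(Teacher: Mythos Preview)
Your proposal is correct and follows essentially the same route as the paper's own proof: compute the $G$--stabilizer of $\theta(\sigma)\rho(h)\Phi$ as $hTh^{-1}$ to see that $\hat Q^\circ$ is stable exactly under $N_{\hat H}(T)=(\Sp(1)\times N_H(T))/\Z_2$, use $\Phi(q)\in S_{(T)}$ to find $g\in G$ with $qg\in\hat Q^\circ$ for fibrewise surjectivity, and then read off the stabilizer and $\ft_P$ from $G_{\Phi(q)}=T$ for $q\in\hat Q^\circ$. The paper is considerably terser and does not spell out the $\Sp(1)/\Z_2$ bookkeeping or the smoothness check you flag, but the logic is the same.
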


\begin{proof}
  If $\Phi \in S_T$, $\hat h = [(q,h)] \in \hat H = (\Sp(1)\times H)/\Z_2$ and $\Psi \coloneq \theta(q)\rho(h)\Phi$, then
  \begin{equation*}
    G_\Psi = hG_\Phi h^{-1} = hTh^{-1}.%
    \footnote{%
      \autoref{Hyp_HConjugateOrbit=GConjugateOrbit} ensures that $hTh^{-1} \subset G$.
    }
  \end{equation*}
  Therefore, $\Psi \in S_T$ if and only if $\hat h \in N_{\hat H}(T) = (\Sp(1)\times N_H(T))/\Z_2$.
  Moreover, for each $\Phi \in S_{(T)}$ there is a $g \in G \subset \hat H$ such that $\rho(g)\Phi(q) \in S_T$.
  This implies that $\hat Q^\circ$ is a principal $N_{\hat H}(T)$--bundle.

  The isomorphism $\hat Q^\circ \times_{N_{\hat H}(T)} \hat H \iso \hat Q$ is given by $[(\hat q,\hat h)] \mapsto \hat q\cdot\hat h$.
  In particular,
  \begin{equation*}
    \sG(P) \iso \Gamma(\hat Q^\circ \times_{N_{\hat H}(T)} G)
  \end{equation*}
  where $N_{\hat H}(T)$ acts on $G$ by conjugation.
  The last two assertions follow from the fact that, for every $q \in \hat Q^\circ$, the $G$--stabilizer of $\Phi(q)$ is $T$.
\end{proof}

\begin{definition}
  Given any $\Phi \in \Gamma(\bS_{(T)})$, the \defined{Weyl group bundle} associated with $\Phi$ is
  \begin{equation*}
    \hat Q^\diamond = \hat Q^\diamond_\Phi \coloneq \hat Q^\circ_\Phi/T.
  \end{equation*}
\end{definition}

\begin{prop}
  \label{Prop_ProjectionDeterminesWeylGroupBundle}
  Suppose that two choices of geometric data have been made such that $\hat R_1 = \hat R_2$.
  Suppose that $\Phi_i \in \Gamma(\bS_{\hat Q_i,(T)})$ satisfy $\mu(\Phi_i) = 0$.
  Denote by $\hat Q^\diamond_i$ the associated Weyl group bundles.

  If $p\circ\Phi_1 = p\circ \Phi_2 \in \Gamma(\bX_{(T)})$,
  then there is an isomorphism $\hat Q^\diamond_1 \iso \hat Q^\diamond_2$ compatible with the isomorphism 
  \begin{equation*}
    \hat Q^\diamond_1/W_G(T) \iso \hat R_1 = \hat R_2 \iso \hat Q^\diamond_2/W_G(T).
  \end{equation*}
\end{prop}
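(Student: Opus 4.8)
The plan is to identify each $\hat Q_i^\diamond$ with one and the same principal $W_G(T)$--bundle over the common flavour bundle $\hat R \coloneq \hat R_1 = \hat R_2$, built only out of the section $s \coloneq p\circ\Phi_1 = p\circ\Phi_2 \in \Gamma(\bX_{(T)})$; the isomorphism $\hat Q_1^\diamond \iso \hat Q_2^\diamond$ is then the composition, and its compatibility with the projections to $\hat R$ is automatic because everything is constructed over $\hat R$.

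First I would set up a reference $W_G(T)$--torsor over the stratum $X_{(T)}$. By \autoref{Thm_HyperkahlerQuotientWithStabilizer}\itref{Thm_HyperkahlerQuotientWithStabilizer_S(T)/G=ST/W} and \itref{Thm_HyperkahlerQuotientWithStabilizer_MuOnST0}, the compact group $W_G(T)$ acts freely on $\sM_T \coloneq \mu^{-1}(0)\cap S_T^0$ with quotient $X_{(T)}$, so $\pi_T \co \sM_T \to X_{(T)}$ is a principal $W_G(T)$--bundle. Since $\bX_{(T)} = \hat R\times_{\hat K}X_{(T)}$, the section $s$ corresponds to a $\hat K$--equivariant map $\hat s\co\hat R\to X_{(T)}$, and I would pull back: $\hat s^*\sM_T \to \hat R$ is a principal $W_G(T)$--bundle depending only on $(\hat R,s)$. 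On the other side, \autoref{Hyp_HConjugateOrbit=GConjugateOrbit} gives $N_H(T)\cdot G = H$, hence $N_{\hat H}(T)\cdot G = \hat H$; together with the isomorphism $\hat Q_i^\circ\times_{N_{\hat H}(T)}\hat H \iso \hat Q_i$ of \autoref{Prop_HatQCirc} this identifies $\hat R_i = \hat Q_i/G$ with $\hat Q_i^\circ/N_G(T)$, so that $\hat Q_i^\circ\to\hat R_i$, $q\mapsto qG$, is a principal $N_G(T)$--bundle and, dividing by $T$ (using $N_G(T)\nsub N_H(T)$, \autoref{Prop_NHT/NGT=K}, hence $W_G(T)\nsub W_{\hat H}(T)$), the map $\hat Q_i^\diamond\to\hat R_i$ is a principal $W_G(T)$--bundle.

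Next I would write down the comparison map $\hat Q_i^\circ\to\hat s^*\sM_T$, $q\mapsto\bigl(qG,\Phi_i(q)\bigr)$. Here $\Phi_i(q)\in\mu^{-1}(0)\cap S_T$ because $q\in\hat Q_i^\circ$ and $\mu(\Phi_i)=0$, and it lies in $S_T^0$ since its component in $S_T$ meets $\mu^{-1}(0)$; unwinding the definition of the section $p\circ\Phi_i\in\Gamma(\bX_{(T)})$ one checks $\hat s(qG)=p(\Phi_i(q))=\pi_T(\Phi_i(q))$, so the map indeed lands in the fibre product. It is $T$--invariant, since $\Phi_i(qt)=\rho(t)^{-1}\Phi_i(q)=\Phi_i(q)$ for $t\in T$ (as $t$ stabilises $\Phi_i(q)$) and $qtG=qG$, so it descends to $\hat Q_i^\diamond\to\hat s^*\sM_T$; and, up to the usual left/right convention on the $W_G(T)$--action on $\sM_T$, it is $W_G(T)$--equivariant over $\hat R$ (for $n\in N_G(T)$ one has $qnG=qG$ and $\Phi_i(qn)=\rho(n)^{-1}\Phi_i(q)$). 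A base--preserving equivariant map of principal $W_G(T)$--bundles is an isomorphism, so $\hat Q_i^\diamond\iso\hat s^*\sM_T$ over $\hat R$ for $i=1,2$; composing yields $\hat Q_1^\diamond\iso\hat Q_2^\diamond$ over $\hat R$, which on passing to $W_G(T)$--quotients is the identity of $\hat R=\hat R_1=\hat R_2$ --- the asserted compatibility.

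The construction is, as one sees, largely formal once the pieces are in place. I expect the main obstacle to be twofold: verifying the equality $\hat s(qG)=p(\Phi_i(q))$ by unwinding exactly how the section $p\circ\Phi_i$ is built as a section of $\bX_{(T)}=\hat R\times_{\hat K}X_{(T)}$, and the bookkeeping needed to see the equivariance of the comparison map through the chain $\hat R = \hat Q_i/G = \hat Q_i^\circ/N_G(T)$ --- in particular tracking the $\Z_2$ in $\hat H = (\Sp(1)\times H)/\Z_2$ and the commuting $\Sp(1)$ factor, and confirming (using \autoref{Prop_HatHPreservesS(T)X(T)} and \autoref{Hyp_HConjugateOrbit=GConjugateOrbit}) that $S_T$, $S_T^0$, $X_{(T)}$ and the residual $\hat K$--action fit together so that $\hat s^*\sM_T$ really is a $W_G(T)$--bundle over $\hat R$. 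Everything else reduces to standard facts about associated bundles.
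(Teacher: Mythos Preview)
Your approach is correct and essentially the same as the paper's: both identify $\hat Q_i^\diamond$ with the pullback of the principal $W_G(T)$--bundle $\mu^{-1}(0)\cap S_T \to X_{(T)}$ along the $\hat K$--equivariant map $s\co \hat R \to X_{(T)}$. The paper does this more tersely by asserting that the commutative square with $\Phi_i^\diamond\co \hat Q_i^\diamond \to \mu^{-1}(0)\cap S_T$, $q^\diamond_i\co \hat Q_i^\diamond \to \hat R_i$, $p$, and $s$ is a pullback diagram and invoking the universal property, whereas you explicitly construct the pullback $\hat s^*\sM_T$ and the comparison maps; the content is identical.
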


\begin{remark}
  \label{Rmk_HatQCircNeedNotBeIsomorphic}
  The principal $N_G(T)$--bundles $\hat Q^\circ_1$ and $\hat Q^\circ_2$ need not be isomorphic.
\end{remark}

\begin{proof}[Proof of \autoref{Prop_ProjectionDeterminesWeylGroupBundle}]
  Since $\hat Q_i/G \iso \hat R_i$, we have $\hat Q^\circ_i/N_G(T) \iso \hat R_i$.
  The sections $\Phi_i$ restrict to $N_G(T)$--equivariant maps $\Phi^\circ_i \co \hat Q^\circ_i \to \mu^{-1}(0)\cap S_T$, which in turn induce $W_G(T)$--equivariant maps $\Phi^\diamond_i\co \hat Q^\diamond_i = \hat Q^\circ_i/T \to \mu^{-1}(0)\cap S_T$.
  The resulting commutative diagrams
  \begin{equation*}
    \begin{tikzcd}
      \hat Q^\diamond_i \ar[d,"q^\diamond_i"] \ar[r,"\Phi^\diamond_i"] & \mu^{-1}(0)\cap S_T \ar[d,"p"] \\
      \hat R_i \ar[r,"s"] & X_{(T)}
    \end{tikzcd}
  \end{equation*}
  are pullback diagrams;
  hence, the assertion follows from the universal property of pullbacks.
\end{proof}

\begin{prop}
  \label{Prop_ExistenceOfWeylGroupLifts}
  Let $\hat R$ be a principal $\hat K$--bundle.
  Given $s \in \Gamma(\bX_{(T)})$,
  there exists a principal $W_{\hat H}(T)$--bundle $\hat Q^\diamond$ together with an isomorphism
  \begin{equation*}
    \hat Q^\diamond/W_G(T) \iso \hat R
  \end{equation*}
  and a section
  \begin{equation*}
    \Phi^\diamond \in \Gamma(\hat Q^\diamond\times_{W_{\hat H}(T)} S_T)
  \end{equation*}
  satisfying
  \begin{equation*}
    \mu(\Phi^\diamond) = 0
    \qandq
    p \circ \Phi^\diamond = s.
  \end{equation*}
  The section $\Phi^\diamond$ is unique up to the action of the restricted gauge group $\Gamma(\hat Q^\diamond\times_{W_{\hat H}(T)} W_G(T))$.
\end{prop}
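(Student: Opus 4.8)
The plan is to realize $\hat Q^\diamond$ as the pullback along $s$ of a \emph{universal} principal $W_{\hat H}(T)$--bundle sitting over $\bX_{(T)}$ itself, so that existence becomes a formal property of pullbacks and uniqueness reduces to the fact that $p$ is a principal $W_G(T)$--bundle. First I would assemble the group theory. By the definition of $S_T$ (see \autoref{Def_ST}), $T$ fixes $S_T$ pointwise, so the restriction to $N_{\hat H}(T)$ of the $\hat H$--action on $S$ — which preserves $S_T$, as in the proof of \autoref{Prop_HatQCirc} — descends to an action of $W_{\hat H}(T)=N_{\hat H}(T)/T$ on $S_T$ preserving $\mu^{-1}(0)\cap S_T$. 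By \autoref{Thm_HyperkahlerQuotientWithStabilizer} the map $p\co \mu^{-1}(0)\cap S_T\to X_{(T)}=(\mu^{-1}(0)\cap S_T)/W_G(T)$ is a principal $W_G(T)$--bundle, equivariant for the quotient homomorphism $W_{\hat H}(T)\to W_{\hat H}(T)/W_G(T)$, and by \autoref{Prop_NHT/NGT=K} (using \autoref{Hyp_HConjugateOrbit=GConjugateOrbit}) together with the definitions of $\hat K$ and $W_{\hat H}(T)$ one has a canonical isomorphism $W_{\hat H}(T)/W_G(T)\iso \hat K$; in particular $W_G(T)\nsub W_{\hat H}(T)$ with quotient $\hat K$.

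\noindent\textbf{The universal bundle.} Write $\hat R$ also for the total space of the given principal $\hat K$--bundle, and let $W_{\hat H}(T)$ act on $\hat R$ through the surjection $W_{\hat H}(T)\to\hat K$. Set
\begin{equation*}
  \cQ \coloneq \hat R\times\paren[\big]{\mu^{-1}(0)\cap S_T}
\end{equation*}
with the diagonal $W_{\hat H}(T)$--action. Since $\hat K$ acts freely on $\hat R$ and $W_G(T)$ acts freely on $\mu^{-1}(0)\cap S_T$, this action is free, and it is proper because $W_{\hat H}(T)$ is compact; taking the quotient first by $W_G(T)$ (which acts trivially on $\hat R$) and then by $\hat K=W_{\hat H}(T)/W_G(T)$ identifies $\cQ/W_{\hat H}(T)$ with $\hat R\times_{\hat K}X_{(T)}=\bX_{(T)}$, so $\cQ\to\bX_{(T)}$ is a principal $W_{\hat H}(T)$--bundle. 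The same bookkeeping gives a canonical isomorphism $\cQ/W_G(T)\iso \pi^*\hat R$ of principal $\hat K$--bundles over $\bX_{(T)}$, where $\pi\co\bX_{(T)}\to M$ is the projection. Finally, projection to the second factor followed by the inclusion $\mu^{-1}(0)\cap S_T\into S_T$ is a $W_{\hat H}(T)$--equivariant map $\cQ\to S_T$, i.e.\ a tautological section $\Phi^{\mathrm{taut}}\in\Gamma(\cQ\times_{W_{\hat H}(T)}S_T)$ with $\mu(\Phi^{\mathrm{taut}})=0$ whose image under $p$ is the tautological (diagonal) section of $\bX_{(T)}=\cQ\times_{W_{\hat H}(T)}X_{(T)}$ over $\bX_{(T)}$.

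\noindent\textbf{Pullback and uniqueness.} Now set $\hat Q^\diamond\coloneq s^*\cQ$. As $\pi\circ s=\id_M$, pulling back $\cQ/W_G(T)\iso\pi^*\hat R$ yields $\hat Q^\diamond/W_G(T)\iso\hat R$, and pulling back $\Phi^{\mathrm{taut}}$ yields $\Phi^\diamond\in\Gamma(\hat Q^\diamond\times_{W_{\hat H}(T)}S_T)$ with $\mu(\Phi^\diamond)=0$ and $p\circ\Phi^\diamond=s$; this settles existence. For uniqueness, suppose $\Phi^\diamond_0,\Phi^\diamond_1\in\Gamma(\hat Q^\diamond\times_{W_{\hat H}(T)}S_T)$ both satisfy $\mu=0$ and $p\circ\Phi^\diamond_i=s$. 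Over each point of $M$ they determine two points of the corresponding fibre of the $S_T$--bundle lying in $\mu^{-1}(0)$ and having the same image under $p$; since $p$ is a principal $W_G(T)$--bundle there is a unique element of the fibre of $\hat Q^\diamond\times_{W_{\hat H}(T)}W_G(T)$ (with $W_{\hat H}(T)$ acting on $W_G(T)$ by conjugation, well defined as $W_G(T)\nsub W_{\hat H}(T)$) taking one to the other, and it depends smoothly on the base point by local triviality of $p$. Hence $\Phi^\diamond_1$ is obtained from $\Phi^\diamond_0$ by the restricted gauge group $\Gamma(\hat Q^\diamond\times_{W_{\hat H}(T)}W_G(T))$. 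One should also note that this $\hat Q^\diamond$ is the one attached by \autoref{Prop_HatQCirc} and \autoref{Prop_ProjectionDeterminesWeylGroupBundle} to any lift $\Phi$ of $s$, by the universal property of pullbacks.

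\noindent\textbf{Main obstacle.} The delicate point is the middle step: checking that $\cQ\to\bX_{(T)}$ genuinely is a principal $W_{\hat H}(T)$--bundle with $\cQ/W_G(T)\cong\pi^*\hat R$. Here one must keep track of the fact that $W_{\hat H}(T)\to\hat K$ is \emph{not} injective — its kernel is exactly $W_G(T)$ — so freeness of the $W_{\hat H}(T)$--action on $\cQ$ really uses both factors, and the two successive quotients must be performed in the correct order; likewise the isomorphism $W_{\hat H}(T)/W_G(T)\cong\hat K$ must be the one induced by $\hat K=\hat H/G$ rather than an ad hoc one. Everything else is either a formal consequence of the pullback construction or has already been provided: smoothness of $S_T$ and of $\mu^{-1}(0)\cap S_T$ and freeness of the $W_G(T)$--action come from \autoref{Thm_HyperkahlerQuotientWithStabilizer}, and the group-theoretic identifications from \autoref{Prop_NHT/NGT=K}.
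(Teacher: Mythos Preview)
Your proof is correct and is essentially the same pullback construction as the paper's: the paper views $s$ as a $\hat K$--equivariant map $\hat R\to X_{(T)}$ and sets $\hat Q^\diamond=\set{(r,\Phi)\in\hat R\times(\mu^{-1}(0)\cap S_T):s(r)=p(\Phi)}$, then defines the $W_{\hat H}(T)$--action by hand, whereas you first assemble the universal $W_{\hat H}(T)$--bundle $\cQ=\hat R\times(\mu^{-1}(0)\cap S_T)$ over $\bX_{(T)}$ and pull back along $s\co M\to\bX_{(T)}$---unwinding your $s^*\cQ$ gives exactly the paper's set, and the short exact sequence $0\to W_G(T)\to W_{\hat H}(T)\to\hat K\to 0$ you invoke is the same one the paper uses. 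The only difference is packaging: your universal-bundle formulation makes the principal $W_{\hat H}(T)$--structure automatic, while the paper verifies freeness of the action directly.
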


\begin{proof}
  We can think of the section $s$ as a $\hat K$--equivariant map $s \co \hat R \to X_{(T)}$.
  The quotient map $p \colon \mu^{-1}(0) \cap S_T \to X_{(T)}$ defines a principal $W_G(T)$--bundle.
  Set
  \begin{align*}
    \hat Q^\diamond
    \coloneq{}&
      s^*(\mu^{-1}(0)\cap S_T) \\
    ={}&
      \set{ (r,\Phi) \in R \times (\mu^{-1}(0)\cap S_T) : s(r) = W_G(T)\cdot\Phi }
  \end{align*}
  and denote by $\Phi^\diamond\co \hat Q^\diamond \to \mu^{-1}(0)\cap S_T$ the projection to the second factor.
  The projection to the first factor $q^\diamond \co \hat Q^\diamond \to R$ makes $\hat Q^\diamond$ into a principal $W_G(T)$--bundle over $\hat R$.
  We have the following diagram with the square being a pullback:
  \begin{equation*}
    \begin{tikzcd}
      \hat Q^\diamond \ar[d,"q^\diamond"] \ar[r,"\Phi^\diamond"] & \mu^{-1}(0)\cap S_T \ar[d,"p"] \\
      \hat R \ar[d] \ar[r,"s"] & X_{(T)} & \\
      M.
    \end{tikzcd}
  \end{equation*}

  $\hat Q^\diamond$ can be given the structure of a principal $W_{\hat H}(T)$--bundle over $M$ as follows.
  By \autoref{Prop_NHT/NGT=K} we have a short exact sequence
  \begin{equation*}
    \begin{tikzcd}
    0 \ar[r] & W_G(T) \ar[r] & W_{\hat H}(T) \ar[r,"\pi"] & \hat K \ar[r] & 0.
    \end{tikzcd}
  \end{equation*}
  Define an right-action of $W_{\hat H}(T)$ on $\hat Q^\diamond$ by 
  \begin{equation*}
    (r, \Phi)\cdot[\hat h]
    \coloneq
      (r\cdot \pi([\hat h]), (\theta\times\rho)(\hat h^{-1})\Phi)
  \end{equation*}
  for $[\hat h] \in W_{\hat H}(T)$ and $(r,\Phi) \in \hat Q^\diamond$ and with $\theta$ as in \autoref{Def_CanonicalPermutingAction}.
  A moment's thought shows that this action is free and
  \begin{equation*}
    \hat Q^\diamond/W_{\hat H}(T) = (\hat Q^\diamond/W_G(T))/\hat K = \hat R/\hat K = M.
  \end{equation*}

  Since $s$ is $\hat K$--equivariant, $\Phi^\diamond$ is $W_{\hat H}(T)$--equivariant and thus defines the desired section.
  The assertion about the uniqueness of $\Phi^\diamond$ is clear.
\end{proof}

\begin{prop}
  \label{Prop_ExistenceOfLifts}
  Assume the situation of \autoref{Prop_ExistenceOfWeylGroupLifts}.
  Suppose that $\hat Q^\circ$ is a principal $N_{\hat H}(T)$--bundle with an isomorphism
  \begin{equation*}
    \hat Q^\circ\!/T \iso \hat Q^\diamond;
  \end{equation*}
  that is:
  $\hat Q^\circ$ is a lift of the structure group from $W_{\hat H}(T)$ to $N_{\hat H}(T)$.
  Set
  \begin{equation*}
    \hat Q \coloneq \hat Q^\circ\times_{N_{\hat H}(T)}\hat H.
  \end{equation*}
  In this situation, there is a section $\Phi$ of $\bS_{(T)} \coloneq \hat Q \times_{\hat H} S_{(T)}$ satisfying
  \begin{equation*}
    \mu(\Phi) = 0
    \qandq
    p \circ \Phi = s;
  \end{equation*}
  moreover, there is an isomorphism
  \begin{equation*}
    \hat Q^\circ_\Phi \iso \hat Q^\circ.
  \end{equation*}
  Any other section satisfying these conditions is related to $\Phi$ by the action of $\sG(P)$.
\end{prop}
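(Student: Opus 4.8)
The plan is to obtain $\Phi$ by pulling $\Phi^\diamond$ back to $\hat Q^\circ$ and then extending it $\hat H$--equivariantly over $\hat Q$. Since $S_T$ lies in the fixed--point set $S^T$, the group $T$ acts trivially on $S_T$, so the $W_{\hat H}(T)$--equivariant map $\Phi^\diamond\co \hat Q^\diamond \to \mu^{-1}(0)\cap S_T$ pulls back along the quotient $\hat Q^\circ \to \hat Q^\circ/T = \hat Q^\diamond$ to an $N_{\hat H}(T)$--equivariant map $\Phi^\circ\co \hat Q^\circ \to \mu^{-1}(0)\cap S_T$, that is, a section of $\hat Q^\circ\times_{N_{\hat H}(T)} S_T$. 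Because $\hat H\cdot S_T = S_{(T)}$ (cf.\ the Slice Theorem step in \autoref{Thm_HyperkahlerQuotientWithStabilizer}) and $\hat Q = \hat Q^\circ\times_{N_{\hat H}(T)}\hat H$, associated--bundle induction promotes $\Phi^\circ$ to a section $\Phi$ of $\bS_{(T)}$, given on the total space by $\Phi([q,\hat h]) = (\theta\times\rho)(\hat h)^{-1}\Phi^\circ(q)$; this is well defined and $\hat H$--equivariant because $\Sp(1)$ commutes with $G$, just as in \autoref{Prop_HatQCirc}.

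Next I would check the defining conditions. The bundle maps $\mu$ and $p$ descend from the $\hat H$--equivariant maps $\mu\co S\to(\fg\otimes\Im\H)^*$ and $p\co\mu^{-1}(0)\cap S_{(T)}\to X_{(T)}$, so $\mu(\Phi) = 0$ and $p\circ\Phi = s$ follow at once from $\mu(\Phi^\circ)\equiv 0$ and $p\circ\Phi^\circ = s\circ(\hat Q^\circ\to\hat R)$, which in turn hold because $\Phi^\circ$ is the pullback of $\Phi^\diamond$ and $\Phi^\diamond$ enjoys these properties by \autoref{Prop_ExistenceOfWeylGroupLifts}. The isomorphism $\hat Q^\circ_\Phi \iso \hat Q^\circ$ comes from the tautological inclusion $\hat Q^\circ\into\hat Q$, $q\mapsto[q,1]$: its image lies in $\hat Q^\circ_\Phi$ since $\Phi([q,1]) = \Phi^\circ(q)\in S_T$, and conversely $[q,\hat h]\in\hat Q^\circ_\Phi$ forces $G_{\Phi([q,\hat h])} = \hat h^{-1}T\hat h = T$, hence $\hat h\in N_{\hat H}(T)$ and $[q,\hat h]$ lies in the image; this inclusion is therefore an isomorphism of $N_{\hat H}(T)$--bundles.

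For uniqueness, let $\Phi'$ be another section of $\bS_{(T)}$ with $\mu(\Phi') = 0$ and $p\circ\Phi' = s$. Running the construction of \autoref{Prop_ExistenceOfWeylGroupLifts} in reverse, the Weyl group bundle $\hat Q^\diamond_{\Phi'}$ together with its tautological map to $\mu^{-1}(0)\cap S_T$ is canonically $s^*(\mu^{-1}(0)\cap S_T)$, hence is \emph{the same} as $\hat Q^\diamond_\Phi = \hat Q^\diamond$; thus $\hat Q^\circ_{\Phi'}$ is another $N_{\hat H}(T)$--lift of $\hat Q^\diamond$ inducing the given $\hat Q$. I would then (i) produce an element of $\sG(P)$ carrying $\hat Q^\circ_{\Phi'}$ onto $\hat Q^\circ_\Phi = \hat Q^\circ$, so that after this gauge transformation $\Phi'$ restricts over $\hat Q^\circ$ to an $N_{\hat H}(T)$--equivariant map into $\mu^{-1}(0)\cap S_T$ inducing $s$; (ii) invoke the uniqueness clause of \autoref{Prop_ExistenceOfWeylGroupLifts} on $\hat Q^\diamond$ to see that the two resulting maps differ by the restricted gauge group $\Gamma(\hat Q^\diamond\times_{W_{\hat H}(T)}W_G(T))$; and (iii) lift that transformation to $\hat Q^\circ$ (the ambiguity of the lift being a section of the stabilizer group $\hat Q^\circ\times_{N_{\hat H}(T)}T$, which in any case fixes $\Phi$) and thereby realize it inside $\sG(P)$. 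Composing the gauge transformations of (i) and (iii) yields $u\in\sG(P)$ with $u\Phi = \Phi'$.

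The main obstacle is step (i): showing that the restricted gauge group acts transitively on the $N_{\hat H}(T)$--reductions of $\hat Q$ that induce the fixed $\hat Q^\diamond$ on passing to the $T$--quotient, equivalently that the fiberwise nonempty sub--bundle $\{(q,g)\in\hat Q\times G : \rho(g)\Phi(q) = \Phi'(q)\}/\hat H \subset \hat Q\times_{\hat H}G$ admits a global section. I would attack this by a slice plus partition--of--unity argument: over a suitable cover one has $\Phi' = \rho(g_\alpha)\Phi$ with $g_\alpha$ valued in $N_G(T)$, and on overlaps $g_\alpha^{-1}g_\beta$ is a section of the stabilizer group of $\Phi$, so the $g_\alpha$ glue once one checks the associated transition cocycle is a coboundary. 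This last point --- a triviality statement for a torsor under the bundle of groups $\hat Q^\circ\times_{N_{\hat H}(T)}T$ over the $3$--manifold $M$ --- is the delicate part, and is exactly where one must use the compatibility already recorded in \autoref{Prop_ProjectionDeterminesWeylGroupBundle} together with the concrete structure of the generic stabilizer $T$ (a product of unitary groups in the ADHM case of \autoref{Ex_ADHMAlgebraicData}).
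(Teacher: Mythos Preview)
Your construction of $\Phi$ is exactly the paper's: the paper writes the single formula $\Phi([q,\hat h]) \coloneq (\theta\times\rho)(\hat h^{-1})\Phi^\diamond(qT)$, observes it is well-defined because $\Phi^\diamond(qT)$ is $T$--invariant and manifestly $\hat H$--equivariant, and declares that this ``defines the desired section.'' Your explicit verification that $\mu(\Phi)=0$, $p\circ\Phi=s$, and $\hat Q^\circ_\Phi = \hat Q^\circ$ as sub-bundles of $\hat Q$ is more than the paper spells out but is correct.

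Where you and the paper diverge sharply is uniqueness. The paper's entire argument is one sentence: ``The assertion about the uniqueness of $\Phi$ is clear.'' You have correctly isolated the nontrivial content in your step (i): given another lift $\Phi'$, one must show that the sub-bundle $\set{(q,g):\rho(g)\Phi(q)=\Phi'(q)}$ of $\hat Q\times_{\hat H}G$ admits a global section, and this is a torsor for the stabilizer bundle $\hat Q^\circ\times_{N_{\hat H}(T)}T$. The paper does not engage with this point at all. Whether ``these conditions'' is meant to include the isomorphism $\hat Q^\circ_{\Phi'}\cong\hat Q^\circ$ (which would bring the argument closer to the uniqueness clause of \autoref{Prop_ExistenceOfWeylGroupLifts}) or only the first two conditions, the lifting step you flag is genuinely present and the paper's ``clear'' sweeps it under the rug.

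That said, your proposed resolution---appealing to the concrete structure of $T$ as a product of unitary groups in the ADHM case---is out of place here. This appendix is written for arbitrary algebraic data as in \autoref{Def_AlgebraicData}; the ADHM representation is only one application, treated separately in \autoref{Sec_NakajimasProof}. Any argument for \autoref{Prop_ExistenceOfLifts} should stay at that level of generality. So: your construction matches the paper, your verification is more thorough, and on uniqueness you have been more careful than the paper---but you should not expect to close the gap by invoking ADHM-specific facts.
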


\begin{proof}
  With $\Phi^\diamond$ as in \autoref{Prop_ExistenceOfWeylGroupLifts} define $\Phi \co \hat Q \to \mu^{-1}(0) \subset S$ by 
  \begin{equation*}
    \Phi([q,\hat h])
    \coloneq
      (\theta\times\rho)(\hat h^{-1})\Phi^\diamond(qT).
  \end{equation*}
  This is well-defined because $\Phi^\diamond(qT)$ is $T$--invariant;
  moreover, $\Phi$ is manifestly $\hat H$--equivariant and, hence, defines the desired section.
  The assertion about the uniqueness of $\Phi$ is clear.
\end{proof}

To summarize the preceeding discussion and answer the questions raised at the beginning of this section:
\begin{enumerate}
\item
  $s$ determines the Weyl group bundle $\hat Q^\diamond$ uniquely,
\item
  every $s$ lifts to a section $\Phi^\diamond$ of $\hat Q^\diamond\times_{W_{\hat H}(T)} S_T$, and
\item
  if $\hat Q^\circ$ is a lift of the structure group of $\hat Q^\diamond$ from $W_{\hat H}(T)$ to $N_{\hat H}(T)$ and we set
  $\hat Q \coloneq \hat Q^\circ\times_{N_{\hat H}(T)} \hat H$,
  then $\Phi^\diamond$ induces a section $\Phi$ of $\bS_{(T)} = \hat Q \times_{\hat H} S_{(T)}$ lifting $s$.
\end{enumerate}

\subsection{Projecting the Dirac equation}
\label{Sec_ProjectingDiracEquation}

The second part of the Haydys correspondence is concerned with the question
\begin{quote}
  \centering
  To what extend is the Dirac equation for a section $\Phi \in \Gamma(\bS_{(T)})$ equivalent to a differential equation for $s \coloneq p\circ \Phi \in \Gamma(\bX_{(T)})$?
\end{quote}

\begin{definition}
  The \defined{vertical tangent bundle} of $\bX_{(T)} \xrightarrow{\pi} M$ is
  \begin{equation*}
    V\bX_{(T)} \coloneq \hat R \times_{\hat K} TX_{(T)}.
  \end{equation*}
  The hyperkähler structure on $X_{(T)}$ induces a \defined{Clifford multiplication}
  \begin{equation*}
    \gamma \co \pi^*\Im \H \to \End(V\bX_{(T)}).
  \end{equation*}

  Given $B \in \sA(\hat R)$ we can assign to each $s \in \Gamma(\bS)$ its covariant derivative $\nabla_B s \in \Omega^1(M,s^*V\bX)$.
  A section $s \in \Gamma(\bX)$ is called a \defined{Fueter section} if it satisfies the \defined{Fueter equation}
  \begin{equation}
    \label{Eq_Fueter}
    \fF(s) = \fF_B(s) \coloneq \gamma(\nabla_B s) = 0 \in \Gamma(s^*V\bX_{(T)}).
  \end{equation}
  The map $s \mapsto \fF(s)$ is called the \defined{Fueter operator}.
\end{definition}

\begin{prop}
  \label{Prop_ProjectingDiracEquation}
  Given $\Phi \in \Gamma(\bS_{(T)})$ satisfying $\mu(\Phi) = 0$, set
  \begin{equation*}
    s \coloneq p\circ\Phi \in \Gamma(\bX_{(T)}).
  \end{equation*}
  The following hold:
  \begin{enumerate}
  \item
    \label{Prop_P_Dirac<=>Fueter+Perpendicular}
    $A \in \sA_B(\hat Q)$ satisfies $\slD_A\Phi = 0$ if and only if
    \begin{equation}
      \label{Eq_Fueter+NablaPhiPerpGauge}
      \fF_B(s) = 0 \qandq
      \nabla_A\Phi \perp \rho(\fg_P)\Phi.
    \end{equation}
  \item
    \label{Prop_P_ABPhi}
    Let $\ft_P$ be as in \eqref{Eq_ftP}.
    The space of connections
    \begin{equation}
      \label{Eq_APhiB}
      \sA_B^\Phi(\hat Q)
      \coloneq
      \set*{ 
        A \in \sA_B(\hat Q)
        :
        \nabla_A\Phi \perp \rho(\fg_P)\Phi
      }
    \end{equation}
    is an affine space modeled on $\Omega^1(M,\ft_P)$ with $\ft_P$ as in \eqref{Eq_ftP}.
    In particular, if $\fF_B(s) = 0$, there exists an $A \in \sA_B(\hat Q)$ such that $\slD_A\Phi = 0$;
    $A$ is unique up to $\Omega^1(M,\ft_P)$.
  \item
    \label{Prop_P_AReducesToHatQCirc}
    Any connection $A \in \sA_B^\Phi(\hat Q)$ reduces to a connection on $\hat Q^\circ$.
    Conversely, any connection on $\hat Q^\circ$ induces a connection in $\sA_B^\Phi(\hat Q)$.
  \item
    \label{Prop_P_TPParallel}
    The subbundle $\ft_P \subset \fg_P$ is parallel with respect to any $A \in \sA_B^\Phi(\hat Q)$.
  \end{enumerate}
\end{prop}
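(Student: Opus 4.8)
The plan is to reduce the whole statement to pointwise linear algebra at a point $\Phi \in S_T \cap \mu^{-1}(0)$ and then globalize over $M$; apart from invoking \autoref{Thm_HyperkahlerQuotientWithStabilizer}, the argument will run parallel to the Haydys correspondence for a free $G$--action \cite{Haydys2011,Doan2017a}, the new ingredient being that one must locate the correct horizontal complement inside the orbit-type stratum rather than in all of $S$. Concretely, for $\Phi \in S_T \cap \mu^{-1}(0)$ I would set
\begin{equation*}
  H_\Phi \coloneq (\rho(\fg)\Phi)^\perp \cap \ker\rd_\Phi\mu \cap T_\Phi S_{(T)},
\end{equation*}
and record three facts from \autoref{Thm_HyperkahlerQuotientWithStabilizer} and \eqref{Eq_TS(T)}: $p_*\co H_\Phi \to T_{[\Phi]}X_{(T)}$ is a quaternionic isometry (so $\gamma(\Im\H)H_\Phi \subseteq H_\Phi$); $H_\Phi \subset S^T = T_\Phi S_T$; and, since $\mu$ is $G$--equivariant with $\mu(\Phi) = 0$, one has $\rho(\fg)\Phi \subseteq \ker\rd_\Phi\mu = (\gamma(\Im\H)\rho(\fg)\Phi)^\perp$, so that $\ker\rd_\Phi\mu \cap T_\Phi S_{(T)} = \rho(\fg)\Phi \oplus H_\Phi$ orthogonally. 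All of this passes to the associated bundles over $M$.

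For \itref{Prop_P_Dirac<=>Fueter+Perpendicular} I would argue as follows. Since $\Phi$ is a section of $\bS_{(T)}$ with $\mu(\Phi)\equiv 0$, the covariant derivative $\nabla_A\Phi$ is fibrewise valued in $T_\Phi S_{(T)} \cap \ker\rd_\Phi\mu = \rho(\fg_P)\Phi \oplus H_\Phi$; write $\nabla_A\Phi = \rho(\fa)\Phi + h$ with $h \in \Omega^1(M,H_\Phi)$. Replacing $A$ by $A + a$ with $a \in \Omega^1(M,\fg_P)$ changes only the first summand, so $h$ is independent of $A \in \sA_B(\hat Q)$, and because $p$ commutes with covariant differentiation, $p_* h = \nabla_B s$, whence $p_*\gamma(h) = \fF_B(s)$. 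Applying $\gamma$ and using $\gamma(\Im\H)H_\Phi \subseteq H_\Phi$ and $\gamma(\Im\H)\rho(\fg)\Phi = (\ker\rd_\Phi\mu)^\perp$, one gets $\slD_A\Phi = \gamma(\rho(\fa)\Phi) + \gamma(h)$ with the two summands in the orthogonal subbundles $(\ker\rd_\Phi\mu)^\perp$ and $H_\Phi$; thus $\slD_A\Phi = 0$ iff $\fF_B(s) = 0$ and $\gamma(\rho(\fa)\Phi) = 0$. The remaining point, $\gamma(\rho(\fa)\Phi) = 0 \Leftrightarrow \rho(\fa)\Phi = 0$ (i.e.\ $\nabla_A\Phi \perp \rho(\fg_P)\Phi$), is the linear-algebraic core: for an orthonormal basis $v_1,v_2,v_3$ of $\Im\H$ and $\xi_1,\xi_2,\xi_3\in\fg$, pairing $\sum_a\gamma(v_a)\rho(\xi_a)\Phi$ with $\gamma(v_b)\rho(\eta)\Phi$ and using skew-adjointness of $\gamma(v_b)$ makes each off-diagonal term equal to $\pm\inner{\rd_\Phi\mu_c(\rho(\xi_a)\Phi)}{\eta} = 0$ ($c$ the third index, the vanishing coming from $\rd_\Phi\mu$ annihilating orbit directions), leaving $\inner{\rho(\xi_b)\Phi}{\rho(\eta)\Phi}$; if the pairing vanishes for all $\eta \in \fg$, then $\rho(\xi_b)\Phi = 0$.

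For \itref{Prop_P_ABPhi}, writing $A = A_0 + a$ turns the defining condition of $\sA^\Phi_B(\hat Q)$ into the fibrewise affine equation $\inner{\rho(a)\Phi}{\rho(\cdot)\Phi} = -\inner{\nabla_{A_0}\Phi}{\rho(\cdot)\Phi}$; the bilinear form $\inner{\rho(\cdot)\Phi}{\rho(\cdot)\Phi}$ on $\fg_P$ is positive semidefinite with kernel exactly $\ft_P$, so this is solvable with solution set a torsor over $\Omega^1(M,\ft_P)$, and the ``in particular'' follows from \itref{Prop_P_Dirac<=>Fueter+Perpendicular}. For \itref{Prop_P_AReducesToHatQCirc}, note that $A \in \sA^\Phi_B(\hat Q)$ is equivalent to $\nabla_A\Phi$ being fibrewise valued in $H_\Phi \subset S^T = T_\Phi S_T$, which — since $\hat Q^\circ = \set{q : \Phi(q) \in S_T}$ and $\Phi$ is transverse to $S^T$ for the dimension count underlying \autoref{Prop_HatQCirc} — is precisely the condition that the $A$--horizontal distribution be tangent to $\hat Q^\circ$; conversely any connection on $\hat Q^\circ$ compatible with $B$ and the Levi-Civita connection extends into $\sA^\Phi_B(\hat Q)$. (Here I use $\rho(\fg)\Phi \cap S^T = \rho(\Lie N_G(T))\Phi$, which is $0$ in all cases relevant to this paper, where $W_G(T)$ is finite, cf.\ \autoref{Prop_ADHM1kHaydysTechnical}.) Finally \itref{Prop_P_TPParallel} is immediate from \itref{Prop_P_AReducesToHatQCirc}: $\ft_P = \hat Q^\circ \times_{N_{\hat H}(T)} \Lie T$, and $\Lie T \subset \fg$ is $\Ad$--invariant under $N_{\hat H}(T)$ since $N_H(T)$ normalizes $T$, so $\ft_P$ is parallel for the connection induced by the reduction of any $A \in \sA^\Phi_B(\hat Q)$ to $\hat Q^\circ$.

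The hard part, such as it is, is the first paragraph together with \itref{Prop_P_Dirac<=>Fueter+Perpendicular}: pinning down $H_\Phi$ in the presence of a non-trivial generic stabilizer and verifying that $p_*$ still intertwines $\nabla_A$ with $\nabla_B$ and the two Clifford multiplications on $H_\Phi$, for which one leans on parts \itref{Thm_HyperkahlerQuotientWithStabilizer_MuOnST0} and \itref{Thm_HyperkahlerQuotientWithStabilizer_X(T)} of \autoref{Thm_HyperkahlerQuotientWithStabilizer} (the free $W_G(T)$--action on $S_T^0$). The linear-algebra lemma is routine; the only other mildly fussy point is keeping $S^T$, $S_T$, $S_{(T)}$ and $\hat Q^\circ$ straight in \itref{Prop_P_AReducesToHatQCirc}, which is eased in the applications by the finiteness of $N_G(T)/T$.
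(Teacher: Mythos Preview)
Your proof is correct and follows essentially the same strategy as the paper's: reduce to pointwise linear algebra at $\Phi \in S_T \cap \mu^{-1}(0)$ using \autoref{Thm_HyperkahlerQuotientWithStabilizer} and then globalize. Your treatment of \itref{Prop_P_Dirac<=>Fueter+Perpendicular} via the explicit horizontal space $H_\Phi$ and the orthogonal splitting $\slD_A\Phi = \gamma(\rho(\fa)\Phi) + \gamma(h)$ is a cleaner packaging of the same computation the paper does: the paper writes the Dirac equation as $\nabla_{A,e_i}\Phi = -\epsilon_{ij}^{~~k}\gamma(e^j)\nabla_{A,e_k}\Phi$ and pairs with $\rho(\xi)\Phi$, using $\inner{\gamma(e^j)\nabla_{A,e_k}\Phi}{\rho(\xi)\Phi} = 0$ (which is $\nabla_A\mu(\Phi)=0$) to get the perpendicularity directly. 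Your argument and the paper's for \itref{Prop_P_ABPhi} and the forward direction of \itref{Prop_P_AReducesToHatQCirc} are identical.

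Two small points of contrast. First, for \itref{Prop_P_TPParallel} you deduce parallelism from \itref{Prop_P_AReducesToHatQCirc} and the $\Ad(N_{\hat H}(T))$--invariance of $\ft$, which is perfectly fine; the paper instead gives a direct computation, differentiating $\rho(\tau)\Phi = 0$ and showing $\abs{\rho(\nabla_{A,v}\tau)\Phi}^2 = \inner{\rho([\tau,\nabla_{A,v}\tau])\Phi}{\nabla_{A,v}\Phi} = 0$ by $\nabla_A\Phi \perp \rho(\fg_P)\Phi$. The paper's argument is self-contained and does not go through the reduction, which is mildly preferable since it avoids the issue you flag next. Second, you are right that the converse of \itref{Prop_P_AReducesToHatQCirc} is not entirely innocent when $W_G(T)$ has positive dimension: a connection reducing to $\hat Q^\circ$ only gives $\nabla_A\Phi \in S^T$, and $S^T \cap \rho(\fg)\Phi = \rho(\fw)\Phi$ need not vanish. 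The paper's proof in fact only writes out the forward implication, so your caveat is apt; in the ADHM$_{1,k}$ applications $W_G(T_\lambda)$ is finite and the issue does not arise.
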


\begin{proof}
  We prove \itref{Prop_P_Dirac<=>Fueter+Perpendicular}.
  If $\slD_A\Phi = 0$, then it follows from $p_*(\nabla_A\Phi) = \nabla_Bs$ that $\fF_B(s) = 0$.
  Let $(e^1,e^2,e^3)$ be an orthonormal basis of $T_x^*M$.
  The equations $\slD_A\Phi = 0$ and $\nabla_A\mu(\Phi) = 0$ can be written as
  \begin{equation*}
    \nabla_{A,e_i}\Phi = -% \sum_{j,k=1}^3
    \epsilon_{ij}^{~~k} \gamma(e^j)\nabla_{A,e_k} \Phi
    \qandq
    \inner{\gamma(e^j)\nabla_{A,e_k}\Phi}{\rho(\xi)\Phi} = 0
  \end{equation*}
  for all $\xi \in \fg_{P,x}$.
  This proves that $\nabla_A\Phi \perp \rho(\fg_P) \Phi$.
  By \autoref{Thm_HyperkahlerQuotientWithStabilizer}\itref{Thm_HyperkahlerQuotientWithStabilizer_X(T)}, \eqref{Eq_Fueter+NablaPhiPerpGauge} implies $\slD_A\Phi = 0$  

  We prove \itref{Prop_P_ABPhi}.
  If $A \in \sA_B^\Phi(\hat Q)$ and $a \in \Omega^1(M,\fg_P)$ are such that $A + a \in \sA_B^\Phi(\hat Q)$, then
  \begin{equation*}
    \rho(a)\Phi \perp \rho(\fg_P)\Phi;
  \end{equation*}
  hence, $\rho(a)\Phi = 0$ and it follows that $a \in \Omega^1(M,\ft_P)$ by \autoref{Prop_HatQCirc}.
  It remains to show that $\sA_B^\Phi(\hat Q)$ is non-empty.
  To see this, note that if $A \in \sA_B^\Phi(\hat Q)$, then one can find $a \in \Omega^1(M,\fg_P)$ such that $\nabla_A\Phi + \rho(a)\Phi$ is perpendicular to $\rho(\fg_P)\Phi$.

  We prove \itref{Prop_P_AReducesToHatQCirc}.
  If $A \in \sA_B^\Phi(\hat Q)$ and $H_A$ denote its horizontal distribution,
  then we need to show that for $q \in \hat Q^\circ$ we have $H_{A,q} \subset T_q\hat Q^\circ$.
  This, however, is an immediate consequence of the definitions of $\sA_B^\Phi(\hat Q)$ and $\hat Q^\circ$.

  We prove \itref{Prop_P_TPParallel}.
  Suppose $\tau \in \Gamma(\ft_P)$, that is, $\rho(\tau)\Phi = 0$.
  Differentiating this identity along $v$ yields
  \begin{equation*}
    \rho(\nabla_{A,v}\tau)\Phi = -\rho(\tau) \nabla_{A,v}\Phi;
  \end{equation*}
  Set $\sigma = \nabla_{A,v}\tau$.
  We need to show that $\rho(\sigma)\Phi = 0$.
  We compute
  \begin{align*}
    \abs{\rho(\sigma)\Phi}^2
    &=
      -\inner{\rho(\sigma)\Phi}{\rho(\tau)\nabla_{A,v}\Phi} \\
    &=
      \inner{\rho(\tau)\rho(\sigma)\Phi}{\nabla_{A,v}\Phi} \\
    &=
      \inner{\rho([\tau,\sigma])\Phi}{\nabla_{A,v}\Phi} = 0
  \end{align*}
  because $\nabla_A \Phi \perp \rho(\fg_P)\Phi$.
\end{proof}

To summarize:
\begin{enumerate}
\item
  The Dirac equation $\slD_A\Phi = 0$ implies the Fueter equation $\fF_B s = 0$.
\item
  Given a solution $s$ of the Fueter equation and $\hat Q^\circ$ as at the end of the last subsection,
  there is a connection $A \in \sA_B(\hat Q)$ such that the lift $\Phi$ satisfies $\slD_A\Phi = 0$.
\item
  $A$ is unique up to $\Omega^1(M,\ft_P)$ with $\ft_P$ as in \eqref{Eq_ftP}.
\end{enumerate}

%%% Local Variables:
%%% mode: latex
%%% TeX-master: "CountingAssociativesSeibergWitten"
%%% End:

\section{The ADHM representation}
\label{Sec_NakajimasProof}

We now focus on the case $r=1$ in \autoref{Ex_ADHMAlgebraicData}. 
We will see that in this case the hyperk\"ahler quotient of the representation is the symmetric product $\Sym^k \H$. 
This fact is the basis of the relationship between multiple covers of associatives and ADHM monopoles.

Identifying $\H\otimes_\C\C^r = \Hom_\C(\C^k,\H)$, we can write the quaternionic vector space $S$ from \autoref{Ex_ADHMAlgebraicData} with $r = 1$ as
\begin{equation*}
  S = \Hom_\C(\C^k,\H) \oplus \H\otimes_\R\fu(k).
\end{equation*}
The group $\U(k)$ acts on $S$ via
\begin{equation*}
  \rho(g)(\Psi,\bxi)
  \coloneq
  (\Psi g^{-1},\Ad(g)\bxi)
\end{equation*}
preserving the hyperkähler structure.
We will now determine the hyperkähler quotient $S \hkred \U(k)$ and its decomposition into hyperkähler manifolds described in \autoref{Thm_HyperkahlerQuotientWithStabilizer}.

\begin{definition}
  \label{Def_PartitionGlambdaTlambda}
  A \defined{partition} of $k \in \N$ is a non-increasing sequence of non-negative integers $\lambda = (\lambda_1, \lambda_2, \ldots)$ which sums to $k$.
  The \defined{length} of a partition is
  \begin{equation*}
    \abs{\lambda}
    \coloneq
    \min \set{ n \in \N : \lambda_n = 0 } - 1.
  \end{equation*}
  
  With each partition $\lambda$ we associate the groups
  \begin{equation*}
    G_\lambda
    \coloneq
    \set*{
      \sigma \in S_{\abs{\lambda}}
      :
      \lambda_{\sigma(n)} = \lambda_n \text{ for all } n \in \set {1 \ldots, \abs{\lambda}}
    }
  \end{equation*}
  and
  \begin{equation*}
    T_\lambda
    \coloneq
    \prod_{n = 1}^{\abs{\lambda}} \U(\lambda_n)
    \subset
    \U(k).
  \end{equation*}

  For each partition $\lambda$ of $k$,
  consider the generalized diagonal
  \begin{equation*}
    \Delta_{\abs{\lambda}}
    =
    \set{
      v_1,\ldots,v_{\abs{\lambda}} \in \H^{\abs{\lambda}} : v_i = v_j \text{ for some } i \neq j
    }
  \end{equation*}
  There is an embedding $(\H^{\abs{\lambda}}\setminus \Delta_{\abs{\lambda}})/G_\lambda \into \Sym^k \H$ defined by
  \begin{equation*}
    [v_1,\ldots,v_{\abs{\lambda}}] \mapsto [\underbrace{v_1,\ldots,v_1}_{\lambda_1 \text{ times}},\cdots,\underbrace{v_{\abs{\lambda}},\ldots,v_{\abs{\lambda}}}_{\lambda_{\abs{\lambda}} \text{ times}}].
  \end{equation*}
  The image of this inclusion is denoted by $\Sym^k_\lambda\H$.
\end{definition}

\begin{theorem}[{\citet[Proposition 2.9]{Nakajima1999}}]
  \label{Thm_1kADHM}
  We have
  \begin{equation*}
    S\hkred G
    =
    \bigcup_{\lambda} S_{T_\lambda} \hkred W_{U(k)}(T_\lambda)
    =
    \bigcup_{\lambda} \Sym^k_\lambda \H
    =
    \Sym^k\H.
  \end{equation*}
  Here we take the union over all partitions $\lambda$ of $k$.
\end{theorem}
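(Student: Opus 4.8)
The plan is to deduce the result from the stratification of \autoref{Thm_HyperkahlerQuotientWithStabilizer}, applied to the $\U(k)$-action. First, identify $\bxi\in\H\otimes_\R\fu(k)$ with a pair $(B_1,B_2)$ of endomorphisms of $\C^k$, and $\Psi$ with a pair $(I,J)\in\C^k\oplus(\C^k)^*$, so that (with $G=\U(k)$) the hyperkähler moment map is, in the complex/real splitting, the usual ADHM moment map
\[
  \mu(\Psi,\bxi)=\bigl([B_1,B_2]+IJ,\ [B_1,B_1^*]+[B_2,B_2^*]+II^*-J^*J\bigr).
\]
By \autoref{Thm_HyperkahlerQuotientWithStabilizer} one has $S\hkred G=\bigcup_{(T)}X_{(T)}$ with $X_{(T)}=S_T^0\hkred W_G(T)$. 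So it remains to (i) show that the only conjugacy classes $(T)$ with $X_{(T)}\neq\emptyset$ are those of the subgroups $T_\lambda=\prod_n\U(\lambda_n)$ of \autoref{Def_PartitionGlambdaTlambda}, one for each partition $\lambda$ of $k$, and (ii) identify each stratum $X_{(T_\lambda)}$ with $\Sym^k_\lambda\H$.

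The technical core, and the step I expect to be the main obstacle, is the vanishing $\mu(\Psi,\bxi)=0\Rightarrow\Psi=0$. Here the plan is: pair the real equation with the orthogonal projection $P_{V_0}$ onto the smallest $(B_1,B_2)$-invariant subspace $V_0\subseteq\C^k$ containing $\im I$. Since $\tr(P_{V_0}[B_i,B_i^*])\geq0$ for such $V_0$, with equality iff $V_0$ is also $B_i^*$-invariant, and since $\lVert I\rVert=\lVert J\rVert$ (trace of the real equation), one gets that $V_0$ is invariant under $B_1^*,B_2^*$ as well and that $J$ is supported on $V_0$. This exhibits an orthogonal, $\{B_1,B_2,B_1^*,B_2^*\}$-invariant splitting $\C^k=V_0\oplus V_0^\perp$ with $(I,J)$ supported on $V_0$, and reduces the claim to the ``cyclic'' case $V_0=\C^k$, where it asserts $\C^k=0$. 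Ruling out nonzero cyclic data at level zero is precisely Nakajima's stability argument (equivalently, a Hilbert--Mumford/properness argument for the $\GL_k(\C)$-action on $\mu_\C^{-1}(0)$), and is where the real work lies; I would follow \citet[Proposition 2.9]{Nakajima1999}.

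Granting $\Psi=0$, the equation $\mu(\bxi)=0$ reads $[B_1,B_2]=0$ and $[B_1,B_1^*]+[B_2,B_2^*]=0$. Expanding traces using cyclicity together with $[B_1,B_2]=[B_1^*,B_2^*]=0$ yields the identity
\[
  \tr\bigl([B_1,B_1^*]^2\bigr)=-\lVert[B_1,B_2^*]\rVert^2,
\]
whose left side is $\geq0$ and right side $\leq0$; hence both vanish, so $B_1,B_2$ are commuting normal operators and therefore simultaneously unitarily diagonalisable. Thus every point of $\mu^{-1}(0)$ is $\U(k)$-conjugate to $(0,B_1,B_2)$ with $B_1,B_2$ diagonal, whose stabiliser in $\U(k)$ is $T_\lambda$ with $\lambda$ the multiplicity partition of the common eigenvalue-pairs $(a_n,b_n)\in\C^2\cong\H$. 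This establishes (i).

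For (ii), fix a partition $\lambda$. A point $(\Psi,\bxi)$ fixed by $T_\lambda$ necessarily has $\Psi=0$ --- a $\C$-linear map $\C^k\to\H$ that is $\U(\lambda_n)$-invariant on each block must vanish, since $e^{i\theta}-1$ is invertible in $\H$ --- and $(B_1,B_2)$ block-diagonal with scalar blocks, the stabiliser being exactly $T_\lambda$ precisely when the blocks are pairwise distinct; hence $S_{T_\lambda}\cong\H^{\abs{\lambda}}\setminus\Delta_{\abs{\lambda}}$, on which $\mu$ vanishes identically, so $S_{T_\lambda}^0=S_{T_\lambda}$. Since $W_{\U(k)}(T_\lambda)$ is identified with the group $G_\lambda$ of \autoref{Def_PartitionGlambdaTlambda} acting by permuting the blocks,
\[
  X_{(T_\lambda)}=S_{T_\lambda}\hkred W_{\U(k)}(T_\lambda)=(\H^{\abs{\lambda}}\setminus\Delta_{\abs{\lambda}})/G_\lambda=\Sym^k_\lambda\H
\]
by the very definition of $\Sym^k_\lambda\H$. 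Combining with \autoref{Thm_HyperkahlerQuotientWithStabilizer} gives $S\hkred G=\bigcup_\lambda\Sym^k_\lambda\H=\Sym^k\H$, the last equality being the stratification of the symmetric product. Finally one checks that the tautological bijection $S\hkred G\to\Sym^k\H$ is a homeomorphism compatible with the stratifications: the power sums $\tr(B_1^pB_2^q)$ descend to continuous functions that separate the points of $\Sym^k\H$, giving continuity, and continuity of the inverse follows from a local-section argument over $\Sym^k\H$. This last point is routine; the substance of the proof is the vanishing $\Psi=0$.
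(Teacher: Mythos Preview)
Your proposal is correct in outline and closely parallels the paper's strategy: both reduce to showing $\Psi = 0$ (\autoref{Prop_1kADHMMu=0=>Phi=0}), then analyse $\mu(\bxi) = 0$ to obtain simultaneous diagonalisability, and finally read off the strata. Your trace identity $\tr([B_1,B_1^*]^2) = -\|[B_1,B_2^*]\|^2$ for the second step is valid and is the complex repackaging of the paper's \autoref{Prop_QuaternionifiedAdjointActionMomentMapSquared}; your identification of each $X_{(T_\lambda)}$ via the fixed-point description is somewhat more explicit than the paper's treatment.

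The substantive divergence is in how $\Psi = 0$ is established. Your projection trick---pairing the real equation with $P_{V_0}$---correctly shows that $V_0$ is $B_i^*$-invariant and that $J^*\in V_0$, but you then defer the cyclic case to ``Nakajima's stability argument'', which is the GIT-based proof of precisely the result at hand. The paper's stated purpose (see the paragraph following \autoref{Prop_1kADHMMu=0=>Phi=0}) is to avoid GIT entirely, and it does so via the combinatorial \autoref{Prop_VperpV1}: using only the complex equation $[A,B]=wv^*$ and induction on word length, it shows $v\perp V_1$. In your notation this is $J^*\perp V_0$---exactly the \emph{complementary} inclusion to what your projection trick gives. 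Your argument extracts information from the real equation, the paper's from the complex one; put together they would give $J^*\in V_0\cap V_0^\perp=0$ immediately. The paper does not phrase it this way but reaches the same conclusion through a block decomposition and \autoref{Prop_SimDiag}. So your route is sound, but the paper's contribution is precisely a self-contained argument for the step you black-box.
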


The proof of \autoref{Thm_1kADHM} occupies the remaining part of this section.
Various algebraic identities derived in the course of proving the theorem are also used in the discussion of the Haydys correspondence for ADHM monopoles.

\begin{prop}
  \label{Prop_1kADHMMomentMap}
  The canonical moment map $\mu \co S \to (\fu(k)\otimes \Im\H)^*$ for the action $\rho \co \U(k) \to \Sp(S)$ is given by
  \begin{equation*}
    \mu(\Psi,\bxi)
    \coloneq
    \mu(\Psi) + \mu(\bxi)
  \end{equation*}
  with%
  \footnote{%
    We identify $(\fu(k)\otimes \Im\H)^* = \fu(k)\otimes \Im\H$.
  }
  \begin{align*}
    \mu(\Psi)
    &\coloneq
      \frac12\big(
      (\Psi^*i\Psi)\otimes i 
      + (\Psi^*j\Psi)\otimes j
      + (\Psi^*k\Psi)\otimes k
      \big) \qand \\
    \mu(\bxi)
    &\coloneq
      ([\bxi_0,\bxi_1] + [\bxi_2,\bxi_3])\otimes i \\
    &\quad
      + ([\bxi_0,\bxi_2] + [\bxi_3,\bxi_1])\otimes j \\
    &\quad
      + ([\bxi_0,\bxi_3] + [\bxi_1,\bxi_2])\otimes k.
  \end{align*}
\end{prop}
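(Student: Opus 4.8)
The plan is to verify the formula by a direct computation from the definition of the canonical hyperkähler moment map recalled in \autoref{Sec_SeibergWitten}. There $\mu(\Phi) = \tfrac12\bar\gamma^*(\Phi\Phi^*)$, and pairing with $\xi\otimes v \in \fg\otimes\Im\H$ unwinds (using that $\Phi\Phi^*$ is self-adjoint and the inner product is real) to
\begin{equation*}
  \inner{\mu(\Phi)}{\xi\otimes v} = \tfrac12\inner{\bar\gamma(\xi\otimes v)\Phi}{\Phi} = \tfrac12\inner{\gamma(v)\rho(\xi)\Phi}{\Phi},
\end{equation*}
where the last step uses that $\rho(\xi)$ commutes with $\gamma(v)$ since $H$ preserves the Clifford multiplication. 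So it suffices to evaluate the right-hand side for $\Phi = (\Psi,\bxi) \in S = \Hom_\C(\C^k,\H)\oplus\H\otimes_\R\fu(k)$, $v \in \set{i,j,k}$, and $\xi \in \fu(k)$, and to identify the resulting linear functional of $\xi$ with an element of $\fu(k)$ via the chosen invariant inner product, thereby realizing the identification $(\fu(k)\otimes\Im\H)^*\iso\fu(k)\otimes\Im\H$.

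First I would differentiate the action $\rho(g)(\Psi,\bxi) = (\Psi g^{-1},\Ad(g)\bxi)$ at the identity to obtain the infinitesimal action $\rho(\xi)(\Psi,\bxi) = (-\Psi\xi,[\xi,\bxi])$, where $[\xi,\cdot]$ acts on the $\fu(k)$--factor of $\H\otimes_\R\fu(k)$. Applying $\gamma(v)$, which is left multiplication by $v$ on the $\H$--factors, and using that the $\Psi$-- and $\bxi$--summands of $S$ are orthogonal, splits $\inner{\gamma(v)\rho(\xi)\Phi}{\Phi}$ into the $\Psi$--contribution $-\inner{v\Psi\xi}{\Psi}$ and the $\bxi$--contribution $\inner{v[\xi,\bxi]}{\bxi}$.

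For the $\Psi$--term I would use the trace identity $\inner{v\Psi\xi}{\Psi} = \tr\bigl((\Psi^*v\Psi)\xi\bigr)$ together with the fact that $\Psi^*v\Psi \in \fu(k)$ (because left multiplication by $v \in \Im\H$ is skew-adjoint on $\H$), and match against the pairing on $\fu(k)$ to read off the contribution $\tfrac12\Psi^*v\Psi\otimes v$. For the $\bxi$--term I would expand $\bxi = \bxi_0 + i\bxi_1 + j\bxi_2 + k\bxi_3$, compute the quaternion products $v\cdot(1,i,j,k)$ and the resulting inner products $\inner{ve_a}{e_b}$ on $\H$, apply the identity $\inner{[\xi,X]}{Y}_{\fu(k)} = \inner{[X,Y]}{\xi}_{\fu(k)}$, and collect terms. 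For each $v$ precisely two pairs survive and contribute with the same sign, so they produce a factor of $2$ which cancels the overall $\tfrac12$ and yields $\bigl([\bxi_0,\bxi_1]+[\bxi_2,\bxi_3]\bigr)\otimes i$ for $v=i$, and its cyclic analogues for $v=j,k$. Summing the three components over $v=i,j,k$ gives the stated formula.

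The computation is entirely routine; the only point requiring care is the bookkeeping of conventions — the complex structure on $\Hom_\C(\C^k,\H)$, the normalizations of the real inner products on $\H$, on $\Hom_\C(\C^k,\H)$ and on $\fu(k)$, and the identification $(\fu(k)\otimes\Im\H)^*\iso\fu(k)\otimes\Im\H$ — so that all of these are consistent with \autoref{Sec_SeibergWitten}. In particular, it is exactly this bookkeeping that accounts for the apparent asymmetry in the statement, namely why the factor $\tfrac12$ appears in $\mu(\Psi)$ but is absent from $\mu(\bxi)$: the extra factor of $2$ arises from the combinatorics of quaternion multiplication in the $\bxi$--term and not in the $\Psi$--term. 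One could instead shortcut part of the verification by invoking the standard complex description of the ADHM hyperkähler moment map (the complex moment map $[\bxi\wedge\bxi] + \Psi(\,\cdot\,)$ together with the real one), but since everything here is this explicit I would simply carry it out by hand.
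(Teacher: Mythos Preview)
Your proposal is correct and follows essentially the same route as the paper: both start from the defining identity $2\inner{\mu(\Phi)}{v\otimes\eta} = \inner{\Phi}{\gamma(v)\rho(\eta)\Phi}$, split into the $\Psi$-- and $\bxi$--summands, and evaluate each piece directly using the invariance identity $\inner{\xi}{[\eta,\zeta]} = -\inner{[\xi,\zeta]}{\eta}$ on $\fu(k)$. The paper is slightly terser but the computation is the same.
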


\begin{proof}
  We can compute the moment maps for the action of $\U(k)$ on $\Hom(\C^k,\H)$ and $\H\otimes \fu(k)$ separately.
  If $v = v_1i+v_2j+v_3k \in \Im\H$ and $\eta \in \fu(k)$, then
  \begin{align*}
    2\inner{\mu(\Psi)}{v\otimes \eta}
    =
    \inner{\Psi}{\gamma(v)\rho(\eta)\Psi} 
    =
    -\inner{\Psi}{\gamma(v)\Psi\circ \eta} 
    =
    \inner{\Psi^*\gamma(v)\Psi}{\eta}
  \end{align*}
  and
  \begin{align*}
    2\inner{\mu(\bxi)}{v\otimes \eta}
    &=
      \inner{\bxi}{\gamma(v)\rho(\eta)\bxi} \\
    &=
      v_1\(-\inner{\bxi_0}{[\eta,\bxi_1]} + \inner{\bxi_1}{[\eta,\bxi_0]} - \inner{\bxi_2}{[\eta,\bxi_3]} + \inner{\bxi_3}{[\eta,\bxi_2]}\) \\
    &\quad
      +v_2\(-\inner{\bxi_0}{[\eta,\bxi_2]} + \inner{\bxi_1}{[\eta,\bxi_3]} + \inner{\bxi_2}{[\eta,\bxi_0]} - \inner{\bxi_3}{[\eta,\bxi_2]}\) \\
    &\quad
      +v_3\(-\inner{\bxi_0}{[\eta,\bxi_3]} - \inner{\bxi_1}{[\eta,\bxi_2]} - \inner{\bxi_2}{[\eta,\bxi_1]} + \inner{\bxi_3}{[\eta,\bxi_0]}\) \\
    &=
      2v_1\inner{[\bxi_0,\bxi_1] + [\bxi_2,\bxi_3]}{\eta} \\
    &\quad
      + 2v_2\inner{[\bxi_0,\bxi_2] + [\bxi_3,\bxi_1]}{\eta} \\
    &\quad
      + 2v_3\inner{[\bxi_0,\bxi_3] + [\bxi_1,\bxi_2]}{\eta}
  \end{align*}
  using that $\inner{\xi}{[\eta,\zeta]} = -\inner{[\xi,\zeta]}{\eta}$ for  $\xi,\eta,\zeta \in \fu(k)$.
\end{proof}

The key to proving \autoref{Thm_1kADHM} is the following result.

\begin{prop}
  \label{Prop_1kADHMMu=0=>Phi=0}
  If $\mu(\Psi,\bxi) = 0$, then $\Psi = 0$.
\end{prop}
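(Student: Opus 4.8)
The plan is to read off the conclusion from the $i$--component of the moment map equation alone. First I would fix on $\H$ the complex structure $I$ given by left multiplication by $i$; with respect to this structure $\Psi\in\Hom_\C(\C^k,\H)$ is complex linear (this is how the summand $\Hom_\C(\C^r,\H\otimes_\C\C^k)$ of $S$ with $r=1$ was built), and — the key point — the endomorphism ``left multiplication by $i$'' of $\H$ is precisely multiplication by the scalar $i$ on the complex vector space $(\H,I)$. Hence the operator $\Psi^*i\Psi$ appearing in \autoref{Prop_1kADHMMomentMap} equals $i\,\Psi^*\Psi$, where $\Psi^*\Psi\in\End_\C(\C^k)$ is the composite of $\Psi$ with its complex adjoint $\Psi^*$ (which, since $\Psi$ is complex linear, agrees with the adjoint formed using the real inner products). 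This $\Psi^*\Psi$ is Hermitian and positive semidefinite, so $\tr(\Psi^*\Psi)\geq 0$, with equality if and only if $\Psi=0$.

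Next I would take the $i$--component of the equation $\mu(\Psi,\bxi)=0$. By \autoref{Prop_1kADHMMomentMap} this is the identity
\begin{equation*}
  \tfrac12\,\Psi^*i\Psi + [\bxi_0,\bxi_1] + [\bxi_2,\bxi_3] = 0
\end{equation*}
in $\fu(k)\subset\End_\C(\C^k)$. Applying the complex trace $\tr$ kills the two commutator terms, leaving $\tfrac12\tr(\Psi^*i\Psi)=\tfrac{i}{2}\tr(\Psi^*\Psi)=0$, hence $\tr(\Psi^*\Psi)=0$ and therefore $\Psi=0$.

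There is no genuine obstacle here; the only step requiring a little care is the linear algebra of the first paragraph — distinguishing the complex trace (on $\fu(k)\subset\gl(k,\C)$) from the real trace, noting that the complex and real adjoints of the complex linear map $\Psi$ coincide, and recording that left multiplication by $i$ becomes the scalar $i$ once the correct complex structure on $\H$ is fixed. By the residual $\Sp(1)$--symmetry permuting $i,j,k$ the same argument runs verbatim with the $j$-- or $k$--component. Conceptually this is the linear--algebraic avatar of the vanishing of abelian instantons, and it is exactly what forces $\Sym^k\H$ — and not a larger hyperkähler manifold — to appear in \autoref{Thm_1kADHM}.
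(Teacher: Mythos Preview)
Your argument has a genuine gap in the linear algebra of the first paragraph. The complex structure on $\H$ used to form $\H\otimes_\C\C^k$ (equivalently, $\Hom_\C(\C^k,\H)$) must commute with left multiplication by all of $\H$, since the Clifford multiplication $\gamma$ and the $\Sp(1)$--action act by left multiplication. Hence the complex structure is \emph{right} multiplication by $i$, not left multiplication. With respect to this structure, left multiplication by $i$ is \emph{not} a scalar: on $\H=\C\oplus j\C$ it acts as $+i$ on the first summand and $-i$ on the second. Concretely, writing $\Psi=v+jw$ with $v,w\in\C^k$ as the paper does, one finds that the $i$--component of $\mu(\Psi)$ is (up to a factor) $vv^*-ww^*$, whose trace is $\abs{v}^2-\abs{w}^2$, not $\abs{v}^2+\abs{w}^2$. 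Your trace argument therefore yields only $\abs{v}=\abs{w}$, which is exactly \eqref{Eq_VWOrthonormal} in the paper, not $\Psi=0$. The $j$-- and $k$--components likewise give only $\inner{v}{w}=0$.

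The paper's proof is substantially harder for this reason: after extracting $\abs{v}=\abs{w}$ and $v\perp w$ from the traces, it constructs the smallest $A,B$--invariant subspace $V_1$ containing $w$, proves by an induction on word length that $v\perp V_1$, and then uses a block upper--triangular argument together with a simultaneous--diagonalisation lemma to force $w=0$. There is no shortcut via a single trace; the $\Psi$--contribution to the real moment map is indefinite.
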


One can derive this result using Geometric Invariant Theory \cite[Section 2.2]{Nakajima1999}.
We provide a proof at the end of this section.
It essentially follows \citeauthor{Nakajima1999}'s reasoning but avoids the use of GIT and comparison results between GIT and Kähler quotients. 

It follows from \autoref{Prop_1kADHMMu=0=>Phi=0} that
\begin{equation*}
  S \hkred \U(k) = \H\otimes \fg \hkred \U(k).
\end{equation*}
The latter can be computed in a straight-forward fashion using the following observation.

\begin{prop}
  \label{Prop_QuaternionifiedAdjointActionMomentMapSquared}
  We have
  \begin{equation*}
    \abs{\mu(\bxi)}^2
    =
    \frac12\sum_{\alpha,\beta=0}^3 \abs{[{\bxi_\alpha},{\bxi_\beta}]}^2.
  \end{equation*}
\end{prop}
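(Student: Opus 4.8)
The plan is to expand $\abs{\mu(\bxi)}^2$ directly from the formula in \autoref{Prop_1kADHMMomentMap} and recognize the result as the complete sum $\tfrac12\sum_{\alpha,\beta}\abs{[\bxi_\alpha,\bxi_\beta]}^2$. Write $a \coloneq [\bxi_0,\bxi_1]+[\bxi_2,\bxi_3]$, $b \coloneq [\bxi_0,\bxi_2]+[\bxi_3,\bxi_1]$, $c \coloneq [\bxi_0,\bxi_3]+[\bxi_1,\bxi_2]$, so that $\mu(\bxi) = a\otimes i + b\otimes j + c\otimes k$ and hence $\abs{\mu(\bxi)}^2 = \abs{a}^2 + \abs{b}^2 + \abs{c}^2$, where $\abs{\cdot}$ is the norm on $\fu(k)$ coming from the chosen $\Ad$--invariant inner product. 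The task is purely algebraic: expand each $\abs{a}^2,\abs{b}^2,\abs{c}^2$ into ``diagonal'' terms like $\abs{[\bxi_0,\bxi_1]}^2$ and ``cross'' terms like $2\inner{[\bxi_0,\bxi_1]}{[\bxi_2,\bxi_3]}$, and show that (i) the diagonal terms collected over $a,b,c$ reproduce $\sum_{\alpha<\beta}\abs{[\bxi_\alpha,\bxi_\beta]}^2 = \tfrac12\sum_{\alpha,\beta}\abs{[\bxi_\alpha,\bxi_\beta]}^2$ (using $[\bxi_\alpha,\bxi_\alpha]=0$ and antisymmetry), and (ii) the cross terms cancel.

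First I would record the six pairs: $a$ contributes $\abs{[\bxi_0,\bxi_1]}^2 + \abs{[\bxi_2,\bxi_3]}^2$, $b$ contributes $\abs{[\bxi_0,\bxi_2]}^2 + \abs{[\bxi_3,\bxi_1]}^2$, $c$ contributes $\abs{[\bxi_0,\bxi_3]}^2 + \abs{[\bxi_1,\bxi_2]}^2$; since $\abs{[\bxi_3,\bxi_1]}^2 = \abs{[\bxi_1,\bxi_3]}^2$, the six diagonal terms are exactly $\{\abs{[\bxi_\alpha,\bxi_\beta]}^2 : 0\le\alpha<\beta\le 3\}$, which is $\tfrac12\sum_{\alpha,\beta=0}^3\abs{[\bxi_\alpha,\bxi_\beta]}^2$ as claimed. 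Then I would handle the cross terms: they are
\begin{equation*}
  2\inner{[\bxi_0,\bxi_1]}{[\bxi_2,\bxi_3]}
  + 2\inner{[\bxi_0,\bxi_2]}{[\bxi_3,\bxi_1]}
  + 2\inner{[\bxi_0,\bxi_3]}{[\bxi_1,\bxi_2]}.
\end{equation*}
Using the invariance identity $\inner{[x,y]}{z} = \inner{x}{[y,z]}$ (equivalently $\inner{[x,y]}{z} = -\inner{y}{[x,z]}$) one rewrites each summand, e.g. $\inner{[\bxi_0,\bxi_1]}{[\bxi_2,\bxi_3]} = \inner{\bxi_0}{[\bxi_1,[\bxi_2,\bxi_3]]}$, and after doing this for all three terms and invoking the Jacobi identity $[\bxi_1,[\bxi_2,\bxi_3]] + [\bxi_2,[\bxi_3,\bxi_1]] + [\bxi_3,[\bxi_1,\bxi_2]] = 0$, the three terms sum to $2\inner{\bxi_0}{0} = 0$. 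Hence $\abs{\mu(\bxi)}^2$ equals the diagonal sum, which is the asserted formula.

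The only point requiring a little care is the cancellation of the cross terms, i.e.\ getting the signs right when applying invariance of the inner product and then matching them against the Jacobi identity; this is the ``main obstacle'' but it is routine once one consistently uses $\inner{[x,y]}{z}=\inner{x}{[y,z]}$ and the antisymmetry of the bracket. Everything else is bookkeeping of the six pairs $(\alpha,\beta)$. No deep input is needed — only the $\Ad$--invariance of the inner product on $\fu(k)$ and the Jacobi identity.
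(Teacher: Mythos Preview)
Your proof is correct and follows essentially the same route as the paper: both expand $\abs{\mu(\bxi)}^2$, match the diagonal terms with $\tfrac12\sum_{\alpha,\beta}\abs{[\bxi_\alpha,\bxi_\beta]}^2$, and show the remaining cross terms vanish by the Jacobi identity. The paper compresses this into a single displayed identity (with the opposite overall sign on the cross-term expression, which is immaterial since it vanishes), whereas you spell out the bookkeeping explicitly.
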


\begin{proof}
  A direct computation shows that
  \begin{equation*}
    \abs{\mu(\bxi)}^2 - \frac12\sum_{\alpha,\beta=0}^3 \abs{[\bxi_\alpha,\bxi_\beta]}^2
    =
    -2 \inner{\bxi_0}{[\bxi_1,[\bxi_2,\bxi_3]]+[\bxi_2,[\bxi_3,\bxi_1]]+[\bxi_3,[\bxi_1,\bxi_2]]}.
  \end{equation*}
  This expression vanishes by the Jacobi Identity.
\end{proof}

\begin{proof}[Proof of \autoref{Thm_1kADHM}]
  From  \autoref{Prop_1kADHMMu=0=>Phi=0} and \autoref{Prop_QuaternionifiedAdjointActionMomentMapSquared} it follows that  we have $\mu(\Psi,\bxi) = 0$ if and only if $\Psi = 0$ and $\bxi \in \H\otimes \ft$ for some maximal torus $\ft \subset \fu(k)$.
  Therefore, for a fixed maximal torus $T \subset \U(k)$ and $\ft \coloneq \Lie(T$),
  \begin{equation*}
    S \hkred G = (\H\otimes \ft)/W_{\U(k)}(T) \iso \H^k/S_k = \Sym^k \H,
  \end{equation*}
  using that the Weyl group of $\U(k)$ is the permutation group $S_k$.
  
  The map $S \hkred G \to \Sym^k\H$ can be described more directly as the joint spectrum.
  Since $\mu(\bxi) = 0$ implies $[\bxi,\bxi] = 0 \in \Lambda^2\H\otimes \fg$,
  we can find a basis $e_1,\ldots,e_k$ of $\C^k$ and elements $v_1,\ldots,v_k \in \H$ such that
  \begin{equation*}
    \bxi(e_i) = v_i\otimes e_i.
  \end{equation*}
  Up to ordering, the $v_i$ are independent of the choice of basis $e_i$.
  The isomorphism $S\hkred G \to \Sym^k \H$ is the map
  \begin{equation*}
    \bxi \mapsto \spec(\bxi) \coloneq \set{ v_1, \ldots, v_k }.
  \end{equation*}
  
  From this description the decomposition of $\Sym^k\H$ into its strata $\Sym^k_\lambda \H$ is clear.
\end{proof}

The following result, which can be viewed as the linearization of \autoref{Prop_QuaternionifiedAdjointActionMomentMapSquared}, plays an important role in \autoref{Sec_FormalExpansion}.

\begin{prop}
  \label{Prop_LinearizedMomentMapAtZero}
  Denote by $R_\bxi \co \fu(k) \to \H\otimes\fu(k)$ the linearization of the action of $\U(k)$ on $\H\otimes\fu(k)$ at $\bxi$ and by $R_\bxi^* \co \H\otimes\fu(k) \to \fu(k)$ its adjoint.
  If $\mu(\bxi) = 0$, then
  \begin{equation*}
    \abs{(\rd_\bxi\mu)\bfeta}^2
    + \frac12\abs{R_\bxi^*\bfeta}^2
    =
    \sum_{\alpha\neq\beta=0}^3 \abs{[\bxi_\alpha,\bfeta_\beta]}^2
    + \frac12\sum_{\alpha=0}^3 \abs{[\bxi_\alpha,\bfeta_\alpha]}^2.
  \end{equation*}
\end{prop}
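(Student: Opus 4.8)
The plan is to read this identity as the second-order (polarized) version of \autoref{Prop_QuaternionifiedAdjointActionMomentMapSquared}. The first step is to isolate how the hypothesis $\mu(\bxi)=0$ is used: by \autoref{Prop_QuaternionifiedAdjointActionMomentMapSquared} it forces $\tfrac12\sum_{\alpha,\beta}\abs{[\bxi_\alpha,\bxi_\beta]}^2 = 0$, hence $[\bxi_\alpha,\bxi_\beta] = 0$ for all $\alpha,\beta \in \{0,1,2,3\}$. Thus inside any bracket one may commute the $\bxi_\alpha$ past one another, which is exactly what makes the Jacobi-type cancellations below go through.

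Next I would make both operators explicit. From the formula for $\mu$ in \autoref{Prop_1kADHMMomentMap} and the Leibniz rule, $(\rd_\bxi\mu)\bfeta$ has first component $[\bfeta_0,\bxi_1]+[\bxi_0,\bfeta_1]+[\bfeta_2,\bxi_3]+[\bxi_2,\bfeta_3]$ together with its two cyclic analogues; writing $C_{\alpha\beta} \coloneq [\bfeta_\alpha,\bxi_\beta] + [\bxi_\alpha,\bfeta_\beta]$ (so $C_{\beta\alpha} = -C_{\alpha\beta}$), this reads $(\rd_\bxi\mu)\bfeta = (C_{01}+C_{23})\otimes i + (C_{02}+C_{31})\otimes j + (C_{03}+C_{12})\otimes k$. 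On the other hand $R_\bxi\zeta = [\zeta,\bxi]$, the bracket taken componentwise, and transposing this against the $\Ad$-invariance of the inner product on $\fu(k)$ gives $R_\bxi^*\bfeta = \sum_{\alpha}[\bxi_\alpha,\bfeta_\alpha]$ (with the signs fixed as in \autoref{Sec_FormalExpansion}), so that $\abs{R_\bxi^*\bfeta}^2 = \abs{\sum_\alpha [\bxi_\alpha,\bfeta_\alpha]}^2$ expands into the diagonal norms $\abs{[\bxi_\alpha,\bfeta_\alpha]}^2$ plus the cross terms $\inner{[\bxi_\alpha,\bfeta_\alpha]}{[\bxi_\beta,\bfeta_\beta]}$.

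For the main computation I would obtain $\abs{(\rd_\bxi\mu)\bfeta}^2$ cheaply by polarizing \autoref{Prop_QuaternionifiedAdjointActionMomentMapSquared}: replace $\bxi$ by $\bxi+t\bfeta$, extract the coefficient of $t^2$ on both sides, and drop the terms proportional to $\mu(\bxi)$ and to the $[\bxi_\alpha,\bxi_\beta]$, which vanish by the first step. This yields $\abs{(\rd_\bxi\mu)\bfeta}^2 = \tfrac12\sum_{\alpha,\beta}\abs{C_{\alpha\beta}}^2 = \sum_{\alpha<\beta}\abs{C_{\alpha\beta}}^2$ directly. Expanding $\abs{C_{\alpha\beta}}^2 = \abs{[\bfeta_\alpha,\bxi_\beta]}^2 + \abs{[\bxi_\alpha,\bfeta_\beta]}^2 + 2\inner{[\bfeta_\alpha,\bxi_\beta]}{[\bxi_\alpha,\bfeta_\beta]}$ and relabelling indices, the pure-norm part assembles into $\sum_{\alpha\ne\beta}\abs{[\bxi_\alpha,\bfeta_\beta]}^2$ together with a diagonal remainder, while the cross terms are where the Jacobi identity enters: using $[\bxi_\alpha,\bxi_\beta]=0$, a Jacobi manipulation rewrites $\inner{[\bfeta_\alpha,\bxi_\beta]}{[\bxi_\alpha,\bfeta_\beta]}$ in terms of $\inner{[\bxi_\alpha,\bfeta_\alpha]}{[\bxi_\beta,\bfeta_\beta]}$. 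Summing these and comparing with the expansion of $\abs{R_\bxi^*\bfeta}^2$ reassembles, on the left, the $R_\bxi^*$-term and, on the right, the remaining diagonal term $\tfrac12\sum_\alpha\abs{[\bxi_\alpha,\bfeta_\alpha]}^2$, which is the asserted identity.

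The hard part is purely the bookkeeping in that last step: checking that the six cross terms (one per unordered pair $\{\alpha,\beta\}$) recombine through the Jacobi identity into precisely the stated multiple of $\abs{R_\bxi^*\bfeta}^2$, with the correct coefficients and normalizations. This is the linearized analogue of the cancellation $\inner{[\bxi_0,\bxi_1]}{[\bxi_2,\bxi_3]} + \inner{[\bxi_0,\bxi_2]}{[\bxi_3,\bxi_1]} + \inner{[\bxi_0,\bxi_3]}{[\bxi_1,\bxi_2]} = 0$ used in the proof of \autoref{Prop_QuaternionifiedAdjointActionMomentMapSquared}, and I would organize the computation so that it reduces to that identity applied to suitable linear combinations of the $\bxi_\alpha$ and $\bfeta_\beta$; everything else is routine linear algebra.
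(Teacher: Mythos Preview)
Your proposal is correct and follows essentially the same approach as the paper: both polarize \autoref{Prop_QuaternionifiedAdjointActionMomentMapSquared} by substituting $\bxi+t\bfeta$ and extracting the $t^2$ coefficient (using that $\mu(\bxi)=0$ forces $[\bxi_\alpha,\bxi_\beta]=0$), then use the Jacobi identity together with $[\bxi_\alpha,\bxi_\beta]=0$ to convert the cross terms $\inner{[\bxi_\alpha,\bfeta_\beta]}{[\bfeta_\alpha,\bxi_\beta]}$ into $-\inner{[\bxi_\alpha,\bfeta_\alpha]}{[\bxi_\beta,\bfeta_\beta]}$, which recombines with the expansion of $\abs{R_\bxi^*\bfeta}^2$ to give the stated identity. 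The paper carries out this Jacobi manipulation explicitly in one line, exactly as you anticipate.
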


\begin{proof}
  If $\mu(\bxi) = 0$, then on the one hand
  \begin{equation*}
    \abs{\mu(\bxi+t\bfeta)}^2
    = t^2\abs{(\rd_\bxi\mu) \bfeta}^2 + O(t^3);
  \end{equation*}
  while on the other hand
  \begin{align*}
    \abs{\mu(\bxi+t\bfeta)}^2
    &=
      \frac12\sum_{\alpha,\beta=0}^3 \abs{[{\bxi_\alpha+t\bfeta_\alpha},{\bxi_\beta+t\bfeta_\beta}]}^2 \\
    &=
      \frac12\sum_{\alpha,\beta=0}^3
      \abs{[\bxi_\alpha,t\bfeta_\beta] + [t\bfeta_\alpha,\bxi_\beta]}^2
      + O(t^3) \\
    &=
      t^2\sum_{\alpha\neq\beta=0}^3
      \abs{[\bxi_\alpha,\bfeta_\beta]}^2
      + \inner{[\bxi_\alpha,\bfeta_\beta]}{[\bfeta_\alpha,\bxi_\beta]}
      + O(t^3).
  \end{align*}
  We also have
  \begin{align*}
    \abs{R_\bxi^*\bfeta}^2 
    &=
      \abs*{\sum_{\alpha=0}^3 [\bxi_\alpha,\bfeta_\alpha]}^2 \\
    &=
      2\sum_{\alpha\neq \beta = 0}^3 \inner{[\bxi_\alpha,\bfeta_\alpha]}{[\bxi_\beta,\bfeta_\beta]}
      + \sum_{\alpha=0}^3 \abs{[\bxi_\alpha,\bfeta_\alpha]}^2.
  \end{align*}
  By the Jacobi identity
  \begin{align*}
    \inner{[\bxi_\alpha,\bfeta_\beta]}{[\bfeta_\alpha,\bxi_\beta]}
    &=
      - \inner{\bfeta_\beta}{[\bxi_\alpha,[\bfeta_\alpha,\bxi_\beta]]} \\
    &=
      \inner{\bfeta_\beta}{[\bfeta_\alpha,[\bxi_\beta,\bxi_\alpha]]}
      + \inner{\bfeta_\beta}{[\bxi_\beta,[\bxi_\alpha,\bfeta_\alpha]]} \\
    &=
      \inner{\bfeta_\beta}{[\bxi_\beta,[\bxi_\alpha,\bfeta_\alpha]]} \\
    &=
      -\inner{[\bxi_\beta,\bfeta_\beta]}{[\bxi_\alpha,\bfeta_\alpha]}.
  \end{align*}
  Putting everything together, yields the asserted identity.
\end{proof}

\begin{proof}[Proof of \autoref{Prop_1kADHMMu=0=>Phi=0}]
  For the proof it is convenient to write $S$ as $\C^k\oplus j\C^k \oplus \End(\C^k) \oplus j\End(\C^k)$.
  A direct computation shows that with respect to this identification the moment map is given by
  \begin{equation}
    \label{Eq_ComplexMomentMap}
    \mu(v,w,A^*,B)
    = 
    \frac{1}{2}(vv^* - ww^* - [A,A^*] - [B,B^*])
    + j(wv^* - [A,B]).
  \end{equation}
  Therefore, if $(\Psi,\bxi) = (v + jw,A^* + jB) \in \mu^{-1}(0)$, then
  \begin{equation}
    \label{Eq_MuvwAB=0}
    vv^* - ww^* = [A,A^*] + [B,B^*] \qandq
    wv^* = [A,B].
  \end{equation}

  Set $T \coloneq [A,A^*] + [B,B^*]$.
  Taking traces and inner products with $v$ and $w$, \eqref{Eq_MuvwAB=0} implies
  \begin{gather}
    \label{Eq_VWOrthonormal}
    \abs{v} = \abs{w} \eqcolon \lambda, \quad
    \inner{v}{w} = 0, \\
    \label{Eq_TvvTww}
    \inner{Tv}{v} = \lambda^4, \qandq
    \inner{Tw}{w} = -\lambda^4.
  \end{gather}

  \begin{prop}[{\cite[Lemma 2.8]{Nakajima1999}}]
    \label{Prop_VperpV1}
    Denote by $V_1$ the smallest subspace of $\C^k$ which contains $w$ and is preserved by both $A$ and $B$.
    We have $v \perp V_1$.
  \end{prop}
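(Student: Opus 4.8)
The plan is to reduce the statement to a trace identity and prove that by induction on word length. Since $V_1$ is the span of the vectors $Ww$, with $W$ ranging over all noncommutative monomials in $A$ and $B$ (including the empty monomial), the assertion $v\perp V_1$ is equivalent to $v^{*}Ww = 0$ for every such $W$. Using the complex moment-map equation $wv^{*}=[A,B]$ from \eqref{Eq_MuvwAB=0} and cyclicity of the trace,
\[
  v^{*}Ww = \tr(Ww\,v^{*}) = \tr\bigl(W[A,B]\bigr),
\]
so it suffices to prove $\tr\bigl(W[A,B]\bigr)=0$ for every monomial $W$, by induction on $\deg W$ (the number of letters). First I would dispose of the base cases: for the empty word this is $\tr[A,B]=0$, and for $W=A^{n}$ or $W=B^{n}$ it is immediate from cyclicity, e.g.\ $\tr\bigl(A^{n}[A,B]\bigr)=\tr(A^{n+1}B)-\tr(A^{n}BA)=0$.

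For the inductive step, assuming $\tr\bigl(W'[A,B]\bigr)=0$ whenever $\deg W'<\deg W$, I would combine two moves. The first is normal ordering: whenever $W$ contains an adjacent substring $BA$, rewrite it as $AB-[A,B]=AB-wv^{*}$; this replaces $\tr\bigl(W[A,B]\bigr)$ by $\tr\bigl(\widetilde W[A,B]\bigr)$, where $\widetilde W$ is a monomial of the same degree and the same letter-counts in which that $A$ has moved one place to the left, plus a correction $-\tr\bigl(W_{1}\,wv^{*}\,W_{2}[A,B]\bigr)=-(v^{*}W_{2}w)(v^{*}W_{1}w)$ with $\deg W_{1}+\deg W_{2}<\deg W$, which vanishes by the inductive hypothesis. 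Iterating until the monomial is sorted reduces $\tr\bigl(W[A,B]\bigr)$, modulo terms killed by the inductive hypothesis, to $\tr\bigl(A^{a}B^{b}[A,B]\bigr)$, where $a,b$ count the $A$'s and $B$'s in $W$. The second move treats this sorted term: by cyclicity $\tr\bigl(A^{a}B^{b}[A,B]\bigr)=\tr\bigl(A[B,A^{a}]B^{b}\bigr)$, and expanding $[B,A^{a}]=-\sum_{i=0}^{a-1}A^{i}[A,B]A^{a-1-i}$ together with $[A,B]=wv^{*}$ rewrites it as $-\sum_{i=0}^{a-1} v^{*}\bigl(A^{a-1-i}B^{b}A^{i+1}\bigr)w$; each summand is $v^{*}(\text{monomial})w$ with the same letter-counts $a,b$, so applying the first move again identifies it with $\tr\bigl(A^{a}B^{b}[A,B]\bigr)$ up to inductive-hypothesis terms. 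Hence $(1+a)\,\tr\bigl(A^{a}B^{b}[A,B]\bigr)$ lies in the span of inductive-hypothesis terms, so it is $0$, and with it $\tr\bigl(W[A,B]\bigr)=0$. This closes the induction and gives $v\perp V_1$.

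The individual identities above are nothing more than trace cyclicity and $wv^{*}=[A,B]$; the part I expect to need care is the bookkeeping — checking that at each swap the ``monomial part'' keeps the same degree while every correction term spun off strictly drops in degree (so the inductive hypothesis genuinely applies), and that the sorting procedure terminates. For context, this lemma is the crux of \autoref{Prop_1kADHMMu=0=>Phi=0}: since $V_1$ is preserved by $A$ and $B$, the elementary inequality $\tr\bigl(\pi_{V_1}[C,C^{*}]\bigr)=\|\pi_{V_1}C(1-\pi_{V_1})\|^{2}\ge 0$ for $C\in\{A,B\}$ yields
\[
  0\le \tr\bigl(\pi_{V_1}([A,A^{*}]+[B,B^{*}])\bigr)=\tr\bigl(\pi_{V_1}(vv^{*}-ww^{*})\bigr)=\|\pi_{V_1}v\|^{2}-\|w\|^{2}=-\lambda^{2},
\]
using $v\perp V_1$ and $w\in V_1$; hence $\lambda=0$ and $\Psi=v+jw=0$.
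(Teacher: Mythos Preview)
Your proof is correct and follows essentially the same approach as the paper's: both argue by induction on word length, use the substitution $BA=AB-wv^{*}$ to normal-order monomials (the correction terms vanishing by the inductive hypothesis), and then handle the sorted monomial $A^{a}B^{b}$ via a cyclicity/commutator-expansion trick that produces a factor $(1+a)$ (the paper's symmetric version expands $[B^{k_2},A]$ instead of $[B,A^{a}]$ and obtains $(1+k_2)$). Your contextual remark at the end, deducing $\lambda=0$ directly from $\tr\bigl(\pi_{V_1}([A,A^{*}]+[B,B^{*}])\bigr)\ge 0$, is in fact a slightly slicker route to \autoref{Prop_1kADHMMu=0=>Phi=0} than the paper's use of simultaneous triangularization.
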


  \begin{proof}
    Let $C$ be a product of $A$s and $B$s.
    We need to show that $\inner{v}{Cw} = 0$.
    The proof is by induction on $k$, the number of factors of $C$.
    If $k = 0$, then $C = \id$ and we have $\inner{v}{w} = 0$ by \eqref{Eq_VWOrthonormal}.
    
    By induction we can assume that $\inner{v}{\tilde Cw} = 0$ for all $\tilde C$ with fewer than $k$ factors.
    If $C = C_lBAC_r$, then
    \begin{align*}
      Cw
      &=
        C_lBAC_rw
        =
        C_lABC_rw - C_l[A,B]C_rw \\
      &=
        C_lABC_rw - C_lwv^*C_rw
        =
        C_lABC_rw
    \end{align*}
    because $v^*C_rw = \inner{v}{C_rw}$ and $C_r$ has fewer than $k$ factors.    
    Henceforth, we can assume that $C = A^{k_1}B^{k_2}$.
    For such $C$, we have
    \begin{align*}
      \inner{v}{A^{k_1}B^{k_2}w}
      &=
        \tr(A^{k_1}B^{k_2}wv^*)
        =
        \tr(A^{k_1}B^{k_2}[A,B]) \\
      &=
        \tr([A^{k_1}B^{k_2},A]B)
        =
        \tr(A^{k_1}[B^{k_2},A]B) \\
      &=
        \sum_{\ell=0}^{k_2-1} \tr(A^{k_1}B^{\ell}[B,A]B^{k_2-\ell})
        =
        \sum_{\ell=0}^{k_2-1} \tr(B^{k_2-\ell}A^{k_1}B^{\ell}[B,A]) \\
      &=
        -\sum_{\ell=0}^{k_2-1} \inner{v}{B^{k_2-\ell}A^{k_1}B^{\ell}w}
        =
        -k_2 \inner{v}{A^{k_1}B^{k_2}w}.
    \end{align*}
    This concludes the proof.
  \end{proof}

  As a warm up consider the case $k = 2$.
  If $\lambda > 0$, then
  \begin{equation*}
    (w/\lambda,v/\lambda)
  \end{equation*}
  is an orthonormal basis for $\C^2$.
  With respect to this basis $A$ and $B$ are given by matrices of the form
  \begin{equation*}
    A =
    \begin{pmatrix}
      a_{11} & a_{12} \\
      0 & a_{22} 
    \end{pmatrix}
    \qandq
    B =
    \begin{pmatrix}
      b_{11} & b_{12} \\
      0 & b_{22} 
    \end{pmatrix}.
  \end{equation*}
  Consequently, the first diagonal entry of $T = [A,A^*] + [B,B^*]$ is
  \begin{equation*}
    T_{11} = \abs{a_{12}}^2 + \abs{b_{12}}^2 > 0.
  \end{equation*}
  However, since $\inner{Tw}{w} = -\lambda^4$ according to \eqref{Eq_TvvTww}, we have
  \begin{equation*}
    T_{11} = -\lambda^2 < 0.
  \end{equation*}
  It follows that $\lambda = 0$;
  that is, $\Psi = v + jw = 0$.

  In general, let $V_1$ be as in \autoref{Prop_VperpV1} and set $V_2 \coloneq V_1^\perp$.
  With respect to the splitting $\C^k = V_1\oplus V_2$, we have
  \begin{equation*}
    A =
    \begin{pmatrix}
      A_{11} & A_{12} \\
      0 & A_{22}
    \end{pmatrix}
    \qandq
    B =
    \begin{pmatrix}
      B_{11} & B_{12} \\
      0 & B_{22}
    \end{pmatrix}.
  \end{equation*}
  It follows from $wv^* = [A,B]$ and $v \in V_2$, that
  \begin{equation*}
    [A_{11},B_{11}]
    =
    [A,B]_{11}
    =
    0;
  \end{equation*}
  Moreover, we have
  \begin{equation*}
    T_{11}
    =
    ([A,A^*] + [B,B^*])_{11}
    =
    [A_{11},A_{11}^*] + [B_{11},B_{11}^*] + A_{12}A_{12}^* + B_{12}B_{12}^*;
  \end{equation*}
  hence,
  \begin{equation*}
    [A_{11},A_{11}^*] + [B_{11},B_{11}^*] + A_{12}A_{12}^* + B_{12}B_{12}^* + ww^* = 0.
  \end{equation*}
  Thus $[A_{11},A_{11}^*] + [B_{11},B_{11}^*] \leq 0$.
  By \autoref{Prop_SimDiag}, it follows that $[A_{11},A_{11}^*] = [B_{11},B_{11}^*] = 0$.
  Since $A_{12}A_{12}^* + B_{12}B_{12}^* + ww^*$ is a sum of non-negative definite matrices,
  we must have $\abs{w} = 0$;
  hence, $\Psi = v + jw = 0$ by \eqref{Eq_VWOrthonormal}.

  \begin{prop}
    \label{Prop_SimDiag}
    If $[A,B] = 0$ and $[A,A^*] + [B,B^*] \leq 0$, then $A$ and $B$ can be simultaneously diagonalized and $[A,A^*] = [B,B^*] = 0$.
  \end{prop}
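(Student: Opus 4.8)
The plan is to reduce to the upper-triangular case and then argue entrywise by induction on $k$. First I would note that $\tr[A,A^*] = \tr[B,B^*] = 0$, so the hypothesis that $H = [A,A^*] + [B,B^*]$ is negative semi-definite together with $\tr H = 0$ forces $H = 0$; in other words $[A,A^*] = -[B,B^*]$.

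Next, since $A$ and $B$ commute, they admit a common eigenvector (each eigenspace of $A$ is $B$-invariant, so one restricts and inducts on the dimension), and then Gram--Schmidt plus induction produces a unitary $U$ for which $U^*AU$ and $U^*BU$ are both upper triangular. Conjugation by $U$ preserves the relations $[A,B]=0$ and $[A,A^*]+[B,B^*]=0$, the negative semi-definiteness, and the conclusion to be proved, so I may assume from now on that $A$ and $B$ are themselves upper triangular.

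For any upper triangular matrix $T$ the first column has only the $(1,1)$-entry, so $[T,T^*]_{11} = (TT^*)_{11} - (T^*T)_{11} = \sum_{j\geq 1}|T_{1j}|^2 - |T_{11}|^2 = \sum_{j\geq 2}|T_{1j}|^2 \geq 0$. Applying this to $A$ and to $B$ and using $[A,A^*]_{11} + [B,B^*]_{11} = 0$ gives $\sum_{j\geq 2}\bigl(|A_{1j}|^2 + |B_{1j}|^2\bigr) = 0$, hence $A_{1j} = B_{1j} = 0$ for all $j \geq 2$. Thus $A = (a_{11}) \oplus A'$ and $B = (b_{11}) \oplus B'$ with $A', B'$ upper triangular of size $k-1$ satisfying $[A',B'] = 0$ and $[A',A'^*] + [B',B'^*] = 0$. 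By the inductive hypothesis $A'$ and $B'$ are diagonal, hence so are $A$ and $B$; undoing the conjugation by $U$ shows that $A$ and $B$ are simultaneously unitarily diagonalizable. Being unitarily diagonalizable, they are normal, so $[A,A^*] = [B,B^*] = 0$, as claimed; the base case $k=1$ is trivial.

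The only step needing any care is the reduction in the second paragraph: one must check that commuting complex matrices admit a simultaneous unitary triangularization and that all the hypotheses as well as the conclusion are invariant under unitary conjugation. Granting that, the entrywise computation and the induction are routine, and I do not expect a genuine obstacle.
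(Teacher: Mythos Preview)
Your proof is correct and follows essentially the same approach as the paper: simultaneous unitary upper-triangularization of the commuting pair, followed by an entry-by-entry argument on the diagonal of $[A,A^*]+[B,B^*]$ to kill the strictly upper triangular parts. Your preliminary trace observation that $H\leq 0$ and $\tr H=0$ force $H=0$ is a small simplification (the paper carries the inequality through the iteration instead), and your block-decomposition induction is just a cleaner packaging of the paper's ``repeating this argument'' step.
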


  \begin{proof}
    Since $A$ and $B$ commute, we can simultaneously upper triagonalize them;
    that is, after conjugating $A$ and $B$  with a unitary matrix we can assume that
    \begin{equation*}
      A = \Lambda + U \qandq
      B = M + V
    \end{equation*}
    where $\Lambda, M$ are diagonal and $U,V$ are strictly upper triangular.
    We have
    \begin{equation*}
      [A,A^*]
      = [\Lambda,\Lambda^*] + [\Lambda,U^*] - [\Lambda^*,U] + [U,U^*].
    \end{equation*}
    The first term vanishes, and the second and third terms have vanishing diagonal entries.
    Writing $U = (u_{mn})$, the $m$--th diagonal of $[A,A^*]$ is
    \begin{equation*}
      \sum_{n=1}^k \abs{u_{mn}}^2 - \abs{u_{nm}}^2;
    \end{equation*}
    and similarly for $B$ with $V = (v_{mn})$.
    
    The first diagonal entry of $[A,A^*]+[B,B^*]$ is
    \begin{equation*}
      \sum_{n=1}^k \abs{u_{1n}}^2 + \abs{v_{1n}}^2.
    \end{equation*}
    Being non-positive, this term vanishes.
    The second diagonal entry is
    \begin{equation*}
      \sum_{n=1}^k \abs{u_{2n}}^2 + \abs{v_{2n}}^2 - \abs{u_{12}}^2 - \abs{v_{12}}^2 = \sum_{n=1}^k \abs{u_{2n}}^2 + \abs{v_{2n}}^2
    \end{equation*}
    Being non-positive, this term vanishes as well.
    Repeating this argument eventually shows that $U = V = 0$.
  \end{proof}
  This completes the proof of \autoref{Prop_1kADHMMu=0=>Phi=0}.
\end{proof}

%%% Local Variables:
%%% mode: latex
%%% TeX-master: "CountingAssociativesSeibergWitten"
%%% End:

\printreferences

\end{document}

%%% Local Variables:
%%% mode: latex
%%% TeX-master: t
%%% End: